\crefname{thm}{Theorem}{Theorems}
\Crefname{thm}{Theorem}{Theorems}
\crefname{conj}{Conjecture}{Conjectures}
\Crefname{conj}{Conjecture}{Conjectures}
\crefname{prop}{Proposition}{Propositions}
\Crefname{prop}{Proposition}{Propositions}
\crefname{cor}{Corollary}{Corollaries}
\Crefname{cor}{Corollary}{Corollaries}
\crefname{defn}{Definition}{Definitions}
\Crefname{defn}{Definition}{Definitions}
\crefname{rmk}{Remark}{Remarks}
\Crefname{rmk}{Remark}{Remarks}
\crefname{prob}{Problem}{Problems}
\Crefname{prob}{Problem}{Problems}
\crefname{enumi}{}{}
\Crefname{enumi}{}{}
\crefname{figure}{Figure}{Figures}
\Crefname{figure}{Figure}{Figures}
\begin{document}

\NewDocumentCommand{\C}{}{{\mathbb{C}}}
\NewDocumentCommand{\R}{}{{\mathbb{R}}}
\NewDocumentCommand{\Q}{}{{\mathbb{Q}}}
\NewDocumentCommand{\Z}{}{{\mathbb{Z}}}
\NewDocumentCommand{\N}{}{{\mathbb{N}}}
\NewDocumentCommand{\M}{}{{\mathbb{M}}}
\NewDocumentCommand{\grad}{}{\nabla}
\NewDocumentCommand{\sA}{}{\mathcal{A}}
\NewDocumentCommand{\sF}{}{\mathcal{F}}
\NewDocumentCommand{\sFh}{}{\widehat{\mathcal{F}}}
\NewDocumentCommand{\sH}{}{\mathcal{H}}
\NewDocumentCommand{\sD}{}{\mathcal{D}}
\NewDocumentCommand{\sB}{}{\mathcal{B}}
\NewDocumentCommand{\sC}{}{\mathcal{C}}
\NewDocumentCommand{\sE}{}{\mathcal{E}}
\NewDocumentCommand{\sL}{}{\mathcal{L}}
\NewDocumentCommand{\sT}{}{\mathcal{T}}
\NewDocumentCommand{\sO}{}{\mathcal{O}}
\NewDocumentCommand{\sP}{}{\mathcal{P}}
\NewDocumentCommand{\sQ}{}{\mathcal{Q}}
\NewDocumentCommand{\sR}{}{\mathcal{R}}
\NewDocumentCommand{\sS}{}{\mathcal{S}}
\NewDocumentCommand{\sSh}{}{\widehat{\mathcal{S}}}
\NewDocumentCommand{\sM}{}{\mathcal{M}}
\NewDocumentCommand{\sI}{}{\mathcal{I}}
\NewDocumentCommand{\sK}{}{\mathcal{K}}
\NewDocumentCommand{\Span}{}{\mathrm{span}}
\NewDocumentCommand{\fM}{}{\mathfrak{M}}
\NewDocumentCommand{\fN}{}{\mathfrak{N}}
\NewDocumentCommand{\fX}{}{\mathfrak{X}}
\NewDocumentCommand{\fY}{}{\mathfrak{Y}}
\NewDocumentCommand{\gammat}{}{\tilde{\gamma}}
\NewDocumentCommand{\ct}{}{\tilde{c}}
\NewDocumentCommand{\bt}{}{\tilde{b}}
\NewDocumentCommand{\ch}{}{\hat{c}}
\NewDocumentCommand{\Ut}{}{\tilde{U}}
\NewDocumentCommand{\Gt}{}{\widetilde{G}}
\NewDocumentCommand{\Ft}{}{\widetilde{F}}
\NewDocumentCommand{\Fh}{}{\widehat{F}}
\NewDocumentCommand{\Vt}{}{\tilde{V}}
\NewDocumentCommand{\ah}{}{\hat{a}}
\NewDocumentCommand{\at}{}{\tilde{a}}
\NewDocumentCommand{\Yh}{}{\widehat{Y}}
\NewDocumentCommand{\Yt}{}{\widetilde{Y}}
\NewDocumentCommand{\gt}{}{\tilde{g}}
\NewDocumentCommand{\Ah}{}{\widehat{A}}
\NewDocumentCommand{\Ch}{}{\widehat{C}}
\NewDocumentCommand{\At}{}{\widetilde{A}}
\NewDocumentCommand{\Bt}{}{\widetilde{B}}
\NewDocumentCommand{\Bh}{}{\widehat{B}}
\NewDocumentCommand{\Vol}{m}{\mathrm{Vol}(#1)}
\NewDocumentCommand{\BVol}{m}{\mathrm{Vol}\left(#1\right)}
\NewDocumentCommand{\fg}{}{\mathfrak{g}}
\NewDocumentCommand{\Div}{}{\mathrm{div}}
\NewDocumentCommand{\etai}{}{\hat{\eta}}
\NewDocumentCommand{\etat}{}{\tilde{\eta}}

\NewDocumentCommand{\Eadmis}{}{\mathrm{E}}

\NewDocumentCommand{\Deriv}{}{\mathscr{D}}
\NewDocumentCommand{\BofA}{}{\mathscr{B}}
\NewDocumentCommand{\ADeriv}{}{\mathscr{A}}

\NewDocumentCommand{\Xa}{}{X^{(\alpha)}}
\NewDocumentCommand{\Va}{}{V^{(\alpha)}}

\NewDocumentCommand{\transpose}{}{\top}

\NewDocumentCommand{\ICond}{}{\sC}

\NewDocumentCommand{\LebDensity}{}{\sigma_{\mathrm{Leb}}}


\NewDocumentCommand{\Lie}{m}{\sL_{#1}}

\NewDocumentCommand{\ZygSymb}{}{\mathscr{C}}

\NewDocumentCommand{\Zyg}{m o}{\IfNoValueTF{#2}{\ZygSymb^{#1}}{\ZygSymb^{#1}(#2) }}
\NewDocumentCommand{\ZygX}{m m o}{\IfNoValueTF{#3}{\ZygSymb^{#2}_{#1}}{\ZygSymb^{#2}_{#1}(#3) }}

\NewDocumentCommand{\CSpace}{m o}{\IfNoValueTF{#2}{C(#1)}{C(#1;#2)}}

\NewDocumentCommand{\CjSpace}{m o o}{\IfNoValueTF{#2}{C^{#1}}{ \IfNoValueTF{#3}{ C^{#1}(#2)}{C^{#1}(#2;#3) } }  }

\NewDocumentCommand{\CjSpaceloc}{m o o}{\IfNoValueTF{#2}{C^{#1}_{\mathrm{loc}}}{ \IfNoValueTF{#3}{ C^{#1}_{\mathrm{loc}}(#2)}{C^{#1}_{\mathrm{loc}}(#2;#3) } }  }

\NewDocumentCommand{\CXjSpace}{m m o}{\IfNoValueTF{#3}{C^{#2}_{#1}}{ C^{#2}_{#1}(#3) } }

\NewDocumentCommand{\ASpace}{m m o}{\mathscr{A}^{#1,#2}\IfNoValueTF{#3}{}{(#3)}}

\NewDocumentCommand{\AXSpace}{m m m o}{\mathscr{A}_{#1}^{#2,#3}\IfNoValueTF{#4}{}{(#4)}}

\NewDocumentCommand{\ComegaSpace}{m o o}{\IfNoValueTF{#2}{\CjSpace{\omega,#1}}{
\IfNoValueTF{#3}
{\CjSpace{\omega,#1}[#2]}
{\CjSpace{\omega,#1}[#2][#3]}
} 
}

\NewDocumentCommand{\CXomegaSpace}{m m o o}{\IfNoValueTF{#3}{\CXjSpace{#1}{\omega,#2}}{
\IfNoValueTF{#4}
{\CXjSpace{#1}{\omega,#2}[#3]}
{\CXjSpace{#1}{\omega,#2}[#3][#4]}
} 
}

\NewDocumentCommand{\ANorm}{m m m o}{\IfNoValueTF{#4}{\Norm{#1}[ \ASpace{#2}{#3} ]}{ \Norm{#1}[ \ASpace{#2}{#3}[#4] ] }}

\NewDocumentCommand{\AXNorm}{m m m m o}{\IfNoValueTF{#5}{\Norm{#1}[ \AXSpace{#2}{#3}{#4} ]}{ \Norm{#1}[ \AXSpace{#2}{#3}{#4}[#5] ] }}
\NewDocumentCommand{\BAXNorm}{m m m m o}{\IfNoValueTF{#5}{\BNorm{#1}[ \AXSpace{#2}{#3}{#4} ]}{ \BNorm{#1}[ \AXSpace{#2}{#3}{#4}[#5] ] }}

\NewDocumentCommand{\ComegaNorm}{m m o o}{\IfNoValueTF{#3}{ \Norm{#1}[\ComegaSpace{#2}] }
{ 
\IfNoValueTF{#4}
{\Norm{#1}[\ComegaSpace{#2}[#3]] }
{\Norm{#1}[\ComegaSpace{#2}[#3][#4]] }
}
}

\NewDocumentCommand{\BComegaNorm}{m m o o}{\IfNoValueTF{#3}{ \BNorm{#1}[\ComegaSpace{#2}] }
{ 
\IfNoValueTF{#4}
{\BNorm{#1}[\ComegaSpace{#2}[#3]] }
{\BNorm{#1}[\ComegaSpace{#2}[#3][#4]] }
}
}

\NewDocumentCommand{\CXomegaNorm}{m m m o o}{\IfNoValueTF{#4}{ \Norm{#1}[\CXomegaSpace{#2}{#3}] }
{ 
\IfNoValueTF{#5}
{\Norm{#1}[\CXomegaSpace{#2}{#3}[#4]] }
{\Norm{#1}[\CXomegaSpace{#2}{#3}[#4][#5]] }
}
}

\NewDocumentCommand{\BCXomegaNorm}{m m m o o}{\IfNoValueTF{#4}{ \BNorm{#1}[\CXomegaSpace{#2}{#3}] }
{ 
\IfNoValueTF{#5}
{\BNorm{#1}[\CXomegaSpace{#2}{#3}[#4]] }
{\BNorm{#1}[\CXomegaSpace{#2}{#3}[#4][#5]] }
}
}

\NewDocumentCommand{\HSpace}{m m o o}{\IfNoValueTF{#3}{C^{#1,#2}}{ \IfNoValueTF{#4} {C^{#1,#2}(#3)} {C^{#1,#2}(#3;#4)} }}

\NewDocumentCommand{\HXSpace}{m m m o}{\IfNoValueTF{#4}{C_{#1}^{#2,#3}}{  {C_{#1}^{#2,#3}(#4)}  }}

\NewDocumentCommand{\ZygSpace}{m o o}{\IfNoValueTF{#2}{\ZygSymb^{#1}}{ \IfNoValueTF{#3} { \ZygSymb^{#1}(#2) }{\ZygSymb^{#1}(#2;#3) } } }

\NewDocumentCommand{\ZygXSpace}{m m o}{\IfNoValueTF{#3}{\ZygSymb^{#2}_{#1}}{\ZygSymb^{#2}_{#1}(#3) }}

\NewDocumentCommand{\Norm}{m o}{\IfNoValueTF{#2}{\| #1\|}{\|#1\|_{#2} }}
\NewDocumentCommand{\BNorm}{m o}{\IfNoValueTF{#2}{\left\| #1\right\|}{\left\|#1\right\|_{#2} }}

\NewDocumentCommand{\CjNorm}{m m o o}{ \IfNoValueTF{#3}{ \Norm{#1}[\CjSpace{#2}]} { \IfNoValueTF{#4}{\Norm{#1}[\CjSpace{#2}[#3]]} {\Norm{#1}[\CjSpace{#2}[#3][#4]]}  }  }

\NewDocumentCommand{\CNorm}{m m o}{\IfNoValueTF{#3}{\Norm{#1}[\CSpace{#2}]}{\Norm{#1}[\CSpace{#2}[#3]]}}

\NewDocumentCommand{\BCNorm}{m m}{\BNorm{#1}[\CSpace{#2}]}

\NewDocumentCommand{\CXjNorm}{m m m o}{\Norm{#1}[ 
\IfNoValueTF{#4}
{\CXjSpace{#2}{#3}}
{\CXjSpace{#2}{#3}[#4]}
]}

\NewDocumentCommand{\BCXjNorm}{m m m o}{\BNorm{#1}[ 
\IfNoValueTF{#4}
{\CXjSpace{#2}{#3}}
{\CXjSpace{#2}{#3}[#4]}
]}

\NewDocumentCommand{\LpNorm}{m m o o}{
\Norm{#2}[L^{#1}
\IfNoValueTF{#3}{}{
(#3
\IfNoValueTF{#4}{}{;#4}
)
}
]
}

\NewDocumentCommand{\BanachSpace}{}{\mathscr{X}}
\NewDocumentCommand{\BanachAlgebra}{}{\mathscr{Y}}
\NewDocumentCommand{\OpsY}{}{\mathcal{O}(\BanachAlgebra)}

\NewDocumentCommand{\Compact}{}{\mathcal{K}}


\NewDocumentCommand{\BCjNorm}{m m o}{ \IfNoValueTF{#3}{ \BNorm{#1}[C^{#2}]} { \BNorm{#1}[C^{#2}(#3)]  }  }

\NewDocumentCommand{\HNorm}{m m m o o}{ \IfNoValueTF{#4}{ \Norm{#1}[\HSpace{#2}{#3}]} {
\IfNoValueTF{#5}
{\Norm{#1}[\HSpace{#2}{#3}[#4]]}
{\Norm{#1}[\HSpace{#2}{#3}[#4][#5]] }
}  }

\NewDocumentCommand{\HXNorm}{m m m m o}{ \IfNoValueTF{#5}{ \Norm{#1}[\HXSpace{#2}{#3}{#4}]} {
{\Norm{#1}[\HXSpace{#2}{#3}{#4}[#5]]}
}  }

\NewDocumentCommand{\BHXNorm}{m m m m o}{ \IfNoValueTF{#5}{ \BNorm{#1}[\HXSpace{#2}{#3}{#4}]} {
{\BNorm{#1}[\HXSpace{#2}{#3}{#4}[#5]]}
}  }

\NewDocumentCommand{\ZygNorm}{m m o o}{ \IfNoValueTF{#3}{ \Norm{#1}[\ZygSpace{#2}]} {
\IfNoValueTF{#4}
{\Norm{#1}[\ZygSpace{#2}[#3]]}  
{\Norm{#1}[\ZygSpace{#2}[#3][#4]]}
}  }

\NewDocumentCommand{\ZygXNorm}{m m m o}{\Norm{#1}[ 
\IfNoValueTF{#4}
{\ZygXSpace{#2}{#3}}
{\ZygXSpace{#2}{#3}[#4]}
]}

\NewDocumentCommand{\BZygXNorm}{m m m o}{\BNorm{#1}[ 
\IfNoValueTF{#4}
{\ZygXSpace{#2}{#3}}
{\ZygXSpace{#2}{#3}[#4]}
]}


\NewDocumentCommand{\diff}{o m}{\IfNoValueTF{#1}{\frac{\partial}{\partial #2}}{\frac{\partial^{#1}}{\partial #2^{#1}} }}

\NewDocumentCommand{\dt}{o}{\IfNoValueTF{#1}{\diff{t}}{\diff[#1]{t} }}

\NewDocumentCommand{\Zygad}{m}{\{ #1\}}

\NewDocumentCommand{\Had}{m}{\langle #1\rangle}

\NewDocumentCommand{\DiffOp}{m}{\Delta_{#1}}


\NewDocumentCommand{\SSFunctionSpacesSection}{}{Section 2}
\NewDocumentCommand{\SSStrangeZygSpace}{}{Remark 2.1}
\NewDocumentCommand{\SSBeyondManifold}{}{Section 2.2.1}
\NewDocumentCommand{\SSNormsAreInv}{}{Proposition 2.3}
\NewDocumentCommand{\SSDefineVectDeriv}{}{Remark 2.4}

\NewDocumentCommand{\SSSectionMoreOnAssumptions}{}{Section 4.1}
\NewDocumentCommand{\SSMainResult}{}{Theorem 4.7}
\NewDocumentCommand{\SSLemmaMoreOnAssump}{}{Proposition 4.14}

\NewDocumentCommand{\SSDivideWedge}{}{Section 5}
\NewDocumentCommand{\SSDerivWedge}{}{Lemma 5.1}

\NewDocumentCommand{\SSDensities}{}{Section 6}
\NewDocumentCommand{\SSDensitiesTheorem}{}{Theorem 6.5}
\NewDocumentCommand{\SSDensityCor}{}{Corollary 6.6}

\NewDocumentCommand{\SSScaling}{}{Section 7}
\NewDocumentCommand{\SSNSW}{}{Section 7.1}
\NewDocumentCommand{\SSHormandersCondition}{}{Section 7.1.1}
\NewDocumentCommand{\SSGenSubR}{}{Section 7.3}
\NewDocumentCommand{\SSGenSubResult}{}{Theorem 7.6}

\NewDocumentCommand{\SSCompareFunctionSpaces}{}{Lemma 8.1}
\NewDocumentCommand{\SSZygIsAlgebra}{}{Proposition 8.3}
\NewDocumentCommand{\SSBiggerNormMap}{}{Proposition 8.6}
\NewDocumentCommand{\SSCompareEuclidNorms}{}{Proposition 8.12}

\NewDocumentCommand{\SSDeriveODE}{}{Proposition 9.1}
\NewDocumentCommand{\SSExistODE}{}{Proposition 9.4}
\NewDocumentCommand{\SSExistXiOne}{}{Lemma 9.23}
\NewDocumentCommand{\SSDifferentOneAdmis}{}{Proposition 9.26}
\NewDocumentCommand{\SSCXjNormWedgeQuotient}{}{Lemma 9.32}
\NewDocumentCommand{\SSExistXiTwo}{}{Lemma 9.35}
\NewDocumentCommand{\SSComputefjzero}{}{Lemma 9.38}
\NewDocumentCommand{\SSSectionDensities}{}{Section 9.4}

\NewDocumentCommand{\SSProofInjectiveImmersion}{}{Appendix A}
\NewDocumentCommand{\SSFinerTopology}{}{Lemma A.1}


\newtheorem{thm}{Theorem}[section]
\newtheorem{cor}[thm]{Corollary}
\newtheorem{prop}[thm]{Proposition}
\newtheorem{lemma}[thm]{Lemma}
\newtheorem{conj}[thm]{Conjecture}
\newtheorem{prob}[thm]{Problem}

\theoremstyle{remark}
\newtheorem{rmk}[thm]{Remark}

\theoremstyle{definition}
\newtheorem{defn}[thm]{Definition}

\theoremstyle{definition}
\newtheorem{assumption}[thm]{Assumption}

\theoremstyle{remark}
\newtheorem{example}[thm]{Example}

\numberwithin{equation}{section}

\title{Coordinates Adapted to Vector Fields III: Real Analyticity}
\author{Brian Street\footnote{This material is partially based upon work supported by the National Science Foundation under Grant No.\ 1440140, while the author was in residence at the Mathematical Sciences Research Institute in Berkeley, California, during the spring semester of 2017.  The author was also partially supported by National Science Foundation Grant Nos.\ 1401671 and 1764265.}}
\date{}

\maketitle

\begin{abstract}
Given a finite collection of $C^1$ vector fields on a $C^2$ manifold which span the tangent space at every point, we consider the question of when there is locally a coordinate system in which these vector fields are real analytic.  We give necessary and sufficient, coordinate-free conditions for the existence of such a coordinate system.  Moreover, we present a quantitative study of these coordinate charts.  This is the third part in a three-part series of papers.  The first part, joint with Stovall, lay the groundwork for the coordinate system we use in this paper and showed how such coordinate charts can be viewed as scaling maps for sub-Riemannian geometry.  The second part dealt with the analogous questions with real analytic replaced by $C^\infty$ and Zygmund spaces.

\end{abstract}


\section{Introduction}
Let $X_1,\ldots, X_q$ be $C^1$ vector fields on a $C^2$ manifold $M$, which span the tangent space at every point of $M$.
In this paper, we investigate the following three closely related questions:
\begin{enumerate}[(i)]
\item\label{Item::Intro::LocalQual} When is there a coordinate system near a fixed point $x_0\in M$ such that the vector fields $X_1,\ldots, X_q$
are real analytic in this coordinate system?
\item\label{Item::Intro::GlobalQual} When is there a real analytic manifold structure on $M$, compatible with its $C^2$ structure, such that $X_1,\ldots, X_q$
are real analytic with respect to this structure?  When such a structure exists, we will see it is unique.
\item\label{Item::Intro::Quant} When there is a coordinate system as in \cref{Item::Intro::LocalQual}, how can we pick it so that $X_1,\ldots, X_q$
are ``normalized'' in this coordinate system in a quantitative way which is useful for applying techniques from analysis?
\end{enumerate}
We present necessary and sufficient conditions for \cref{Item::Intro::LocalQual,Item::Intro::GlobalQual}, and under these conditions give a quantitative
answer to \cref{Item::Intro::Quant}.  This is the third part in a three part series of papers.  In the first two parts \cite{StovallStreet,StreetII}, the same questions
were investigated where ``real analytic'' was replaced by Zygmund spaces.

The first paper in the series \cite{StovallStreet}, joint with Stovall, was based on methods from ODEs, while the second paper \cite{StreetII} sharpened the results from the first paper using
methods from PDEs.  In this paper, we take the results from the first paper as a starting point, and use additional methods from ODEs
to answer the above questions.  Thus, this paper does not use any methods from PDEs.

The coordinate charts from \cref{Item::Intro::Quant} can be viewed as scaling maps in sub-Riemannian geometry.
When viewed in this light, these results can be seen as a continuation of results initiated by Nagel, Stein, and Wainger \cite{NagelSteinWaingerBallsAndMetrics}
and C.\ Fefferman and S\'anchez-Calle \cite{FeffermanSanchezCalleFundamentalSoltuions}, and furthered by Tao and Wright
\cite{TaoWrightLpImproving} and the author \cite{S}.
See \cref{Section::Role::RA,Section::Scaling} for a description of this.

This paper is a continuation of the first part of the series \cite{StovallStreet}.  That paper contains several applications and motivations for the types of results described in this paper.
It also contains a more leisurely introduction to some of the definitions and results in this paper, though we include all the necessary definitions so that the statement of the results is self-contained.

	\subsection{The role of real analyticity}\label{Section::Role::RA}
Two important ways the main results of this paper can be used are:
\begin{itemize}
\item They give necessary and sufficient, coordinate free, conditions on a collection of $C^1$ vector fields, which span the tangent space, for there to exist a coordinate system in which these vector fields are real analytic.
\item They give scaling maps adapted to real analytic sub-Riemannian geometries, which are useful for questions from harmonic analysis.
\end{itemize}
The first way seems to be new.  The second way has a long history and similar results have been used in several areas of harmonic analysis.  We now turn to describing some of this; see also \cref{Section::Scaling::BeyondHormander}.

Real analytic vector fields have important applications in several types of questions from harmonic analysis.
Since the original work of H\"ormander \cite{HormanderHypoellipticSecondOrderDiffEq}, $\CjSpace{\infty}$ vector fields satisfying H\"ormader's condition\footnote{A finite collection of vector fields satisfies ``H\"ormander's condition'' if the Lie algebra generated by the vector fields spans the tangent space at every point.}
have played a central role in several areas.
Nagel, Stein, and Wainger \cite{NagelSteinWaingerBallsAndMetrics} developed a quantitative theory of the sub-Riemannian geometries induced by H\"ormander vector fields.  In particular, they introduced scaling maps
adapted to H\"ormander vector fields which allowed the use of many techniques from harmonic analysis to be generalized to the setting of sub-Riemannian manifolds.  These ideas have been used in many different ways, including applications
to partial differential equations defined by vector fields and singular Radon transforms.  
See the notes at the end of Chapter 2 of \cite{StreetMultiParamSingInt} for a history of some of these ideas.

A finite collection of real analytic vector fields does not necessarily satisfy H\"ormander's condition; however, it does satisfy a generalization of this condition:  the $C^\infty$ module generated by the vector fields and their commutators of all orders is locally finitely generated (as a $C^\infty$ module).
This was first noted by Lobry \cite{LobryControlabiliteDesSystems} and is a simple consequence of the Weierstrass preparation theorem (see \cref{Section::ScalingRevis}).  Because of this, it is possible to generalize the quantitative theory of Nagel, Stein, and Wainger to a setting  which applies to any finite collection
of real analytic vector fields, whether or not they satisfy H\"ormander's condition.  The techniques required for this generalization use ideas of Tao and Wright \cite{TaoWrightLpImproving}
and the author \cite{S}.  In the context of the quantitative theory of sub-Riemannian geometry applied to questions in analysis, this seems to have been first explicitly used by the author and Stein \cite{SteinStreetIII} to study singular Radon transforms.

Thus, real analytic vector fields hold a special place: the quantitative scaling techniques  used to study H\"ormander vector fields can often be applied to real analytic vector fields, whether or not they satisfy H\"ormander's condition.
For many such applications, the scaling maps developed in \cite{S} are sufficient; however, in the context of real analytic vector fields, the theory from that paper has several deficiencies which are fixed in this paper.  One major deficiency is that if one starts with real analytic
vector fields, the scaling theorems from \cite{S} only guarantee quantitive bounds on the $C^m$ norms of the rescaled vector fields and no estimates on their real analyticity.  
Thus, when one applies the results from \cite{S} to a real analytic setting, the real analyticity is destroyed.
In this paper, we show the rescaled vector fields are real analytic and give appropriate quantitative control
of this fact.  This is described in \cref{Section::Scaling::BeyondHormander}.

\begin{rmk}\label{Rmk::Intro::BadAtSingular}
The results from this paper are useful even when considering some very classical settings.  Indeed, suppose $V_1,\ldots, V_r$ are real analytic vector fields on an open set $\Omega\subseteq \R^n$.  The classical
Frobenius theorem applies to foliate $\Omega$ into leaves; each leaf is a real analytic, injectively immersed sub-manifold (see \cite{HermannOnTheAccessibilityProblemInControlTheory,NaganoLienarDifferentialSystemsWithSingularities,LobryControlabiliteDesSystems,SussmanOrbitsOfFamiliesOfVectorFieldsAndIntegrabilityOfDistributions}).
This may be a singular foliation:  the various leaves may have different dimension.  Near a point where the dimension changes (a ``singular point''), classical proofs do not give good quantitative control on the real analytic coordinate systems which define the leaves:  classical proofs ``blow up'' near a singular point.
Our methods give useful, uniform quantitative control near such a singular point and avoid this blow up, in a certain sense.  See \cref{Rmk::BeyondHor::UsefulForSingular}.
\end{rmk}

\noindent \textbf{Outline of the paper:}  In \cref{Section::FuncSpace}, we present the function spaces we use.  There are two types of function spaces: the standard ones on Euclidean space described in \cref{Section::FuncSpace::Euclid},
and analogs on a $C^2$ manifold endowed with a finite collection of $C^1$ vector fields described in \cref{Section::FuncSpace::Mfld}.
In \cref{Section::Results} we describe the main results of this paper, starting with the qualitative results in \cref{Section::Results::Qual}  (i.e., \cref{Item::Intro::LocalQual} and \cref{Item::Intro::GlobalQual}
from the beginning of the introduction), and then turning to the quantitative results in \cref{Section::Results::Quant} (i.e., \cref{Item::Intro::Quant}). 
In \cref{Section::FuncSpaceRev} we further study the function spaces introduced in \cref{Section::FuncSpace}.
In \cref{Section::Scaling} we describe how the quantitative results can be seen as scaling maps in sub-Riemannian type geometries.
In \cref{Section::Part1} we describe the results we use from \cite{StovallStreet}.  In \cref{Section::Proofs} we prove the main results of this paper.
In \cref{Section::DensitiesInEuclidean} we describe the special case of some of our results when working on Euclidean space with Lebesgue measure (which is the most common application).
Finally, in \cref{Section::ScalingRevis} we prove the results concerning scaling from \cref{Section::Scaling}.

\section{Function Spaces}\label{Section::FuncSpace}
Before we can state our main results, we need to introduce the function spaces we use.
As described in \cite{StovallStreet}, we make a distinction between function spaces on subsets of $\R^n$
and function spaces on a $C^2$ manifold $M$.
On $\R^n$ we use the standard coordinate system to define the usual function spaces.
On an abstract $C^2$ manifold $M$, we do not have access to any one natural coordinate system
and so it does not make sense to discuss, for example, real analytic functions on $M$.
However, if $M$ is endowed with $C^1$ vector fields $X_1,\ldots, X_q$, we are able to define
what it means to be real analytic with respect to these vector fields, and that is how we shall proceed.
The notion of a function being real analytic with respect to a finite collection of vector fields is a special case
of a general notion due to Nelson \cite{NelsonAnalyticVectors}. 
Throughout the paper, $B^n(\delta)$ denotes the ball of radius $\delta>0$, centered at $0$, in $\R^n$.

\subsection{Function Spaces on Euclidean Space}\label{Section::FuncSpace::Euclid}
Let $\Omega\subseteq \R^n$ be an open set.  
We have the usual Banach space of bounded, continuous functions on $\Omega$:
\begin{equation*}
\CSpace{\Omega}
:=\{f:\Omega\rightarrow \C \: |\: f\text{ is continuous and bounded}\}, \quad \CNorm{f}{\Omega}
:=\sup_{x\in \Omega}|f(x)|.
\end{equation*}
%
We next define two closely related spaces of real analytic functions on $\R^n$.  
For $r>0$ let $B^n(r)$ be the ball of radius $r$ in $\R^n$, centered at $0$.
We define $\ASpace{n}{r}$ to be the space of those $f\in \CSpace{B^n(r)}$ such that
$f(t)=\sum_{\alpha\in \N^n} \frac{c_\alpha}{\alpha!} t^{\alpha}$, $\forall t\in B^n(r)$, where
\begin{equation*}
\ANorm{f}{n}{r}:=\sum_{\alpha\in \N^n} \frac{|c_{\alpha}|}{\alpha!} r^{|\alpha|}<\infty.
\end{equation*}
For $\Omega\subseteq \R^n$ open, we let $f\in \ComegaSpace{r}[\Omega]$ consist of those $f\in \CjSpace{\infty}[\Omega]$
such that 
\begin{equation}\label{Eqn::FuncEuclid::ComegaNorm}
\ComegaNorm{f}{r}[\Omega]:=\sum_{\alpha\in \N^n} \frac{ \CNorm{\partial_x^{\alpha}f}{\Omega} }{\alpha!} r^{|\alpha|}<\infty.
\end{equation}
For the relationship between $\ASpace{n}{r}$ and $\ComegaSpace{r}[\Omega]$ see \cref{Lemma::FiuncSpaceRev::Euclid::Compare}.
We set 
$$\CjSpace{\omega}[\Omega]:=\bigcup_{r>0} \ComegaSpace{r}[\Omega].$$
We say $f\in \CjSpaceloc{\omega}[\Omega]$ if for all $x\in \Omega$, there exists an open neighborhood $U\subseteq \Omega$ of $x$, with $f\big|_U\in \CjSpace{\omega}[U]$.
It is easy to see that $\CjSpaceloc{\omega}[\Omega]$ is the usual space of real analytic functions on $\Omega$.

If $\BanachSpace$ is a Banach space, we define the same spaces taking values in $\BanachSpace$ in the obvious way, and denote these
spaces by $\CSpace{\Omega}[\BanachSpace]$,
 $\ASpace{n}{r}[\BanachSpace]$,  $\ComegaSpace{r}[\Omega][\BanachSpace]$,
and $\CjSpace{\omega}[\Omega][\BanachSpace]$.
When we have a vector field $X$ on $\Omega$, we identify $X=\sum_{j=1}^n a_j(x)\diff{x_j}$ with the function $(a_1,\ldots, a_n):\Omega\rightarrow \R^n$.
It therefore makes sense to consider quantities like $\ComegaNorm{X}{r}[\Omega][\R^n]$.

\subsection{Function Spaces on Manifolds}\label{Section::FuncSpace::Mfld}
Let $X_1,\ldots, X_q$ be $C^1$ vector fields on a connected $C^2$ manifold $M$.
Define the Carnot-Carath\'eodory ball
associated to $X_1,\ldots, X_q$, centered at $x\in M$, of radius $\delta>0$ by
\begin{equation}\label{Eqn::FuncManifold::DefnBX}
\begin{split}
B_X(x,\delta):=
\Bigg\{ y\in M \: \bigg| \: &\exists \gamma:[0,1]\rightarrow M, \gamma(0)=x, \gamma(1)=y, \gamma'(t)=\sum_{j=1}^q a_j(t) \delta X_j(\gamma(t)),
\\& a_j\in L^\infty([0,1]), \BNorm{\sum_{j=1}^q |a_j|^2}[L^\infty]<1\Bigg\},
\end{split}
\end{equation}
and for $y\in M$ set
\begin{equation}\label{Eqn::FuncManifold::Defnrho}
\rho(x,y)=\inf\{\delta>0 : y\in B_X(x,\delta)\}.
\end{equation}
$\rho$ is called a sub-Riemannian distance.  See \cref{Rmk::FuncMfld::DefineDeriv} for the precise definition of $\gamma'(t)$ used in \cref{Eqn::FuncManifold::DefnBX}.

We use ordered multi-index notation $X^{\alpha}$.  Here $\alpha$ denotes a list of elements of $\{1,\ldots q\}$ and $|\alpha|$
denotes the length of the list.  For example, $X^{(2,1,3,1)}=X_2X_1X_3X_1$ and $|(2,1,3,1)|=4$.

We have the usual Banach space of bounded continuous functions on $M$:
\begin{equation*}
\CSpace{M}
:=\{f:M\rightarrow \C\: |\: f\text{ is continuous and bounded}\}, \quad
\CNorm{f}{M}
:=\sup_{x\in M}|f(x)|.
\end{equation*}
%
Next, we introduce what it means to be real analytic with respect to $X_1,\ldots, X_q$.  Following the setting in $\R^n$, we introduce two versions of this.
Given $x_0\in M$ and $r>0$ we define $\AXSpace{X}{x_0}{r}$ to be the space of those $f\in \CSpace{M}$ such that
\begin{equation*}
h(t_1,\ldots, t_q):=g(e^{t_1 X_1+\cdots +t_q X_q}x_0)\in \ASpace{q}{r},
\end{equation*}
here we are assuming $e^{t_1 X_1+\cdots +t_qX_q}x_0$ exists for $(t_1,\ldots, t_q)\in B^q(r)$ (see \cref{Defn::QuantRes::sC}).
We define $\AXNorm{f}{X}{x_0}{r}:=\ANorm{h}{q}{r}$.  Note that $\AXNorm{f}{X}{x_0}{r}$ depends only on the values of $f(y)$
where $y=e^{t_1X_1+\cdots+t_qX_q}x_0$ and $(t_1,\ldots, t_q)\in B^q(r)$; thus this is merely a semi-norm.

For $r>0$ we define $\CXomegaSpace{X}{r}[M]$ to be the space of those 
$f\in \CSpace{M}$ such that $X^{\alpha} f$ exists
and is continuous for all ordered multi-indices $\alpha$ and
such that
\begin{equation}\label{Eqn::FuncMfld::CXomegaNorm}
\CXomegaNorm{f}{X}{r}[M]
:=\sum_{m=0}^\infty \frac{r^m}{m!} \sum_{|\alpha|=m} \CNorm{X^{\alpha} f}{M}<\infty.
\end{equation}
We set
\begin{equation*}
	\CXjSpace{X}{\omega}[M]:=\bigcup_{r>0} \CXomegaSpace{X}{r}[M].
\end{equation*}
The norm $\CXomegaNorm{f}{X}{r}[M]$ was originally introduced by Nelson \cite{NelsonAnalyticVectors} in greater generality.

\begin{rmk}
When we write $Vf$ for a $C^1$ vector field $V$ and $f:M\rightarrow \R$, we define this as $Vf(x):=\frac{d}{dt}\big|_{t=0} f(e^{tX} x)$.
When we say $Vf$ exists, it mean that this derivative exists in the classical sense, $\forall x$.  If we have several $C^1$ vector fields $V_1,\ldots, V_L$,
we define $V_1V_2\cdots V_L f:= V_1(V_2(\cdots V_L (f)))$ and to say that this exists means that at each stage the derivatives exist.
\end{rmk}

Note that if $\grad$ denotes the list of vector fields $\grad=\left(\diff{x_1},\ldots, \diff{x_n}\right)$ and $\Omega\subseteq \R^n$ is open, we have
\begin{equation*}
\AXSpace{\grad}{0}{r}=\ASpace{n}{r}\text{ and } \CXomegaSpace{\grad}{r}[\Omega]=\ComegaSpace{r}[\Omega],
\end{equation*}
with equality of norms.\footnote{Notice that \cref{Eqn::FuncEuclid::ComegaNorm} uses regular multi-indicies, while \cref{Eqn::FuncMfld::CXomegaNorm} uses ordered
multi-indicies.  Once this is taken into account, proving the two norms are equal when $X=\grad$ is straightforward.}
For more details on these spaces, see \cref{Section::FuncSpaceRev}.

Throughout the paper if we claim $\CXomegaNorm{f}{X}{r}[M]<\infty$ it means $f\in \CXomegaSpace{X}{r}[M]$, and similarly for any other function space.
We refer the reader to \cite{StovallStreet} for a more detailed discussion of the above definitions.

An important property of the above spaces and norms is that they are invariant under diffeomorphisms.

\begin{prop}\label{Prop::FuncMfld::DiffeoInv}
Let $N$ be another $C^2$ manifold,  let $\Phi:M\rightarrow N$ be a $C^2$ diffeomorphism, and let $\Phi_{*}X$ denote the list
of vector fields $\Phi_{*}X_1,\ldots, \Phi_{*} X_q$.  Then the map $f\mapsto f\circ\Phi$ is an isometric isomorphism between
the following spaces:
$\CSpace{N}\rightarrow \CSpace{M}$,
$\CXomegaSpace{\Phi_{*}X}{r}[N]\rightarrow \CXomegaSpace{X}{r}[M]$,
and $\AXSpace{\Phi_{*}X}{\Phi(x_0)}{r}\rightarrow \AXSpace{X}{x_0}{r}$.
\end{prop}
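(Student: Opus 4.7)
The plan is to reduce all three claims to a single intertwining identity: for any $C^1$ vector field $V$ on $M$ and any function $g : N \to \C$ such that $(\Phi_*V)(g)$ exists (in the pointwise sense of the Remark), the chain rule gives
\begin{equation*}
(\Phi_*V)(g) \circ \Phi \;=\; V(g \circ \Phi).
\end{equation*}
The cleanest derivation is via flows: for a $C^1$ vector field $V$, the flow $e^{tV}$ exists, and a direct computation (or the fact that $\Phi$ is a $C^2$ diffeomorphism) shows that the flow of $\Phi_*V$ is $\Phi\circ e^{tV}\circ \Phi^{-1}$. Differentiating $g(\Phi(e^{tV}\Phi^{-1}(y)))$ at $t=0$ yields the identity. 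A short induction on $L$ then gives, for any ordered multi-index $\alpha = (j_1,\ldots, j_L)$,
\begin{equation*}
(\Phi_*X)^{\alpha}(f) \circ \Phi \;=\; X^{\alpha}(f \circ \Phi),
\end{equation*}
and the existence of the left-hand side at every point of $N$ is equivalent to the existence of the right-hand side at every point of $M$.

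For the first map $\CSpace{N}\to\CSpace{M}$, since $\Phi$ is a bijection, $\sup_{y\in N}|f(y)| = \sup_{x\in M}|f(\Phi(x))|$, so the claim is immediate. For the $\CXomegaSpace{\Phi_*X}{r}[N]\to\CXomegaSpace{X}{r}[M]$ statement, apply the previous identity: the sup of $|(\Phi_*X)^{\alpha}f|$ over $N$ equals the sup of $|X^{\alpha}(f\circ\Phi)|$ over $M$, and summing the corresponding series in \cref{Eqn::FuncMfld::CXomegaNorm} for $X$ and for $\Phi_*X$ shows term-by-term that $\CXomegaNorm{f\circ\Phi}{X}{r}[M] = \CXomegaNorm{f}{\Phi_*X}{r}[N]$. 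Bijectivity at the function level is clear from the composition $f\mapsto f\circ\Phi$ having inverse $g\mapsto g\circ\Phi^{-1}$, which is a map of the same form with $\Phi^{-1}$ in place of $\Phi$.

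For the map $\AXSpace{\Phi_*X}{\Phi(x_0)}{r}\to\AXSpace{X}{x_0}{r}$, the key observation is that $\Phi$ intertwines the exponential map of the combination $t_1X_1+\cdots+t_qX_q$: for each fixed $t\in B^q(r)$,
\begin{equation*}
\Phi\bigl(e^{t_1X_1+\cdots+t_qX_q}x_0\bigr) \;=\; e^{t_1(\Phi_*X_1)+\cdots+t_q(\Phi_*X_q)}\Phi(x_0),
\end{equation*}
which again follows from the general flow-pushforward identity applied to the vector field $\sum_j t_j X_j$. Therefore, the auxiliary function $h(t)=f(e^{\sum t_j (\Phi_*X_j)}\Phi(x_0))$ appearing in the definition of $\AXSpace{\Phi_*X}{\Phi(x_0)}{r}$ coincides, term by term in its power series, with the corresponding auxiliary function $\widetilde h(t)=(f\circ\Phi)(e^{\sum t_j X_j}x_0)$ appearing in the definition of $\AXSpace{X}{x_0}{r}$. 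Consequently $\AXNorm{f\circ\Phi}{X}{x_0}{r}=\ANorm{\widetilde h}{q}{r}=\ANorm{h}{q}{r}=\AXNorm{f}{\Phi_*X}{\Phi(x_0)}{r}$.

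The main technical point to handle with care is the low regularity: $\Phi$ is only $C^2$ and $X_j$ only $C^1$, so $\Phi_*X_j$ is genuinely only $C^1$, and we cannot freely commute derivatives or assume intermediate functions $X^{\beta}f$ are $C^1$. This is manageable because the pointwise definition of $V_1\cdots V_L f$ in the Remark asks only that each successive derivative exist at every point; the chain-rule identity $(\Phi_*V)g\circ\Phi = V(g\circ\Phi)$ is a pointwise statement that requires no extra regularity, and the induction propagates the existence (and continuity) of the derivatives in lockstep on both sides. Thus the only real content is the flow-intertwining identity; once established, all three isometries follow by unwinding definitions.
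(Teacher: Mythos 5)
Your proposal is correct and is exactly what the paper means by ``immediate from the definitions'': the paper's proof is that one-liner, and the intertwining identity $\Phi\circ e^{tV}\circ\Phi^{-1}=e^{t\Phi_{*}V}$, together with the pointwise chain-rule consequence $(\Phi_{*}V)g\circ\Phi=V(g\circ\Phi)$, is precisely the computation that unpacks it. Your remarks on the low-regularity bookkeeping (only pointwise existence is needed at each stage of the induction) are sound and show the needed care, but do not change the nature of the argument.
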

\begin{proof}
This is immediate from the definitions.
\end{proof}

\begin{rmk}\label{Rmk::FuncMfld::DefineDeriv}
In \cref{Eqn::FuncManifold::DefnBX} (and in the rest of the paper), $\gamma'(t)$ is defined as follows.  In the case that $M$ is an open subset $\Omega\subseteq \R^n$
and $\gamma:[a,b]\rightarrow \Omega$, $\gamma'(t)=\sum_{j=1}^q a_j(t) X_j(\gamma(t))$ is defined to mean
$\gamma(t)= \gamma(a)+\int_a^t \sum_j a_j(s) X_j(\gamma(s))\: ds$; note that this definition is local in $t$ (equivalently, we are requiring that $\gamma$ be absolutely continuous
and have the desired derivative almost everywhere).  For an abstract $C^2$ manifold, this is interpreted locally.
I.e., if $\gamma:[a,b]\rightarrow M$, we say $\gamma'(t)= \sum_{j=1}^q a_j(t) X_j(\gamma(t))$ if $\forall t_0\in [a,b]$, there is an open neighborhood $N$ of $\gamma(t_0)$
and a $C^2$ diffeomorphism $\Psi:N\rightarrow \Omega$, where $\Omega\subseteq \R^n$ is open, such that $(\Psi\circ \gamma)'(t) = \sum_{j=1}^q a_j(t) (\Psi_{*}X_j)(\Psi\circ \gamma(t))$
for $t$ near $t_0$ ($t\in [a,b]$). 
\end{rmk}

\section{Results}\label{Section::Results}
We present the main results of the paper.  We separate the results into the qualitative results (i.e., \cref{Item::Intro::LocalQual,Item::Intro::GlobalQual} from the introduction)
and quantitative results (i.e., \cref{Item::Intro::Quant}).  The qualitative results are a simple consequence of the quantitative results, and the quantitative results are useful for proving results in analysis (see \cref{Section::Scaling}).
	
	\subsection{Qualitative Results}\label{Section::Results::Qual}
Let $X_1,\ldots, X_q$ be $C^1$ vector fields on a $C^2$ manifold $\fM$.  For $x,y\in \fM$, let $\rho(x,y)$ denote the sub-Riemannian distance
associated to $X_1,\ldots, X_q$ on $\fM$ defined by \cref{Eqn::FuncManifold::Defnrho}.  Fix $x_0\in M$ and let 
$Z:=\{y\in \fM : \rho(x_0,y)<\infty\}$.  $\rho$ is a metric on $Z$, and we give $Z$ the topology induced by $\rho$ (this is finer\footnote{See \cite[\SSFinerTopology]{StovallStreet} for a proof that this topology is finer than the subspace topology.}
than the topology as a subspace of $\fM$ and may be strictly finer).  Let $M\subseteq Z$ be a connected open subset of $Z$
containing $x_0$.  We give $M$ the topology of a subspace of $Z$.  We begin with a classical result to set the stage.

\begin{prop}\label{Prop::QualRes::InjectiveImmresion}
Suppose $[X_i,X_j]=\sum_{k=1}^q c_{i,j}^k X_k$, where $c_{i,j}^k:M\rightarrow \R$ are locally bounded.
Then, there is a $C^2$ manifold structure on 
$M$ (compatible with its topology) such that:
\begin{itemize}
\item The inclusion $M\hookrightarrow \fM$ is a $C^2$ injective immersion.
\item $X_1,\ldots, X_q$ are $C^1$ vector fields tangent to $M$.
\item $X_1,\ldots, X_q$ span the tangent space at every point of $M$.
\end{itemize}
Furthermore, this $C^2$ structure is unique in the sense that if $M$ is given another $C^2$ structure (compatible with its topology)
such that the inclusion map $M\hookrightarrow \fM$ is a $C^2$ injective immersion, then the identity map $M\rightarrow M$
is a $C^2$ diffeomorphism between these two structures.
\end{prop}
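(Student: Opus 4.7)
The plan is to construct a $C^2$ atlas on $M$ by pushing forward Euclidean balls via joint flows of $X_1,\ldots, X_q$. For each $x\in M$, let $n(x):=\dim\Span\{X_1(x),\ldots, X_q(x)\}$ and pick indices $j_1,\ldots, j_{n(x)}$ with $X_{j_1}(x),\ldots, X_{j_{n(x)}}(x)$ linearly independent in $T_x\fM$. The candidate inverse chart at $x$ is
\[ \Phi_x(t_1,\ldots,t_{n(x)}) := e^{t_1 X_{j_1}+\cdots+t_{n(x)}X_{j_{n(x)}}}x, \]
defined on a small ball in $\R^{n(x)}$. Its differential at $0$ is injective, so $\Phi_x$ should serve as the parametrization of a local patch of $M$ near $x$.

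The crucial leverage is the commutator hypothesis $[X_i,X_j]=\sum_k c_{i,j}^k X_k$. For each fixed $l$, the pullback $(e^{tX_l})_{*}^{-1}X_i$ satisfies the linear ODE
\[ \tfrac{d}{dt}(e^{tX_l})_{*}^{-1}X_i = (e^{tX_l})_{*}^{-1}[X_l,X_i] = \sum_k \bigl(c_{l,i}^k\circ e^{tX_l}\bigr)\,(e^{tX_l})_{*}^{-1}X_k, \]
whose solutions stay in $\Span\{(e^{tX_l})_{*}^{-1}X_k\}_{k=1}^{q}$. Chaining such flows---which by finiteness of $\rho(x_0,\cdot)$ on $M$ suffices to reach every point of $M$ from $x_0$---shows that $n(x)$ is a constant $n$ on $M$ and that each $\Phi_x$ maps into $M$. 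The same invariance makes the rank-$n$ distribution spanned by the $X_i$'s involutive along orbits, so a Frobenius-type result tailored to $C^1$ vector fields satisfying a bounded-coefficient involutivity hypothesis---the main content of the foundational paper \cite{StovallStreet}---shows that $\Phi_x$ is a local homeomorphism onto a $\rho$-open neighborhood of $x$ in $M$, that transition maps $\Phi_x^{-1}\circ\Phi_y$ are $C^2$, and that each $X_i$ becomes a $C^1$ tangent vector field in these charts.

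With the atlas in hand, the three bulleted properties follow by construction: the inclusion $M\hookrightarrow\fM$ is $\Phi_x$ read through the chart $\Phi_x^{-1}$, hence a $C^2$ immersion; it is injective as a set-theoretic inclusion, and continuous because $M$ carries the finer $\rho$-topology; $X_1,\ldots, X_q$ are $C^1$ tangent vector fields on $M$; and they span the tangent space by the choice of $j_1,\ldots, j_n$. For uniqueness, any $C^2$ structure on $M$ compatible with the $\rho$-topology and making the inclusion a $C^2$ injective immersion forces each $X_i$ to be $C^1$ and tangent to $M$, hence the same joint-flow maps $\Phi_x$ are $C^2$ parametrizations in either structure, and the identity $M\rightarrow M$ is a $C^2$ diffeomorphism between them. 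The main obstacle is the low regularity: the vector fields are only $C^1$, so naive compositions of their flows are at best $C^1$ and classical Frobenius is inapplicable; the commutator hypothesis---placing commutators in the module generated by $X_1,\ldots, X_q$ with only bounded coefficients---is exactly what the $C^1$-Frobenius machinery of \cite{StovallStreet} needs to upgrade the output to a genuine $C^2$ manifold structure.
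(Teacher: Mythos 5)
The paper does not give a proof of this proposition; it simply notes the result is ``standard'' and refers to \cite[\SSProofInjectiveImmersion]{StovallStreet} for details. Your outline is the standard argument: parametrize by joint flows $\Phi_x$, use the pullback identity $\tfrac{d}{dt}(e^{tX_l})_{*}^{-1}X_i = (e^{tX_l})_{*}^{-1}[X_l,X_i]$ together with the bounded structure-coefficient hypothesis to conclude (via Gr\"onwall) that the span of $X_1,\ldots,X_q$ is preserved along sub-Riemannian curves, so the rank is constant on $M$ and each $\Phi_x$ parametrizes an open patch; then invoke the $C^1$-Frobenius machinery of \cite{StovallStreet} for the low-regularity integrability and for the $C^2$ compatibility of transition maps. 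This is exactly the route the paper points to, and deferring the technical $C^1$-regularity details to \cite{StovallStreet} matches what the paper itself does, so the proposal is correct and takes essentially the same approach. One minor imprecision: you say ``chaining such flows... suffices to reach every point of $M$,'' but points of $M$ are reached by general $L^\infty$-controlled sub-Riemannian paths, not only by concatenations of individual flows; the span-preservation argument must be run along such general absolutely continuous curves, which the same linear ODE/Gr\"onwall argument handles without difficulty.
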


For a proof of \cref{Prop::QualRes::InjectiveImmresion} (which is standard), see \cite[\SSProofInjectiveImmersion]{StovallStreet}.  Henceforth, we assume the conditions of
\cref{Prop::QualRes::InjectiveImmresion} so that $M$ is a $C^2$ manifold and $X_1,\ldots, X_q$ are $C^1$ vector fields on $M$
which span the tangent space at every point.
We write $n:=\dim\Span\{X_1(x_0),\ldots, X_q(x_0)\}$ so that $\dim M=n$.

\begin{rmk}
If $X_1(x_0),\ldots, X_q(x_0)$ span $T_{x_0} \fM$, then $M$ is an open submanifold of $\fM$.  If $X_1,\ldots, X_q$
span the tangent space
at every point of $\fM$ and $\fM$ is connected, one may take $M=\fM$.
\end{rmk}

\begin{thm}[The Local Theorem]\label{Thm::QualRes::LocalThm}
The following three conditions are equivalent:
\begin{enumerate}[(i)]
%
%
\item\label{Item::QualRes::Local::Coord} There exists an open neighborhood $V\subseteq M$ of $x_0$ and a $C^2$ diffeomorphism $\Phi:U\rightarrow V$
where $U\subseteq \R^n$ is open, such that $\Phi^{*}X_1,\ldots, \Phi^{*}X_q\in \CjSpace{\omega}[U][\R^n]$.

\item\label{Item::QualRes::Local::Basis} Reorder the vector fields so that $X_1(x_0),\ldots, X_n(x_0)$ are linearly independent.
There exists an open neighborhood $V\subseteq M$ of $x_0$ such that:
\begin{itemize}
\item $[X_i,X_j]=\sum_{k=1}^n \ch_{i,j}^k X_k$, $1\leq i,j\leq n$, where $\ch_{i,j}^k\in \CXjSpace{X}{\omega}[V]$.
\item For $n+1\leq j\leq q$, $X_j=\sum_{k=1}^n b_j^k X_k$, where $b_j^k\in \CXjSpace{X}{\omega}[V]$.
\end{itemize}

\item\label{Item::QualRes::Local::Commute} There exists an open neighborhood $V\subseteq M$ of $x_0$ such that
$[X_i,X_j]=\sum_{k=1}^q c_{i,j}^k X_k$, $1\leq i,j\leq q$, where $c_{i,j}^k\in \CXjSpace{X}{\omega}[V]$.
\end{enumerate}

\end{thm}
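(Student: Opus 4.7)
The plan is to establish the cycle \cref{Item::QualRes::Local::Coord} $\Rightarrow$ \cref{Item::QualRes::Local::Commute} $\Rightarrow$ \cref{Item::QualRes::Local::Basis} $\Rightarrow$ \cref{Item::QualRes::Local::Coord}. The first two implications are structural, relying on $\CjSpace{\omega}[U]$ and $\CXjSpace{X}{\omega}[V]$ being algebras stable under the operations used, while the third and most substantive implication reduces to the quantitative main theorem (\SSMainResult) of the paper applied to a canonical exponential chart.

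For \cref{Item::QualRes::Local::Coord} $\Rightarrow$ \cref{Item::QualRes::Local::Commute}: working in the coordinates supplied by $\Phi$, each $\Phi^{*}X_j$ lies in $\CjSpaceloc{\omega}[U][\R^n]$, so each commutator $[\Phi^{*}X_i,\Phi^{*}X_j]$ is real analytic. Since $\Phi^{*}X_1,\ldots,\Phi^{*}X_q$ span $\R^n$ at every point of $U$, a real-analytic selection from the (possibly non-square) matrix of their coefficients --- e.g.\ Cramer's rule on a non-vanishing $n\times n$ minor, glued locally --- produces real-analytic $c_{i,j}^k$ on $U$ with $[X_i,X_j]=\sum_k c_{i,j}^k X_k$. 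Transferring Euclidean-coordinate real analyticity to membership in $\CXjSpace{X}{\omega}[V]$ on a smaller neighborhood is a direct estimate using that $\Phi^{*}X_1,\ldots,\Phi^{*}X_q$ are themselves real analytic (this is the Nelson-type equivalence alluded to after \eqref{Eqn::FuncMfld::CXomegaNorm}). For \cref{Item::QualRes::Local::Commute} $\Rightarrow$ \cref{Item::QualRes::Local::Basis}: after reordering, $X_1,\ldots,X_n$ form a local frame near $x_0$, so any $n\times n$ minor of their coefficient matrix in a $C^2$ chart whose determinant is nonzero at $x_0$ remains nonzero on a neighborhood; since $\CXjSpace{X}{\omega}[V]$ is closed under multiplication and inversion of units, Cramer's rule gives $X_j=\sum_{k=1}^n b_j^k X_k$ for $j>n$ with $b_j^k\in\CXjSpace{X}{\omega}[V]$, and substituting these into the relations of \cref{Item::QualRes::Local::Commute} collapses the $\sum_{k=1}^q$ into $\sum_{k=1}^n$ with $\hat c_{i,j}^k\in\CXjSpace{X}{\omega}[V]$.

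For \cref{Item::QualRes::Local::Basis} $\Rightarrow$ \cref{Item::QualRes::Local::Coord}, I would take the exponential chart
\begin{equation*}
\Phi(t_1,\ldots,t_n):=\exp\!\Bigl(\sum_{k=1}^n t_k X_k\Bigr)x_0,
\end{equation*}
which is well defined for small $t\in\R^n$ by \cref{Defn::QuantRes::sC} and is a $C^2$ local diffeomorphism at $0$ because $X_1(x_0),\ldots,X_n(x_0)$ are linearly independent. Writing $\Phi^{*}X_j=\sum_{k=1}^n a_j^k(t)\,\partial_{t_k}$, one derives an ODE in the radial variable for the coefficients $a_j^k$: differentiating $\Phi(st)$ in $s$ and commuting with $X_j$ produces a linear ODE whose inhomogeneity involves the pull-backs of the $\hat c_{i,j}^k$ and $b_j^k$. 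Under hypothesis \cref{Item::QualRes::Local::Basis}, those pull-backs are real analytic in $t$ with quantitative $\ASpace{n}{r}$-norms controlled by the $\CXjSpace{X}{\omega}[V]$-norms of the structure coefficients, so a Cauchy--Kovalevskaya/majorant argument along rays yields $a_j^k\in\ASpace{n}{r}$ for some $r>0$, i.e.\ $\Phi^{*}X_j\in\CjSpace{\omega}[B^n(r)][\R^n]$. This is precisely the content of the quantitative \SSMainResult, so the qualitative conclusion is a direct corollary.

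The principal obstacle is the final implication, and specifically propagating real analyticity through the radial ODE that governs $\Phi^{*}X_j$: the $C^2$ hypothesis on $\fM$ is too weak to do classical Cauchy--Kovalevskaya on an abstract manifold, so one must work on the Euclidean target side and extract the analyticity from the $\CXjSpace{X}{\omega}$-control of the structural coefficients via the ODE methods developed around \SSDeriveODE\ and \SSExistODE. The easy algebra of the first two implications is insulated from this difficulty; all the real work happens inside the quantitative theorem.
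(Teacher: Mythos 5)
Your cycle runs \cref{Item::QualRes::Local::Coord} $\Rightarrow$ \cref{Item::QualRes::Local::Commute} $\Rightarrow$ \cref{Item::QualRes::Local::Basis} $\Rightarrow$ \cref{Item::QualRes::Local::Coord}, whereas the paper proves \cref{Item::QualRes::Local::Coord} $\Rightarrow$ \cref{Item::QualRes::Local::Basis} $\Rightarrow$ \cref{Item::QualRes::Local::Commute} $\Rightarrow$ \cref{Item::QualRes::Local::Coord}. The orientation of the cycle matters here, and yours has a gap in the leg you call easy.

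The problem is \cref{Item::QualRes::Local::Commute} $\Rightarrow$ \cref{Item::QualRes::Local::Basis}. You argue that because $X_1,\ldots,X_n$ form a local frame, Cramer's rule on a nonvanishing $n\times n$ minor of ``their coefficient matrix in a $C^2$ chart'' yields $b_j^k\in\CXjSpace{X}{\omega}[V]$, invoking closure of $\CXjSpace{X}{\omega}[V]$ under multiplication and inversion. But the entries of that coefficient matrix are only $C^1$ functions on a $C^2$ manifold --- nothing in hypothesis \cref{Item::QualRes::Local::Commute} places them in $\CXjSpace{X}{\omega}[V]$, so the Banach-algebra closure properties simply don't apply to them. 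The objects you need to control are the wedge quotients $\bigwedge X_J / \bigwedge X_{J_0}$, and showing those lie in $\CXjSpace{X}{\omega}$ (equivalently $\AXSpace{X_{J_0}}{x_0}{\cdot}$) is genuinely hard: they satisfy a nonlinear first-order system in the flow parameter whose coefficients come from the $c_{i,j}^k$ (this is \cref{Lemma::PfQuant::DerivOfQuotient} in the paper, with analyticity then extracted by the ODE machinery of \cref{Prop::IdentRAManifold::MainProp}). In other words, extracting the $b_j^k$ from \cref{Item::QualRes::Local::Commute} is not ``easy algebra'' --- it is the same species of work you correctly identify as the heart of the final implication, just relocated and unacknowledged.

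The paper avoids this by orienting the cycle the other way. In \cref{Item::QualRes::Local::Basis} $\Rightarrow$ \cref{Item::QualRes::Local::Commute} the functions $b_j^k$ are \emph{given} as data, so substituting $X_j=\sum_k b_j^k X_k$ into the commutator relations genuinely is algebra plus \cref{Lemma::FuncSpaceRev::DerivComega} and the Banach-algebra property \cref{Lemma::FuncSpaceRev::Algebra}. The hard implication is then \cref{Item::QualRes::Local::Commute} $\Rightarrow$ \cref{Item::QualRes::Local::Coord}, handled by \cref{Thm::QuantRes::MainThm}, whose hypothesis is precisely the \cref{Item::QualRes::Local::Commute}-style commutator assumption; the wedge-quotient ODE argument lives inside the proof of that theorem (as \cref{Lemma::PfQuantLD::DefnBts} and friends), where it belongs. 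Your \cref{Item::QualRes::Local::Coord} $\Rightarrow$ \cref{Item::QualRes::Local::Commute} step is fine (it mirrors \cref{Lemma::PfQual::ShowCommRa} via \cref{Prop::NelsonTheorem2}), and your \cref{Item::QualRes::Local::Basis} $\Rightarrow$ \cref{Item::QualRes::Local::Coord} step is also fine if you restrict the quantitative theorem to the $n=q$ case on $X_1,\ldots,X_n$ and then pull back $X_j$, $j>n$, through the given $b_j^k$. So the fix is simply to reverse the direction between \cref{Item::QualRes::Local::Basis} and \cref{Item::QualRes::Local::Commute}.
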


\begin{thm}[The Global Theorem]\label{Thm::QualRes::GlobalThm}
The following two conditions are equivalent:
\begin{enumerate}[(i)]
\item\label{Item::QualRes::Global::Atlas} There is a real analytic atlas on $M$, compatible with its $C^2$ structure, such that $X_1,\ldots, X_q$ are real analytic with respect to this atlas.
\item\label{Item::QualRes::Global::Conds} For each $x_0\in M$, any of the three equivalent conditions from \cref{Thm::QualRes::LocalThm} hold for this choice of $x_0$.
\end{enumerate}
Furthermore, under these conditions, the real analytic manifold structure on $M$ induced by the atlas in \cref{Item::QualRes::Global::Atlas} is unique, in the sense that if there is another
real analytic atlas on $M$, compatible with its $C^2$ structure and such that $X_1,\ldots, X_q$ are real analytic with respect to this second atlas, then the identity map $M\rightarrow M$
is a real analytic diffeomorphism between these two real analytic structures on $M$.
\end{thm}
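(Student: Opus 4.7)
The plan is to dispatch the direction \cref{Item::QualRes::Global::Atlas}$\Rightarrow$\cref{Item::QualRes::Global::Conds} in a single line and to spend the effort on the converse, whose entire content is showing that the local charts produced by \cref{Thm::QualRes::LocalThm} have real-analytic transition maps. The uniqueness statement will fall out of the same argument.

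The direction \cref{Item::QualRes::Global::Atlas}$\Rightarrow$\cref{Item::QualRes::Global::Conds} is immediate: at any $x_0 \in M$, a chart from the given real-analytic atlas is a $C^2$ diffeomorphism under which the pullbacks $\Phi^{*} X_j$ are real analytic, which is exactly condition \cref{Item::QualRes::Local::Coord} of \cref{Thm::QualRes::LocalThm}.

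For the converse, apply \cref{Thm::QualRes::LocalThm} at each $x_0 \in M$ to produce a collection of $C^2$ diffeomorphisms $\Phi_{x_0}:U_{x_0}\to V_{x_0}$, with $U_{x_0}\subseteq\R^n$ open, whose images cover $M$ and which satisfy $\Phi_{x_0}^{*} X_j \in \CjSpace{\omega}[U_{x_0}][\R^n]$. The only task is to show that for any two such charts $\Phi_1, \Phi_2$ with $V_1 \cap V_2 \ne \emptyset$, the transition $\Phi_2^{-1}\circ \Phi_1$ is real analytic; granted this, $\{(V_{x_0}, \Phi_{x_0}^{-1})\}$ forms a real-analytic atlas, automatically compatible with the ambient $C^2$ structure since its charts are $C^2$ diffeomorphisms. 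Fix $t_0 \in \Phi_1^{-1}(V_1 \cap V_2)$ and set $p:=\Phi_1(t_0)$, $s_0:=\Phi_2^{-1}(p)$. Since $\Phi_1^{*}X_1,\ldots,\Phi_1^{*}X_q$ span $\R^n$ at $t_0$, reindex so that $\Phi_1^{*}X_1(t_0),\ldots,\Phi_1^{*}X_n(t_0)$ are linearly independent; then $\Phi_2^{*}X_1(s_0),\ldots,\Phi_2^{*}X_n(s_0)$ are also linearly independent, as $\Phi_2^{-1}\circ\Phi_1$ is a $C^2$ diffeomorphism carrying the first frame to the second. Define
\[
F_1(u) := e^{u_1\Phi_1^{*}X_1+\cdots+u_n\Phi_1^{*}X_n}(t_0), \qquad F_2(u) := e^{u_1\Phi_2^{*}X_1+\cdots+u_n\Phi_2^{*}X_n}(s_0),
\]
for $u$ in a small neighborhood of $0\in\R^n$. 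Real-analyticity of $F_1$ and $F_2$ in $u$ is the classical real-analytic dependence of ODE solutions on initial time and parameters when the right-hand side is jointly real analytic, while invertibility of $dF_i(0)$ follows from the linear-independence observations above, so each $F_i$ is a local real-analytic diffeomorphism near $0$. Naturality of flows under the $C^2$ diffeomorphism $\Phi_2^{-1}\circ\Phi_1$, which pushes $\Phi_1^{*}X_j$ forward to $\Phi_2^{*}X_j$, yields $(\Phi_2^{-1}\circ\Phi_1)\circ F_1 = F_2$ near $0$, hence $\Phi_2^{-1}\circ\Phi_1 = F_2 \circ F_1^{-1}$ near $t_0$, a composition of real-analytic maps, and therefore real analytic.

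For the uniqueness statement, take $\Phi_1$ from the first atlas and $\Phi_2$ from the second and run the identical argument: $\Phi_2^{-1}\circ\Phi_1$ is real analytic, which is precisely what it means for the identity map to be a real-analytic diffeomorphism between the two real-analytic structures. The only nontrivial ingredient is the classical real-analytic dependence of ODE solutions on initial data and parameters with a real-analytic right-hand side; the rest is routine naturality of flows together with the inverse function theorem.
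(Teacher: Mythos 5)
Your proof is correct, and the overall architecture—dispatch \cref{Item::QualRes::Global::Atlas}$\Rightarrow$\cref{Item::QualRes::Global::Conds} trivially, reduce the converse and the uniqueness claim to real-analyticity of the chart-to-chart maps, and prove that—matches the paper. What differs is how you establish real-analyticity of the transition map $\Psi=\Phi_2^{-1}\circ\Phi_1$. The paper isolates this as \cref{Lemma::PfQual::ShowDiffeoRA}: picking a real-analytic coordinate system near the source point, it expresses $\partial/\partial x_j$ in the real-analytic spanning frame $X_1,\ldots,X_n$, obtains a first-order quasilinear PDE $\partial_{x_j}\Psi = \sum_l a_{j,l}(x)\,F_{j,l}(\Psi(x))$, and then feeds this into \cref{Prop::PfRealAl::IdentifyRA::Basic}, which reduces the PDE to an ODE in a scaling variable $\epsilon$. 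You instead factor $\Psi = F_2\circ F_1^{-1}$, where $F_i(u)=e^{u_1 Y^{(i)}_1+\cdots+u_n Y^{(i)}_n}(\cdot)$ are the exponential coordinates of the two pulled-back frames; real-analyticity of $F_1,F_2$ comes straight from classical real-analytic parameter dependence of ODE flows, invertibility of $dF_i(0)$ from linear independence of the frame, and the factorization from naturality of flows under the $C^2$ diffeomorphism $\Psi$ (which intertwines $\Phi_1^*X_j$ with $\Phi_2^*X_j$). Both arguments rest on the same kernel—an ODE with real-analytic right-hand side has real-analytic flow—but yours bypasses writing a PDE for $\Psi$ and the $\epsilon$-rescaling device, trading them for the flow-conjugation identity. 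Your route is arguably a bit more self-contained at the qualitative level; the paper's route has the advantage of being built on the same \cref{Prop::PfRealAl::IdentifyRA::Basic} machinery that it quantifies elsewhere (\cref{Prop::PfIdentRAEuclid::MainProp}, \cref{Prop::IdentRAManifold::MainProp}), so the lemma does double duty. Either way the argument is sound.
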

	
	\subsection{Quantitative Results}\label{Section::Results::Quant}
\Cref{Thm::QualRes::LocalThm} gives necessary and sufficient conditions for a certain type of coordinate chart to exist.
For applications in analysis, it is essential to have quantitative control of this coordinate chart.  By using this quantitative control,
these charts can be seen as generalized scaling maps in sub-Riemannian geometry--see \cref{Section::Scaling} and \cite[\SSScaling]{StovallStreet} for more details.  We now turn to these quantitative results, which are the heart of this paper.

Let $X_1,\ldots, X_q$ be $C^1$ vector fields on a $C^2$ manifold $\fM$.

\begin{defn}\label{Defn::QuantRes::sC}
For $x\in \fM$, $\eta>0$, and $U\subseteq\fM$, we say the list $X=X_1,\ldots, X_q$ satisfies $\sC(x_0,\eta,U)$ if for every $a\in B^q(\eta)$ the expression
\begin{equation*}
e^{a_1 X_1+\cdots + a_q X_q} x_0
\end{equation*}
exists in $U$.  More precisely, consider the differential equation
\begin{equation*}
\diff{r} E(r)=a_1 X_1(E(r))+\cdots+a_q X_q(E(r)), \quad E(0)=x_0.
\end{equation*}
We assume that a solution to this differential equation exists up to $r=1$, $E:[0,1]\rightarrow U$.  We have
$E(r) = e^{ra_1 X_1+\cdots+ ra_q X_q}x_0$.
\end{defn}

For $1\leq n\leq q$, we let
\begin{equation*}
\sI(n,q):=\{(i_1,i_2,\ldots, i_n) : i_j\in \{1,\ldots, q\}\}=\{1,\ldots, q\}^n.
\end{equation*}
For $J=(j_1,\ldots, j_n)\in \sI(n,q)$ we write $X_J$ for the list of vector fields $X_{j_1},\ldots, X_{j_n}$.  We write
$\bigwedge X_J:=X_{j_1}\wedge X_{j_2}\wedge \cdots \wedge X_{j_n}$.

Fix $x_0\in \fM$, $\xi>0$, $\zeta\in (0,1]$, and set $n=\dim\Span\{X_1(x_0),\ldots, X_q(x_0)\}$.  We assume for $1\leq j,k\leq q$,
\begin{equation*}
[X_j,X_k]=\sum_{l=1}^q c_{j,k}^l X_l, \quad c_{j,k}^l\in \CSpace{B_X(x_0,\xi)},
\end{equation*}
where $B_X(x_0,\xi)$ is defined via \cref{Eqn::FuncManifold::DefnBX} and is given the metric topology induced by $\rho$ from \cref{Eqn::FuncManifold::Defnrho}.
\Cref{Prop::QualRes::InjectiveImmresion} applies to show that $B_X(x_0,\xi)$ is an $n$-dimensional, $C^2$, injectively immersed
submanifold of $\fM$.  $X_1,\ldots, X_q$ are $C^1$ vector fields on $B_X(x_0,\xi)$ and span the tangent space at every point.
Henceforth, we treat $X_1,\ldots, X_q$ as vector fields on $B_X(x_0,\xi)$.

Let $J_0\in \sI(n,q)$ be such that $\bigwedge X_{J_0}(x_0)\ne 0$ and moreover
\begin{equation}\label{Eqn::QuantRes::DefnJ0}
\max_{J\in \sI(n,q)} \left|\frac{\bigwedge X_J(x_0)}{\bigwedge X_{J_0}(x_0)}\right|\leq \zeta^{-1},
\end{equation}
where $\frac{\bigwedge X_J(x_0)}{\bigwedge X_{J_0}(x_0)}$ is defined as follows.  Let $\lambda:\bigwedge^n T_{x_0} B_X(x_0,\xi)\rightarrow \R$ be any nonzero linear functional;
then
\begin{equation}\label{Eqn::QuantRes::QuotientWedge}
\frac{\bigwedge X_J(x_0)}{\bigwedge X_{J_0}(x_0)}:=\frac{\lambda(\bigwedge X_J(x_0))}{\lambda(\bigwedge X_{J_0}(x_0))}.
\end{equation}
Because $\bigwedge^n T_{x_0} B_X(x_0,\xi)$ is one dimensional, \cref{Eqn::QuantRes::QuotientWedge} is independent of the choice of $\lambda$;
see \cite[\SSDivideWedge]{StovallStreet} for more details.
Notice that a $J_0\in \sI(n,q)$ satisfying \cref{Eqn::QuantRes::DefnJ0} always exists--one can pick $J_0$ so that \cref{Eqn::QuantRes::DefnJ0} holds with $\zeta=1$; however
it is important for some applications\footnote{For example, it will be essential that we may take $\zeta<1$ in an upcoming work on similar questions with complex vector fields \cite{StreetNN}.} that we have the flexibility to choose $\zeta<1$.
Without loss of generality, reorder $X_1,\ldots, X_q$ so that $J_0=(1,\ldots, n)$.

\begin{itemize}
\item Let $\eta>0$ be such that $X_{J_0}$ satisfies $\sC(x_0,\eta,\fM)$.
\item Let $\delta_0>0$ be such that for $\delta\in (0,\delta_0]$ the following holds:  if $z\in B_{X_{J_0}}(x_0,\xi)$ is such that $X_{J_0}$
satisfies $\sC(z,\delta, B_{X_{J_0}}(x_0,\xi))$ and if $t\in B^n(\delta)$ is such that $e^{t_1 X_1+\cdots+t_n X_n}z=z$ and if $X_1(z),\ldots, X_n(z)$
are linearly independent, then $t=0$.
\end{itemize}

\begin{rmk}
Because $X_1,\ldots, X_n$ are $C^1$, such an $\eta>0$ and $\delta_0>0$ always exist; see \cref{Lemma::PfQual::Existsetadelta,Rmk::PfQual::Existsetadelta}.  However, in general one can
only guarantee that $\eta$, $\delta_0$ are bounded below in terms of the $C^1$ norms of $X_1,\ldots, X_n$ in some coordinate system--and this is not a diffeomorphic
invariant quantity.  Thus, we state our results in terms of $\delta_0$ and $\eta$ to preserve the diffeomorphic invariance.
See \cite[\SSSectionMoreOnAssumptions]{StovallStreet} for a further discussion on $\eta$ and $\delta_0$.
\end{rmk}

\noindent\textbf{Key Assumption:}  We assume $c_{j,k}^l\in \AXSpace{X_{J_0}}{x_0}{\eta}$, $1\leq j,k,l\leq q$.

\begin{rmk}\label{Rmk::QuantRest::CanUseComegaNormsInstead}
The assumption $c_{j,k}^l\in \AXSpace{X_{J_0}}{x_0}{\eta}$ can be replaced with the stronger assumption\footnote{A priori,
$B_{X_{J_0}}(x_0,\xi)$ is not necessarily a manifold.  Nevertheless, $\CXomegaSpace{X_{J_0}}{\eta}[B_{X_{J_0}}(x_0,\xi)]$ can be defined with the same formulas.  
For further details on this, see
\cite[\SSBeyondManifold]{StovallStreet}.} $c_{j,k}^l\in \CXomegaSpace{X_{J_0}}{\eta}[B_{X_{J_0}}(x_0,\xi)]$.
Indeed, \cref{Lemma::FuncSpaceRev::Mfld::ABigger} shows $\CXomegaSpace{X_{J_0}}{\eta}[B_{X_{J_0}}(x_0,\xi)]\subseteq \AXSpace{X_{J_0}}{x_0}{\min\{\xi,\eta\}}$.
\end{rmk}

\begin{defn}
We say $C$ is a $0$-admissible constant if $C$ can be chosen to depend only on upper bounds for $q$, $\zeta^{-1}$, $\xi^{-1}$,
and $\CNorm{c_{j,k}^l}{B_{X_{J_0}}(x_0,\xi)}$, $1\leq j,k,l\leq q$.
\end{defn}

\begin{defn}\label{Defn::QuantRes::AdmissibleConst}
We say $C$ is an admissible constant if $C$ can be chosen to depend on anything a $0$-admissible constant can depend on,
and can also depend on upper bounds for $\eta^{-1}$, $\delta_0^{-1}$, 
and $\AXNorm{c_{j,k}^l}{X_{J_0}}{x_0}{\eta}$ ($1\leq j,k,l\leq q$).
\end{defn}

We write $A\lesssim_0 B$ for $A\leq C B$ where $C$ is a positive $0$-admissible constant, and write $A\approx_0 B$ for $A\lesssim_0 B$ and $B\lesssim_0 A$.
We write $A\lesssim B$ for $A\leq C B$ where $C$ is a positive admissible constant, and write $A\approx B$ for $A\lesssim B$ and $B\lesssim A$.

For $t\in B^n(\eta)$ set
\begin{equation}\label{Eqn::QuantRes::DefnPhi}
\Phi(t) = e^{t_1 X_1+\cdots +t_n X_n} x_0.
\end{equation}
Let $\eta_0:=\min\{\eta,\xi\}$ so that $\Phi:B^n(\eta_0)\rightarrow B_{X_{J_0}}(x_0,\xi)\subseteq B_X(x_0,\xi)$.

\begin{thm}[The Quantitative Theorem]\label{Thm::QuantRes::MainThm}
Fix $x_0\in \fM$ and let $\xi$, $\zeta$, $n$, $J_0$, $\eta$, and $\delta_0$ be as above, and suppose the Key Assumption is satisfied. 
Then, there exists a $0$-admissible constant $\chi\in (0,\xi]$ such that:
\begin{enumerate}[label=(\alph*),series=maintheoremenumeration]
\item\label{Item::QuantRes::LI} $\forall y\in B_{X_{J_0}}(x_0,\chi)$, $\bigwedge X_{J_0}(y)\ne 0$.
\item\label{Item::QuantRes::BestBasis} $\forall y\in B_{X_{J_0}}(x_0,\chi)$,
\begin{equation*}
\sup_{J\in \sI(n,q)} \left| \frac{\bigwedge X_J(y)}{\bigwedge X_{J_0}(y)} \right|\approx_0 1.
\end{equation*}
\item\label{Item::QuantRes::chiSubMfld} $\forall \chi'\in (0,\chi]$, $B_{X_{J_0}}(x_0,\chi')$ is an open subset of $B_X(x_0,\xi)$ and is therefore a submanifold.
\end{enumerate}
There exist admissible constants $\eta_1,\xi_1,\xi_2>0$ such that:
\begin{enumerate}[resume*=maintheoremenumeration]
\item\label{Item::QuantRes::PhiOpen} $\Phi(B^n(\eta_1))$ is an open subset of $B_{X_{J_0}}(x_0,\chi)$ and is therefore a submanifold of $B_X(x_0,\xi)$.
\item\label{Item::QuantRes::PhiDiffeo} $\Phi:B^n(\eta_1)\rightarrow \Phi(B^n(\eta_1))$ is a $C^2$ diffeomorphism.
\item\label{Item::QuantRes::xi1xi2} $B_X(x_0,\xi_2)\subseteq B_{X_{J_0}}(x_0,\xi_1)\subseteq \Phi(B^n(\eta_1))\subseteq B_{X_{J_0}}(x_0,\chi)\subseteq B_X(x_0,\xi)$.
\end{enumerate}
Let $Y_j =\Phi^{*} X_j$ and write $Y_{J_0}=(I+A) \grad$, where $Y_{J_0}$ denotes the column vector of vector fields $Y_{J_0}=[Y_1,Y_2,\ldots, Y_n]^{\transpose}$,
$\grad$ denotes the gradient in $\R^n$ thought of as a column vector, and $A\in \CSpace{B^n(\eta_1)}[\M^{n\times n}]$.\footnote{Here, and in the rest of the paper,
$\M^{n\times n}$ denotes the Banach space of $n\times n$ real matrices endowed with the usual operator norm.}
\begin{enumerate}[resume*=maintheoremenumeration]
\item\label{Item::QuantRes::BoundA} $A(0)=0$ and $A\in \ASpace{n}{\eta_1}[\M^{n\times n}]$ with $\ANorm{A}{n}{\eta_1}[\M^{n\times n}]\leq \frac{1}{2}$.
\item\label{Item::QuantRes::BoundY} For $1\leq j\leq q$, $Y_j \in \ASpace{n}{\eta_1}[\R^n]$ and $\ANorm{Y_j}{n}{\eta_1}[\R^n]\lesssim 1$.
\end{enumerate}
\end{thm}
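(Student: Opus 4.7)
My plan is to split the statement into the structural claims (a)--(f), which are essentially the content of the main quantitative theorem of the first paper in the series \cite{StovallStreet}, and the real-analytic estimates (g)--(h), which constitute the content specific to this paper. Under the assumptions on $\xi, \zeta, \eta, \delta_0$ together with only the $C^0$ boundedness of the structure constants $c_{j,k}^l$, we are precisely in the hypotheses of that theorem; invoking it produces admissible constants $\chi, \eta_1, \xi_1, \xi_2$ and yields (a), (b), (c), (d), (e), (f) directly, along with a continuous matrix $A$ satisfying $A(0)=0$ and $\Norm{A}[\CSpace{B^n(\eta_1)}[\M^{n\times n}]] \le 1/2$. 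Only the real-analytic refinement of (g) and the analyticity half of (h) remain to be established.

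For the analytic estimates, I would first derive, as in \SSDeriveODE, a matrix ODE for $A$ by tracking the scaled family
\[
\Phi_s(t) := e^{s(t_1 X_1 + \cdots + t_n X_n)} x_0 = \Phi(st), \qquad s \in [0,1],\; t \in B^n(\eta_1).
\]
Setting $Y_j^{(s)} := \Phi_s^{*} X_j$ and writing $Y_{J_0}^{(s)} = (I + A_s)\grad$, one derives from the structure equation $[X_i, X_j] = \sum_l c_{i,j}^l X_l$ a matrix ODE of the schematic form
\[
\partial_s A_s = F\bigl(t, A_s, C_s(t)\bigr), \qquad A_0 \equiv 0,
\]
where $C_s(t) := \bigl(c_{i,j}^l(\Phi_s(t))\bigr)$ and $F$ is polynomial in $A_s$ and linear in $C_s$. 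The components of $Y_j^{(s)}$ for $n+1 \le j \le q$ satisfy a parallel inhomogeneous ODE, whose coefficients also involve the wedge-ratio quantities $\tfrac{\bigwedge X_J(\Phi_s)}{\bigwedge X_{J_0}(\Phi_s)}$ that, by (b), are admissibly bounded.

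The Key Assumption $c_{j,k}^l \in \AXSpace{X_{J_0}}{x_0}{\eta}$ says exactly that $c_{j,k}^l \circ \Phi \in \ASpace{n}{\eta}$ with admissible norm, so after rescaling $t \mapsto st$, the driving term $C_s$ sits in $\ASpace{n}{\eta}[\M^{n \times n}]$ uniformly in $s \in [0,1]$ with an admissible bound. Applying \SSExistODE, which is an existence/uniqueness result for ODEs with coefficients in the Banach algebra $\ASpace{n}{r}$, produces a solution $A_s \in \ASpace{n}{\eta_1}[\M^{n \times n}]$ on some admissibly small $\eta_1$; shrinking $\eta_1$ once more (still admissibly) to absorb constants arranges $\ANorm{A_1}{n}{\eta_1}[\M^{n\times n}] \le 1/2$, which is (g). For (h), the cases $j \in J_0$ follow directly from $Y_{J_0} = (I+A)\grad$ and the Banach algebra property of $\ASpace{n}{\eta_1}$; the cases $j \notin J_0$ are handled by expressing $Y_j$ as a linear combination of $Y_1, \ldots, Y_n$ via Cramer's rule applied to the wedge ratios $\tfrac{\bigwedge X_J \circ \Phi}{\bigwedge X_{J_0} \circ \Phi}$, which by (b) and the same analytic machinery lie in $\ASpace{n}{\eta_1}$ with admissible norm, and then multiplying against $(I + A) \in \ASpace{n}{\eta_1}[\M^{n\times n}]$ yields the required $\ASpace{n}{\eta_1}[\R^n]$ bound.

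The delicate point is Part 3: proving \SSExistODE with a radius of analyticity $\eta_1$ that remains admissible, despite the fact that composition with the flow $\Phi$, Banach-algebra products, and inversion of $I + A$ all conspire to shrink the radius. Tracking these losses quantitatively so that the Picard iteration for the matrix ODE converges in $\ASpace{n}{\eta_1}$ on a radius depending only on admissible data is the technical crux, and is precisely why the Key Assumption is phrased using the flow-based semi-norm $\AXSpace{X_{J_0}}{x_0}{\eta}$ rather than any intrinsic manifold norm.
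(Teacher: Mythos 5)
Your high-level architecture matches the paper: invoke Part I for the structural claims (a)--(f) and the ODE for $A$, then solve that ODE in the real-analytic Banach algebra $\ASpace{n}{\eta_1}[\M^{n\times n}]$ to obtain (g)--(h). Two issues, though. First, \SSExistODE{} of \cite{StovallStreet} is the Hölder/Zygmund version; it does not produce solutions in $\ASpace{n}{r}$. The paper has to prove a new analogue (\cref{Prop::PartODE::MainProp}), running the contraction argument from scratch in the analytic norm using that $\ASpace{n}{\eta_1}$ is a Banach algebra together with the specific scaling lemmas $\ANorm{A(s\cdot)}{n}{\eta_1}\leq s\ANorm{A}{n}{\eta_1}$ and $\ANorm{C(s\cdot)}{n}{\eta_1}\leq \eta_1 s D$. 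You correctly identify this as the crux, but you cite it as if already available, and your schematic $\partial_s A_s=F(\cdot)$ loses the crucial form $\diff{r}\,r A(r\theta)=\cdots$ from \SSDeriveODE, whose $r$-factor is what makes the integral operator $\sT(A)(r\theta)=\frac{1}{r}\int_0^r(\cdots)\,ds$ contractive on the ball of radius $1/2$.

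Second, and more fundamentally, your proposal does not correctly order the reduction from $q>n$ to $q=n$. The ODE for $A$ only makes sense once the commutators $[X_i,X_j]$ for $i,j\leq n$ are expressed as $\sum_{k=1}^n\ch_{i,j}^k X_k$ with $\ch_{i,j}^k$ in the flow-based analytic class; the given $c_{i,j}^l$ involve all $l\leq q$ and cannot feed the $A$-ODE directly. Obtaining the reduced $\ch_{i,j}^k$ requires first showing the wedge ratios $\bigwedge X_J/\bigwedge X_{J_0}$ lie in $\AXSpace{X_{J_0}}{x_0}{\eta'}$ (not merely $C^0$, which is all (b) gives). That analyticity is itself a separate ODE argument -- the wedge ratios satisfy $X_j\frac{\bigwedge X_J}{\bigwedge X_{J_0}}=\sum_K g_{j,J}^K\frac{\bigwedge X_K}{\bigwedge X_{J_0}}-\sum_K g_{j,J_0}^K\frac{\bigwedge X_J}{\bigwedge X_{J_0}}\frac{\bigwedge X_K}{\bigwedge X_{J_0}}$, and one applies the manifold analytic-identification result (\cref{Prop::IdentRAManifold::MainProp}). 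You invoke the wedge-ratio analyticity only to handle $j\notin J_0$ at the end, but in fact it must precede the $A$-ODE when $q>n$. With these two corrections your proof outline coincides with the paper's.
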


\begin{rmk}
The main results of this paper (including \cref{Thm::QuantRes::MainThm}) are invariant under arbitrary $C^2$ diffeomorphisms.  
This is true \textit{quantitatively}--all of the estimates are unchanged when pushed forward under an arbitrary $C^2$ diffeomorphism; this is a consequence of \cref{Prop::FuncMfld::DiffeoInv}.
See \cite{StovallStreet} for more details.
\end{rmk}

		\subsubsection{Densities}\label{Section::Results::Desnities}\label{Section::Results::Densitites}
As in \cite{StovallStreet,StreetII}, we describe how to study densities in the coordinate system given by \cref{Thm::QuantRes::MainThm}.
We refer the reader to \cref{Section::Scaling} and \cite[\SSScaling]{StovallStreet} for a further discussion of how these estimates can be used.

We take the same setting as in \cref{Thm::QuantRes::MainThm}.
Let $\chi\in (0,\xi]$ be as in that theorem and let $\nu$ be a $C^1$ density on $B_{X_{J_0}}(x_0,\chi)$.  Suppose
\begin{equation*}
\Lie{X_j} \nu =f_j \nu, \quad 1\leq j\leq n, \quad f_j\in \CSpace{B_{X_{J_0}}(x_0,\chi)},
\end{equation*}
where $\Lie{X_j}$ denotes the Lie derivative with respect to $X_j$.  We refer the reader to \cite{GuilleminNotes} for a quick and easy to read introduction on the basics
of densities (see also \cite{NicolaescuLecturesOnTheGeometryOfManifolds} where densities are called $1$-densities).
We assume that there exists $r>0$ such that $f_j\in \AXSpace{X_{J_0}}{x_0}{r}$.

\begin{defn}
We say $C$ is a $0;\nu$-admissible constant if $C$ is a $0$-admissible constant which is also allowed to depend on upper bounds for 
$\CNorm{f_j}{B_{X_{J_0}}(x_0,\chi)}$, $1\leq j\leq n$.
\end{defn}

\begin{defn}
We say $C$ is a  $\nu$-admissible constant, if $C$ is an admissible constant which is also allowed to depend on upper bounds for $r^{-1}$ and
$\AXNorm{f_j}{X_{J_0}}{x_0}{r}$, $1\leq j\leq n$.
\end{defn}

We write $A\lesssim_{0;\nu} B$ for $A\leq CB$ where $C$ is a $0;\nu$-admissible constant, and write $A\approx_{0;\nu}B$ for
$A\lesssim_{0;\nu} B$ and $B\lesssim_{0;\nu} A$.  We similarly define $\lesssim_\nu$ and $\approx_\nu$.

\begin{thm}\label{Thm::Density::MainThm}
Define $h\in \CjSpace{1}[B^n(\eta_1)]$ by $\Phi^{*}\nu=h\LebDensity$, where $\LebDensity$ denotes the Lebesgue density on $\R^n$.
Then,
\begin{enumerate}[(a)]
\item\label{Item::Density::HConst} $h(t)\approx_{0;\nu} \nu(X_1,\ldots, X_n)(x_0)$, $\forall t\in B^n(\eta_1)$.  In particular, $h(t)$ always has the same sign, and is either never zero, or always zero.
\item\label{Item::Density::HRA} Set $s:=\min\{\eta_1,r\}$, where $\eta_1$ is as in  \cref{Thm::QuantRes::MainThm}.  Then, $h\in \ASpace{n}{s}$ and $\ANorm{h}{n}{s}\lesssim_\nu |\nu(X_1,\ldots, X_n)(x_0)|$.
\end{enumerate}
\end{thm}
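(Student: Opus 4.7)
The plan is to introduce the scalar $\mu := \nu(X_1,\ldots,X_n)$ on $B_{X_{J_0}}(x_0,\chi)$, derive a first-order ODE for it, relate $\mu\circ\Phi$ to $h$ through a change-of-basis factor, and integrate the ODE along two different paths to prove (a) and (b).

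\emph{Setup.} In a local chart write $\nu = \phi\,|dx|$ (with a choice of orientation making $\det(X_1,\ldots,X_n)>0$). The hypothesis $\Lie{X_j}\nu = f_j\nu$ reads $X_j\phi = (f_j - \mathrm{div}\,X_j)\phi$. Setting $\mu = \phi\det(X_1,\ldots,X_n)$ and combining $[X_j,X_k]=\sum_l c_{j,k}^l X_l$ with Jacobi's formula $X_j\det(X)=\det(X)\,\mathrm{tr}(X^{-1}X_jX)$ yields $X_j\det(X) = \det(X)(\mathrm{div}\,X_j + \sum_k c_{j,k}^k)$, and hence
\begin{equation*}
X_j\mu = G_j\mu, \qquad G_j := f_j + \sum_{k=1}^n c_{j,k}^k, \qquad j = 1,\ldots,n.
\end{equation*}
Since $Y_{J_0} = (I+A)\nabla$ gives $X_j(\Phi(t)) = \sum_k(\delta_{jk}+A_{jk}(t))\,d\Phi_t(e_k)$, the density transformation rule yields $\mu(\Phi(t)) = |\det(I+A(t))|\,h(t)$. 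At $t=0$, $A(0)=0$ and $d\Phi_0(e_k)=X_k(x_0)$, so $h(0) = \mu(x_0) = \nu(X_1,\ldots,X_n)(x_0)$; and on $B^n(\eta_1)$, $|\det(I+A)|\in[2^{-n},(3/2)^n]$ (from $\|A\|\leq 1/2$ and $n\leq q$), a $0$-admissible range.

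\emph{Part (a).} By the quantitative theorem, $\Phi(t)\in B_{X_{J_0}}(x_0,\chi)$, so one finds a curve $\gamma:[0,1]\to B_{X_{J_0}}(x_0,\chi)$ with $\gamma(0) = x_0$, $\gamma(1) = \Phi(t)$, $\gamma' = \chi\sum_{k=1}^n a_k(s)X_k\circ\gamma$, $\sum a_k^2 < 1$. Integrating $\frac{d}{ds}\log|\mu(\gamma)| = \chi\sum_k a_k G_k\circ\gamma$ yields
\begin{equation*}
\left|\log\frac{|\mu(\Phi(t))|}{|\mu(x_0)|}\right| \leq \chi\sqrt{n}\,\max_{k}\CNorm{G_k}{B_{X_{J_0}}(x_0,\chi)},
\end{equation*}
which is $0;\nu$-admissible since $\chi$ is $0$-admissible, $\|c_{j,k}^l\|_{C^0}$ is $0$-admissible, and $\|f_k\|_{C^0}$ is $0;\nu$-admissible. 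Combined with the pinch $|\det(I+A)|\in[2^{-n},(3/2)^n]$, this gives $h(t)\approx_{0;\nu}h(0)$. The degenerate case $\mu(x_0)=0$ is handled by uniqueness for the ODE $X_j\mu = G_j\mu$ with zero initial datum, which forces $\mu\equiv 0$ on $B_{X_{J_0}}(x_0,\chi)$ and hence $h\equiv 0$.

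\emph{Part (b).} Integrating the same ODE along $s\mapsto\Phi(st)$ instead gives
\begin{equation*}
\mu(\Phi(t)) = \mu(x_0)\exp\!\Big(\sum_{k=1}^n t_k\int_0^1 G_k(\Phi(st))\,ds\Big).
\end{equation*}
The Key Assumption and the density hypothesis give $f_k\circ\Phi\in\ASpace{n}{r}$ and $c_{j,k}^l\circ\Phi\in\ASpace{n}{\eta}$ with $\nu$-admissible and admissible norms respectively. Since $s = \min\{\eta_1,r\}\leq \min\{\eta,r\}$ (as $\eta_1\leq\eta$), the inclusions $\ASpace{n}{r}\supseteq \ASpace{n}{s}\subseteq\ASpace{n}{\eta}$ give $G_k\circ\Phi\in\ASpace{n}{s}$ with $\nu$-admissible norm. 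Using that $\ASpace{n}{s}$ is a Banach algebra, that the averaging $F\mapsto\int_0^1 F(s\,\cdot)\,ds$ is norm-nonincreasing on $\ASpace{n}{s}$, and that multiplication by $t_k$ has operator norm $s$ on $\ASpace{n}{s}$, the exponent lies in $\ASpace{n}{s}$ with $\nu$-admissible norm. The Banach-algebra exponential produces $\mu(\Phi(\cdot))/\mu(x_0)\in\ASpace{n}{s}$ with $\nu$-admissible norm. Finally, $\|A\|_{\ASpace{n}{\eta_1}[\M^{n\times n}]}\leq 1/2$ makes $\log(I+A) = -\sum_{k\geq 1}(-A)^k/k$ converge in $\ASpace{n}{\eta_1}[\M^{n\times n}]$, so $1/|\det(I+A)| = \exp(-\mathrm{tr}\log(I+A))\in\ASpace{n}{\eta_1}\subseteq\ASpace{n}{s}$ with admissible norm. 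Multiplying both factors in the Banach algebra $\ASpace{n}{s}$ yields $h\in\ASpace{n}{s}$ with $\ANorm{h}{n}{s}\lesssim_\nu |h(0)|$.

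\emph{Main obstacle.} The delicate point is making the bound in (a) genuinely $0;\nu$-admissible rather than merely admissible: this forces one to integrate $\mu$ along an optimal sub-Riemannian curve whose length is controlled by the $0$-admissible constant $\chi$, rather than along the ray $s\mapsto\Phi(st)$ whose length $|t|$ can be as large as the only-admissible $\eta_1$.
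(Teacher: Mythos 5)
Your proposal follows essentially the same strategy as the paper: factor $h$ as the product of the scalar $\mu = \nu(X_1,\ldots,X_n)$ pulled back by $\Phi$ and a $\det(I+A)^{-1}$-type factor controlled by \cref{Thm::QuantRes::MainThm}~\cref{Item::QuantRes::BoundA}, then prove (a) by integrating the first-order ODE for $\mu$ along sub-Riemannian paths of length $\lesssim \chi$ and (b) by integrating the same ODE along rays $s\mapsto\Phi(st)$ and using the Banach algebra structure of $\ASpace{n}{s}$. The paper packages $\mu$ as the quotient $g=\nu/\nu_0$ against an auxiliary density $\nu_0$ and invokes \cref{Lemma::PfDensity::Derivg} from Part~I for the $C^0$ estimate, but the substance is the same, and your Part~(b) mirrors \cref{Lemma::PfDensities::gRA} closely. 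Your treatment of $\det(I+A)^{-1}$ via $\exp(-\mathrm{tr}\log(I+A))$ is a cosmetic variant of the paper's Neumann-series inversion.

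There is, however, a genuine gap when $q>n$. The Jacobi-formula computation $X_j\det(X)=\det(X)\,\mathrm{tr}(X^{-1}X_jX)$ takes place on the $n$-dimensional manifold $B_{X_{J_0}}(x_0,\chi)$, where $X=(X_1|\cdots|X_n)$ is $n\times n$; the trace it produces is $\mathrm{div}\,X_j+\sum_{k=1}^n\ch_{j,k}^k$, where the $\ch_{j,k}^l$ are the \emph{reduced} structure constants satisfying $[X_j,X_k]=\sum_{l=1}^n\ch_{j,k}^lX_l$, not the ambient $c_{j,k}^l$ from the Key Assumption with $l$ ranging over $1,\ldots,q$. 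Concretely, since $(X^{-1}X_l)_k=\bt_l^k$ for $l>n$ (where $X_l=\sum_m\bt_l^mX_m$), the correct coefficient is
\begin{equation*}
G_j \;=\; f_j + \sum_{k=1}^n\Bigl(c_{j,k}^k + \sum_{l=n+1}^{q}c_{j,k}^l\,\bt_l^k\Bigr),
\end{equation*}
and the second sum is missing from your $G_j$. This matters for the admissibility bookkeeping you invoke: it is not the Key Assumption alone that supplies the needed $C^0$ and $\AXSpace{X_{J_0}}{x_0}{\cdot}$ control, but the non-trivial estimates on the wedge quotients $\bigwedge X_K/\bigwedge X_{J_0}$ (hence on the $\bt_l^k$), which the paper establishes in \cref{Lemma::PfQuantLd::C1Estimates,Lemma::PfQuantLD::RAEstimates,Lemma::PfQuantLD::HyposHold} via an auxiliary ODE argument of the type you set up in your Part~(b), but applied to the quotients themselves. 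Without that step your formula for $G_j$ is only correct for $q=n$, and in general your claim that the relevant $C^0$ and analytic norms are ($0$- or $\nu$-)admissible is unsupported.
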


\begin{cor}\label{Cor::Density::MainCor}
Let $\xi_2$ be as in \cref{Thm::QuantRes::MainThm}.  Then,
\begin{equation}\label{Eqn::Density::MainCor::NoAbs}
\nu(B_{X_{J_0}}(x_0,\xi_2)) \approx_{\nu} \nu(B_X(x_0,\xi_2))\approx_\nu \nu(X_1,\ldots, X_n)(x_0),
\end{equation}
and therefore,
\begin{equation}\label{Eqn::Density::MainCor::Abs}
|\nu(B_{X_{J_0}}(x_0,\xi_2))| \approx_\nu |\nu(B_X(x_0,\xi_2))| \approx_\nu |\nu(X_1,\ldots, X_n)(x_0)|\approx_0 \max_{j_1,\ldots, j_n\in \{1,\ldots, q\}} |\nu(X_{j_1},\ldots, X_{j_n})(x_0)|.
\end{equation}
\end{cor}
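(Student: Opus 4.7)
The plan is to reduce the corollary to a change-of-variables computation in the chart $\Phi$ of \cref{Thm::QuantRes::MainThm}, using the near-constancy of the pulled-back density $h$ from \cref{Thm::Density::MainThm}. First I would record the inclusions $B_{X_{J_0}}(x_0,\xi_2)\subseteq B_X(x_0,\xi_2)\subseteq \Phi(B^n(\eta_1))$: the first because $X_{J_0}$ is a sublist of $X$, the second via the inclusion chain in \cref{Thm::QuantRes::MainThm}. Consequently, for $B$ equal to either of these two balls, the change of variables formula gives
\begin{equation*}
\nu(B)=\int_{\Phi^{-1}(B)} h(t)\,dt.
\end{equation*}

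Set $c_0:=\nu(X_1,\ldots,X_n)(x_0)$. By \cref{Thm::Density::MainThm}, $h(t)\approx_{0;\nu}c_0$ on $B^n(\eta_1)$ and $h$ agrees in sign with $c_0$ pointwise (both vanish identically if $c_0=0$). Thus for any measurable $E\subseteq B^n(\eta_1)$,
\begin{equation*}
\int_E h\,dt\approx_{0;\nu} c_0\,|E|
\end{equation*}
as a signed relation. Taking $E=\Phi^{-1}(B_X(x_0,\xi_2))$ and $E=\Phi^{-1}(B_{X_{J_0}}(x_0,\xi_2))$ reduces \cref{Eqn::Density::MainCor::NoAbs} to showing $|E|\approx 1$ in each case. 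The upper bound $|E|\lesssim 1$ is immediate from $E\subseteq B^n(\eta_1)$ and admissibility of $\eta_1$. For the lower bound, I would exhibit an admissible Euclidean ball inside $\Phi^{-1}(B_{X_{J_0}}(x_0,\xi_2))$ directly from \cref{Eqn::FuncManifold::DefnBX}: for $t\in B^n(\min\{\eta_1,\xi_2\})$, the curve $\gamma(s)=e^{s(t_1X_1+\cdots+t_nX_n)}x_0$, $s\in[0,1]$, satisfies $\gamma'(s)=\sum_{j=1}^{n}(t_j/\xi_2)\,\xi_2 X_j(\gamma(s))$ with $\sum_j (t_j/\xi_2)^2<1$, placing $\Phi(t)\in B_{X_{J_0}}(x_0,\xi_2)\subseteq B_X(x_0,\xi_2)$. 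Since $\min\{\eta_1,\xi_2\}$ is admissible, both pullbacks contain $B^n(\min\{\eta_1,\xi_2\})$, giving the required lower bound.

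Combining these steps yields \cref{Eqn::Density::MainCor::NoAbs}, and the first three equivalences of \cref{Eqn::Density::MainCor::Abs} follow by taking absolute values. For the final $\approx_0$ comparison with the maximum over $J\in\sI(n,q)$, I would invoke \cref{Thm::QuantRes::MainThm} at $y=x_0$ together with the alternating multilinearity of the density $\nu$: for each $J=(j_1,\ldots,j_n)$,
\begin{equation*}
\nu(X_{j_1},\ldots,X_{j_n})(x_0)=\frac{\bigwedge X_J(x_0)}{\bigwedge X_{J_0}(x_0)}\,c_0,
\end{equation*}
and the wedge ratio is bounded by $\zeta^{-1}$ via \cref{Eqn::QuantRes::DefnJ0}, with value $1$ achieved at $J=J_0$. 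The only non-routine step is the lower bound on $|\Phi^{-1}(B)|$, which the explicit ODE witness just given resolves at once.
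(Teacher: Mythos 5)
Your proposal is correct and its overall skeleton matches the paper: reduce to the chart via $\nu(B)=\int_{\Phi^{-1}(B)}h\,dt$, use the near-constancy of $h$ from \cref{Thm::Density::MainThm}, and conclude by estimating $\LebDensity(\Phi^{-1}(B))$ from both sides.

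Where you genuinely diverge is in the lower bound $\LebDensity(\Phi^{-1}(B_{X_{J_0}}(x_0,\xi_2)))\gtrsim 1$. The paper gets this by re-invoking \cref{Thm::QuantRes::MainThm} with $\xi$ replaced by $\xi_2$ (this is \cref{Cor::PfDesnity::MakeTheBalls}), producing an admissible $\eta_2$ with $\Phi(B^n(\eta_2))\subseteq B_{X_{J_0}}(x_0,\xi_2)$. You instead exhibit an explicit control path: for $|t|<\min\{\eta_1,\xi_2\}$, the curve $\gamma(s)=\Phi(st)$ has $\gamma'(s)=\sum_j (t_j/\xi_2)\,\xi_2 X_j(\gamma(s))$ with $\sum_j(t_j/\xi_2)^2<1$, hence $\Phi(t)\in B_{X_{J_0}}(x_0,\xi_2)$ directly from \cref{Eqn::FuncManifold::DefnBX}, giving $B^n(\min\{\eta_1,\xi_2\})\subseteq\Phi^{-1}(B_{X_{J_0}}(x_0,\xi_2))$. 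This is more elementary than the paper's re-application of the main theorem and avoids introducing a second set of admissible constants $\eta_2,\xi_3,\xi_4$; the paper's route is arguably more systematic (it packages the inclusions in a form reused elsewhere), but for this corollary alone your path argument is cleaner.

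One small inaccuracy: you wrote $\nu(X_{j_1},\ldots,X_{j_n})(x_0)=\frac{\bigwedge X_J(x_0)}{\bigwedge X_{J_0}(x_0)}\,\nu(X_1,\ldots,X_n)(x_0)$, treating $\nu$ as alternating multilinear. Since $\nu$ is a density, not an $n$-form, the transformation law carries $|\det|$, so the correct identity is $\nu(X_{j_1},\ldots,X_{j_n})(x_0)=\left|\frac{\bigwedge X_J(x_0)}{\bigwedge X_{J_0}(x_0)}\right|\nu(X_1,\ldots,X_n)(x_0)$ (e.g.\ transposing two entries of $J_0$ flips the wedge ratio's sign but leaves $\nu$ unchanged). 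This does not affect the conclusion, since the final $\approx_0$ in \cref{Eqn::Density::MainCor::Abs} compares absolute values, and with the absolute value the maximum over $J$ lies in $[\,|c_0|,\,\zeta^{-1}|c_0|\,]$ as you argue; this coincides in substance with the paper's use of the auxiliary density $\nu_0$.
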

		
\section{Function Spaces Revisited}\label{Section::FuncSpaceRev}
In this section, we present the basic results we need concerning the function spaces defined in \cref{Section::FuncSpace}.
Let $M$ be a $C^2$ manifold and let $X_1,\ldots, X_q$ be $C^1$ vector fields on $M$
and let $X$ denote the list $X=X_1,\ldots, X_q$.  Let $\Omega\subseteq \R^n$ be an open set.

\begin{lemma}\label{Lemma::FuncSpaceRev::Algebra}
The spaces $\ComegaSpace{r}[\Omega]$, $\ASpace{n}{r}$, $\CXomegaSpace{X}{r}[M]$, and $\AXSpace{X}{x_0}{r}$ are Banach algebras.
In particular, if $\BanachAlgebra$ denotes any one of these spaces and if $x,y\in \BanachAlgebra$, then $\Norm{xy}[\BanachAlgebra]\leq \Norm{x}[\BanachAlgebra]\Norm{y}[\BanachAlgebra]$.
More generally, the same holds for the analogous spaces of functions taking values in a Banach algebra.
\end{lemma}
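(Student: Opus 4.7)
The plan is to reduce each of the four claims to a Leibniz-type product identity paired with a Cauchy-product summation; the Banach-algebra-valued variant is then syntactically identical, because every inequality below uses the target norm only through submultiplicativity.

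I would start with $\ASpace{n}{r}$: if $f(t) = \sum_\alpha \frac{c_\alpha}{\alpha!} t^\alpha$ and $g(t) = \sum_\beta \frac{d_\beta}{\beta!} t^\beta$, formal multiplication gives $(fg)(t) = \sum_\gamma \frac{e_\gamma}{\gamma!} t^\gamma$ with $e_\gamma = \sum_{\alpha + \beta = \gamma} \binom{\gamma}{\alpha} c_\alpha d_\beta$; dividing by $\gamma!$ and summing against $r^{|\gamma|}$ factorises into the Cauchy product $\ANorm{fg}{n}{r} \leq \ANorm{f}{n}{r} \ANorm{g}{n}{r}$. For $\ComegaSpace{r}[\Omega]$, the same calculation applies verbatim with the Leibniz rule $\partial^\gamma(fg) = \sum_{\alpha + \beta = \gamma} \binom{\gamma}{\alpha} \partial^\alpha f \cdot \partial^\beta g$ in place of the power-series identity. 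Both cases are routine.

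The main obstacle is $\CXomegaSpace{X}{r}[M]$, since $X_1, \ldots, X_q$ do not commute. I would first establish the non-commutative Leibniz rule
\begin{equation*}
X^\alpha(fg) = \sum_{S \subseteq \{1, \ldots, m\}} X^{\alpha_S} f \cdot X^{\alpha_{S^c}} g, \qquad \alpha = (j_1, \ldots, j_m),
\end{equation*}
where for $S = \{s_1 < \cdots < s_p\}$ the ordered sub-index is $\alpha_S := (j_{s_1}, \ldots, j_{s_p})$. This follows by induction on $m$: peeling off the outermost $X_{j_1}$ and applying the inductive hypothesis to $X^{(j_2, \ldots, j_m)}(fg)$ splits each summand, for $T \subseteq \{2, \ldots, m\}$, according to whether the new $X_{j_1}$ lands on the $f$-factor (contributing $S = T \cup \{1\}$) or on the $g$-factor (contributing $S = T$). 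Taking $\CSpace{M}$-norms and observing that for fixed $|S| = p$ the components $\alpha_S$ and $\alpha_{S^c}$ range independently over $\{1, \ldots, q\}^p$ and $\{1, \ldots, q\}^{m-p}$, I obtain
\begin{equation*}
\sum_{|\alpha| = m} \CNorm{X^\alpha(fg)}{M} \leq \sum_{p = 0}^m \binom{m}{p} \Bigl( \sum_{|\beta| = p} \CNorm{X^\beta f}{M} \Bigr) \Bigl( \sum_{|\gamma| = m - p} \CNorm{X^\gamma g}{M} \Bigr).
\end{equation*}
Multiplying by $r^m / m!$, using $\binom{m}{p} / m! = 1 / (p! (m - p)!)$, and summing over $m$ converts the double sum into a product, yielding $\CXomegaNorm{fg}{X}{r}[M] \leq \CXomegaNorm{f}{X}{r}[M] \CXomegaNorm{g}{X}{r}[M]$.

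Finally, for $\AXSpace{X}{x_0}{r}$ I would invoke the definition directly: the map $f \mapsto h_f$, where $h_f(t) := f(e^{t_1 X_1 + \cdots + t_q X_q} x_0)$, is by construction an isometry into $\ASpace{q}{r}$ that intertwines pointwise multiplication ($h_{fg} = h_f h_g$), so submultiplicativity is inherited from the $\ASpace{q}{r}$ case already handled. The Banach-algebra-valued statements then follow by repeating each argument with $|\cdot|$ and $\CNorm{\cdot}{M}$ replaced by the target-algebra norm and invoking $\Norm{xy} \leq \Norm{x}\Norm{y}$ in place of scalar multiplication; no step above uses scalarity in an essential way.
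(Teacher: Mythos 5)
Your proof is correct, and all four cases reduce, as in the paper, to a Leibniz identity paired with a Cauchy-product rearrangement; but for the crux case $\CXomegaSpace{X}{r}[M]$ you take a genuinely different (and more self-contained) route. The paper imports Nelson's abstract operator-semigroup formalism: it works in the free abelian semigroup $|\OpsY|$ generated by the symbols $|A|$, defines a vectorized derivation $\sA$ on $\BanachAlgebra^m$, and quotes the identity $\sA^k(xy)=\sum_{k_1+k_2=k}\binom{k}{k_1}(\sA^{k_1}x)(\sA^{k_2}y)$ from \cite{NelsonAnalyticVectors}, extracting the binomial norm inequality from it. You instead prove the non-commutative Leibniz rule $X^\alpha(fg)=\sum_{S\subseteq\{1,\dots,m\}}X^{\alpha_S}f\cdot X^{\alpha_{S^c}}g$ by an honest induction on $|\alpha|$, and then observe that for each fixed $S$ with $|S|=p$ the map $\alpha\mapsto(\alpha_S,\alpha_{S^c})$ is a bijection $\{1,\dots,q\}^m\to\{1,\dots,q\}^p\times\{1,\dots,q\}^{m-p}$, which gives the same inequality
\begin{equation*}
\sum_{|\alpha|=m}\CNorm{X^\alpha(fg)}{M}\ \leq\ \sum_{p=0}^m\binom{m}{p}\Bigl(\sum_{|\beta|=p}\CNorm{X^\beta f}{M}\Bigr)\Bigl(\sum_{|\gamma|=m-p}\CNorm{X^\gamma g}{M}\Bigr)
\end{equation*}
without any auxiliary machinery. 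Your version makes the combinatorics visible and is arguably easier to verify; the paper's version is shorter because it delegates the combinatorics to Nelson's semigroup framework, which the author needs elsewhere anyway. A second, minor difference: the paper obtains $\ComegaSpace{r}[\Omega]$ as the special case $X=\grad$ of the $\CXomegaSpace{X}{r}[M]$ result, whereas you prove it separately via the commuting Leibniz rule; both are fine, and you could also have specialized, which would have shortened the write-up. The treatment of $\ASpace{n}{r}$, $\AXSpace{X}{x_0}{r}$, and the Banach-algebra-valued variant is essentially identical in both arguments.
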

\begin{proof}
We prove only the result for functions taking values in $\C$; the same proof proves the more general result for functions taking values in a Banach algebra.

We begin with proof for $\ASpace{n}{r}$.  Suppose $f,g\in \ASpace{n}{r}$.  Then if $f(t)=\sum_{\alpha\in \N^n} \frac{c_\alpha}{\alpha!} t^{\alpha}$
and $g(t)=\sum_{\alpha\in \N^n} \frac{d_\alpha}{\alpha!} t^{\alpha}$,
we have
$f(t)g(t)= \sum_{\alpha,\beta\in \N^n} \frac{c_\alpha d_\beta}{\alpha! \beta!} t^{\alpha+\beta}$, and therefore,
\begin{equation*}
\ANorm{fg}{n}{r} \leq \sum_{\alpha,\beta\in \N^n} \frac{|c_\alpha d_\beta|}{\alpha!\beta!} r^{|\alpha|+|\beta|} \leq \left(\sum_{\alpha\in \N^n} \frac{|c_\alpha|}{\alpha!} r^{|\alpha|}\right)\left(\sum_{\beta\in \N^n} \frac{|d_\beta|}{\beta!} r^{|\beta|}\right)=\ANorm{f}{n}{r}\ANorm{g}{n}{r},
\end{equation*}
completing the proof for $\ASpace{n}{r}$.  The result for $\AXSpace{X}{x_0}{r}$ follows immediately from the result for $\ASpace{q}{r}$.

Next we consider $\CXomegaSpace{X}{r}[M]$.  For this, we need some notation from \cite{NelsonAnalyticVectors}--we refer the reader
to that reference for more detailed information on these definitions.
Let $\BanachAlgebra$ be a Banach space and let $\OpsY$ denote the set of all (bounded or unbounded) operators on $\BanachAlgebra$.
For $A\in \OpsY$, we write $|A|$ for the set consisting of $A$ alone.
Let $|\OpsY|$ be the free abelian semigroup with the set of all $|A|$, $A\in \OpsY$, as generators.  Let
$\alpha,\beta\in |\OpsY|$ so that $\alpha=|A_1|+\cdots+|A_l|$, $\beta=|B_1|+\cdots+|B_m|$ (where these are formal sums).
We define $\alpha\beta=\sum_{i=1}^l \sum_{j=1}^m |A_iB_j|\in |\OpsY|$.  

Let $A_1,\ldots, A_l\in \OpsY$, and set $\alpha=|A_1|+\cdots+|A_l|$.
For $y\in \BanachAlgebra$ in the domains of $A_1,\ldots, A_l$, we define
$\Norm{\alpha y}:=\Norm{A_1 y}[\BanachAlgebra]+\cdots+\Norm{A_l y}[\BanachAlgebra]$.
For $f=(f_1,\ldots, f_m)\in \BanachAlgebra^m$, with each $f_j$ in the domains of $A_1,\ldots, A_l$, define
$\sA f:= (A_j f_k)_{1\leq j\leq l, 1\leq k\leq m}\in \BanachAlgebra^{ml}$.  Note that
$\Norm{\alpha x} = \Norm{\sA x}[\BanachAlgebra^l]$, and more generally, $\Norm{\alpha^m x} =\Norm{\sA^m x}[\BanachAlgebra^{l^m}]$.
Here we are giving $\BanachAlgebra^m$ the norm $\Norm{f}[\BanachAlgebra^m]:=\sum_{l=1}^m \Norm{f_j}[\BanachAlgebra]$.

Now suppose $\BanachAlgebra$ is a Banach algebra, and suppose $A_1,\ldots, A_l\in \OpsY$ satisfy
$A_j (xy) = (A_j(x)) y + x(A_j(y))$ (and the domains of $A_1,\ldots, A_l$ are algebras).  For $f=(f_1,\ldots, f_{m_1})\in \BanachAlgebra^{m_1}$ and $g=(g_1,\ldots, g_{m_2})\in \BanachAlgebra^{m_2}$
set $fg=(f_j g_k)_{1\leq j\leq m_1, 1\leq k\leq m_2}\in \BanachAlgebra^{m_1m_2}$.
We then have, for $x,y\in \BanachAlgebra$ (and $x$ and $y$ in the domains of the appropriate operators),
\begin{equation*}
\sA^k (xy) = \sum_{k_1+k_2=k} \binom{k}{k_1} (\sA^{k_1} x) (\sA^{k_2} y),
\end{equation*}
and in particular
\begin{equation}\label{Eqn::FuncSpaceRev::ProductAbstract}
\Norm{\alpha^k (xy)} = \Norm{\sA^k (xy)}[\BanachAlgebra^{l^k}] \leq \sum_{k_1+k_2=k} \binom{k}{k_1}\Norm{\sA^{k_1} x}[\BanachAlgebra^{l^{k_1}}] \Norm{\sA^{k_2} y}[\BanachAlgebra^{l^{k_2}}]
=\sum_{k_1+k_2=k} \binom{k}{k_1} \Norm{\alpha^{k_1} x} \Norm{\alpha^{k_2} y}.
\end{equation}

We now specialize to the case $\BanachAlgebra=\CSpace{M}$ and $\alpha=|X_1|+|X_2|+\cdots+|X_q|$.  We have, using \cref{Eqn::FuncSpaceRev::ProductAbstract} and
the definition of $\CXomegaNorm{\cdot}{X}{r}$,
\begin{equation*}
\begin{split}
&\CXomegaNorm{fg}{X}{r}[M] = \sum_{k=0}^\infty \frac{1}{k!} \Norm{\alpha^k (fg)} r^k
\leq \sum_{k=0}^\infty \frac{1}{k!} \sum_{k_1+k_2=k} \binom{k}{k_1} \Norm{\alpha^{k_1} f} \Norm{\alpha^{k_2} g}r^{k_1} r^{k_2}
\\&=\left(\sum_{k_1=0}^{\infty} \frac{1}{k_1!} \Norm{\alpha^{k_1}f} r^{k_1}\right) \left(\sum_{k_2=0}^\infty \frac{1}{k_2!} \Norm{\alpha^{k_2} g}r^{k_2}\right)
=\CXomegaNorm{f}{X}{r}[M] \CXomegaNorm{g}{X}{r}[M].
\end{split}
\end{equation*}
This completes the proof for $\CXomegaSpace{X}{r}[M]$.  Since $\ComegaSpace{r}[\Omega]=\CXomegaSpace{\grad}{r}[\Omega]$ (with equality of norms), the result
for $\ComegaSpace{r}[\Omega]$ follows as well.
\end{proof}

The spaces $\CXomegaSpace{X}{r}[M]$, and $\AXSpace{X}{x_0}{r}$ are closely related as the next three results show.

\begin{lemma}\label{Lemma::FiuncSpaceRev::Euclid::Compare}
\begin{enumerate}[(i)]
\item\label{Item::FuncSpaceRev::Euclid::ABigger} $\ComegaSpace{r}[B^n(r)]\subseteq \ASpace{n}{r}$ and $\ANorm{f}{n}{r}\leq \ComegaNorm{f}{r}[B^n(r)]$.
\item\label{Item::FuncSpaceRev::Euclid::CBigger} $\ASpace{n}{r}\subseteq \ComegaSpace{r/2}[B^n(r/2)]$ and $\ComegaNorm{f}{r/2}[B^n(r/2)]\leq \ANorm{f}{n}{r}$.
\end{enumerate}
\end{lemma}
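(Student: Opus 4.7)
The plan for part (i) is to set $c_\alpha := \partial_x^\alpha f(0)$. The estimate $|c_\alpha| \leq \CNorm{\partial_x^\alpha f}{B^n(r)}$ is immediate, so the termwise inequality
$$\sum_{\alpha\in \N^n}\frac{|c_\alpha|}{\alpha!}r^{|\alpha|}\leq \sum_{\alpha\in \N^n}\frac{\CNorm{\partial_x^\alpha f}{B^n(r)}}{\alpha!}r^{|\alpha|}=\ComegaNorm{f}{r}[B^n(r)]$$
will give the desired norm bound once I know $f$ agrees with its Taylor series on $B^n(r)$. To establish this, I would apply the multivariable Taylor expansion with integral remainder at $0$: for each $N$ and $t\in B^n(r)$,
$$f(t)=\sum_{|\alpha|<N}\frac{c_\alpha}{\alpha!}t^\alpha + N\sum_{|\alpha|=N}\frac{t^\alpha}{\alpha!}\int_0^1 (1-s)^{N-1}\partial_x^\alpha f(st)\,ds.$$
Since $|t^\alpha|\leq r^{|\alpha|}$ for $t\in B^n(r)$, the remainder is bounded by the $N$-th block in the sum defining $\ComegaNorm{f}{r}[B^n(r)]$, which is the tail of a convergent series and hence vanishes as $N\to\infty$.

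For part (ii), write $f(t)=\sum_{\alpha}\frac{c_\alpha}{\alpha!}t^\alpha$ with $\sum_\alpha\frac{|c_\alpha|}{\alpha!}r^{|\alpha|}=\ANorm{f}{n}{r}$. The plan is to show the formally differentiated series
$$\sum_{\alpha\geq \beta}\frac{c_\alpha}{(\alpha-\beta)!}t^{\alpha-\beta}$$
converges absolutely and uniformly on $B^n(r/2)$, bounded in absolute value by $\sum_{\alpha\geq\beta}\frac{|c_\alpha|}{(\alpha-\beta)!}(r/2)^{|\alpha-\beta|}$. Standard theorems on uniform convergence of derivatives then give $f\in \CjSpace{\infty}[B^n(r/2)]$ with $\partial_x^\beta f$ equal to this series, and
$$\CNorm{\partial_x^\beta f}{B^n(r/2)}\leq \sum_{\alpha\geq \beta}\frac{|c_\alpha|}{(\alpha-\beta)!}(r/2)^{|\alpha-\beta|}.$$
Summing against $(r/2)^{|\beta|}/\beta!$, swapping the order of summation, and using the multinomial identity $\sum_{\beta\leq\alpha}\frac{\alpha!}{\beta!(\alpha-\beta)!}=2^{|\alpha|}$ yields
$$\ComegaNorm{f}{r/2}[B^n(r/2)]\leq \sum_{\alpha}\frac{|c_\alpha|}{\alpha!}(r/2)^{|\alpha|}2^{|\alpha|}=\sum_{\alpha}\frac{|c_\alpha|}{\alpha!}r^{|\alpha|}=\ANorm{f}{n}{r}.$$

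The only truly substantive step is justifying these two interchanges (derivatives with the sum in (ii), and identifying $f$ with its Taylor polynomial plus remainder in (i)); both are handled by the same absolute-convergence argument that bounds the tails by the hypothesized norms. The reason the radius must shrink from $r$ to $r/2$ in (ii) is visible in the calculation above: the sum swap produces a factor $2^{|\alpha|}$, which is exactly absorbed by the ratio $(r/r/2)^{|\alpha|}$. This is the one place where a naive attempt to get (ii) with radius $r$ would fail, and it is the main thing to get right.
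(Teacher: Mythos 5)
Your proof of part (ii) is essentially identical to the paper's: both differentiate the power series term by term, bound the sup by the absolute sum of Taylor coefficients, swap the order of summation over $\alpha$ and $\beta$, and absorb the resulting factor $2^{|\alpha|}$ via the multinomial identity $\sum_{\beta\leq\alpha}\binom{\alpha}{\beta}=2^{|\alpha|}$ and the shrinkage $r\mapsto r/2$.

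For part (i), you take a genuinely different route. The paper does not prove (i) directly; it cites \cref{Lemma::FuncSpaceRev::Mfld::ABigger}, which establishes the analogous containment $\CXomegaSpace{X}{r}[M]\subseteq \AXSpace{X}{x_0}{r}$ for arbitrary $C^1$ vector fields on a manifold (of which (i) is the special case $X=\grad$, $x_0=0$). That manifold lemma proves $f(e^{t\cdot X}x_0)=\sum_m \frac{((t\cdot X)^m f)(x_0)}{m!}$ via an auxiliary two-parameter function $g(s_1,s_2)$ satisfying $\partial_{s_1}g=\partial_{s_2}g$; it cannot appeal to a Taylor remainder formula because there is no linear structure on $M$. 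Your argument, by contrast, uses the multivariable Taylor expansion with integral remainder at $0$, bounds the $N$-th remainder by the $N$-th block of the convergent series defining $\ComegaNorm{f}{r}[B^n(r)]$, and sends $N\to\infty$; the resulting identification $c_\alpha=\partial^\alpha f(0)$ then gives the norm inequality termwise. Both are correct. Your approach is more elementary and self-contained in the Euclidean setting, where the convexity of $B^n(r)$ and the bound $|t^\alpha|\leq|t|^{|\alpha|}<r^{|\alpha|}$ are available; the paper's is the one that generalizes to the non-Euclidean version it also needs.

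One minor remark: in part (ii), as in the paper's own proof, the interchange of $\partial^\beta$ with the sum (and the identification $f\in\CjSpace{\infty}$) is stated rather than fully justified; it is the standard fact that the termwise derivative of a power series converges uniformly on compacta of the domain of convergence, which your absolute-convergence estimate on $B^n(r/2)$ supplies. This is fine, but if you wanted it airtight you would note that uniform convergence of the differentiated series plus pointwise convergence of the undifferentiated one is what licenses the swap.
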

\begin{proof}
\Cref{Item::FuncSpaceRev::Euclid::ABigger} is a special case of \cref{Lemma::FuncSpaceRev::Mfld::ABigger}, below, so we only prove \cref{Item::FuncSpaceRev::Euclid::CBigger}.
We use the identity, for multi-indices $\alpha\in \N^n$,
\begin{equation*}
\sum_{\beta\leq \alpha} \binom{\alpha}{\beta} = 2^{|\alpha|},
\end{equation*}
where the sum is taken over all $\beta\in \N^n$ with $\beta_j\leq \alpha_j$ for all $j$.

Suppose $f\in \ASpace{n}{r}$.  Then, $f(t)=\sum_{\alpha\in \N^n}\frac{c_\alpha}{\alpha!} t^{\alpha}$ with $\ANorm{f}{n}{r}=\sum_{\alpha\in \N^n} \frac{|c_\alpha|}{\alpha!} r^{|\alpha|}<\infty$.
Set $r_1=r/2$.  We have
\begin{equation*}
\begin{split}
&\ComegaNorm{f}{r_1}[B^n(r_1)] = \sum_{\beta\in \N^n} \frac{r_1^{|\beta|}}{\beta!} \CNorm{\partial_t^{\beta} f}{B^n(r_1)}
= \sum_{\beta\in \N^n} \frac{r_1^{|\beta|}}{\beta!} \sup_{t\in B^n(r_1)} \left| \sum_{\alpha\geq \beta} \frac{c_\alpha}{(\alpha-\beta)!} t^{\alpha-\beta}\right|
\\&\leq \sum_{\beta\in \N^n} \sum_{\alpha\geq \beta} \frac{r_1^{|\alpha|}}{\beta!(\alpha-\beta)!} |c_\alpha|
=\sum_{\alpha\in \N^n} \frac{r_1^{|\alpha|}}{\alpha!} |c_\alpha| \sum_{\beta\leq \alpha} \binom{\alpha}{\beta}
=\sum_{\alpha\in \N^n} \frac{2^{|\alpha|} r_1^{|\alpha|}}{\alpha!} |c_\alpha| =\ANorm{f}{n}{r},
\end{split}
\end{equation*}
completing the proof.
\end{proof}

\begin{lemma}\label{Lemma::FuncSpaceRev::Mfld::ABigger}
Suppose $X=X_1,\ldots, X_q$ satisfies $\sC(x_0,r,M)$.  Then, $\CXomegaSpace{X}{r}[M]\subseteq \AXSpace{X}{x_0}{r}$ and
$\AXNorm{f}{X}{x_0}{r}\leq \CXomegaNorm{f}{X}{r}[M]$.
\end{lemma}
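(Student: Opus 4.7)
The plan is to prove the inclusion and estimate by expanding
\begin{equation*}
h(t_1,\ldots,t_q) := f(e^{t_1 X_1 + \cdots + t_q X_q} x_0)
\end{equation*}
as an absolutely convergent power series in $t \in B^q(r)$ whose coefficients are iterated derivatives of $f$ at $x_0$. Following Nelson's idea, the key is to fix $t \in B^q(r)$, set $Y_t := \sum_j t_j X_j$, and study the one-parameter function $\phi(s) := f(e^{sY_t} x_0) = h(st)$; then $\phi(0) = f(x_0)$ and $\phi(1) = h(t)$, and the hypothesis $\sC(x_0, r, M)$ guarantees that $s \mapsto e^{sY_t}x_0$ is a well-defined $C^1$ curve on $[0,1]$, since $st \in B^q(r)$ for $s \in [0,1]$.

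I would next show by induction on $m$ that
\begin{equation*}
\phi^{(m)}(s) = (Y_t^m f)(e^{sY_t} x_0) = \sum_{|\alpha|=m} t^{[\alpha]}\,(X^\alpha f)(e^{sY_t} x_0),
\end{equation*}
where the inner sum runs over ordered multi-indices $\alpha$ of length $m$ and $t^{[\alpha]} := t_{\alpha_1}\cdots t_{\alpha_m}$. Granted this identity, Taylor's theorem with integral remainder on $[0,1]$ gives
\begin{equation*}
h(t) = \sum_{m=0}^{N-1} \frac{1}{m!} \sum_{|\alpha|=m} t^{[\alpha]}\,(X^\alpha f)(x_0) + R_N,
\end{equation*}
and the bound $|t^{[\alpha]}| \leq r^m$ (which follows from $|t_j| \leq |t| < r$) forces $|R_N| \leq \tfrac{r^N}{N!}\sum_{|\alpha|=N} \CNorm{X^\alpha f}{M}$, the $N$-th tail of the convergent series $\CXomegaNorm{f}{X}{r}[M]$, which tends to zero.

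The last step is bookkeeping: I regroup the ordered-multi-index expansion into the standard monomial basis $\{t^\beta : \beta \in \N^q\}$, observe that for each $\beta$ the Taylor coefficient $c_\beta/\beta!$ equals the sum of $(X^\alpha f)(x_0)$ over ordered $\alpha$ with $t^{[\alpha]} = t^\beta$ divided by $|\beta|!$, and apply the triangle inequality to reverse the regrouping:
\begin{equation*}
\ANorm{h}{q}{r} = \sum_\beta \frac{|c_\beta|}{\beta!} r^{|\beta|} \leq \sum_{m=0}^\infty \frac{r^m}{m!} \sum_{|\alpha|=m} |(X^\alpha f)(x_0)| \leq \CXomegaNorm{f}{X}{r}[M].
\end{equation*}
I expect the only genuine obstacle is the inductive chain-rule identity for $\phi^{(m)}$ under the minimal regularity available (the $X_j$ are only $C^1$, and each $X^\alpha f$ is assumed merely to exist and be continuous). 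This requires reading $X^\alpha f$ literally as an iterated pointwise directional derivative along $C^1$ flows and invoking the standard chain rule for a $C^0$ function along a $C^1$ flow whose Lie derivative exists and is continuous; everything else is essentially a computation with Taylor series.
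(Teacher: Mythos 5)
Your proposal is correct, but it takes a genuinely different route from the paper to the key identity
\[
h(t) = \sum_{m=0}^\infty \frac{1}{m!}\bigl((t_1X_1+\cdots+t_qX_q)^m f\bigr)(x_0).
\]
You prove this by differentiating $\phi(s) := h(st)$ to order $m$ via an induction driven by the flow identity $e^{(s+h)Y_t}x_0 = e^{hY_t}\bigl(e^{sY_t}x_0\bigr)$, and then invoking Taylor's theorem with integral remainder, estimating $|R_N| \le \frac{r^N}{N!}\sum_{|\alpha|=N}\CNorm{X^\alpha f}{M}$ by the $N$-th term of the convergent Nelson series. The paper instead introduces the two-variable function $g(s_1,s_2) = \sum_{m\geq 0}\frac{s_1^m}{m!}\bigl(Y_t^m f\bigr)(e^{s_2 Y_t}x_0)$ and observes $\partial_{s_1} g = \partial_{s_2} g$; this forces $g$ to be constant along lines $s_1+s_2 = c$, so $g(1,0)=g(0,1)$, which is exactly the identity. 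The paper's device is slicker in that it only requires a single term-by-term differentiation of a power series in $s_1$ and a single application of the chain rule in $s_2$, completely sidestepping the higher-order differentiability of $\phi$ and the Taylor remainder estimate; your argument needs the $N$-th order differentiability of $\phi$ and Taylor's theorem, but this is a routine induction. Both proofs use the same two essential facts — the flow property makes the chain rule along $s\mapsto e^{sY_t}x_0$ automatic even though the $X_j$ are only $C^1$, and the finiteness of $\CXomegaNorm{f}{X}{r}[M]$ controls everything uniformly — and your final reorganization of the ordered-multi-index series into the standard monomial basis, with the triangle inequality collapsing back to the ordered sum, matches the paper's conclusion exactly. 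Your proposal is a valid alternative proof.
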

\begin{proof}
We will show, for $f\in \CXomegaSpace{X}{r}[M]$ that
\begin{equation}\label{Eqn::FuncSpaceRev::ToShowEqualh}
f(e^{t_1 X_1+\cdots + t_q X_q}x_0)= \sum_{m=0}^\infty \frac{((t_1X_1+\cdots +t_qX_q )^m f)(x_0)}{m!}=:h(t), \quad t\in B^q(r).
\end{equation}
The result will follow since the hypothesis $f\in \CXomegaSpace{X}{r}[M]$ implies that the sum in \cref{Eqn::FuncSpaceRev::ToShowEqualh} converges absolutely
for $|t|\leq r$, and $\AXNorm{f}{X}{x_0}{r}=\ANorm{h}{q}{r}\leq \CXomegaNorm{f}{X}{r}[M]$.

Fix $t\in B^q(r)$. 
For $\delta>0$ small (depending on $t$) and for $s_1,s_2\in (-1-\delta,1+\delta)$ define
\begin{equation}\label{Eqn::FuncSpaceRev::Makeg}
g(s_1,s_2)=\sum_{m=0}^\infty \frac{s_1^m}{m!} \left( (t_1X_1+\cdots+t_qX_q)^m f\right)\left( e^{s_2 (t_1X_1+\cdots+t_qX_q)}x_0 \right).
\end{equation}
Since $X$ satisfies $\sC(x_0,r,M)$, for $s_2\in (-1-\delta,1+\delta)$ we have $e^{s_2(t_1X_1+\cdots+t_qX_q)}x_0\in M$, and therefore the sum
in \cref{Eqn::FuncSpaceRev::Makeg} converges absolutely by the hypothesis that  $f\in \CXomegaSpace{X}{r}[M]$ (here we are taking $\delta$ small, depending on $t$).  Hence
$g(s_1,s_2)$ is defined for $s_1,s_2\in (-1-\delta,1+\delta)$.

We have,
\begin{equation*}
\diff{s_1} g(s_1,s_2)= \sum_{m=0}^\infty \frac{s_1^m}{m!} \left( (t_1X_1+\cdots+t_qX_q)^{m+1} f \right)\left( e^{s_2 (t_1X_1+\cdots+t_qX_q)}x_0 \right)
= \diff{s_2} g(s_1,s_2).
\end{equation*}
We conclude $g(s,0)=g(0,s)$ for $s\in (-1-\delta,1+\delta)$.  In particular, $g(1,0)=g(0,1)$, which is exactly
\cref{Eqn::FuncSpaceRev::ToShowEqualh}, completing the proof.
\end{proof}

Unlike the Euclidean case in \cref{Lemma::FiuncSpaceRev::Euclid::Compare}, the reverse containment to \cref{Lemma::FuncSpaceRev::Mfld::ABigger}
is a more involved and requires more hypotheses.  In fact, we see it as a corollary of \cref{Thm::QuantRes::MainThm}.

\begin{cor}\label{Cor::FuncSpaceRev::HardContainment}
We take all the same hypotheses and notation as in \cref{Thm::QuantRes::MainThm} and define admissible constants as in that theorem.
Fix $r\in (0,\eta_1]$ (where $\eta_1$ is as in \cref{Thm::QuantRes::MainThm}).
Then, there is an admissible constant $s=s(r)>0$ such that $\AXSpace{X_{J_0}}{x_0}{r}\subseteq \CXomegaSpace{X}{s}[\Phi(B^n(r/2)]$.
Moreover, there is an admissible constant $C=C(r)$ such that
\begin{equation}\label{Eqn::FuncSpaceRev::HardIneq}
\CXomegaNorm{f}{X}{s}[\Phi(B^n(r/2))]\leq C \AXNorm{f}{X_{J_0}}{x_0}{r}.
\end{equation}
\end{cor}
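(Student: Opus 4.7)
The plan is to reduce the problem to $\R^n$ via $\Phi$ and then apply Cauchy's integral formula on a telescoping sequence of complex polydiscs. First, by the definition of $\AXSpace{X_{J_0}}{x_0}{r}$, the pullback $g := f \circ \Phi$ lies in $\ASpace{n}{r}$ with $\ANorm{g}{n}{r} = \AXNorm{f}{X_{J_0}}{x_0}{r}$; in particular, $g$ extends to a holomorphic function on the complex polydisc $\Delta_r := \{z \in \C^n : \max_j |z_j| < r\}$ with sup-norm at most $\ANorm{g}{n}{r}$. By \cref{Prop::FuncMfld::DiffeoInv} applied to $\Phi$, one has $\CXomegaNorm{f}{X}{s}[\Phi(B^n(r/2))] = \CXomegaNorm{g}{Y}{s}[B^n(r/2)]$ where $Y_j := \Phi^{*}X_j$, so it suffices to bound the Euclidean expression on the right.

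Next I invoke \cref{Thm::QuantRes::MainThm}\ref{Item::QuantRes::BoundY}: each $Y_j \in \ASpace{n}{\eta_1}[\R^n]$ with norm $\lesssim 1$, so writing $Y_j = \sum_{k=1}^n a_{j,k}\partial_k$, each coefficient $a_{j,k}$ extends holomorphically to $\Delta_{\eta_1} \supseteq \Delta_r$ with sup-norm bounded by an admissible constant $K$. The key step is to prove, for every ordered multi-index $\alpha$ with $|\alpha| = m$, the pointwise estimate
\begin{equation*}
\sup_{z \in \Delta_{r/2}} |Y^{\alpha} g(z)| \leq (2nKm/r)^m \ANorm{g}{n}{r}.
\end{equation*}
To show this, I set $\rho_i := r(1 - i/(2m))$, so $\rho_0 = r$, $\rho_m = r/2$, and $\rho_i - \rho_{i+1} = r/(2m)$. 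Applying the factors of $Y^\alpha$ one at a time, from the innermost out, I obtain by induction that the $i$-th partial product is holomorphic on $\Delta_{\rho_i}$ and is there bounded by $(2nKm/r)^i \ANorm{g}{n}{r}$; the inductive step uses Cauchy's inequality $\|\partial_k h\|_{\Delta_{\rho_{i+1}}} \leq (2m/r)\|h\|_{\Delta_{\rho_i}}$ together with the uniform bound $|a_{j,k}| \leq K$ on $\Delta_r$.

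Finally, Stirling's inequality $m^m \leq e^m m!$ converts the key estimate into $\CNorm{Y^{\alpha} g}{B^n(r/2)} \leq (2enK/r)^m\, m!\, \ANorm{g}{n}{r}$. Since there are $q^m$ ordered multi-indices of length $m$, summing and weighting by $s^m/m!$ gives
\begin{equation*}
\CXomegaNorm{g}{Y}{s}[B^n(r/2)] \leq \ANorm{g}{n}{r} \sum_{m \geq 0} (2enqKs/r)^m,
\end{equation*}
which converges to at most $2\ANorm{g}{n}{r}$ for $s := r/(4enqK)$, an admissible multiple of $r$. Combined with Step~1 this is precisely \cref{Eqn::FuncSpaceRev::HardIneq}, with admissible $C = 2$. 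The main technical obstacle is the bookkeeping in the key estimate: one must distribute the total shrinkage from $r$ to $r/2$ evenly across the $m$ applications of $Y$ so that the product of Cauchy losses $(2m/r)^m$ can be combined, via Stirling, with the factor $1/m!$ built into the Nelson norm.
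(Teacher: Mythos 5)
Your proof is correct, and it takes a genuinely different route from the paper's. The paper reduces to Euclidean space via $\Phi$ as you do, but then passes through the intermediate space $\ComegaSpace{r/2}[B^n(r/2)]$ using \cref{Lemma::FiuncSpaceRev::Euclid::Compare}, and invokes \cref{Prop::NelsonTheorem2} (the quantitative Nelson comparison between Euclidean real-analytic norms and the vector-field norm $\CXomegaNorm{\cdot}{Y}{s}$) as a black box; that proposition's proof is itself only sketched in the paper as a ``straightforward modification'' of Nelson's Theorem~2. You instead work entirely within $\ASpace{n}{\cdot}$ and prove the needed comparison from scratch: you exploit that membership in $\ASpace{n}{r}$ gives a holomorphic extension to the polydisc $\Delta_r$ with sup-norm controlled by the $\ASpace{}{}$-norm, and then bound $Y^{\alpha}g$ by applying the $m$ operators one at a time while telescoping across polydiscs $\Delta_{\rho_i}$ that shrink by $r/(2m)$ at each step, so each Cauchy inequality costs a factor $2m/r$ and the total $(2m/r)^m$ is absorbed, via $m^m \le e^m m!$, by the $1/m!$ in the definition of $\CXomegaNorm{\cdot}{Y}{s}$. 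What the paper's route buys is brevity and reuse of a general-purpose proposition that is also applied several other times; what your route buys is a self-contained, quantitatively explicit argument that makes transparent exactly how the admissible constant $s$ depends on $r$, $q$, $n$, and the admissible bound $K$ on the coefficients of the $Y_j$. One small stylistic point: you pass from $\sup_{\Delta_{r/2}}$ to $\CNorm{\cdot}{B^n(r/2)}$ implicitly via the inclusion $B^n(r/2)\subset\Delta_{r/2}$; it would be worth making this step explicit, and likewise worth noting that the hypothesis $r\le\eta_1$ is exactly what guarantees the coefficients $a_{j,k}$ are holomorphic on (a neighborhood of) $\Delta_r$, which is needed at every stage of the induction.
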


\begin{rmk}
Notice that $\AXNorm{f}{X_{J_0}}{x_0}{r}\leq \AXNorm{f}{X}{x_0}{r}$ and so one may replace $X_{J_0}$ with $X$
throughout \cref{Cor::FuncSpaceRev::HardContainment}.
\end{rmk}

\begin{proof}[Proof of \cref{Cor::FuncSpaceRev::HardContainment}]
Let $\Phi(t)=e^{t_1X_1+\cdots+t_nX_n}x_0$ be as in \cref{Thm::QuantRes::MainThm}.
Suppose $f\in \AXSpace{X_{J_0}}{x_0}{r}$; so that, by the definition of $\AXSpace{X_{J_0}}{x_0}{r}$, $\Phi^{*}f\in \ASpace{n}{r}$
with $\ANorm{\Phi^{*} f}{n}{r}= \AXNorm{f}{X_{J_0}}{x_0}{r}$.
By \cref{Lemma::FiuncSpaceRev::Euclid::Compare} \cref{Item::FuncSpaceRev::Euclid::CBigger},
$\Phi^{*} f\in \ComegaSpace{r/2}[B^n(r/2)]$ with $\ComegaNorm{\Phi^{*} f}{r/2}[B^n(r/2)]\leq \AXNorm{f}{X_{J_0}}{x_0}{r}$.  
Letting $Y_j=\Phi^{*}X_j$ as in \cref{Thm::QuantRes::MainThm},
we have that $Y_1,\ldots, Y_q\in \ASpace{n}{\eta_1}[\R^n]\subseteq \ComegaSpace{\eta_1/2}[B^n(\eta_1/2)][\R^n]$ 
with $\ComegaNorm{Y_j}{\eta_1/2}[B^n(\eta_1/2)][\R^n] \leq \ANorm{Y_j}{n}{\eta_1}[\R^n]\lesssim 1$
(where we have again used \cref{Lemma::FiuncSpaceRev::Euclid::Compare} \cref{Item::FuncSpaceRev::Euclid::CBigger}).  

\Cref{Prop::NelsonTheorem2} (below)
shows that
there exists an admissible $s=s(r)>0$ such that $\Phi^{*} f\in \ComegaSpace{r/2}[B^n(r/2)]= \CXomegaSpace{Y}{s}[B^n(r/2)]$ and
\begin{equation*}
\CXomegaNorm{\Phi^{*} f}{Y}{s}[B^n(r/2)]\leq C \ComegaNorm{\Phi^{*} f}{r/2}[B^n(r/2)]\leq C \AXNorm{f}{X_{J_0}}{x_0}{r},
\end{equation*}
where $C$ is as in the statement of the corollary.
Because $\Phi_{*}Y_j=X_j$, 
\cref{Prop::FuncMfld::DiffeoInv} implies $f\in \CXomegaSpace{X}{s}[\Phi(B^n(r/2)]$ with $\CXomegaNorm{f}{X}{s}[\Phi(B^n(r/2))]=\CXomegaNorm{\Phi^{*} f}{Y}{s}[B^n(r/2)]\leq C\AXNorm{f}{X_{J_0}}{x_0}{r}$ , completing the proof.
\end{proof}

\begin{lemma}\label{Lemma::FuncSpaceRev::DerivComega}
For any $s\in (0,r)$,
$X_j:\CXomegaSpace{X}{r}[M]\rightarrow \CXomegaSpace{X}{s}[M]$.
Furthermore, for $f\in \CXomegaSpace{X}{r}[M]$, $\CXomegaNorm{X_j f}{X}{s}[M] \leq \mleft(\sup_{m\in \N} (s/r)^m \mleft(\frac{m+1}{r}\mright)  \mright) \CXomegaNorm{f}{X}{r}[M]$.
\end{lemma}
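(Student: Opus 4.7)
The plan is to prove this directly from the definition of the norm, by manipulating the defining series and reindexing. The only non-routine observation is a bookkeeping one about ordered multi-indices.

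By the definition in \cref{Eqn::FuncMfld::CXomegaNorm},
\begin{equation*}
\CXomegaNorm{X_j f}{X}{s}[M] = \sum_{m=0}^\infty \frac{s^m}{m!} \sum_{|\alpha|=m} \CNorm{X^\alpha (X_j f)}{M}.
\end{equation*}
For an ordered multi-index $\alpha=(i_1,\ldots,i_m)$, the operator $X^\alpha X_j$ is $X_{i_1}\cdots X_{i_m} X_j$, which equals $X^\beta$ for the ordered multi-index $\beta=(i_1,\ldots,i_m,j)$ of length $m+1$ whose last entry is $j$. Since these $\beta$ form a subset of all ordered multi-indices of length $m+1$, the first step is the crude bound
\begin{equation*}
\sum_{|\alpha|=m} \CNorm{X^\alpha X_j f}{M} \leq \sum_{|\beta|=m+1} \CNorm{X^\beta f}{M}.
\end{equation*}

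Substituting this in and shifting the index $m \mapsto m+1$ gives
\begin{equation*}
\CXomegaNorm{X_j f}{X}{s}[M] \leq \sum_{m=0}^\infty \frac{s^m}{m!} \sum_{|\beta|=m+1} \CNorm{X^\beta f}{M}
= \sum_{k=1}^\infty \frac{s^{k-1}}{(k-1)!} \sum_{|\beta|=k} \CNorm{X^\beta f}{M}.
\end{equation*}
The $k$-th term on the right is exactly $\frac{k}{r}(s/r)^{k-1}$ times the $k$-th term of $\CXomegaNorm{f}{X}{r}[M]$, as one sees by comparing coefficients $\frac{s^{k-1}}{(k-1)!}$ and $\frac{r^k}{k!}$. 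Factoring out the supremum and setting $m=k-1\in\N$ gives
\begin{equation*}
\CXomegaNorm{X_j f}{X}{s}[M] \leq \left(\sup_{m\in\N} (s/r)^m\, \tfrac{m+1}{r}\right)\CXomegaNorm{f}{X}{r}[M],
\end{equation*}
which is the asserted estimate. Because $s<r$, the factor $(s/r)^m$ decays geometrically while $(m+1)/r$ grows only linearly, so the supremum is finite and the right-hand side is finite whenever $\CXomegaNorm{f}{X}{r}[M]<\infty$; this shows $X_j f\in \CXomegaSpace{X}{s}[M]$.

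The proof is essentially a one-line calculation and contains no real obstacle; the only subtlety is the first step, where one must be careful that "ordered multi-index" means an ordered tuple (so that the combinatorial factors in the defining series are just $1/m!$, not a multinomial coefficient), and that the map $\alpha\mapsto (\alpha,j)$ is an injection from multi-indices of length $m$ into multi-indices of length $m+1$.
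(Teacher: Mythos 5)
Your proof is correct and follows essentially the same approach as the paper: the key crude bound $\sum_{|\alpha|=m}\|X^\alpha X_j f\| \leq \sum_{|\beta|=m+1}\|X^\beta f\|$, a reindexing, and factoring out the supremum $(s/r)^m(m+1)/r$. You simply make the multi-index injection $\alpha\mapsto(\alpha,j)$ and the index shift $k=m+1$ explicit where the paper compresses them into a single display.
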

\begin{proof}
Let $f\in \CXomegaSpace{X}{r}[M]$, and consider 
\begin{equation*}
\begin{split}
&\CXomegaNorm{X_j f}{X}{s}[M] = \sum_{m=0}^\infty \frac{s^m}{m!} \sum_{|\alpha|=m} \CNorm{X^{\alpha} X_j f}{M}
\leq \sum_{m=0}^\infty (s/r)^m \frac{m+1}{r} \frac{r^{m+1}}{(m+1)!} \sum_{|\alpha|=m+1} \CNorm{X^{\alpha} f}{M}
\\&\leq \mleft(\sup_{m\in \N} (s/r)^m \frac{m+1}{r}  \mright) \CXomegaNorm{f}{X}{r}[M]<\infty,
\end{split}
\end{equation*}
where in the last inequality, we have used $s<r$.  The result follows.
\end{proof}

	\subsection{Comparison with Euclidean function spaces and a result of Nelson}
Let $\Omega\subseteq \R^n$ be an open set.  If $Y_1,\ldots, Y_q$ are real analytic vector fields on $\Omega$ which span the tangent space at every point,
it is a result of Nelson \cite[Theorem 2]{NelsonAnalyticVectors} that being real analytic with respect to $Y_1,\ldots, Y_q$ is the same as being real analytic in the classical sense.
We state a quantitative version of this.



\begin{prop}\label{Prop::NelsonTheorem2}
Fix $r>0$, and let $Y_1,\ldots, Y_q\in \ComegaSpace{r}[\Omega][\R^n]$.
\begin{enumerate}[(i)]
\item\label{Item::NelsonThm::Used} There exists $s>0$ such that 
$\ComegaSpace{r}[\Omega]\subseteq \CXomegaSpace{Y}{s}[\Omega]$ and
$\CXomegaNorm{f}{Y}{s}[\Omega]\leq C \ComegaNorm{f}{r}[\Omega]$, $\forall f\in \ComegaSpace{r}[\Omega]$,
where $s$ and $C$ can be chosen to depend only on upper bounds for $q$, $n$, $r^{-1}$, and $\ComegaNorm{Y_j}{r}[\Omega][\R^n]$ ($1\leq j\leq q$).

\item Suppose, in addition,  that for $1\leq j\leq n$, $1\leq k\leq q$, there exists $b_j^k\in \ComegaSpace{r}[\Omega]$
such that $\diff{x_j}=\sum_{k=1}^q b_j^k Y_k$.
For all $r_1>0$, there exists $s'>0$ such that 
$\CXomegaSpace{Y}{r_1}[\Omega]\subseteq \ComegaSpace{s'}[\Omega]$ and
$\ComegaNorm{f}{s'}[\Omega]\leq C \CXomegaNorm{f}{Y}{r_1}[\Omega]$, $\forall f\in  \CXomegaSpace{Y}{r_1}[\Omega]$,
where $s'$ and $C$ can be chosen to depend only on upper bounds for $q$, $r^{-1}$, $r_1^{-1}$, $\ComegaNorm{Y_j}{r}[\Omega][\R^n]$ ($1\leq j\leq q$),
and $\ComegaNorm{b_j^k}{r}[\Omega]$ ($1\leq j\leq n$, $1\leq k\leq q$).
\end{enumerate}
\end{prop}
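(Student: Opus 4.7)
The plan is to prove both parts by the same Cauchy-majorant iteration, combining the Banach algebra property (\cref{Lemma::FuncSpaceRev::Algebra}) with an elementary derivative-loss estimate.

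For part~\cref{Item::NelsonThm::Used}, the first step is the elementary Cauchy-type estimate
\[
\ComegaNorm{\partial_j g}{r'}[\Omega] \leq \frac{C_0}{r-r'}\,\ComegaNorm{g}{r}[\Omega], \qquad 0 < r' < r,
\]
obtained from the definition of the norm by the index shift $\beta = \alpha + e_j$ and the bound $\sup_m m(r'/r)^{m-1} \lesssim (1-r'/r)^{-2}$. Combined with the algebra property, this gives for $Z = \sum_k a_k Y_k$ with $\sum_k |a_k| \leq 1$,
\[
\ComegaNorm{Zg}{r'}[\Omega] \leq \frac{C_1 M n}{r-r'}\,\ComegaNorm{g}{r}[\Omega], \qquad M := \max_k \ComegaNorm{Y_k}{r}[\Omega][\R^n].
\]
The second step is to iterate this $m$ times with the telescoping choice of radii $r_i = r(1 - i/(2m))$, so each step loses radius $r/(2m)$. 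This yields, for any ordered multi-index $\alpha$ with $|\alpha|=m$,
\[
\CNorm{Y^\alpha g}{\Omega} \leq \ComegaNorm{Y^\alpha g}{r/2}[\Omega] \leq (2C_1 M n m/r)^m\,\ComegaNorm{g}{r}[\Omega] \leq (C_2/r)^m\, m!\, \ComegaNorm{g}{r}[\Omega]
\]
via Stirling ($m^m \leq e^m m!$). Summing over $|\alpha| = m$ costs a factor $q^m$, so $\sum_m (s^m/m!) \sum_{|\alpha|=m} \CNorm{Y^\alpha g}{\Omega}$ converges for $s < r/(qC_2)$, and choosing $s = r/(2qC_2)$ gives the claim with $s$ and $C$ depending only on the stated data.

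For part (ii), I would run the same argument inside the $\CXomegaSpace{Y}{\cdot}$-scale. First, part~\cref{Item::NelsonThm::Used} applied to $b_j^k \in \ComegaSpace{r}[\Omega]$ supplies an admissible $s_0 > 0$ and constant $C'$ with $\CXomegaNorm{b_j^k}{Y}{s_0}[\Omega] \leq C'$. Next, for $g \in \CXomegaSpace{Y}{r'}[\Omega]$ with $r'' < r' \leq \min\{r_1, s_0\}$, the identity $\partial_j g = \sum_k b_j^k Y_k g$, the algebra property of $\CXomegaSpace{Y}{\cdot}$, and \cref{Lemma::FuncSpaceRev::DerivComega} combine to give
\[
\CXomegaNorm{\partial_j g}{Y}{r''}[\Omega] \leq \frac{C_3}{r'-r''}\,\CXomegaNorm{g}{Y}{r'}[\Omega].
\]
Iterating $m$ times with the same telescoping radii and applying Stirling yields $\CNorm{\partial^\alpha g}{\Omega} \leq (C_4/r_1)^{|\alpha|}\, |\alpha|!\, \CXomegaNorm{g}{Y}{r_1}[\Omega]$ for every standard multi-index $\alpha$. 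Inserting this into the definition of $\ComegaNorm{g}{s'}[\Omega]$ and using $\sum_{|\alpha|=m} |\alpha|!/\alpha! = n^m$ shows $\ComegaNorm{g}{s'}[\Omega]<\infty$ for $s' < r_1/(n C_4)$.

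The main obstacle will be the bookkeeping in the telescoping Cauchy iteration: $m$ steps each losing radius $r/(2m)$ produce the factor $m^m$ which, via Stirling, exactly absorbs the $m!$ required for the Taylor-type series to converge. One must also be careful in part (ii) to track that the $s_0$ and constant supplied by part~\cref{Item::NelsonThm::Used} for the coefficients $b_j^k$ depend only on the admissible quantities $q$, $n$, $r^{-1}$, $\ComegaNorm{Y_j}{r}[\Omega][\R^n]$, and $\ComegaNorm{b_j^k}{r}[\Omega]$, and not on the auxiliary parameter $r_1$, so that the two iterations compose cleanly.
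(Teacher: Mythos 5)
Your argument is correct and is essentially the standard Cauchy--majorant iteration that underlies Nelson's proof; the paper itself simply defers to ``a straightforward modification of the proof of'' Nelson's Theorem~2, so your proposal fills in exactly the details the paper elides.

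One small but worth-flagging slip: in the derivation of the Cauchy estimate you cite the bound $\sup_m m(r'/r)^{m-1} \lesssim (1-r'/r)^{-2}$. The sharp bound is $\sup_m m(r'/r)^{m-1} \lesssim (1-r'/r)^{-1}$ (indeed $m x^{m-1}(1-x) \leq (1-1/m)^{m-1} \leq 1$ for all $m\geq 1$, $x\in[0,1)$), and the distinction matters: with the $-2$ power literally, each telescoping step costs $O(m^2/r)$, $m$ steps produce $m^{2m}$, and $\sum_m s^m m^{2m}/m!$ diverges for every $s>0$. So you must use the sharp single-power bound, which fortunately is what you then state as the conclusion $\ComegaNorm{\partial_j g}{r'} \leq C_0(r-r')^{-1}\ComegaNorm{g}{r}$. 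Similarly, in part~(ii) the telescoping radii should descend from $\tilde r := \min\{r_1,s_0\}$ (not from $r_1$), so the bound reads $\CNorm{\partial^\alpha g}{\Omega}\leq(C_4/\tilde r)^{|\alpha|}|\alpha|!\,\CXomegaNorm{g}{Y}{r_1}$; since $s_0^{-1}$ is admissible and $r_1^{-1}$ is an allowed dependency, $s' \lesssim \tilde r$ still lands in the admissible class, so the conclusion holds. With those two corrections the write-up is clean, and your closing observation---that the $s_0$, $C'$ supplied by part~(i) for the $b_j^k$ are independent of $r_1$---is exactly the point needed for the two iterations to compose.
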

\begin{proof}
This follows from a straightforward modification of the proof of  \cite[Theorem 2]{NelsonAnalyticVectors} and we leave the details to the reader.
\end{proof}

\begin{rmk}
In the sequel, we only use \cref{Item::NelsonThm::Used} of \cref{Prop::NelsonTheorem2}.
\end{rmk}
	
\section{Sub-Riemannian Geometry and Scaling}\label{Section::Scaling}
One of the main applications of results like \cref{Thm::QuantRes::MainThm} is as scaling maps for sub-Riemannian geometries.  Such scaling maps were first introduced by
Nagel, Stein, and Wainger \cite{NagelSteinWaingerBallsAndMetrics}, and were further studied by many other authors including Tao and Wright \cite{TaoWrightLpImproving}, the author \cite{S}, Montanari and Morbidelli \cite{MontanariMorbidelliNonsmoothHormanderVectorFields},
and most recently in the first two parts of this series \cite{StovallStreet,StreetII}.  Since Nagel, Stein, and Wainger's results, these ideas have been used in a wide variety of problems.
For a description of some of these applications, see the notes at the end of Chapter 2 of \cite{StreetMultiParamSingInt}.

Nagel, Stein, and Wainger's results worked in the smooth category\footnote{More precisely, the quantitative estimates they proved involved $C^m$ norms, for various $m\in \N$.}.  The later papers either worked in the smooth category,
or with a finite level of smoothness.  Thus, if one starts with a sub-Riemannian geometry based on real analytic vector fields, the results in these works do not yield appropriate quantitative control in the real analytic setting
and therefore these results destroy the real analytic nature of the problem under consideration.
\Cref{Thm::QuantRes::MainThm} fixes this issue.

Furthermore, when dealing with real analytic vector fields, one does not need to assume H\"ormander's condition on the vector fields.  See \cref{Section::Scaling::BeyondHormander}.

We present such results, here.

	\subsection{Classical Real Analytic Sub-Riemannian Geometries}\label{Section::NSW}
In this section, we describe a real analytic version of the foundational work of Nagel, Stein, and Wainger \cite{NagelSteinWaingerBallsAndMetrics}, and see how it is a special case
of \cref{Thm::QuantRes::MainThm}.  This is the simplest non-trivial setting where the results in this paper can be seen as providing scaling maps adapted to a sub-Riemannian geometry.

Let $X_1,\ldots, X_q$ be real analytic vector fields on an open set $\Omega\subseteq \R^n$; we assume $X_1,\ldots, X_q$ span the tangent space at every point of $\Omega$.  To each $X_j$ assign a formal degree $d_j\in [1,\infty)$.
We assume $\forall x\in \Omega$ there exists an open neighborhood $U_x\subseteq \Omega$ of $x$ such that:
\begin{equation}\label{Eqn::NSW::MainAssump}
[X_j,X_k] =\sum_{d_l\leq d_j+d_k} c_{j,k}^{l,x} X_l, \quad c_{j,k}^{l,x}\in \CjSpace{\omega}[U_x].
\end{equation}

We write $(X,d)$ for the list $(X_1,d_1),\ldots, (X_q,d_q)$ and for $\delta>0$ write $\delta^d X$ for the list of vector fields $\delta^{d_1} X_1,\ldots, \delta^{d_q} X_q$.  The sub-Riemannian ball
associated to $(X,d)$ centered at $x_0\in \Omega$, of radius $\delta>0$ is defined by
\begin{equation*}
B_{(X,d)}(x_0,\delta):= B_{\delta^d X}(x_0,1),
\end{equation*}
where the later ball is defined by \cref{Eqn::FuncManifold::DefnBX}.  $B_{(X,d)}(x_0,\delta)$ is an open subset of $\Omega$.  It is easy to see that the balls $B_{(X,d)}(x,\delta)$ are metric balls.

Define, for $x\in \Omega$, $\delta\in (0,1]$,
\begin{equation*}
	\Lambda(x,\delta):=\max_{j_1,\ldots, j_n\in \{1,\ldots, q\}} \mleft|\det \mleft(\delta^{d_{k_1}} X_{k_1}(x)| \cdots | \delta^{d_{k_n}} X_{k_n}(x) \mright) \mright|.
\end{equation*}
For each $x\in \Omega$, $\delta\in (0,1]$, pick $j_1=j_1(x,\delta),\ldots, j_n=j_n(x,\delta)$ so that
\begin{equation*}
	\mleft|\det \mleft(\delta^{d_{j_1}} X_{j_1}(x)| \cdots | \delta^{d_{j_n}} X_{j_n}(x) \mright) \mright| =\Lambda(x,\delta).
\end{equation*}
For this choice of $j_1=j_1(x,\delta),\ldots, j_n=j_n(x,\delta)$, define
\begin{equation*}
\Phi_{x,\delta}(t_1,\ldots, t_n):=\exp(t_1 \delta^{d_{j_1}} X_{j_1} + \cdots+t_n \delta^{d_{j_n}} X_{j_n})x.
\end{equation*}
We let $\LebDensity$ denote the usual Lebesgue density on $\Omega$.

\begin{thm}\label{Thm::NSW::RA}
Fix a compact set $\Compact\Subset \Omega$.\footnote{We write $A\Subset B$ to mean $A$ is a relatively compact subset of $B$.}  In what follows, we write $A\lesssim B$ for $A\leq C B$ where
$C$ is a positive constant which may depend on $\Compact$, but does not depend on the particular points $x\in \Compact$ or $u\in \R^n$, or the scale $\delta\in (0,1]$; we write
$A\approx B$ for $A\lesssim B$ and $B\lesssim A$.  Under the above described hypotheses, there exists $\eta_1,\xi_0\approx 1$,
such that $\forall x\in \Compact$,
\begin{enumerate}[(i)]
\item\label{NSW::1} $\LebDensity(B_{(X,d)}(x,\delta)) \approx \Lambda(x,\delta)$, $\forall \delta\in (0,\xi_0]$.
\item\label{NSW::2} $\LebDensity(B_{(X,d)}(x,2\delta))\lesssim \LebDensity(B_{(X,d)}(x,\delta))$, $\forall \delta\in (0,\xi_0/2]$.
\item\label{NSW::3} $\forall \delta\in (0,1]$, $\Phi_{x,\delta}(B^n(\eta_1))\subseteq \Omega$ is open and $\Phi_{x,\delta}:B^n(\eta_1)\rightarrow \Phi_{x,\delta}(B^n(\eta_1))$ is a real analytic diffeomoprhism.
\item\label{NSW::4} Define $h_{x,\delta}(t)$ by $h_{x,\delta}\LebDensity=\Phi_{x,\delta}^{*}\LebDensity$.  Then, $h_{x,\delta}(t)\approx \Lambda(x,\delta)$, $\forall t\in B^n(\eta_1)$, and there exists $s\approx 1$
with $\ANorm{h_{x,\delta}}{n}{s}\lesssim \Lambda(x,\delta)$.
\item\label{NSW::5} $B_{(X,d)}(x,\xi_0\delta)\subseteq \Phi_{x,\delta}(B^n(\eta_1))\subseteq B_{(X,d)}(x,\delta)$, $\forall \delta\in (0,1]$.
\item\label{NSW::6} Let $Y_j^{x,\delta}:=\Phi^{*}_{x,\delta} \delta^{d_j} X_j$, $1\leq j\leq q$, so that $Y_j^{x,\delta}$ is a real analytic vector field on $B^n(\eta_1)$.  We have
\begin{equation}\label{Eqn::NSW::EstYs}
\ANorm{Y_j^{x,\delta}}{n}{\eta_1}[\R^n]\lesssim 1,\quad 1\leq j\leq q.
\end{equation}
Finally, $Y_1^{x,\delta}(u),\ldots, Y_q^{x,\delta}(u)$ span $T_uB^n(\eta_1)$, uniformly in $x$, $\delta$, and $u$, in the sense that
\begin{equation}\label{Eqn::NSW::Span}
\max_{k_1,\ldots, k_n\in \{1,\ldots, q\}} \inf_{u\in B^n(\eta_1)} \mleft| \det\mleft( Y_{k_1}^{x,\delta}(u)| \cdots | Y_{k_n}^{x,\delta}(u)  \mright) \mright|\approx 1.
\end{equation}
\end{enumerate}
\end{thm}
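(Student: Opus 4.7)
The strategy is to apply the Quantitative Theorem \cref{Thm::QuantRes::MainThm} and the density results \cref{Thm::Density::MainThm,Cor::Density::MainCor} to the rescaled list $\delta^d X := \delta^{d_1}X_1,\ldots,\delta^{d_q}X_q$ at the point $x_0 = x \in \Compact$, with the choice $J_0 = (j_1(x,\delta),\ldots,j_n(x,\delta))$, which by its very definition makes \cref{Eqn::QuantRes::DefnJ0} hold with $\zeta=1$. The main work is to show that the resulting admissible constants $\chi,\eta_1,\xi_1,\xi_2$ can be chosen uniformly in $x\in\Compact$ and $\delta\in(0,1]$. The homogeneity identity
$$[\delta^{d_j}X_j,\delta^{d_k}X_k] = \sum_{d_l\leq d_j+d_k} \bigl(\delta^{d_j+d_k-d_l}c_{j,k}^{l,x}\bigr)\,(\delta^{d_l}X_l)$$
shows that the rescaled structure coefficients inherit every bound on $c_{j,k}^{l,x}$, because $d_j+d_k-d_l\geq 0$ and $\delta\leq 1$. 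Covering $\Compact$ by finitely many of the neighborhoods $U_x$ from \cref{Eqn::NSW::MainAssump} produces a single radius $\rho\approx 1$ on which both the $c_{j,k}^{l,x}$ and the $X_j$ are classically real analytic with uniformly bounded $\ComegaNorm{\cdot}{\rho}$-norms. \Cref{Prop::NelsonTheorem2}\cref{Item::NelsonThm::Used} then yields $s\approx 1$ and a uniform bound on $\CXomegaNorm{\delta^{d_j+d_k-d_l}c_{j,k}^{l,x}}{\delta^d X_{J_0}}{s}$, which by \cref{Rmk::QuantRest::CanUseComegaNormsInstead} (and \cref{Lemma::FuncSpaceRev::Mfld::ABigger}) delivers the Key Assumption of \cref{Thm::QuantRes::MainThm} with uniformly admissible constants. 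The parameters $\eta$ and $\delta_0$ are uniform because the Euclidean $C^1$-norms of $\delta^{d_j}X_j$ on a neighborhood of $\Compact$ are controlled by those of $X_j$ when $\delta\in(0,1]$.

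With uniform admissible constants in hand, \cref{Thm::QuantRes::MainThm} applied to $(\delta^d X, J_0)$ at $x$ immediately yields \cref{NSW::5} and \cref{NSW::6}. Setting $\xi_0 := \xi_2$, the containments in \cref{NSW::5} are exactly those of \cref{Item::QuantRes::xi1xi2} after translating $B_{\delta^d X}(x,r) = B_{(X,d)}(x,r\delta)$ and using $B_{\delta^d X_{J_0}}(x,\chi)\subseteq B_{\delta^d X}(x,1)$; \cref{Eqn::NSW::EstYs} is \cref{Item::QuantRes::BoundY}; and the spanning bound \cref{Eqn::NSW::Span} follows by taking $(k_1,\ldots,k_n) = J_0$ and using the normal form $Y^{x,\delta}_{J_0} = (I+A)\grad$ with $\ANorm{A}{n}{\eta_1}[\M^{n\times n}]\leq \tfrac12$, which forces $|\det(Y^{x,\delta}_{j_1}(u)\mid\cdots\mid Y^{x,\delta}_{j_n}(u))|\geq 2^{-n}$ for every $u\in B^n(\eta_1)$. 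For \cref{NSW::3}, real analyticity of $\Phi_{x,\delta}$ itself is immediate from the classical theorem on flows of real analytic vector fields applied to $\sum_j t_j\delta^{d_j}X_j$; real analyticity of its inverse then follows from the real analytic inverse function theorem together with \cref{Item::QuantRes::PhiDiffeo}.

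The density conclusions follow from \cref{Thm::Density::MainThm} and \cref{Cor::Density::MainCor} applied with $\nu=\LebDensity$. Here $\Lie{\delta^{d_j}X_j}\LebDensity = \delta^{d_j}(\Div X_j)\,\LebDensity$, and classical real analyticity of $\Div X_j$ on $\Compact$ produces uniformly $\nu$-admissible constants by the same argument as in the first paragraph. Since $\LebDensity(\delta^{d_{j_1}}X_{j_1},\ldots,\delta^{d_{j_n}}X_{j_n})(x) = \pm\Lambda(x,\delta)$, \cref{Thm::Density::MainThm} delivers \cref{NSW::4}, and \cref{Cor::Density::MainCor} gives $\LebDensity(B_{(X,d)}(x,\xi_2\delta))\approx\Lambda(x,\delta)$; rescaling $\xi_2\delta\mapsto \delta$ and using the elementary comparison $\Lambda(x,\delta)\approx\Lambda(x,\delta/\xi_2)$ (whose constants depend only on $\xi_2\approx 1$ and $n\max_j d_j$) proves \cref{NSW::1} for $\delta\in(0,\xi_0]$. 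The doubling bound \cref{NSW::2} follows from \cref{NSW::1} together with $\Lambda(x,2\delta)\leq 2^{n\max_j d_j}\Lambda(x,\delta)$.

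The main obstacle is the uniformity step: translating the Key Assumption of \cref{Thm::QuantRes::MainThm}, which is phrased in terms of the intrinsic $\AXSpace{X_{J_0}}{x_0}{\eta}$-scale, into a verifiable condition that holds uniformly in both $x\in\Compact$ and $\delta\in(0,1]$. This rests on the two structural ingredients identified above: compactness of $\Compact$ gives a common Euclidean domain of real analyticity for the structure coefficients, so that \cref{Prop::NelsonTheorem2} applies uniformly; and the degree inequality $d_l\leq d_j+d_k$ is precisely what ensures the commutator structure, and hence real analytic control of it, survives the rescaling $X_j\mapsto \delta^{d_j}X_j$ for $\delta\in(0,1]$. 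Once this uniformity is secured, every remaining conclusion is a direct transcription of \cref{Thm::QuantRes::MainThm,Thm::Density::MainThm,Cor::Density::MainCor}.
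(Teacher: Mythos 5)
Your argument is substantively correct and follows the same strategy as the paper: establish uniform admissibility for the rescaled list $\delta^d X$ by covering $\Compact$ and invoking \cref{Prop::NelsonTheorem2}, then transcribe the conclusions of \cref{Thm::QuantRes::MainThm}, \cref{Thm::Density::MainThm}, and \cref{Cor::Density::MainCor}. The paper's exposition differs in two inessential ways: it cites the companion paper for the conclusions not involving real analyticity (\cref{NSW::1}, \cref{NSW::2}, \cref{NSW::5}, \cref{Eqn::NSW::Span}, and the $C^\infty$ version of \cref{NSW::3}) rather than re-deriving them, and it routes the remaining real-analytic conclusions through the more general \cref{Thm::GenSubR::MainThm} rather than applying the quantitative theorems directly as you do. Neither difference changes the mathematics: \cref{Thm::GenSubR::MainThm} is itself proved by applying \cref{Thm::QuantRes::MainThm}, \cref{Thm::Density::MainThm}, \cref{Cor::Density::MainCor} to $X^\delta$ at each base point, which is exactly your route. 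The paper is also more careful about two technical points you compress, namely obtaining a single $\xi\approx 1$ so that $B_{\delta^d X}(x,\xi)\subseteq U_{x_r}$ (via Phragm\'en--Lindel\"of and the Lebesgue number lemma) and the uniformity of $\delta_0$.

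One minor inaccuracy to flag: the claimed identity $B_{\delta^d X}(x,r)=B_{(X,d)}(x,r\delta)$ is false when the degrees $d_j$ are not all equal to $1$, because $(r\delta)^{d_j}=r^{d_j}\delta^{d_j}\ne r\,\delta^{d_j}$. What is true, and is what your argument actually needs, is the one-sided containment $B_{(X,d)}(x,r\delta)\subseteq B_{\delta^d X}(x,r)$ for $r\in(0,1]$ (since $d_j\geq 1$ gives $r^{d_j}\leq r$), together with the trivial equality $B_{\delta^d X}(x,1)=B_{(X,d)}(x,\delta)$. With that replacement, \cref{NSW::5} follows with $\xi_0=\xi_2$, and \cref{NSW::1} follows by the two one-sided estimates $\LebDensity(B_{(X,d)}(x,\delta))\leq\LebDensity(B_{\delta'{}^d X}(x,\xi_2))\approx\Lambda(x,\delta')\approx\Lambda(x,\delta)$ (with $\delta'=\delta/\xi_2$) and $\Lambda(x,\delta)\approx\LebDensity(B_{\delta^d X}(x,\xi_2))\leq\LebDensity(B_{(X,d)}(x,\delta))$, so the conclusion is unaffected.
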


\begin{rmk}
If real analytic was replaced in this entire section with $C^\infty$ (and the estimates in \cref{Eqn::NSW::EstYs} were replaced with appropriate estimates of $C^m$ norms), then \cref{Thm::NSW::RA} is the main result, in a slightly different language, of Nagel, Stein, and Wainger's work \cite{NagelSteinWaingerBallsAndMetrics}--see \cite[\SSNSW]{StovallStreet} for a further discussion.  A main consequence of the results of this paper is that one can obtain good estimates
on the real analyticity of the vector fields $Y_1^{x,\delta},\ldots, Y_q^{x,\delta}$--see \cref{Eqn::NSW::EstYs}.
\end{rmk}

\begin{proof}
By a simple partition of unity argument, we may write
\begin{equation*}
[X_j,X_k] =\sum_{d_l\leq d_j+d_k} c_{j,k}^{l} X_l, \quad c_{j,k}^{l}\in \CjSpaceloc{\infty}[\Omega].
\end{equation*}
Using this, most of \cref{Thm::NSW::RA} is contained in \cite[\SSNSW]{StovallStreet}; the only parts which are not are those which relate to real analyticity.  In particular, \cref{NSW::1}, \cref{NSW::2}, \cref{NSW::3} (with real analytic replaced by $C^\infty$), \cref{NSW::5}, and \cref{Eqn::NSW::Span} are all explicitly stated in \cite[\SSNSW]{StovallStreet}.  Furthermore, since $X_1,\ldots, X_q$ are real analytic, $\Phi_{x,\delta}$ is real analytic (by classical theorems),
so \cref{NSW::3} follows.  Thus, the new parts are \cref{NSW::4} and \cref{Eqn::NSW::EstYs}.  These are simple consequences of \cref{Thm::Density::MainThm} and \cref{Thm::QuantRes::MainThm}, respectively,
though we will see them as part of a more general theorem:  \cref{Thm::GenSubR::MainThm}, below.

Hence, to complete the proof, we show how \cref{Thm::GenSubR::MainThm} applies to this setting.
Without loss of generality, we may shrink each $U_x$ so that $U_x\Subset \Omega$.
Set
\begin{equation*}
X_j^{\delta}:=\delta^{d_j} X_j,\quad \ch_{j,k}^{l,x,\delta}:=
\begin{cases}
\delta^{d_j+d_k-d_l} c_{j,k}^{l,x}, & d_l\leq d_j+d_k,\\
0, &\text{otherwise,}
\end{cases}
\end{equation*}
so that
\begin{equation*}
[X_j^{\delta}, X_k^{\delta}]=\sum_{l=1}^q \ch_{j,k}^{l,x,\delta} X_l^{\delta},\text{ on }U_x.
\end{equation*}
We let $X^\delta$ denote the list $X_1^{\delta},\ldots, X_q^{\delta}$.
Fix an open set $\Omega'$ with $\Compact\Subset \Omega'\Subset \Omega$ and set $\Compact_1:=\overline{\Omega'}$, so that $\Compact_1\Subset \Omega$ is compact.
Take $s_1>0$ so that $X_1,\ldots, X_q\in \ComegaSpace{s_1}[\Omega'][\R^n]$.
By the Phragmen-Lindel\"of principle, we may take $\xi'>0$ so small that $B_{X}(x,\xi')\subseteq \Omega'\subset \Compact_1$, $\forall x\in \Compact$.
$\{U_x : x\in \Compact_1\}$ is an open cover for $\Compact_1$ and we extract a finite sub-cover $U_{x_1},\ldots, U_{x_R}$.  The balls $B_X(x,\delta)$ are metric balls and the topology
induced by these balls is the same as the usual topology on $\Omega$.  Let $\xi\in (0,\min\{\xi', 1\}]$ be less than or equal to the Lebesgue number for the cover $U_{x_1},\ldots, U_{x_R}$
of $\Compact_1$, with respect to the metric associated to the balls $B_X(x,\delta)$.  Thus, since $\xi\leq \xi'$, $\forall x\in \Compact$, $\exists r\in \{1,\ldots, R\}$ with $B_X(x,\xi)=B_X(x,\xi)\cap \Compact_1\subseteq U_{x_r}$.  For this choice of $r$,
set $c_{j,k}^{l,x,\delta}:= \ch_{j,k}^{l, x_r,\delta}$.
Take $s_2>0$ so that $\forall r\in \{1,\ldots, R\}$, $c_{j,k}^{l, x_r}\in \ComegaSpace{s_2}[U_{x_r}]$.
By \cref{Prop::NelsonTheorem2} (using $X_1,\ldots, X_q\in \ComegaSpace{s_1}[\Omega'][\R^n]$), there exists $s\approx 1$ with $c_{j,k}^{l,x_r}\in \CXomegaSpace{X}{s}[U_{x_r}]$, $1\leq r\leq R$, $1\leq j,k,l\leq q$, and
\begin{equation*}
    \CXomegaNorm{c_{j,k}^{l,x_r}}{X}{s}[U_{x_r}]\lesssim \ComegaNorm{c_{j,k}^{l,x_r}}{s_1}[U_{x_r}] \lesssim 1, \quad 1\leq r\leq R, 1\leq j,k,l\leq q.
\end{equation*}
Thus, by tracing through the definitions we have, for $x\in \Compact$, $\delta\in (0,1]$, $c_{j,k}^{l,x,\delta}\in \CXomegaSpace{X^{\delta}}{s}[B_{X^{\delta}}(x,\xi)]$ and
\begin{equation*}
    \sup_{\substack{x\in \Compact \\ \delta\in (0,1]}} \CXomegaNorm{c_{j,k}^{l,x,\delta}}{X^{\delta}}{s}[B_{X^{\delta}}(x,\xi)]
    \leq \sup_{1\leq r\leq R} \CXomegaNorm{c_{j,k}^{l,x_r}}{X}{s}[U_{x_r}]
    \lesssim 1.
\end{equation*}

Define $f_j^{\delta}$ by $f_j^{\delta}\LebDensity=\Lie{X_j^{\delta}} \LebDensity$; i.e., $f_j^{\delta}=\delta^{d_j} f_j$, where $f_j:= \grad \cdot X_j$.
By our hypotheses, we have $f_j\in \CjSpaceloc{\omega}[\Omega]$, $1\leq j\leq q$.
It follows that there exists $s_3>0$ with $f_j\in \ComegaSpace{s_3}[\Omega']$, $1\leq j\leq q$.  By \cref{Prop::NelsonTheorem2} (using that $X_1,\ldots, X_q\in \ComegaSpace{s_1}[\Omega'][\R^n]$), there exists $r\approx 1$
with $f_j\in \CXomegaSpace{X}{r}[\Omega']$.  Directly from the definitions, we now have
\begin{equation*}
 \sup_{\delta\in (0,1]} \CXomegaNorm{f_{j}^{\delta}}{X^\delta}{r}[\Omega']\leq \CXomegaNorm{f_{j}}{X}{r}[\Omega']\lesssim 1.
\end{equation*}

Using the above remarks, the result now follows directly from \cref{Thm::GenSubR::MainThm}.
\end{proof}

\begin{rmk}
The most important part of \cref{Thm::NSW::RA} is \cref{NSW::6}; which allows us to think of $\Phi_{x,\delta}$ as a scaling map.  Indeed, one thinks of the vector fields $\delta^{d_1}X_1,\ldots, \delta^{d_q} X_q$ as being
``small'' (for $\delta$ small).  However, $\Phi_{x,\delta}$ gives a coordinate system in which these vector fields are unit size.  Indeed, the vector fields $Y_1^{x,\delta},\ldots, Y_q^{x,\delta}$ are the
vector fields $\delta^{d_1} X_1,\ldots, \delta^{d_q} X_q$ written in the coordinate system given by $\Phi_{x,\delta}$.  These vector fields are real analytic uniformly in $x$ and $\delta$ (i.e., \cref{Eqn::NSW::EstYs})
and span the tangent space uniformly in $x$ and $\delta$ (i.e., \cref{Eqn::NSW::Span}).  Thus, we have ``rescaled'' the vector fields to be unit size.
\end{rmk}

		\subsubsection{H\"ormander's condition}\label{Section::Scaling::Hormander}
The main way that \cref{Thm::NSW::RA} arises is via vector fields which satisfy H\"ormander's condition.  Suppose $V_1,\ldots, V_r$ are real analytic vector fields on an open set $\Omega\subseteq \R^n$.
We assume $V_1,\ldots, V_r$ satisfy H\"ormander's condition of order $m$ on $\Omega$.  I.e., we assume that the finite list of vector fields
\begin{equation*}
V_1,\ldots, V_r, \ldots, [V_i,V_j],\ldots, [V_i,[V_j,V_k]],\ldots, \ldots, \text{commutators of order }m,
\end{equation*}
span the tangent space at every point of $\Omega$.

To each $V_1,\ldots, V_r$, we assign the formal degree $1$.  If $Z$ has formal degree $e$, we assign to $[V_j,Z]$ the formal degree $e+1$.
Let $(X_1,d_1),\ldots, (X_q,d_q)$ denote the finite list of vector fields with formal degree $d_j\leq m$.  H\"ormander's condition implies
$X_1,\ldots, X_q$ span the tangent space at every point of $\Omega$.

We claim \cref{Eqn::NSW::MainAssump} holds, and therefore \cref{Thm::NSW::RA} applies to $(X_1,d_1),\ldots, (X_q,d_q)$.  Indeed, if $d_j+d_k\leq m$ we have
\begin{equation*}
[X_j,X_k]=\sum_{d_l=d_j+d_k} c_{j,k}^l X_l,
\end{equation*}
where $c_{j,k}^l$ are constants by the Jacobi identity.  If $d_j+d_k>m$, then since $X_1,\ldots, X_q$ are real analytic and span the tangent space at every point, we have
$\forall x\in \Omega$ there exists a neighborhood $U_x\subseteq \Omega$ of $x$ such that
\begin{equation*}
[X_j,X_k]=\sum_{l=1}^q c_{j,k}^{l,x} X_l=\sum_{d_l\leq d_j+d_k} c_{j,k}^{l,x} X_l, \quad c_{j,k}^{l,x}\in \CjSpace{\omega}[U_x].
\end{equation*}
Thus, \cref{Eqn::NSW::MainAssump} holds and \cref{Thm::NSW::RA} applies.

Let $\Compact \Subset \Omega$ be a compact set.  Applying \cref{Thm::NSW::RA} for $\delta\in (0,1]$, $x\in \Compact$, we obtain $\eta_1>0$ and $\Phi_{x,\delta}:B^n(\eta_1)\rightarrow B_{(X,d)}(x,\delta)$ as in that theorem.
Set $V_j^{x,\delta}:=\Phi_{x,\delta}^{*} \delta V_j$.
If $d_k=l$, then
\begin{equation*}
X_k = [V_{j_1}, [V_{j_2},\cdots,[V_{j_{l-1}}, V_{j_l}]]],
\end{equation*}
and so
\begin{equation*}
\Phi_{x,\delta}^{*} \delta^{d_k} X_k = \Phi_{x,\delta}^{*} [\delta V_{j_1}, [\delta V_{j_2}, \cdots [\delta V_{j_{l-1}}, \delta V_{j_l}]]] = [V_{j_1}^{x,\delta}, [V_{j_2}^{x,\delta},\cdots ,[V_{j_{l-1}}^{x,\delta}, V_{j_l}^{x,\delta}]]].
\end{equation*}
\Cref{Thm::NSW::RA} implies the vector fields $\Phi_{x,\delta}^{*} \delta^{d_j} X_j$ are real analytic and span the tangent space, uniformly for $x\in \Compact$, $\delta\in (0,1]$.
We conclude that the vector fields $V_1^{x,\delta},\ldots, V_q^{x,\delta}$ are real analytic and satisfy H\"ormander's condition, uniformly for $x\in \Compact$ and $\delta\in (0,1]$.
In short, the map $\Phi_{x,\delta}^{*}$ takes $\delta V_1,\ldots, \delta V_r$ to $V_1^{x,\delta},\ldots, V_r^{x,\delta}$ which are real analytic and satisfy H\"ormander's condition ``uniformly'';
i.e., it takes the case of $\delta$ small and rescales it to the case $\delta=1$, while preserving real analyticity in a quantitative way.
		
		\subsubsection{Beyond H\"ormander's condition}\label{Section::Scaling::BeyondHormander}
Let $V_1,\ldots, V_r$ be real analytic vector fields defined on an open set $\Omega\subseteq \R^n$.  It turns out that the main conclusions of \cref{Section::Scaling::Hormander}
hold without assuming $V_1,\ldots, V_r$ satisfy H\"ormander's condition, so long as one is willing to work on an injectively immersed submanifold.  We describe this here--many of these methods
appeared in \cite{SteinStreetIII} and are based on an idea of Lobry \cite{LobryControlabiliteDesSystems}.

Fix a large integer $m$ to be chosen later and a compact set $\Compact\Subset \Omega$.  Assign to each $V_1,\ldots, V_r$ the formal degree $1$.  If $Z$ has formal degree $e$, we assign to $[V_j, Z]$ the formal degree $e+1$.
Let $(X_1,d_1),\ldots, (X_q,d_q)$ denote the finite list of vector fields with formal degree $d_j\leq m$.
The results that follow are essentially independent of $m$, provided $m$ is chosen sufficiently large; how large $m$ needs to be depends on $V_1,\ldots, V_r$ and $\Compact$.
As above, for $\delta\in (0,1]$, we let $\delta^d X$ denote the list of vector fields $\delta^{d_1} X_1,\ldots, \delta^{d_q} X_q$.  We sometimes identify $\delta^d X$ with the $n\times q$ matrix
$( \delta^{d_1} X_1 | \cdots | \delta^{d_q} X_q)$.
Set $B_{(X,d)}(x,\delta):=B_{\delta^{d} X}(x,1)$, where the later ball is defined in \cref{Eqn::FuncManifold::DefnBX}.

The classical Frobenius theorem applies to the involutive distribution generated by $V_1,\ldots, V_r$ (see \cite{HermannOnTheAccessibilityProblemInControlTheory,NaganoLienarDifferentialSystemsWithSingularities,LobryControlabiliteDesSystems,SussmanOrbitsOfFamiliesOfVectorFieldsAndIntegrabilityOfDistributions})
to see that the ambient space is foliated into real analytic leaves\footnote{The various leaves may have different dimensions; i.e., this may be a singular foliation.  See \cref{Rmk::BeyondHor::UsefulForSingular} for further comments.}.  Let $L_x$ denote the leaf passing through $x$.  $V_1,\ldots, V_r$ satisfy H\"ormander's condition on each leaf.  If $m$ is sufficiently large and $\Omega'$ is an open set with $\Compact\Subset\Omega'\Subset \Omega$, then $V_1,\ldots, V_r$ satisfy H\"ormander's condition of order at most $m$ on
$L_x\cap \Omega'$, $\forall x\in \Compact$.
Therefore $X_1,\ldots, X_q$ span the tangent space at every point of $L_x\cap \Omega'$, $\forall x\in \Compact$, and $B_{(X,d)}(x,\delta)$ is an open subset of $L_x$.  Let $\nu_x$ denote the induced Lebesgue density on
the leaf passing through $x$.
For an $n\times q$ matrix $A$, and for $n_0\leq \min\{n,q\}$, let $\det_{n_0\times n_0} A$ denote the vector consisting of the determinants of the $n_0\times n_0$ submatricies of $A$--the order of the components does not matter.

For each $x\in \Omega$ set $n_0(x):=\dim \Span\{X_1(x),\ldots, X_q(x)\}=\dim L_x$.  For each $x\in \Omega$, $\delta\in (0,1]$, pick $j_1=j_1(x,\delta),\ldots, j_{n_0(x)}=j_{n_0(x)}(x,\delta)$ so that
\begin{equation*}
\mleft| \det_{n_0(x)\times n_0(x)} \mleft( \delta^{d_{j_1}} X_{j_1}(x) | \cdots | \delta^{d_{j_{n_0(x)}}} X_{j_{n_0(x)}}(x)\mright) \mright|_{\infty} = \mleft| \det_{n_0(x)\times n_0(x)} \delta^{d} X\mright|_{\infty}.
\end{equation*}
For this choice of $j_1=j_1(x,\delta),\ldots, j_{n_0(x)}=j_{n_0(x)}(x,\delta)$ set (writing $n_0$ for $n_0(x)$):
\begin{equation}\label{Eqn::Scaling::BeyondHor::DefinePhi}
\Phi_{x,\delta}(t_1,\ldots,t_{n_0}) := \exp\mleft( t_1 \delta^{d_{j_1}} X_{j_1}+\cdots+ t_{n_0} \delta^{d_{j_{n_0}}} X_{j_{n_0}} \mright)x.
\end{equation}

\begin{thm}\label{Thm::Scaling::BeyondHor}
Fix a compact set $\Compact\Subset \Omega$ and $x\in \Compact$, take $m$ sufficiently large (depending on $\Compact$ and $V_1,\ldots, V_r$), and define $(X_1,d_1),\ldots, (X_q,d_q)$ as above.
Define $n_0(x)$, $\nu_x$, and $\Phi_{x,\delta}(t_1,\ldots, t_{n_0(x)})$ as above.
We write
$A\lesssim B$ for $A\leq C B$ where $C$ is a positive constant which may depend on $\Compact$, but does not depend on the particular points $x\in \Compact$ and $u\in \R^{n_0(x)}$ under consideration,
or on the scale $\delta\in (0,1]$; we write $A\approx B$ for $A\lesssim B$ and $B\lesssim A$.  There exists $\eta_1,\xi_0\approx 1$ such that $\forall x\in \Compact$,
\begin{enumerate}[(i)]
\item\label{Item::BeyondHor::Estimatenux} $\nu_x(B_{(X,d)}(x,\delta))\approx \mleft| \det_{n_0(x)\times n_0(x)} \delta^{d} X(x) \mright|_{\infty}$, $\forall \delta\in (0,\xi_0]$.
\item\label{Item::BeyondHor::Doubling} $\nu_x(B_{(X,d)}(x,2\delta))\lesssim \nu_x(B_{(X,d)}(x,\delta))$, $\forall \delta\in (0,\xi_0/2]$.
\item\label{Item::BeyondHor::RADiffeo} $\forall \delta\in (0,1]$, $\Phi_{x,\delta}(B^{n_0(x)}(\eta_1))\subseteq L_x$ is open and $\Phi_{x,\delta}:B^n(\eta_1)\rightarrow \Phi_{x,\delta}(B^n(\eta_1))$ is a real analytic diffeomorphism.
\item\label{Item::BeyondHor::Estimateh} For $\delta\in (0,1]$, define $h_{x,\delta}(t)$ on $B^{n_0(x)}(\eta_1)$ by $h_{x,\delta}\LebDensity = \Phi_{x,\delta}^{*} \nu_x$.  Then, $h_{x,\delta}(t)\approx \mleft| \det_{n_0(x)\times n_0(x)} \delta^{d} X(x) \mright|_{\infty}$,
$\forall t\in B^{n_0(x)}(\eta_1)$, and there exists $s\approx 1$ with $\ANorm{h_{x,\delta}}{n_0(x)}{s}\lesssim  \mleft| \det_{n_0(x)\times n_0(x)} \delta^{d} X(x) \mright|_{\infty}$.
\item\label{Item::BeyondHor::Containments} $B_{(X,d)}(x,\xi_0\delta)\subseteq \Phi_{x,\delta}(B^{n_0(x)}(\eta_1))\subseteq B_{(X,d)}(x,\delta)$, $\forall \delta\in (0,1]$.
\item\label{Item::BeyongHor::Pullbacks} For $\delta\in (0,1]$, $x\in \Compact$, let $Y_j^{x,\delta}:=\Phi_{x,\delta}^{*} \delta^{d_j} X_j$, so that $Y_j^{x,\delta}$ is a real analytic vector field on $B^{n_0(x)}(\eta_1)$.  We have
\begin{equation*}
\ANorm{Y_j^{x,\delta}}{n_0(x)}{\eta_1}[\R^n]\lesssim 1, \quad 1\leq j\leq q.
\end{equation*}
Finally, $Y_1^{x,\delta}(u),\ldots, Y_q^{x,\delta}(u)$ span $T_u B^{n_0(x)}(\eta_1)$, uniformly in $x$, $\delta$, and $u$, in the sense that
\begin{equation*}
\max_{k_1,\ldots, k_{n_0(x)}\in \{1,\ldots, q\}} \inf_{u\in B^{n_0(x)}(\eta_1)} \mleft| \det\mleft( Y_{k_1}^{x,\delta}(u)| \cdots | Y_{k_{n_0(x)}}^{x,\delta}(u)  \mright) \mright|\approx 1.
\end{equation*}
\end{enumerate}
\end{thm}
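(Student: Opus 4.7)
The plan is to reduce \cref{Thm::Scaling::BeyondHor} to the generalized sub-Riemannian result \cite[\SSGenSubResult]{StovallStreet} (which itself is an application of \cref{Thm::QuantRes::MainThm,Thm::Density::MainThm}), following the same template as the proof of \cref{Thm::NSW::RA} but carried out on the leaves of the Frobenius foliation generated by $V_1,\ldots,V_r$.

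The first step is to establish the commutator relations. By Lobry's observation \cite{LobryControlabiliteDesSystems} (a direct consequence of the Weierstrass preparation theorem), the $\CjSpace{\omega}$-module generated by iterated commutators of $V_1,\ldots,V_r$ is locally finitely generated. Thus, for each $x\in\Omega$ there is a neighborhood $U_x\Subset\Omega$ and real analytic functions $c_{j,k}^{l,x}\in \CjSpace{\omega}[U_x]$ such that
\begin{equation*}
    [X_j,X_k] = \sum_{d_l\le d_j+d_k} c_{j,k}^{l,x} X_l \quad\text{on }U_x,
\end{equation*}
exactly as in \cref{Eqn::NSW::MainAssump}. Fix $\Omega'$ with $\Compact\Subset\Omega'\Subset\Omega$, cover $\overline{\Omega'}$ by finitely many such $U_{x_1},\ldots,U_{x_R}$, and choose $m$ so large that on each $L_x\cap U_{x_r}$ with $x\in\Compact_1$, the vector fields $V_1,\ldots,V_r$ satisfy H\"ormander's condition of order $\le m$; this is possible by compactness since, by the real analytic Frobenius theorem, for each fixed $x$ some finite-order H\"ormander condition holds on $L_x\cap U_{x_r}$.

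Next, for $x\in\Compact$ and $\delta\in(0,1]$, set $X_j^{\delta}:=\delta^{d_j}X_j$ and rescale the coefficients to $c_{j,k}^{l,x,\delta}:=\delta^{d_j+d_k-d_l}c_{j,k}^{l,x_r}$ on a Lebesgue-number-sized ball $B_X(x,\xi)\subseteq U_{x_r}$. Applying \cref{Prop::NelsonTheorem2} \cref{Item::NelsonThm::Used} to the real analytic $X_1,\ldots,X_q$ (which span the tangent space of $L_x$ at each point of $L_x\cap U_{x_r}$), we obtain $s\approx 1$ with $c_{j,k}^{l,x_r}\in\CXomegaSpace{X}{s}[U_{x_r}]$ bounded uniformly in $r$, and hence
\begin{equation*}
    \sup_{x\in\Compact,\ \delta\in(0,1]} \CXomegaNorm{c_{j,k}^{l,x,\delta}}{X^\delta}{s}[B_{X^\delta}(x,\xi)]\lesssim 1.
\end{equation*}
By \cref{Prop::QualRes::InjectiveImmresion}, $B_{X^\delta}(x,\xi)\subseteq L_x$ is an $n_0(x)$-dimensional $C^2$ injectively immersed submanifold on which $X_1^\delta,\ldots,X_q^\delta$ span the tangent space at every point. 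With the specific choice of $j_1,\ldots,j_{n_0(x)}$ in \cref{Eqn::Scaling::BeyondHor::DefinePhi}, the ratio condition \cref{Eqn::QuantRes::DefnJ0} holds with $\zeta=1$. The scaling lower bounds on $\eta$ and $\delta_0$ needed as input to \cref{Thm::QuantRes::MainThm} follow from uniform $\CjSpace{1}$ control of $X_1,\ldots,X_q$ on $\Compact_1$ via \cref{Lemma::PfQual::Existsetadelta,Rmk::PfQual::Existsetadelta} together with the scaling $X^\delta=\delta^d X$; these bounds are absorbed into the constants of \cite[\SSGenSubResult]{StovallStreet}.

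For the density statement, note that on $L_x$ the Lebesgue density $\nu_x$ satisfies $\Lie{X_j}\nu_x=f_{j,x}\nu_x$, where $f_{j,x}$ is the divergence of $X_j$ with respect to any local real analytic chart on $L_x$; these $f_{j,x}$ are real analytic on $L_x\cap\Omega'$, and a second application of \cref{Prop::NelsonTheorem2} gives them uniform $\CXomegaSpace{X}{r}[\cdot]$ control. After rescaling $f_{j,x}^\delta:=\delta^{d_j}f_{j,x}$, \cref{Thm::GenSubR::MainThm} applied uniformly over $x\in\Compact$, $\delta\in(0,1]$ immediately yields \cref{Item::BeyondHor::Estimatenux,Item::BeyondHor::Doubling,Item::BeyondHor::Estimateh,Item::BeyondHor::Containments,Item::BeyongHor::Pullbacks}, while the real analyticity of each $\Phi_{x,\delta}$ in \cref{Item::BeyondHor::RADiffeo} follows from classical real analytic ODE theory once the $C^2$ diffeomorphism claim (from \cref{Thm::QuantRes::MainThm} \cref{Item::QuantRes::PhiDiffeo}) is in hand.

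The main obstacle is maintaining uniformity across $x\in\Compact$ where $n_0(x)=\dim L_x$ jumps: classical proofs of Frobenius localize to fixed-dimensional charts and lose control near such singular points (cf.~\cref{Rmk::Intro::BadAtSingular}). The point of invoking \cref{Thm::QuantRes::MainThm} is precisely that its hypotheses and admissible constants are coordinate-free and depend only on $\CXomegaSpace{X_{J_0}}{\eta}$-norms of the structure coefficients, so the uniformity survives the dimension change. Verifying carefully that the Lebesgue-number argument and the application of \cref{Prop::NelsonTheorem2} both produce bounds independent of the choice of $x_r$ covering $x$ (and in particular independent of which leaf $x$ lies on) is the technical core.
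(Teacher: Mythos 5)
Your roadmap (Weierstrass/Noetherian commutator structure, compactness cover, rescaling, quantitative machinery) is in the right spirit, but the central reduction you propose --- applying \cref{Thm::GenSubR::MainThm} ``uniformly over $x\in\Compact$, $\delta\in(0,1]$'' --- does not go through, and this is precisely the gap you flag in your last paragraph without resolving. \Cref{Thm::GenSubR::MainThm} (and the $C^\infty$ version \cite[\SSGenSubResult]{StovallStreet}) are stated for a \emph{single} connected manifold $M$ carrying a one-parameter family $X^\delta$ and a \emph{single} density $\nu$; the hypotheses (I)--(VII) are imposed on that fixed $M$, and the constants are tied to a compact subset of $M$ and to uniform $C^1$ control of $\Psi^*X_j^\delta$ for a fixed family of charts $\Psi$ on $M$. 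In the beyond-H\"ormander setting there is no such $M$: each $x\in\Compact$ lives on its own leaf $L_x$, with its own density $\nu_x$, and the dimension $n_0(x)$ jumps. Applying the theorem leaf-by-leaf yields constants that depend on a compact piece of $L_x$ and on charts for $L_x$, and there is no canonical way to make these uniform in $x$ --- this is exactly the singular-foliation blow-up of \cref{Rmk::Intro::BadAtSingular}. This is also why the paper does route \cref{Thm::NSW::RA} (where there is a genuine ambient manifold $\Omega$) through \cref{Thm::GenSubR::MainThm}, but proves \cref{Thm::Scaling::BeyondHor} \emph{directly} from \cref{Thm::QuantRes::MainThm}, \cref{Thm::Density::MainThm}, \cref{Cor::Density::MainCor}, and \cref{Prop::EuclidDense::MainProp}: those are stated per base point with coordinate-free admissible constants, so uniformity over $x\in\Compact$, $\delta\in(0,1]$ is built in. The verification step is \cref{Lemma::ScalingRevis::TheoremApplies}.

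Two subsidiary gaps. First, the degree constraint $d_l\le d_j+d_k$ with a \emph{single} neighborhood $U_x$ working for all pairs $j,k$ simultaneously is not a one-line consequence of Lobry's locally-finite-generation observation; the paper proves it via \cref{Lemma::ScalingRevis::NoetherianWithFormal} (a Noetherian argument in $\sA_{n+1}$ encoding formal degrees as powers of an auxiliary variable $t$) followed by the induction on degree in \cref{Lemma::ScalingRevis::MakeUAndF} and \cref{Prop::ScalingRevis::NSW}. Second, your treatment of the divergence functions $f_{j,x}$ presumes uniformly controlled local real analytic charts on $L_x$, which is exactly what fails near singular points of the foliation; the paper instead obtains the uniform $\AXSpace{X^\delta_{J_0}}{x}{r}$ control of the $f_j^{x,\delta}$ chart-free, via \cref{Prop::EuclidDense::MainProp}, by showing the quotients $\det X_{K,J_0}/|\det_{n\times n}X_{J_0}|$ satisfy an ODE along the $X_{J_0}$-flow (\cref{Lemma::EudlidDense::DerivOfFraction}, \cref{Lemma::EuclidDense::QuotientRA}) and applying \cref{Prop::IdentRAManifold::MainProp}.
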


\begin{rmk}
See \cref{Section::MultiParam} for a generalization of \cref{Thm::Scaling::BeyondHor} to the ``multi-parameter'' setting.
\end{rmk}

For the proof of \cref{Thm::Scaling::BeyondHor}, see \cref{Section::ScalingRevis}.

\begin{rmk}\label{Rmk::BeyondHor::UsefulForSingular}
\Cref{Thm::Scaling::BeyondHor} is useful even when restricting attention to $\delta=1$.  Indeed, the leaves $L_x$ are real analytic manifolds, and account for the foliation of $\Omega$ associated to the real analytic
vector fields $V_1,\ldots, V_r$.  This may be a singular foliation:  $\dim L_x$ may not be constant in $x$.  Suppose $x_0\in \Compact$ is a singular point; i.e., $\dim L_x$ is not constant on any neighborhood of $x_0$.
Then, the usual constructions of the real analytic coordinate systems on $L_x$ ``blow up'' as $x$ approaches $x_0$; one does not obtain a useful quantitative control of these charts.
One can think of the map $\Phi_{x,1}$ from \cref{Thm::Scaling::BeyondHor} as a real analytic coordinate system near the point $x$.  The conclusions of \cref{Thm::Scaling::BeyondHor} amount to a quantitative control
of this chart, which is uniform in $x$.  A similar sort of uniform control was an important ingredient in \cite{SteinStreetIII}, and will likely be useful in other questions from analysis regarding real analytic vector fields.
\end{rmk}

	\subsection{Generalized Sub-Riemannian Geometries}
The results described in \cref{Section::NSW} concern the classical setting of sub-Riemannian geometry; which arises in many questions, including in the study of ``maximally hypoelliptic'' differential equations (see \cite[Chapter 2]{StreetMultiParamSingInt} for details).  In \cite[\SSGenSubR]{StovallStreet}, this setting was generalized to account for certain situations which arise
for some partial differential equations which are not maximally hypoelliptic.  With the results of this paper in hand, the results from \cite[\SSGenSubR]{StovallStreet} transfer to the real analytic setting.  We
present these results here (with a few slight modifications from the setting in \cite[\SSGenSubR]{StovallStreet}).  One important thing to note is that, in this section (and unlike the settings described above) we do not
require that the vector fields be a priori real analytic.  We only require them to be $C^1$, along with certain estimates which allow us to construct a coordinate system in which they are real analytic.

Let $M$ be a connected $n$ dimensional $C^2$ manifold and for each $\delta\in (0,1]$, let $X^{\delta}=X_1^{\delta},\ldots, X_q^{\delta}$ be a list of $C^1$ vector fields on $\Omega$
which span the tangent space at every point.  For $x\in \Omega$, $\delta\in (0,1]$ set $B(x,\delta):=B_{X^{\delta}}(x,1)$,
where $B_{X^{\delta}}(x,1)$ is defined by \cref{Eqn::FuncManifold::DefnBX}.  Let $\nu$ be a $C^1$ density on $M$.  Our goal is to give conditions under
which the balls $B(x,\delta)$ when paired with the density $\nu$ give a real analytic space of homogeneous type.
Some of the conditions we give can be thought of as analogs of the axioms of a space of homogeneous type, while others can be thought of as 
endowing this space of homogeneous type with an adapted real analytic structure.
In what follows, we write $X^{\delta}$ as the column vector of vector fields $[X_1^{\delta},\ldots, X_q^{\delta}]^{\transpose}$.  Because of this, if we are given a matrix
$A:M\rightarrow \M^{q\times q}$, it makes sense to consider $A(x)X^{\delta}$ which again gives a column vector of vector fields.
It also makes sense to consider the space $L^\infty(M)$, which can be defined with any strictly positive density on $M$--all such densities give the same space and norm.

We assume the following, which are a real analytic version of the assumptions in \cite[\SSGenSubR]{StovallStreet}:
\begin{enumerate}[(I)]
\item $\forall \delta\in(0,1]$, $x\in M$, we have $\Span\{X_1^{\delta}(x),\ldots, X_q^{\delta}(x)\}=T_xM$.
\item\label{Item::GenSub::Assume::C1Norms} $X_1^{\delta},\ldots, X_q^{\delta}$ are uniformly $C^1$ in the following sense.  For every $x\in M$, there exists an open set $U\subseteq \R^n$ and a diffeomorphism $\Psi:U\rightarrow V$, where $V$ is a neighborhood of $x$ in $M$,
such that
\begin{equation*}
	\sup_{\delta\in (0,1]} \CjNorm{\Psi^{*} X_j^{\delta}}{1}[U][\R^n]<\infty.
\end{equation*}
\item $X_j^{\delta}\rightarrow 0$ and $\delta\rightarrow 0$, uniformly on compact subsets of $M$.  More precisely, for every $x\in M$, there exists an open set $U\subseteq \R^n$ and a diffeomorphism $\Psi:U\rightarrow V$, where $V$ is a neighborhood of $x$ in $M$,
such that
\begin{equation*}
\lim_{\delta\rightarrow 0} \CjNorm{\Psi^{*} X_j^{\delta}}{0}[U][\R^n]=0.
\end{equation*}

\item $\forall 0<\delta_1\leq \delta_2\leq 1$, $X^{\delta_1}=T_{\delta_1,\delta_2} X^{\delta_2}$, where $T_{\delta_1,\delta_2}\in L^\infty(M;\M^{q\times q})$ with $\Norm{T_{\delta_1,\delta_2}}[L^\infty(M;\M^{q\times q})]\leq 1$.

\item $\exists B_1,B_2\in (1,\infty)$, $b_1,b_2\in (0,1)$ such that $\forall \delta\in (0,1/B_1]$, $\exists S_\delta\in L^\infty(M;\M^{q\times q})$ and $\forall \delta\in (0,1/B_2]$, $\exists R_\delta\in L^\infty(M;\M^{q\times q})$
with $S_\delta X^{B_1 \delta} = X^{\delta}$, $R_\delta X^{\delta} = X^{B_2\delta}$, and
\begin{equation*}
	\sup_{0<\delta\leq 1/B_1} \Norm{S_\delta}[L^\infty(M;\M^{q\times q})]\leq b_1, \quad \sup_{0<\delta\leq 1/B_2} \Norm{R_\delta}[L^\infty(M;\M^{q\times q})]\leq b_2^{-1}.
\end{equation*}

\item\label{Item::GenSub::Assump::Boudnscs} For every compact set $\Compact\Subset M$, there exists $\xi\in (0,1]$ and $s>0$ such that $\forall \delta\in (0,1]$ and $x\in \Compact$, $[X_j^{\delta}, X_k^{\delta}]=\sum_{l=1}^q c_{j,k}^{l,x,\delta} X_l^{\delta}$ on $B_{X^{\delta}}(x,\xi)$, where
\begin{equation*}
	\sup_{x\in \Compact,\delta\in (0,1]} \BCXomegaNorm{c_{j,k}^{l,x,\delta}}{X^{\delta}}{s}[B_{X^{\delta}}(x,\xi)]<\infty.
\end{equation*}

\item We assume $\Lie{X_j^{\delta}} \nu = f_j^{\delta} \nu$, for $1\leq j\leq q$, $\delta\in (0,1]$, where for every relatively compact open set $\Omega'\Subset M$, there exists $s>0$ with
\begin{equation*}
	\sup_{\delta\in (0,1]} \CXomegaNorm{f_j^{\delta}}{X^{\delta}}{s}[\Omega']<\infty.
\end{equation*}
\end{enumerate}

Fix $\zeta\in (0,1]$ (we  take $\zeta=1$ for many applications).
For each $x\in M$, $\delta\in (0,1]$, pick $j_1=j_1(x,\delta),\ldots, j_n=j_n(x,\delta)\in \{1,\ldots, q\}$ so that
\begin{equation*}
	\max_{k_1,\ldots, k_n\in \{1,\ldots, q\}} \frac{ X_{k_1}^{\delta}(x)\wedge \cdots\wedge X_{k_n}^\delta(x)}{ X_{j_1}^{\delta}(x)\wedge \cdots \wedge X_{j_n}^{\delta}(x)}\leq \zeta^{-1}.
\end{equation*}
For this choice of $j_1=j_1(x,\delta),\ldots, j_n=j_n(x,\delta)$, define
\begin{equation*}
	\Phi_{x,\delta}(t_1,\ldots, t_n) := \exp(t_1 X_{j_1}^{\delta}+\cdots+ t_nX_{j_n}^{\delta})x.
\end{equation*}
Define, for $x\in M$, $\delta\in (0,1]$
\begin{equation*}
\Lambda(x,\delta):=\max_{k_1,\ldots, k_n\in \{1,\ldots, q\}} |\nu(X_{k_1},\ldots, X_{k_n})(x)|.
\end{equation*}

\begin{thm}\label{Thm::GenSubR::MainThm}
\begin{enumerate}[label=(\roman*),series=gensubriemannianenumeration]
\item\label{Item::GenSub::1} $B(x,\delta_1)\subseteq B(x,\delta_2)$, $\forall x\in M$, $0<\delta_1\leq \delta_2\leq 1$.
\item\label{Item::GenSub::2} $\bigcap_{\delta\in (0,1]} \overline{B(x,\delta)} = \{x\}$, $\forall x\in M$.
\item\label{Item::GenSub::3} $B(x,\delta)\cap B(y,\delta)\ne \emptyset \Rightarrow B(y,\delta)\subseteq B(x,C\delta)$, $\forall \delta\in (0,1/C]$, where $C=B_1^k$ and $k$ is chosen so that $b_1^k\leq \frac{1}{3}$.
\end{enumerate}
Fix a compact set $\Compact\Subset M$.  In what follows, we write $A\lesssim B$ for $A\leq C B$ where $C$ is a positive constant which may depend on $\Compact$, but does not depend on the particular point $x\in \Compact$
or on the scale $\delta\in (0,1]$.  We write $A\approx B$ for $A\lesssim B$ and $B\lesssim A$.  There exist $\eta_1, \xi_0\approx 1$ such that $\forall x\in \Compact$:
\begin{enumerate}[resume*=gensubriemannianenumeration]
\item\label{Item::GenSubR::nuSigns} $\nu$ is either everywhere strictly positive, everywhere strictly negative, or everywhere $0$.
\item\label{Item::GenSubR::nuValues} $\nu(B(x,\delta))\approx \nu\mleft(X_{j_1(x,\delta)}^{\delta},\ldots, X_{j_n(x,\delta)}^{\delta}\mright)(x)$ and $|\nu(B(x,\delta))|\approx \Lambda(x,\delta)$, $\forall \delta\in (0,\xi_0]$.
\item\label{Item::GenSubR::Doubling} $|\nu(B(x,2\delta))|\lesssim |\nu(B(x,\delta))|$, $\forall \delta\in (0,\xi_0/2]$.
\item\label{Item::GenSubR::Diffeo} $\forall \delta\in (0,1]$, $\Phi_{x,\delta}(B^n(\eta_1))\subseteq M$ is open and $\Phi_{x,\delta}:B^n(\eta_1)\rightarrow \Phi_{x,\delta}(B^n(\eta_1))$ is a $C^2$ diffeomorphism.
\item\label{Item::GenSubR::Esth} Define $h_{x,\delta}\in \CSpace{B^n(\eta_1)}$ by $h_{x,\delta}\LebDensity=\Phi_{x,\delta}^{*} \nu$.  
Then, 
\begin{equation*}
h_{x,\delta}(t)\approx \nu(X_{j_1(x,\delta)}^{\delta},\ldots, X_{j_n(x,\delta)}^{\delta})(x)\text{ and } |h_{x,\delta}(t)|\approx \Lambda(x,\delta),\quad  \forall t\in B^n(\eta_1).
\end{equation*}
Also, $h_{x,\delta}\in \ASpace{n}{\eta_1}$ and $\ANorm{h_{x,\delta}}{n}{\eta_1}\lesssim \Lambda(x,\delta)$.
\item\label{Item::GenSubR::Contaiment} $B(x,\xi_0\delta)\subseteq \Phi_{x,\delta}(B^n(\eta_1))\subseteq B(x,\delta)$, $\forall \delta\in (0,1]$.
\item\label{Item::GenSubR::Ysmooth} Let $Y_j^{x,\delta}:=\Phi_{x,\delta}^{*} X_j^{\delta}$, $1\leq j\leq q$, so that $Y_j^{x,\delta}$ is a vector field on $B^{n}(\eta_1)$.  Then, $Y_j^{x,\delta}\in \ASpace{n}{\eta_1}$ and
\begin{equation*}
	\ANorm{Y_j^{x,\delta}}{n}{\eta_1}[\R^n]\lesssim 1, \quad 1\leq j\leq q.
\end{equation*}
Furthermore, $Y_1^{x,\delta}(u),\ldots, Y_q^{x,\delta}(u)$ span $T_u B^n(\eta_1)$, uniformly in $x$, $\delta$, and $u$ in the sense that
\begin{equation*}
	\max_{k_1,\ldots, k_n \in \{1,\ldots, q\}} \inf_{u\in B^n(\eta_1)} \mleft| \det\mleft( Y_{k_1}^{x,\delta}(u) | \cdots| Y_{k_n}^{x,\delta}(u)  \mright) \mright|\approx 1.
\end{equation*}
\end{enumerate}
\end{thm}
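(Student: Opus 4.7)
The plan is to reduce the theorem to a pointwise application of \cref{Thm::QuantRes::MainThm} and \cref{Thm::Density::MainThm} at each pair $(x,\delta) \in \Compact \times (0,1]$, with the implicit constants $\lesssim$ arising as admissible (resp.\ $\nu$-admissible) constants that depend only on the hypotheses (I)--(VII) and on $\Compact$. The parts of the conclusion that do not involve real analyticity---namely items \cref{Item::GenSub::1,Item::GenSub::2,Item::GenSub::3}, \cref{Item::GenSubR::nuSigns}, \cref{Item::GenSubR::Doubling}, \cref{Item::GenSubR::Diffeo}, \cref{Item::GenSubR::Contaiment}, and the $C^2$-diffeomorphism plus span statement within \cref{Item::GenSubR::Ysmooth}---are already proved in \cite[\SSGenSubResult]{StovallStreet} in essentially the same setup, and I would simply import them. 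What is genuinely new is the real-analytic control: the $\ASpace{n}{\eta_1}$ estimates for $h_{x,\delta}$ in \cref{Item::GenSubR::Esth} and for $Y_j^{x,\delta}$ in \cref{Item::GenSubR::Ysmooth}.

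For each fixed $(x,\delta)$, I apply \cref{Thm::QuantRes::MainThm} to the list $X^\delta = X_1^\delta,\ldots,X_q^\delta$ based at $x_0 = x$, after reordering so that the index set $J_0$ from \cref{Thm::QuantRes::MainThm} coincides with $(j_1(x,\delta),\ldots,j_n(x,\delta))$; taking $\zeta$ as fixed above makes this legitimate. The theorem produces the admissible constants $\eta_1,\xi_1,\xi_2$ and the map $\Phi_{x,\delta}$, and yields $Y_j^{x,\delta} \in \ASpace{n}{\eta_1}[\R^n]$ with $\ANorm{Y_j^{x,\delta}}{n}{\eta_1}[\R^n] \lesssim 1$, provided the admissible constants for this application are uniform in $(x,\delta)$. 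The required verifications are: (a) uniformity of $\xi$ and of $\CNorm{c_{j,k}^{l,x,\delta}}{B_{X^\delta}(x,\xi)}$, both immediate from Assumption (VI); (b) existence of a uniform $\eta > 0$ for which $X^\delta_{J_0}$ satisfies $\sC(x,\eta,\fM)$, together with a uniform upper bound on $\AXNorm{c_{j,k}^{l,x,\delta}}{X^\delta_{J_0}}{x}{\eta}$, the latter coming from \cref{Lemma::FuncSpaceRev::Mfld::ABigger} combined with (VI); and (c) a uniform positive lower bound on $\delta_0$.

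The main obstacle is (b) and (c), i.e., showing that $\eta$ and $\delta_0$ admit positive lower bounds independent of $(x,\delta)$. I would derive them by combining Assumption (II), which gives uniform $C^1$ bounds on the $X_j^\delta$ in finitely many coordinate charts covering $\Compact$, with Assumption (III), which says $X_j^\delta \to 0$ in $C^0$ as $\delta \to 0$. Standard ODE arguments (inverse function theorem plus Gr\"onwall) then guarantee a uniform existence time for the relevant exponential and uniform injectivity of the time-one map $t \mapsto e^{t_1 X^\delta_{j_1} + \cdots + t_n X^\delta_{j_n}} x$ on each chart, with the bounds only improving as $\delta \to 0$. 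The parenthetical remark following \cref{Defn::QuantRes::AdmissibleConst} (pointing to \cite[\SSLemmaMoreOnAssump]{StovallStreet}) indicates that precisely this kind of uniformization of $\eta$ and $\delta_0$ is carried out there under hypotheses equivalent to (I)--(III), and I would invoke it directly rather than redoing the ODE estimates.

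The density portion proceeds in parallel. For each $(x,\delta)$ I apply \cref{Thm::Density::MainThm} with $f_j := f_j^\delta$; Assumption (VII) together with \cref{Lemma::FuncSpaceRev::Mfld::ABigger} supplies a uniform $r > 0$ and uniform upper bounds on $\AXNorm{f_j^\delta}{X^\delta_{J_0}}{x}{r}$, so the $\nu$-admissible constants are uniform in $(x,\delta)$. Part (a) of that theorem yields $h_{x,\delta}(t) \approx \nu(X^\delta_{j_1(x,\delta)},\ldots,X^\delta_{j_n(x,\delta)})(x)$ on $B^n(\eta_1)$, and part (b) yields the $\ASpace{n}{\eta_1}$ bound on $h_{x,\delta}$; this is exactly \cref{Item::GenSubR::Esth}. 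Applying \cref{Cor::Density::MainCor} in this setting then gives $\nu(B(x,\delta)) \approx \nu(X^\delta_{j_1},\ldots,X^\delta_{j_n})(x)$ and the comparison with $\Lambda(x,\delta)$ appearing in \cref{Item::GenSubR::nuValues}, and the doubling \cref{Item::GenSubR::Doubling} falls out by comparing the expressions for $\delta$ and $2\delta$ using the structural relation $X^\delta = R_\delta X^{B_2 \delta}$ from hypothesis (V). Assembling these new ingredients with the items imported from \cite[\SSGenSubResult]{StovallStreet} completes every claim of the theorem.
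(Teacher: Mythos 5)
Your overall strategy is the same as the paper's: fix $(x,\delta)\in\Compact\times(0,1]$, verify that the hypotheses of \cref{Thm::QuantRes::MainThm} and \cref{Thm::Density::MainThm} hold with admissible constants uniform in $(x,\delta)$ (using \cref{Lemma::PfQual::Existsetadelta} for $\eta$ and $\delta_0$, and Assumptions (VI)--(VII) together with \cref{Lemma::FuncSpaceRev::Mfld::ABigger} for the $\AXSpace{\cdot}{\cdot}{\cdot}$ bounds), and then read off the new real-analytic content from those theorems. That part is sound. But there is a genuine gap in your treatment of \cref{Item::GenSubR::nuValues,Item::GenSubR::Doubling}.

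You write that ``applying \cref{Cor::Density::MainCor} in this setting then gives $\nu(B(x,\delta)) \approx \nu(X^\delta_{j_1},\ldots,X^\delta_{j_n})(x)$.'' That is not what \cref{Cor::Density::MainCor} gives. Applied to $X^\delta$ at base point $x$, it compares $\nu$ evaluated on the \emph{small} ball $B_{X^\delta}(x,\xi_2)$ (where $\xi_2$ is the admissible constant from \cref{Thm::QuantRes::MainThm}) with $\Lambda(x,\delta)$; but $B(x,\delta)=B_{X^\delta}(x,1)$, and $\xi_2<1$. Closing that gap requires two additional steps that are absent from your proposal. First one needs the containment $B(x,\xi_0\delta)\subseteq B_{X^\delta}(x,\xi_2)$ (so the $\xi_0$ of \cref{Item::GenSubR::Contaiment} must be produced \emph{from this paper's} $\xi_2$, not merely imported from \cite{StovallStreet}, where the admissible constants are different). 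Second, one needs the comparison $\Lambda(x,c\delta)\approx\Lambda(x,\delta)$ for fixed $c$, which is proved by using Assumptions (IV) and (V) --- writing $X^{\delta_1}=T_{\delta_1,\delta_2}X^{\delta_2}$ with $\Norm{T_{\delta_1,\delta_2}}\leq 1$ to get monotonicity of $\Lambda$, and iterating $R_\delta$ with $\Norm{R_\delta}\leq b_2^{-1}$ for the reverse direction. Only with both pieces can one run the sandwich $\nu(B(x,\xi_0\delta))\leq\nu(B_{X^\delta}(x,\xi_2))\approx\Lambda(x,\delta)\approx\Lambda(x,\xi_0\delta)$ and $\Lambda(x,\delta)\approx\nu(B_{X^\delta}(x,\xi_2))\leq\nu(B(x,\delta))$ to get \cref{Item::GenSubR::nuValues}, and then \cref{Item::GenSubR::Doubling} falls out. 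You mention the $R_\delta$ structure only in passing for doubling, but it is needed already for \cref{Item::GenSubR::nuValues}, and the $T_{\delta_1,\delta_2}$ monotonicity is not mentioned at all.

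A smaller point: \cref{Item::GenSubR::nuSigns} is not simply imported from \cite{StovallStreet} in the paper. It is deduced from \cref{Item::GenSubR::Esth} (which in particular shows $h_{x,\delta}$ has constant sign on $B^n(\eta_1)$) together with connectedness of $M$, and then one normalizes $\nu$ to be nonnegative before proceeding. This is a harmless reorganization, but your claim that \cref{Item::GenSubR::nuSigns} sits entirely outside the real-analytic machinery is inaccurate as stated.
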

\begin{proof}
\Cref{Item::GenSub::1}, \cref{Item::GenSub::2}, and \cref{Item::GenSub::3} all follow just as in the corresponding results in \cite[\SSGenSubResult]{StovallStreet}.
For the remaining parts, the goal is to apply \cref{Thm::QuantRes::MainThm}, \cref{Thm::Density::MainThm}, and \cref{Cor::Density::MainCor} to the vector fields
$X_1^{\delta},\ldots, X_q^{\delta}$, for $\delta\in (0,1]$ and for each base point $x_0\in \Compact$ (uniformly for $\delta\in (0,1]$, $x_0\in \Compact$)--we use the choice
of $\xi$ and $\nu$ from above, and take $\zeta=1$.

By the Picard--Lindel\"of theorem, we make take $\eta\in (0,1]$ depending on $\Compact$ and the bounds from \cref{Item::GenSub::Assume::C1Norms}, so that
$\forall x\in \Compact$, $\delta\in (0,1]$, $X_1^{\delta},\ldots, X_q^{\delta}$ satisfy $\sC(x,\eta,M)$.
Take $\delta_0>0$ as in \cref{Lemma::PfQual::Existsetadelta}, when applied to $X_1^{\delta},\ldots, X_q^{\delta}$.  It can be seen from the proof of \cref{Lemma::PfQual::Existsetadelta} (which can be found
in \cite[\SSLemmaMoreOnAssump]{StovallStreet}) that $\delta_0$ can be chosen independent of $\delta\in (0,1]$ (this uses \cref{Item::GenSub::Assume::C1Norms}).
In light of \cref{Item::GenSub::Assump::Boudnscs} (see, also, \cref{Rmk::QuantRest::CanUseComegaNormsInstead}), \cref{Thm::QuantRes::MainThm}, \cref{Thm::Density::MainThm}, and \cref{Cor::Density::MainCor}
apply to the vector fields $X_1^{\delta},\ldots, X_q^{\delta}$ for each $\delta\in (0,1]$ and each base point $x\in \Compact$ (with $\eta$ replaced by $\min\{\eta,s\}$).  Each constant which is
$0$-admissible, admissible, $\nu$-admissible, or $0;\nu$-admissible in these results can be chosen independent of $x\in \Compact$ and $\delta\in (0,1]$.  Let $\xi_1,\xi_2,\eta_1>0$ be as in \cref{Thm::QuantRes::MainThm},
so that $\xi_1$, $\xi_2$, and $\eta_1$ can be chosen independent of $x\in \Compact$ and $\delta\in (0,1]$.  The map $\Phi_{x,\delta}$ is precisely the map $\Phi$ from \cref{Thm::QuantRes::MainThm}
when using the base point $x$ and the vector fields $X_1^{\delta},\ldots, X_q^{\delta}$.

\Cref{Item::GenSubR::Diffeo} follows from \cref{Thm::QuantRes::MainThm} \cref{Item::QuantRes::PhiOpen} and \cref{Item::QuantRes::PhiDiffeo}.
\Cref{Item::GenSubR::Esth} follows from \cref{Thm::Density::MainThm} and \cref{Cor::Density::MainCor}.
\Cref{Item::GenSubR::Esth} implies that on a neighborhood of each point, $\nu$ is either strictly positive, strictly negative, or identically $0$.  Since $M$ is connected, it follows that
$\nu$ is either everywhere strictly positive, everywhere strictly negative, or everywhere $0$; i.e., \cref{Item::GenSubR::nuSigns} holds.
By multiplying $\nu$
by $\pm 1$, we may henceforth assume (without loss of generality) that $\nu$ is everywhere non-negative--and is either identically $0$ or everywhere strictly positive.

\Cref{Item::GenSubR::Contaiment}: \cref{Thm::QuantRes::MainThm} gives $\xi_2\approx 1$ ($\xi_2<1$) such that
\begin{equation*}
	B_{X^{\delta}}(x,\xi_2)\subseteq \Phi_{x,\delta}(B^n(\eta_1))\subseteq B_{X^{\delta}}(x,\xi)\subseteq B_{X^{\delta}}(x,1)=B(x,\delta).
\end{equation*}
Thus, to prove \cref{Item::GenSubR::Contaiment}, we wish to show $\exists \xi_0\approx 1$ with 
\begin{equation}\label{Eqn::PfQual::Containmentxi0}
B(x,\xi_0\delta)\subseteq B_{X^{\delta}}(x,\xi_2).
\end{equation}
This follows just as in \cite[\SSGenSubResult]{StovallStreet}, where it is shown that we may take $\xi_0=B_1^{-k}$, where $k$ is chosen so that $b_1^k\leq \xi_2$.

We claim, for $\delta_1\leq \delta_2\leq 1$,
\begin{equation}\label{Eqn::PfQual::LambdaIncrease}
	\Lambda(x,\delta_1)\lesssim \Lambda(x,\delta_2),
\end{equation}
where the implicit constant can be chosen to depend only on $q$.  Indeed,
\begin{equation*}
\begin{split}
\Lambda(x,\delta_1) &= \max_{k_1,\ldots, k_n\in \{1,\ldots, q\}} \mleft| \nu(X_{k_1}^{\delta_1},\ldots, X_{k_{n}}^{\delta_1})(x)  \mright|
\\&= \max_{k_1,\ldots, k_n\in \{1,\ldots, q\}} \mleft| \nu\mleft( (T_{\delta_1,\delta_2} X^{\delta_2})_{k_1}, \ldots, (T_{\delta_1,\delta_2} X^{\delta_2})_{k_n}  \mright)(x) \mright|.
\end{split}
\end{equation*}
Since $\Norm{T_{\delta_1,\delta_2}}[L^\infty(M;\M^{q\times q})]\leq 1$, the right hand side is $\nu$ evaluated at a linear combination, with (variable) coefficients bounded by $1$, 
of the vector fields $X_1^{\delta_2},\ldots, X_q^{\delta_2}$.  Using the properties of densities, it follows that
\begin{equation*}
\mleft| \nu\mleft( (T_{\delta_1,\delta_2} X^{\delta_2})_{k_1}, \ldots, (T_{\delta_1,\delta_2} X^{\delta_2})_{k_n}  \mright)(x) \mright|\lesssim \Lambda(x,\delta_2),
\end{equation*}
\cref{Eqn::PfQual::LambdaIncrease} follows.

Next we claim, for $c>0$ fixed,
\begin{equation}\label{Eqn::PfQual::LambdaConst}
\Lambda(x,c\delta)\approx \Lambda(x,\delta),\quad \delta, c\delta\in (0,1],
\end{equation}
where the implicit constant depends on $c$, but not on $x$ or  $\delta$.  It suffices to prove \cref{Eqn::PfQual::LambdaConst} for $c<1$.  By \cref{Eqn::PfQual::LambdaIncrease}, it suffices to prove 
\cref{Eqn::PfQual::LambdaConst} for $c=B_2^{-k}$ for some $k$.  We have
\begin{equation}\label{Eqn::PfQual::DoThecs}
\begin{split}
	\Lambda(x,\delta) &= \max_{k_1,\ldots, k_n\in \{1,\ldots, q\}} \mleft| \nu\mleft( X_{k_1}^{\delta},\ldots, X_{k_n}^{\delta} \mright)(x) \mright|
	\\&=\max_{k_1,\ldots, k_n\in \{1,\ldots, q\}}  \mleft| \nu\mleft( (AX^{c\delta})_{k_1},\ldots, (AX^{c\delta})_{k_n} \mright)(x) \mright|,
\end{split}
\end{equation}
where $A(x) = R_{B_2^{-1}\delta}(x) R_{B_2^{-2}\delta}(x)\cdots R_{B_2^{-k}\delta}(x)$.  Since $\sup_{x\in M} \Norm{A(x)}[\M^{q\times q}]\leq b_2^{-k}\lesssim 1$ (where the implicit constant depends on $k$), it follows that the
right hand side of \cref{Eqn::PfQual::DoThecs} is $\nu$ evaluated at linear combinations, with (variable) coefficients which have absolute value $\lesssim 1$, of the vectors
$X_1^{c\delta},\ldots, X_q^{c\delta}$.  It follows from the properties of densities that
\begin{equation*}
	\mleft| \nu\mleft( (AX^{c\delta})_{k_1},\ldots, (AX^{c\delta})_{k_n} \mright)(x) \mright|\lesssim \Lambda(x,c\delta).
\end{equation*}
We conclude $\Lambda(x,\delta)\lesssim \Lambda(x,c\delta)$.  Combining this with \cref{Eqn::PfQual::LambdaIncrease} proves \cref{Eqn::PfQual::LambdaConst}.

\Cref{Cor::Density::MainCor} and using that we have (without loss of generality) assumed $\nu$ is non-negative, shows
\begin{equation}\label{Eqn::PfQual::nuapproxLambda}
\nu(B_{X^{\delta}}(x,\xi_2))\approx \Lambda(x,\delta).
\end{equation}
Combining \cref{Eqn::PfQual::nuapproxLambda} with \cref{Eqn::PfQual::LambdaConst} and \cref{Eqn::PfQual::Containmentxi0} shows
\begin{equation}\label{Eqn::PfQual::nuLessLambda}
\nu(B(x,\xi_0\delta))\leq \nu(B_{X^{\delta}}(x,\xi_2)) \approx \Lambda(x,\delta)\approx \Lambda(x,\xi_0\delta).
\end{equation}
Conversely, using \cref{Eqn::PfQual::nuapproxLambda} again, we have
\begin{equation}\label{Eqn::PfQual::LambdaLessnu}
\Lambda(x,\delta)\approx \nu(B_{X^{\delta}}(x,\xi_2))\leq \nu(B_{X^{\delta}}(x,1))=\nu(B(x,\delta)).
\end{equation}
Combining \cref{Eqn::PfQual::nuLessLambda} and \cref{Eqn::PfQual::LambdaLessnu} proves $|\nu(B(x,\delta))|\approx \Lambda(x,\delta)$, $\forall \delta\in (0,\xi_0]$.
Since we have assumed $\nu$ is everywhere non-negative, \cref{Item::GenSubR::nuValues} follows from this and \cref{Cor::Density::MainCor}.
\Cref{Item::GenSubR::Doubling} follows from \cref{Item::GenSubR::nuValues} and \cref{Eqn::PfQual::LambdaConst}.
\Cref{Item::GenSubR::Ysmooth} follows from \cref{Thm::QuantRes::MainThm}.
\end{proof}

\section{Results from the first paper}\label{Section::Part1}
In this section, we describe the main results needed from \cite{StovallStreet}.
We do not require as detailed information as is discussed in that paper, and so we instead
state an immediate consequence of the results in that paper.

We take the same setup and hypotheses as in \cref{Thm::QuantRes::MainThm}, and define $0$-admissible constants
and admissible constants as in that theorem.  We take $\Phi(t)$ as in \cref{Eqn::QuantRes::DefnPhi}.
We separate the results we need into two parts.

\begin{prop}\label{Prop::Part1::chi}
There exists a $0$-admissible constant $\chi\in (0,\xi]$ such that:
\begin{enumerate}[(a)]
\item $\forall y\in B_{X_{J_0}}(x_0,\chi)$, $\bigwedge X_{J_0}(y)\ne 0$.
\item $\forall y\in B_{X_{J_0}}(x_0,\chi)$,
\begin{equation*}
\sup_{J\in \sI(n,q)} \left|\frac{\bigwedge X_J(y)}{\bigwedge X_{J_0}(y)}\right|\approx_0 1.
\end{equation*}
\item $\forall \chi'\in (0,\chi]$, $B_{X_{J_0}}(x_0,\chi')$ is an open subset of $B_X(x_0,\xi)$ and is therefore a submanifold.
\end{enumerate}
\end{prop}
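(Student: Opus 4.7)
The plan is to reduce the statement to the quantitative ODE arguments developed in \cite{StovallStreet}; indeed, the introductory sentence of this section announces that this proposition is stated as an immediate consequence of results there. I will sketch what is really going on, since the mechanism clarifies why the constants come out $0$-admissible.

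The main objects to track are the ratios
\begin{equation*}
f_J(y) := \frac{\bigwedge X_J(y)}{\bigwedge X_{J_0}(y)}, \qquad J\in \sI(n,q),
\end{equation*}
which are well-defined wherever $\bigwedge X_{J_0}(y)\ne 0$. At $y=x_0$, the hypothesis \cref{Eqn::QuantRes::DefnJ0} gives $|f_J(x_0)|\leq \zeta^{-1}$, which is controlled by a $0$-admissible constant. The plan is to move along integral curves of arbitrary unit combinations $V=\sum_{i=1}^n a_i X_i$, $\sum|a_i|^2<1$, and derive an autonomous ODE system for the $f_J$'s whose right-hand side is a polynomial in the $f_J$'s with coefficients that are polynomials in the $c_{j,k}^l$. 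Since the $C^0$-norms $\|c_{j,k}^l\|_{B_{X_{J_0}}(x_0,\xi)}$ are $0$-admissible, a Gronwall-type bound will yield $|f_J(\gamma(t))|\lesssim_0 1$ for $|t|\leq \chi$ with $\chi$ a $0$-admissible constant.

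The ODE is obtained from the Lie-derivative formula
\begin{equation*}
\Lie{V}\bigl(X_{j_1}\wedge\cdots\wedge X_{j_n}\bigr)=\sum_{k=1}^n X_{j_1}\wedge\cdots\wedge [V,X_{j_k}]\wedge\cdots\wedge X_{j_n},
\end{equation*}
combined with $[X_i,X_{j_k}]=\sum_l c_{i,j_k}^l X_l$, which expresses each term above as a combination of wedges $\bigwedge X_{J'}$. Dividing by $\bigwedge X_{J_0}$ and keeping track of the additional term coming from $\Lie{V}(\bigwedge X_{J_0})$ in the quotient rule expresses $\frac{d}{dt}f_J(\gamma(t))$ as a polynomial expression in the $f_{J'}$'s with coefficients that are evaluations of the $c_{j,k}^l$'s along $\gamma$. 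As long as the $f_J$ remain bounded, $\bigwedge X_{J_0}(\gamma(t))$ stays nonzero, and the system continues to make sense. A standard Gronwall/continuity argument then gives a uniform $0$-admissible radius $\chi\in (0,\xi]$ on which (a) and (b) hold.

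For (c), note that once (a) holds on $B_{X_{J_0}}(x_0,\chi)$, the vector fields $X_1,\ldots,X_n$ span the tangent space at every point of this set, and the ratio bound (b) shows that all the other $X_j$'s lie in their span. Thus the sub-Riemannian ball $B_{X_{J_0}}(x_0,\chi')$ for $\chi'\in (0,\chi]$ is attained by paths driven by vector fields spanning the tangent space, and the standard openness argument for sub-Riemannian balls relative to a spanning family (applied inside the injectively immersed submanifold $B_X(x_0,\xi)$ given by \cref{Prop::QualRes::InjectiveImmresion}) shows $B_{X_{J_0}}(x_0,\chi')$ is open. I expect the main technical obstacle to be bookkeeping the polynomial identities arising when one expands $\Lie{V}(\bigwedge X_J)/\bigwedge X_{J_0}$ in terms of the $f_{J'}$'s — in particular showing that the dependence on the $c_{j,k}^l$ is only through their $C^0$-norms on $B_X(x_0,\xi)$, which is exactly what keeps the resulting constant $0$-admissible rather than merely admissible. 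This bookkeeping is precisely the content of the cited results from \cite{StovallStreet}.
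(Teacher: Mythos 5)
The paper's own proof of this proposition is a one-line citation to \cite[Theorem 4.7]{StovallStreet}, so there is no argument in the paper itself to compare against; your sketch, however, is consistent with the machinery the paper builds later for the linearly-dependent case of \cref{Thm::QuantRes::MainThm}. In particular, \cref{Lemma::PfQuant::Liewedge} and \cref{Lemma::PfQuant::DerivOfQuotient} give precisely the quadratic ODE system $X_j f_J = \sum_K g_{j,J}^K f_K - \sum_K g_{j,J_0}^K f_J f_K$ you describe, with $g_{j,J}^K$ built from $\pm c_{j,k}^l$ and so controlled by $0$-admissible constants. Two small refinements: (1) boundedness of the $f_J$'s does not \emph{by itself} prevent $\bigwedge X_{J_0}$ from vanishing; the actual mechanism is that $\bigwedge X_{J_0}$ (in any local trivialization of $\bigwedge^n TB_X(x_0,\xi)$) satisfies a first-order \emph{linear} ODE with coefficient $\sum_{j,K} a_j g_{j,J_0}^K f_K$ along the flow, and a linear ODE with coefficient bounded in terms of the (Gronwall-controlled) $f_K$'s cannot reach zero in finite time; (2) the blow-up control for a quadratic ODE is not literal Gronwall but a comparison with a scalar Riccati-type equation, which still produces a lower bound on the escape time in terms of $\zeta^{-1}$ and the $C^0$-norms of the $c_{j,k}^l$, hence a $0$-admissible $\chi$. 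With those corrections your approach is the natural one and, as you note, the detailed bookkeeping is exactly what \cite{StovallStreet} supplies.
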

\begin{proof}
This is contained in \cite[\SSMainResult]{StovallStreet}.
\end{proof}

\begin{prop}\label{Prop::PartI::MainProp}
In the special case $n=q$ (so that $X_{J_0}=X$), there exists an admissible constant $\etai\in (0,\eta_0]$ such that:
\begin{enumerate}[label=(\alph*),series=partitheoremenumeration]
\item\label{Item::Part1::PhiOpen} $\Phi(B^n(\etai))$ is an open subset of $B_{X}(x_0,\xi)$, and is therefore a submanifold.
\item\label{Item::Part1::PhiDiffeo} $\Phi:B^n(\etai)\rightarrow \Phi(B^n(\etai))$ is a $C^2$ diffeomorphism.
\end{enumerate}
Let $Y_j=\Phi^{*}X_j$ and write $Y=(I+A)\grad$, where $Y$ denotes the column vector of vector
fields $Y=[Y_1,\ldots, Y_n]^{\transpose}$, $\grad$ denotes the gradient in $\R^n$ thought of as a column vector,
and $A\in \CSpace{B^n(\etai)}[\M^{n\times n}]$.
\begin{enumerate}[resume*=partitheoremenumeration]
\item\label{Item::Part1::ASize} $A(0)=0$. 
\end{enumerate}
For $t\in B^n(\etai)$, let $C(t)$ denote the $n\times n$ matrix with $j,k$ component given by
$\sum_{l=1}^n t_l c_{j,l}^k(\Phi(t))$.  For $t\in B^n(\etai)$ write $t$ in polar coordinates $t=r\theta$.
\begin{enumerate}[resume*=partitheoremenumeration]
\item\label{Item::Part1::ADiffEq} $A$ satisfies the differential equation
\begin{equation}\label{Eqn::Part1::ADiffEq}
\diff{r} r A(r\theta) = -A(r\theta)^2 - C(r\theta)A(r\theta)-C(r\theta).
\end{equation}
\end{enumerate}
\end{prop}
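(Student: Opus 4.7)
Parts \cref{Item::Part1::PhiOpen} and \cref{Item::Part1::PhiDiffeo} are direct consequences of \cite[\SSMainResult]{StovallStreet} in the special case $n=q$ (so $X_{J_0}=X$): in that setting, the main qualitative output of the cited theorem is precisely the existence of an admissible $\etai\in(0,\eta_0]$ for which $\Phi(B^n(\etai))$ is an open submanifold and $\Phi$ is a $C^2$ diffeomorphism onto it. So the remaining content for me to address is the two algebraic/differential identities \cref{Item::Part1::ASize} and \cref{Item::Part1::ADiffEq} for $A$.

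For \cref{Item::Part1::ASize}, I would compute $(d\Phi)_0$ directly from $\Phi(t)=e^{t_1 X_1+\cdots+t_n X_n}x_0$: differentiating in $t_j$ at $t=0$ yields $\partial_{t_j}\Phi(0)=X_j(x_0)$. Hence $Y_j(0)=(d\Phi_0)^{-1}X_j(x_0)=e_j$, so the column vector of vector fields satisfies $Y(0)=\grad$; comparing with $Y=(I+A)\grad$ gives $A(0)=0$.

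For the ODE \cref{Eqn::Part1::ADiffEq}, the key observation is that along a ray $t=r\theta$ with $\theta$ fixed, $\Phi(r\theta)=e^{r V_\theta}x_0$ where $V_\theta:=\sum_l \theta_l X_l$. Thus $r\mapsto\Phi(r\theta)$ is the integral curve of $V_\theta$ through $x_0$, so the pullback $\Phi^{*}V_\theta$ equals the radial field $\partial_r=\sum_k\theta_k\partial_{t_k}$ along the ray; writing $\Phi^{*}V_\theta=\sum_l\theta_l Y_l$ one obtains the side identity $\theta^{\top}A(r\theta)\equiv 0$, which is compatible with the ODE and will be used in the derivation. To actually get the ODE, I would pull back the commutator identities $[X_j,X_k]=\sum_l c_{j,k}^l X_l$ via $\Phi$, contract the $k$-index against $\theta$, and use that $\partial_r = \Phi^{*}V_\theta$ acts on the frame $\{Y_j\}$ by a Lie bracket equation of the form $\partial_r Y_j=[\partial_r,Y_j]=\Phi^{*}[V_\theta,X_j]=-\sum_l\bigl(\sum_k\theta_k c_{j,k}^l\bigr)Y_l$ (at $t=r\theta$). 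In matrix form this becomes a linear transport equation for $Y$ whose coefficient matrix, after reindexing, is exactly $C(r\theta)/r$. Substituting $Y=(I+A)\grad$ and using the Leibniz rule, the quadratic term $A^2$ arises from the nonlinearity in expressing derivatives of $(I+A)\grad$ in the $Y$-frame, and a short reorganization yields $\partial_r(rA(r\theta))=-A(r\theta)^2-C(r\theta)A(r\theta)-C(r\theta)$.

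The main obstacle, and the reason why I would lean on \cite[\SSMainResult, \SSDeriveODE]{StovallStreet}, is the bookkeeping that converts the pulled-back commutator relations into the scalar matrix ODE with the correct factor of $r$ on the left. The presence of that $r$ is precisely what turns a singular initial-value problem at $r=0$ into a well-posed one (given $A(0)=0$), and this is exactly the singular ODE framework that the sequel sections of the paper will exploit. Modulo that calculation, all statements of the proposition reduce to citations of the main results of \cite{StovallStreet}.
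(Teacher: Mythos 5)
Your high‐level plan (reduce everything to citations of \cite{StovallStreet} plus a small amount of bookkeeping) matches the paper's proof, which cites \cite[\SSDifferentOneAdmis]{StovallStreet} for items (a)--(c) and \cite[\SSDeriveODE]{StovallStreet} for the ODE. Your direct verification of $A(0)=0$ is correct and self-contained. But there are two substantive issues.

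First, and most importantly, the paper does \emph{not} simply cite the earlier result; it explicitly handles a mismatch of norms that your proposal glosses over. The cited result from \cite{StovallStreet} uses ``$1'$-admissible constants,'' which require a bound on $\CjNorm{c_{j,k}^l\circ\Phi}{1}[B^n(\eta_0)]$, whereas the present paper's admissible constants control only $\ANorm{c_{j,k}^l\circ\Phi}{n}{\eta_0}$. The latter does \emph{not} imply the former: although $c_{j,k}^l\circ\Phi$ is $C^1$ on $B^n(\eta_0)$, its first derivatives need not be bounded up to the boundary. The fix in the paper is to shrink to $\etat:=\eta_0/2$, where one does have $\CjNorm{c_{j,k}^l\circ\Phi}{1}[B^n(\etat)]\lesssim\ANorm{c_{j,k}^l\circ\Phi}{n}{\eta_0}$, and then apply the cited proposition with $\eta$ replaced by $\etat$. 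Your proposal asserts that (a) and (b) follow ``directly'' from \cite[\SSMainResult]{StovallStreet}; a careful proof must address this translation of hypotheses.

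Second, your sketch of the ODE derivation contains a genuine error. You write $[\partial_r, Y_j]=\Phi^{*}[V_\theta,X_j]$, which would require $\partial_r=\Phi^{*}V_\theta$ as vector fields. But $\Phi^{*}V_\theta=\sum_l\theta_l Y_l$, and this equals $\sum_k\theta_k\partial_{t_k}$ only at points of the fixed ray $\{r\theta : r\ge 0\}$ (this is precisely the identity $\theta^\transpose A(r\theta)=0$ you correctly note). The Lie bracket is not tensorial, so agreement of two vector fields on a one-dimensional curve does not let you interchange them inside a bracket. The correct derivation uses the Euler field $E=\sum_k t_k\partial_{t_k}=r\partial_r$: from $\theta^\transpose A(r\theta)\equiv0$ one gets $E=\sum_l t_l Y_l$ as an identity of vector fields on a full ball, and then $[E,Y_j]$ can be computed both via $[E,Y_j]=r\partial_r Y_j-Y_j$ (components) and via the commutator relations, yielding $r\partial_r A=-(C+A)(I+A)$, equivalently $\partial_r(rA)=-A^2-CA-C$. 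Since you ultimately defer to \cite[\SSDeriveODE]{StovallStreet} for this step, the citation saves you, but the sketch as written would not survive scrutiny.
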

\begin{proof}
All of the results except \cref{Item::Part1::ADiffEq} are contained in \cite[\SSDifferentOneAdmis]{StovallStreet}.
\cite[\SSDifferentOneAdmis]{StovallStreet} uses the notion of a $1'$-admissible constant, which involves bounds on
$\CjNorm{c_{j,k}^l\circ \Phi}{1}[B^n(\eta_0)]$.
Our assumptions imply $\ANorm{c_{j,k}^l\circ \Phi}{n}{\eta_0}\lesssim 1$; this does not quite imply $c_{j,k}^l\circ \Phi\in \CjSpace{1}[B^n(\eta_0)]$ (the problem
is that while $c_{j,k}^l\circ \Phi$ is $C^1$, its first derivatives might not be bounded on $B^n(\eta_0)$).
Instead, we proceed as follows.  By defining $\etat:=\eta_0/2$, we do have
$\CjNorm{c_{j,k}^l\circ \Phi}{1}[B^n(\etat)]\lesssim \ANorm{c_{j,k}^l\circ \Phi}{n}{\eta_0}\lesssim 1$.  Applying \cite[\SSDifferentOneAdmis]{StovallStreet}
with $\eta$ replaced by $\etat$ yields all of the above except \cref{Item::Part1::ADiffEq}.

\Cref{Item::Part1::ADiffEq} is an immediate consequence of \cite[\SSDeriveODE]{StovallStreet}.
\end{proof}

\section{Proofs}\label{Section::Proofs}
In this section, we prove the main results of this paper; namely, \cref{Thm::QualRes::LocalThm,Thm::QualRes::GlobalThm,Thm::QuantRes::MainThm,Thm::Density::MainThm}.
In \cref{Section::Proofs::RAAndODE} we  describe the main way we show functions are real analytic.  Namely, we prove the function in question satisfies an appropriate real analytic ODE, which
forces it to be real analytic; \cref{Section::Proofs::RAAndODE} contains several quantitative  instances of this.
In \cref{Section::Proofs::QuantiativeThm} we prove the main quantitative theorem:  \cref{Thm::QuantRes::MainThm}.  In \cref{Section::Proofs::Densities} we study densities
and prove \cref{Thm::Density::MainThm}.  Finally, in \cref{Section::Proofs::Qual}, we prove the qualitative results (\cref{Thm::QualRes::LocalThm,Thm::QualRes::GlobalThm}),
which are simple consequences of \cref{Thm::QuantRes::MainThm}.

	\subsection{Real Analytic Functions and ODEs}\label{Section::Proofs::RAAndODE}
At various points, we will need to prove functions are real analytic.  The way we will do this is by showing the functions
satisfy a ODE which depends real analytically on the appropriate parameters.
We begin with a simple and classical version of this where we deduce the solutions to a certain real analytic PDE are real analytic, by reducing it to a real analytic ODE.

\begin{prop}\label{Prop::PfRealAl::IdentifyRA::Basic}
Let $\Omega\subseteq \R^n$ be an open set and $f\in \CjSpace{1}[\Omega][\R^m]$ satisfy
\begin{equation*}
\diff{t_j} f(t) = F_j(t,f(t)), \quad \forall t\in \Omega, 1\leq j\leq n,
\end{equation*}
where $F_j$ is real analytic in both variables.  Then, $f$ is real analytic.
\end{prop}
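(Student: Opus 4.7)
The plan is to reduce the problem to the classical theorem that ODE solutions depend real analytically on initial data and parameters when the right hand side is real analytic. Fix $t_0\in\Omega$ and set $y_0:=f(t_0)\in\R^m$. For $\theta\in\R^n$ small, consider the single-variable ODE
\begin{equation*}
\frac{d}{ds} h(s;\theta) = \sum_{j=1}^n \theta_j\, F_j(t_0+s\theta,\,h(s;\theta)),\qquad h(0;\theta)=y_0.
\end{equation*}
The right-hand side is real analytic in $(s,h,\theta)$ near $(0,y_0,0)$. By the classical theorem on analytic dependence of solutions to ODEs on initial data and parameters, which is proved by the Cauchy method of majorants, $h(s;\theta)$ is jointly real analytic in $(s,\theta)$ in a neighborhood of $(0,0)\in\R\times\R^n$.

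On the other hand, the given hypothesis on $f$ implies that for each fixed $\theta$ the curve $s\mapsto f(t_0+s\theta)$ is $C^1$ and satisfies the same ODE with the same initial condition:
\begin{equation*}
\frac{d}{ds} f(t_0+s\theta)=\sum_{j=1}^n \theta_j (\partial_{t_j}f)(t_0+s\theta)=\sum_{j=1}^n \theta_j F_j(t_0+s\theta, f(t_0+s\theta)).
\end{equation*}
Since $F_j$ is locally Lipschitz (being real analytic), the uniqueness part of the Picard--Lindel\"of theorem gives $f(t_0+s\theta)=h(s;\theta)$ on a common neighborhood of $(0,0)$. Setting $s=1$ yields $f(t_0+\theta)=h(1;\theta)$ for $\theta$ in a neighborhood of $0$, so $f$ is real analytic at $t_0$. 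Since $t_0\in\Omega$ was arbitrary, $f\in C^\omega(\Omega;\R^m)$.

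The only non-trivial ingredient is the joint real analyticity of $h(s;\theta)$ in $(s,\theta)$, which is the main potential obstacle. This is, however, a classical theorem which can be quoted directly; a self-contained proof proceeds by first showing $h$ is $C^\infty$ (which follows by differentiating the ODE and using that $F_j\in C^\infty$, or equivalently from a bootstrap applied directly to $f$) and then using the method of majorants to bound $|\partial^\alpha_{(s,\theta)} h(0;0)|/\alpha!$ by a geometric sequence, comparing with the power-series solution of a scalar model ODE whose right hand side majorizes the Taylor series of $\sum_j \theta_j F_j$. The remaining steps are routine.
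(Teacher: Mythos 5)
Your proposal is correct and takes essentially the same route as the paper: both reduce the PDE system to a one-variable ODE along the rays $\theta\mapsto t_0+s\theta$ (the paper writes $g(\epsilon,t)=f(\epsilon t+s)$, which is your $h(s;\theta)$) and then invoke the classical theorem on real-analytic dependence of ODE solutions on the time variable and on parameters/initial data. Your version is slightly more explicit in that you separately define $h$ as the abstract analytic ODE solution and identify it with $f(t_0+s\theta)$ via Picard--Lindel\"of uniqueness, whereas the paper compresses this into ``classical results show $g$ is real analytic,'' but the underlying argument is the same.
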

\begin{proof}[Proof Sketch]
Fix $s\in \Omega$.  We will show $f$ is real analytic near $s$.
Set $g(\epsilon,t):=f(\epsilon t+s)$.  Then, we have
\begin{equation*}
\diff{\epsilon} g(\epsilon,t) = \sum_{j=1}^n t_j \frac{\partial f}{\partial t_j}(\epsilon t+s)
=\sum_{j=1}^n t_j F_j(\epsilon t+s,f(\epsilon t+s))=:G_s(\epsilon,t,g(\epsilon,t)),
\end{equation*}
where $G_s$ is analytic in all its variables.  Also, $g(0,t)=f(s)$ (which is constant in $t$).

Hence, $g(\epsilon,t)$ satisfies a real analytic ODE, and classical results show $g$ is real analytic for $\epsilon$ and $t$ small.
Since $g(\epsilon,t) = f(\epsilon t+s)$, this shows that $f$ is real analytic near $s$, completing the proof.
\end{proof}
	
		\subsubsection{A Particular ODE}
Fix $D,\etai>0$ and for $1\leq j\leq n$, let $C_j\in \ASpace{n}{\etai}[\M^{n\times n}]$
with $\sum_{j=1}^n \ANorm{C_j}{n}{\etai}[\M^{n\times n}]\leq D$.
Set $C(t):=\sum_{j=1}^n t_j C_j(t)$.  
For $t\in B^n(\etai)$, we write $t$ in polar coordinates $t=r\theta$.
We consider the differential equation, defined for functions $A(t)$ taking values in $\M^{n\times n}$, given by
\begin{equation}\label{Eqn::PartODE::MainEqn}
\diff{r} rA(r\theta)=-A(r\theta)^2-C(r\theta)A(r\theta)-C(r\theta),\quad A(0)=0.
\end{equation}

\begin{prop}\label{Prop::PartODE::MainProp}
Let $\eta_1\in (0, \min\{\etai,5/8D\}]$.  There exists a solution $A\in \ASpace{n}{\eta_1}[\M^{n\times n}]$
to \cref{Eqn::PartODE::MainEqn}.  Moreover, this solution satisfies $\ANorm{A}{n}{\eta_1}[\M^{n\times n}]\leq \frac{1}{2}$.
Finally, this solution is unique in the sense that if $B(t)\in \CSpace{B^n(\delta)}[\M^{n\times n}]$ 
is another solution to \cref{Eqn::PartODE::MainEqn}, then $A(t)=B(t)$ for $|t|<\min\{\delta,\eta_1\}$.
\end{prop}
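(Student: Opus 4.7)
My plan is to recast the ODE as a fixed point equation in the Banach algebra $\ASpace{n}{\eta_1}[\M^{n\times n}]$ (which is Banach by \cref{Lemma::FuncSpaceRev::Algebra}) and then apply the contraction mapping principle. Integrating \cref{Eqn::PartODE::MainEqn} from $0$ to $r$ along the ray $\rho\mapsto\rho\theta$ using $A(0)=0$, and then substituting $s=\rho/r$, shows that the ODE is equivalent to the integral equation
\[
A(t)\;=\;-\int_0^1\bigl(A^2+CA+C\bigr)(st)\,ds\;=:\;\Psi(A)(t),\qquad t\in B^n(\eta_1).
\]
The structural observation I will exploit is that the averaging operator $K\colon F\mapsto \int_0^1 F(st)\,ds$ divides the coefficient of $t^\alpha/\alpha!$ by $|\alpha|+1$, and hence $\|KF\|_{\ASpace{n}{\eta_1}}\leq \tfrac{1}{k+1}\|F\|_{\ASpace{n}{\eta_1}}$ whenever $F$ vanishes to order $k$ at the origin (i.e.\ $c_\alpha=0$ for $|\alpha|<k$).

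For existence and the bound, I first use the Banach algebra property together with $\|t_j\|_{\ASpace{n}{\eta_1}}=\eta_1$ and the trivial monotonicity $\|\cdot\|_{\ASpace{n}{\eta_1}}\leq \|\cdot\|_{\ASpace{n}{\etai}}$ to estimate $\|C\|_{\ASpace{n}{\eta_1}}\leq \eta_1\sum_j\|C_j\|_{\ASpace{n}{\etai}}\leq \eta_1 D\leq 5/8$. I then restrict $\Psi$ to the closed ball
\[
\mathcal{B}:=\bigl\{A\in\ASpace{n}{\eta_1}[\M^{n\times n}]: A(0)=0,\ \|A\|\leq 1/2\bigr\}.
\]
For $A\in\mathcal{B}$ both $A^2$ and $CA$ vanish to order $2$ and $C$ vanishes to order $1$ at the origin, so the $K$ estimate gives
\[
\|\Psi(A)\|\;\leq\;\tfrac{1}{3}\|A\|^2+\tfrac{1}{3}\|C\|\|A\|+\tfrac{1}{2}\|C\|\;\leq\;\tfrac{1}{3}\bigl(\tfrac{1}{4}+\tfrac{5}{16}\bigr)+\tfrac{5}{16}\;=\;\tfrac{1}{2},
\]
and the identity $A_1^2-A_2^2=A_1(A_1-A_2)+(A_1-A_2)A_2$ yields the Lipschitz bound $\|\Psi(A_1)-\Psi(A_2)\|\leq \tfrac{13}{24}\|A_1-A_2\|$ on $\mathcal{B}$. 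The Banach fixed point theorem produces a unique $A\in\mathcal{B}$, and differentiating the identity $rA(r\theta)=-\int_0^r(A^2+CA+C)(\rho\theta)\,d\rho$ recovers \cref{Eqn::PartODE::MainEqn}; the residual identity $A(0)=-A(0)^2$ combined with $\|A(0)\|\leq 1/2<1$ (so that $I+A(0)$ is invertible) forces $A(0)=0$.

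For uniqueness, the same radial integration converts the hypothesis that $B\in\CSpace{B^n(\delta)}[\M^{n\times n}]$ solves \cref{Eqn::PartODE::MainEqn} into $B=\Psi(B)$ on $B^n(\min\{\delta,\eta_1\})$. On a sufficiently small ball $B^n(\epsilon)$ the sup norms of $A$, $B$, and $C$ are simultaneously small (each vanishes at $0$ and is continuous), so $\Psi$ is a sup-norm contraction on a ball in $C(B^n(\epsilon);\M^{n\times n})$ containing both $A|_{B^n(\epsilon)}$ and $B|_{B^n(\epsilon)}$, forcing $A=B$ on $B^n(\epsilon)$. Away from the origin, writing $g(r):=A(r\theta)-B(r\theta)$ along a fixed ray, the ODE gives the regular first-order linear equation $(rg)'=-(A(r\theta)+C(r\theta))g-gB(r\theta)$ with continuous coefficients on any compact subinterval of $(0,\min\{\delta,\eta_1\})$; since $g$ vanishes on $[0,\epsilon]$, standard Picard uniqueness extends $g\equiv 0$ to the entire interval. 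The main obstacle throughout is the singularity of the operator $A\mapsto \tfrac{d}{dr}(rA(r\theta))$ at $r=0$, which prevents a naive Picard iteration on the pointwise ODE and is precisely what is circumvented by the radial integral reformulation and the gain of $1/(|\alpha|+1)$ in the $K$ estimate.
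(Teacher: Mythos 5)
Your existence argument mirrors the paper's exactly: same closed ball (your $\mathcal{B}$ is the paper's $\sM$), same fixed-point operator, and the same self-map bound $1/2$ and Lipschitz constant $13/24$. The only packaging difference is where the crucial gain of a factor $1/(k+1)$ comes from: you extract it once and for all from the averaging operator $K$ acting on functions that vanish to order $k$ at the origin, whereas the paper establishes the scaling bounds $\ANorm{A(s\cdot)}{n}{\eta_1}\le s\ANorm{A}{n}{\eta_1}$ (for $A(0)=0$) and $\ANorm{C(s\cdot)}{n}{\eta_1}\le \eta_1 s D$ term-by-term and then integrates $s^k$ over $[0,1]$; these are the same computation organized differently. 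Where you genuinely diverge is uniqueness: the paper simply cites the uniqueness proof from \cite{StovallStreet} and does not reprove it, while you supply a self-contained argument in two regimes — a sup-norm contraction for the integral operator $\Psi$ on a small ball $B^n(\epsilon)$ (using that $A$, $B$, and $C$ all vanish at the origin, so $\Psi$ is Lipschitz with constant $<1$ on a set containing both solutions), followed by standard Picard/Gr\"onwall uniqueness for the regular linear ODE $(rg)'=-(A+C)g-gB$ along each ray away from $r=0$ with the zero initial data propagated out from $[0,\epsilon]$. That argument is complete and correct, and has the modest advantage of keeping the proof self-contained; one could trim the redundant "residual identity $A(0)=-A(0)^2$" remark, since $A(0)=0$ is already built into $\mathcal{B}$ and is preserved by $\Psi$ because $C(0)=0$.
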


The rest of this section is devoted to the proof of \cref{Prop::PartODE::MainProp}.
Following the proof in \cite[\SSExistODE]{StovallStreet}, we introduce the
map $\sT:\ASpace{n}{\eta_1}[\M^{n\times n}]\rightarrow \ASpace{n}{\eta_1}[\M^{n\times n}]$ given by
\begin{equation*}
\sT(A)(x):=-\int_0^1 A(sx)^2 +C(sx)A(sx)+C(sx)\: ds.
\end{equation*}
Using that $\ASpace{n}{\eta_1}[\M^{n\times n}]$ is an algebra (\cref{Lemma::FuncSpaceRev::Algebra}) it is immediate to verify
$\sT:\ASpace{n}{\eta_1}[\M^{n\times n}]\rightarrow \ASpace{n}{\eta_1}[\M^{n\times n}]$.
A simple change of variables shows, for $r>0$,
\begin{equation*}
\sT(A)(r\theta)=\frac{1}{r}\int_0^r -A(s\theta)^2-C(s\theta)A(s\theta)-C(s\theta)\: ds.
\end{equation*}
Thus, $A$ is a solution to \cref{Eqn::PartODE::MainEqn} if and only if $\sT(A)=A$ and $A(0)=0$.
We will prove the existence of such a fixed point by using the contraction mapping principle.

\begin{lemma}\label{Lemma::PartODE::NormA}
Let $A\in \ASpace{n}{\eta_1}[\M^{n\times n}]$ satisfy $A(0)=0$.
For $s\in [0,1]$ set $A_s(x)=A(sx)$.
Then, $\ANorm{A_s}{n}{\eta_1}[\M^{n\times n}]\leq s \ANorm{A}{n}{\eta_1}[\M^{n\times n}]$.
\end{lemma}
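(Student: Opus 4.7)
The plan is to expand $A$ as a power series, use the hypothesis $A(0)=0$ to discard the constant term, and then exploit that $s^{|\alpha|}\le s$ whenever $|\alpha|\ge 1$ and $s\in[0,1]$.

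More concretely, write $A(x)=\sum_{\alpha\in\N^n}\frac{C_\alpha}{\alpha!}x^\alpha$ with $C_\alpha\in\M^{n\times n}$, so that by definition
\begin{equation*}
\ANorm{A}{n}{\eta_1}[\M^{n\times n}]=\sum_{\alpha\in\N^n}\frac{\Norm{C_\alpha}[\M^{n\times n}]}{\alpha!}\eta_1^{|\alpha|}.
\end{equation*}
The hypothesis $A(0)=0$ forces $C_0=0$, so the sum actually runs over $|\alpha|\ge 1$. Substituting $sx$ for $x$, one obtains the power series $A_s(x)=\sum_{|\alpha|\ge 1}\frac{s^{|\alpha|}C_\alpha}{\alpha!}x^\alpha$, which lies in $\ASpace{n}{\eta_1}[\M^{n\times n}]$ with
\begin{equation*}
\ANorm{A_s}{n}{\eta_1}[\M^{n\times n}]=\sum_{|\alpha|\ge 1}s^{|\alpha|}\frac{\Norm{C_\alpha}[\M^{n\times n}]}{\alpha!}\eta_1^{|\alpha|}.
\end{equation*}
Since $s\in[0,1]$ and $|\alpha|\ge 1$, we have $s^{|\alpha|}\le s$, and pulling out the factor $s$ yields
\begin{equation*}
\ANorm{A_s}{n}{\eta_1}[\M^{n\times n}]\le s\sum_{|\alpha|\ge 1}\frac{\Norm{C_\alpha}[\M^{n\times n}]}{\alpha!}\eta_1^{|\alpha|}=s\ANorm{A}{n}{\eta_1}[\M^{n\times n}],
\end{equation*}
which is the desired inequality.

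There is essentially no obstacle here: the estimate is a direct consequence of the definition of the $\ASpace{n}{\eta_1}$ norm together with the vanishing of the constant term. The only mild care needed is to note that $A_s$ indeed lies in $\ASpace{n}{\eta_1}[\M^{n\times n}]$ (with the displayed series as its Taylor expansion), which is automatic from the absolute convergence of the original series on $B^n(\eta_1)$ and the fact that $|sx|\le|x|<\eta_1$.
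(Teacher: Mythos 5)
Your proof is correct, and it spells out exactly the computation the paper compresses into ``This is immediate from the definitions'': expand in the $\ASpace{n}{\eta_1}$ power series, use $A(0)=0$ to kill the constant term, and bound $s^{|\alpha|}\le s$ for $|\alpha|\ge 1$.
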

\begin{proof}This is immediate from the definitions.\end{proof}

\begin{lemma}\label{Lemma::PartODE::NormC}
For $s\in [0,1]$, set $C_s(t)=C(st)$.  Then,
$\ANorm{C_s}{n}{\eta_1}[\M^{n\times n}]\leq \eta_1 s D$.
\end{lemma}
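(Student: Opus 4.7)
The plan is to reduce this to a direct estimate on Taylor coefficients, exploiting the structural fact that $C(t) = \sum_{j=1}^n t_j C_j(t)$. The factor of $\eta_1$ in the conclusion will come from the $\ASpace{n}{\eta_1}$-norm of the monomial $t_j$, and the factor of $s$ will come from pulling an $s$ out of the rescaling $C(st) = s \sum_j t_j C_j(st)$; no cleverness is required.

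Concretely, I would write
\[
C_s(t) = C(st) = \sum_{j=1}^n (st_j)\, C_j(st) = s \sum_{j=1}^n t_j\, (C_j)_s(t),
\]
and then estimate each piece in $\ASpace{n}{\eta_1}[\M^{n\times n}]$ separately. The coordinate function $t\mapsto t_j$, viewed as an element of $\ASpace{n}{\eta_1}$, has norm exactly $\eta_1$ (its power series has a single nonzero coefficient, namely $1$, at multi-degree $e_j$). By the Banach algebra property in \cref{Lemma::FuncSpaceRev::Algebra}, each term therefore satisfies
\[
\ANorm{t_j (C_j)_s}{n}{\eta_1}[\M^{n\times n}] \leq \eta_1\, \ANorm{(C_j)_s}{n}{\eta_1}[\M^{n\times n}].
\]

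It then remains to show $\ANorm{(C_j)_s}{n}{\eta_1}[\M^{n\times n}] \leq \ANorm{C_j}{n}{\etai}[\M^{n\times n}]$. This is analogous to \cref{Lemma::PartODE::NormA}, except that without a hypothesis $C_j(0)=0$ there is no extra $s$ to harvest here. Expanding $C_j(t) = \sum_\alpha \frac{c_\alpha^{(j)}}{\alpha!} t^\alpha$, one has $(C_j)_s(t) = \sum_\alpha \frac{s^{|\alpha|} c_\alpha^{(j)}}{\alpha!} t^\alpha$, and so the $\ASpace{n}{\eta_1}$-norm equals $\sum_\alpha \frac{|c_\alpha^{(j)}|}{\alpha!} (s\eta_1)^{|\alpha|}$. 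Since $s\leq 1$ and $\eta_1 \leq \etai$, one has $s\eta_1 \leq \etai$, so this is dominated termwise by $\sum_\alpha \frac{|c_\alpha^{(j)}|}{\alpha!} \etai^{|\alpha|} = \ANorm{C_j}{n}{\etai}[\M^{n\times n}]$. Summing over $j$ and using the hypothesis $\sum_j \ANorm{C_j}{n}{\etai}[\M^{n\times n}] \leq D$ yields $\ANorm{C_s}{n}{\eta_1}[\M^{n\times n}] \leq s\eta_1 D$, as desired.

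There is essentially no obstacle here — the only care required is bookkeeping the two factors: the $\eta_1$ is the norm of the linear factor $t_j$ extracted via the Banach algebra inequality, while the $s$ is the explicit one pulled out of $st_j$, distinct from the $s^{|\alpha|}$ factor that is simply discarded when passing from $\eta_1$ up to $\etai$.
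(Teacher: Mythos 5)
Your proof is correct. The paper's own proof establishes the same bound by a direct one-line computation on the Taylor expansion of $C_s$ without explicitly invoking the Banach algebra inequality from \cref{Lemma::FuncSpaceRev::Algebra}, but the underlying content—extracting a factor of $\eta_1$ from the linear monomial $t_j$, a factor of $s$ from the rescaling $st_j$, and then discarding the remaining $s^{|\alpha|}\leq 1$ while passing from $\eta_1$ to $\etai$—is exactly what you do; your version is simply a more modular packaging of the same estimate.
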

\begin{proof}
Let $C_j(t) =\sum_{\alpha\in \N^n} \frac{c_{\alpha,j}}{\alpha!} t^{\alpha}$.
Then, $C_s(t) = \sum_{\alpha\in \N^n} \sum_{j=1}^n \frac{c_{\alpha,j}}{\alpha!} s^{|\alpha|+1} t_j t^{\alpha}$.  Thus,
\begin{equation*}
\begin{split}
\ANorm{C_s}{n}{\eta_1}[\M^{n\times n}] &\leq \sum_{j=1}^n \sum_{\alpha\in \N^n} \frac{\Norm{c_{\alpha,j}}[\M^{n\times n}]}{\alpha!} s^{|\alpha|+1} \eta_1^{|\alpha|+1}
\leq s\eta_1 \sum_{j=1}^n \sum_{\alpha\in \N^n} \frac{\Norm{c_{\alpha,j}}[\M^{n\times n}]}{\alpha!}  \eta_1^{|\alpha|}
\\&=s\eta_1 \sum_{j=1}^n \ANorm{C_j}{n}{\eta_1}[\M^{n\times n}] \leq s\eta_1 D,
\end{split}
\end{equation*}
completing the proof.
\end{proof}

Define 
$$\sM:=\left\{A\in \ASpace{n}{\eta_1}[\M^{n\times n}]  :  A(0)=0\text{ and }\ANorm{A}{n}{\eta_1}[\M^{n\times n}]\leq \frac{1}{2}\right\}.$$
We give $\sM$ the induced metric as a subset of $\ASpace{n}{\eta_1}[\M^{n\times n}]$.  With this metric, $\sM$ is a complete metric space.

\begin{lemma}\label{Lemma::PartODE::Contraction}
For $\eta_1 \in (0, \min\{\etai,5/8D\}]$,
$\sT:\sM\rightarrow \sM$ and is a strict contraction.
\end{lemma}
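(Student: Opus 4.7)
The plan is to verify the two claims of the lemma separately: that $\sT$ maps $\sM$ into itself, and that $\sT$ is a strict contraction. Both will follow from straightforward applications of the Banach algebra property $\ANorm{fg}{n}{\eta_1} \leq \ANorm{f}{n}{\eta_1}\ANorm{g}{n}{\eta_1}$ (Lemma \ref{Lemma::FuncSpaceRev::Algebra}) combined with the scaling estimates in Lemmas \ref{Lemma::PartODE::NormA} and \ref{Lemma::PartODE::NormC}. The hypothesis $\eta_1 \leq 5/(8D)$ is chosen precisely to make these estimates close up.

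For the ``maps $\sM$ into $\sM$'' part, I first check $\sT(A)(0) = 0$: since $C(t) = \sum_j t_j C_j(t)$ satisfies $C(0) = 0$ and $A(0) = 0$ by hypothesis, the integrand of $\sT(A)$ vanishes at $x = 0$. For the norm bound, for each $s \in [0,1]$ write $A_s(x) = A(sx)$ and $C_s(x) = C(sx)$. Lemma \ref{Lemma::PartODE::NormA} gives $\ANorm{A_s}{n}{\eta_1} \leq s/2$ (since $A \in \sM$), while Lemma \ref{Lemma::PartODE::NormC} gives $\ANorm{C_s}{n}{\eta_1} \leq s\eta_1 D$. Using the Banach algebra property and the triangle inequality for the integral,
\begin{equation*}
\ANorm{\sT(A)}{n}{\eta_1} \leq \int_0^1 \left( \frac{s^2}{4} + \frac{s^2 \eta_1 D}{2} + s\eta_1 D \right) ds = \frac{1}{12} + \frac{\eta_1 D}{6} + \frac{\eta_1 D}{2} = \frac{1}{12} + \frac{2\eta_1 D}{3}.
\end{equation*}
The assumption $\eta_1 D \leq 5/8$ yields $\ANorm{\sT(A)}{n}{\eta_1} \leq 1/12 + 5/12 = 1/2$, as required.

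For the strict contraction part, given $A, B \in \sM$, the identity $A^2 - B^2 = A(A - B) + (A - B) B$ gives
\begin{equation*}
\sT(A)(x) - \sT(B)(x) = -\int_0^1 \bigl[ A(sx)(A(sx) - B(sx)) + (A(sx) - B(sx)) B(sx) + C(sx)(A(sx) - B(sx)) \bigr] ds.
\end{equation*}
Since $(A - B)(0) = 0$, Lemma \ref{Lemma::PartODE::NormA} applies to give $\ANorm{(A - B)_s}{n}{\eta_1} \leq s \ANorm{A - B}{n}{\eta_1}$. Combining with the bounds $\ANorm{A_s}{n}{\eta_1}, \ANorm{B_s}{n}{\eta_1} \leq s/2$ and $\ANorm{C_s}{n}{\eta_1} \leq s\eta_1 D$ through the Banach algebra inequality yields
\begin{equation*}
\ANorm{\sT(A) - \sT(B)}{n}{\eta_1} \leq \int_0^1 s^2 (1 + \eta_1 D)\, ds \cdot \ANorm{A - B}{n}{\eta_1} = \frac{1 + \eta_1 D}{3} \ANorm{A - B}{n}{\eta_1}.
\end{equation*}
With $\eta_1 D \leq 5/8$ the contraction constant is at most $(1 + 5/8)/3 = 13/24 < 1$, proving strict contraction.

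There is no real obstacle here: the proof is entirely a bookkeeping exercise once one has the Banach algebra property and the two scaling lemmas in hand. The one minor subtlety is ensuring that the term $C_s$ (which vanishes at $0$) carries the extra factor of $s$ from Lemma \ref{Lemma::PartODE::NormC}; without this, the linear-in-$A$ term $CA$ in the integrand would not be integrable in the contraction bound with a constant better than some expression in $D$ alone. The factor of $s\eta_1 D$ from $C_s$ combines with the factor $s$ from $(A - B)_s$ to produce an $s^2$ integrand, which integrates to $1/3$ and gives the needed strict contraction constant.
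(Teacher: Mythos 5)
Your proof is correct and matches the paper's argument in all essentials: same use of the Banach algebra property (Lemma \ref{Lemma::FuncSpaceRev::Algebra}) together with Lemmas \ref{Lemma::PartODE::NormA} and \ref{Lemma::PartODE::NormC}, and the same integral bookkeeping yielding $\frac{1}{12}+\frac{2}{3}D\eta_1\leq\frac12$ and contraction constant $\frac{1+D\eta_1}{3}\leq\frac{13}{24}$. The only cosmetic difference is that you factor $A^2-B^2=A(A-B)+(A-B)B$ whereas the paper uses the symmetrized identity $\frac12(A+B)(A-B)+\frac12(A-B)(A+B)$; both produce the identical norm estimate.
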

\begin{proof}
Using that $\ANorm{B_1B_2}{n}{\eta_1}[\M^{n\times n}]\leq \ANorm{B_1}{n}{\eta_1}[\M^{n\times n}]\ANorm{B_2}{n}{\eta_1}[\M^{n\times n}]$ (\cref{Lemma::FuncSpaceRev::Algebra})
and \cref{Lemma::PartODE::NormA,Lemma::PartODE::NormC},
we have, for $A\in \sM$,
\begin{equation*}
\begin{split}
&\ANorm{\sT(A)}{n}{\eta_1}\leq \int_0^1 \ANorm{A(s\cdot)^2}{n}{\eta_1}+ \ANorm{C(s\cdot)A(s\cdot)}{n}{\eta_1}+ \ANorm{C(s\cdot)}{n}{\eta_1}\: ds
\\&\leq \int_0^1 s^2 \ANorm{A}{n}{\eta_1}^2 + (D\eta_1 s^2) \ANorm{A}{n}{\eta_1} + D\eta_1 s\: ds
\leq \frac{1}{3}\cdot \frac{1}{4} + \frac{D\eta_1}{3}\cdot \frac{1}{2} + \frac{D\eta_1}{2}
\\&=\frac{1}{12}+\frac{2}{3} D\eta_1
\leq \frac{1}{12}+\frac{2}{3} \cdot \frac{5}{8} = \frac{1}{2}.
\end{split}
\end{equation*}
Clearly, since $A(0)=0$ and $C(0)=0$, we have $\sT(A)(0)=0$.  We conclude $\sT:\sM\rightarrow \sM$.

For $A,B\in \sM$, we have using $A^2-B^2 = \frac{1}{2}(A+B)(A-B)+\frac{1}{2}(A-B)(A+B)$,
\begin{equation*}
\begin{split}
&\ANorm{\sT(A)-\sT(B)}{n}{\eta_1} \leq \int_0^1 \ANorm{A(s\cdot)+B(s\cdot)}{n}{\eta_1} \ANorm{A(s\cdot)-B(s\cdot)}{n}{\eta_1} + \ANorm{C(s\cdot)}{n}{\eta_1} \ANorm{A(s\cdot)-B(s\cdot)}{n}{\eta_1}\: ds
\\&\leq \int_0^1 s^2 \ANorm{A-B}{n}{\eta_1} + D\eta_1 s^2 \ANorm{A-B}{n}{\eta_1}\: ds\leq \frac{1+D\eta_1}{3} \ANorm{A-B}{n}{\eta_1}\leq \frac{13}{24} \ANorm{A-B}{n}{\eta_1},
\end{split}
\end{equation*}
completing the proof.
\end{proof}

\begin{proof}[Proof of \cref{Prop::PartODE::MainProp}]
Uniqueness for \cref{Eqn::PartODE::MainEqn} was established in \cite[\SSExistODE]{StovallStreet};
and the same proof yields the claimed uniqueness in \cref{Prop::PartODE::MainProp}.
For existence, \cref{Lemma::PartODE::Contraction} shows that the contraction mapping principle applies to $\sT:\sM\rightarrow \sM$ to show that there is a unique fixed point, $A\in \sM$, of $\sT$.  As described above, this $A$ is a solution to \cref{Eqn::PartODE::MainEqn}
and clearly satisfies $\ANorm{A}{n}{\eta_1}[\M^{n\times n}]\leq \frac{1}{2}$ (since $A\in \sM$).  This completes the proof.
\end{proof}

		
		\subsubsection{Identifying Real Analytic Functions I: Euclidean Space}
In \cref{Prop::PfRealAl::IdentifyRA::Basic}, we showed how to prove a function was real analytic by introducing
a new variable and proving the function satisfied an ODE in this new variable.  In this section, we present a quantitative version
of a similar argument.  We make no effort to state the result in the greatest generality, and instead focus on the setting needed for this paper.

Fix $n,N,L\in \N$ and $r>0$.  We consider functions $F(t)=(F_1(t),\ldots, F_N(t))\in \CSpace{B^n(r)}[\R^N]$ satisfying a certain ODE.
For each $1\leq l\leq N$, $1\leq j\leq n$, and $\alpha\in \N^N$ with $|\alpha|\leq L$ ($\alpha$ a multi-index), let
$a_{\alpha,j,l}\in \ASpace{n}{r}$. For $1\leq l\leq N$, fix $F_{l,0}\in \R$.  We consider the following system of differential equations for $1\leq l\leq N$:
\begin{equation}\label{Eqn::PfIdentRAEuclid::MainEquation}
\diff{\epsilon} F_l(\epsilon t) = \sum_{j=1}^n \sum_{\substack{\alpha\in \N^N \\ |\alpha|\leq L}} t_j a_{\alpha, j,l}(\epsilon t) F(\epsilon t)^{\alpha}, \quad F_l(0) = F_{l,0}.
\end{equation}
Fix $D$ such that
\begin{equation*}
	|F_{l,0}|, \ANorm{a_{\alpha,j,l}}{n}{r}\leq D, \quad 1\leq j\leq n, 1\leq l\leq N, |\alpha|\leq L.
\end{equation*}

\begin{prop}\label{Prop::PfIdentRAEuclid::MainProp}
Set $r':=\min\{ r, D(n 2^L (L+1)^N (\max\{1,D\})^{L+1})^{-1}, ( n(L+1)^{N+1} (\max\{1,D\})^{L} 2^L)^{-1} \}$.  Suppose $F\in \CSpace{B^n(r)}[\R^N]$ satisfies \cref{Eqn::PfIdentRAEuclid::MainEquation}.
Then, $F\big|_{B^n(r')}\in \ASpace{n}{r'}[\R^N]$.  Moreover, for each $1\leq l\leq N$, $\ANorm{F_l}{n}{r'}\leq 2D$.
\end{prop}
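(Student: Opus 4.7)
The plan is to integrate the differential equation in $\epsilon$, recast the resulting identity as a fixed-point equation in the Banach algebra $\ASpace{n}{r'}[\R^N]$, and then run the contraction-mapping argument used in the proof of \cref{Prop::PartODE::MainProp}. Evaluating \cref{Eqn::PfIdentRAEuclid::MainEquation} at a fixed $t\in B^n(r)$ and integrating from $\epsilon=0$ to $\epsilon=1$ gives
\begin{equation*}
F_l(t) = F_{l,0} + \int_0^1 \sum_{j=1}^n \sum_{|\alpha|\leq L} t_j\, a_{\alpha,j,l}(\epsilon t)\, F(\epsilon t)^{\alpha}\, d\epsilon,
\end{equation*}
so $F$ is a fixed point of the operator $\sT$ given by the right-hand side. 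I would work inside
\begin{equation*}
\sM := \{\, G\in \ASpace{n}{r'}[\R^N] : G_l(0)=F_{l,0}\text{ and }\ANorm{G_l}{n}{r'}\leq 2D\text{ for }1\leq l\leq N\,\},
\end{equation*}
a complete metric space in the induced metric.

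All the estimates rest on two facts: $\ASpace{n}{r'}$ is a Banach algebra (\cref{Lemma::FuncSpaceRev::Algebra}), and the rescaling identity $\ANorm{f(\epsilon\cdot)}{n}{r'}=\ANorm{f}{n}{\epsilon r'}\leq \ANorm{f}{n}{r'}$ holds for $\epsilon\in [0,1]$, which one reads off the defining series. Using $\ANorm{t_j}{n}{r'}=r'$ together with $\ANorm{a_{\alpha,j,l}(\epsilon\cdot)}{n}{r'}\leq D$ (which uses $r'\leq r$), the algebra property gives $\ANorm{G(\epsilon\cdot)^\alpha}{n}{r'}\leq (2D)^{|\alpha|}\leq (2\max\{1,D\})^L$ for $G\in \sM$; since there are at most $n(L+1)^N$ pairs $(j,\alpha)$,
\begin{equation*}
\ANorm{\sT(G)_l - F_{l,0}}{n}{r'}\leq n(L+1)^N r' D (2\max\{1,D\})^L.
\end{equation*}
The first constraint in the definition of $r'$ forces this to be $\leq D$, so $\ANorm{\sT(G)_l}{n}{r'}\leq 2D$ and $\sT:\sM\rightarrow \sM$. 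For the contraction, I would expand $G^\alpha-H^\alpha$ as a telescoping sum of $|\alpha|$ products, each containing a single factor $G_l-H_l$, giving
\begin{equation*}
\ANorm{G(\epsilon\cdot)^\alpha - H(\epsilon\cdot)^\alpha}{n}{r'} \leq L(2\max\{1,D\})^{L-1}\max_l \ANorm{G_l-H_l}{n}{r'};
\end{equation*}
the same summation as above then yields $\ANorm{\sT(G)_l-\sT(H)_l}{n}{r'}\leq n(L+1)^N r' D L (2\max\{1,D\})^{L-1}\max_l \ANorm{G_l-H_l}{n}{r'}$, and the second constraint on $r'$ bounds the coefficient by $L/(2(L+1))<1/2$.

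The Banach fixed-point theorem then produces a unique $\Ft\in \sM$ with $\sT(\Ft)=\Ft$, and $\ANorm{\Ft_l}{n}{r'}\leq 2D$ is built into the definition of $\sM$. To conclude $F=\Ft$ on $B^n(r')$, for each $t\in B^n(r')$ both $\epsilon\mapsto F(\epsilon t)$ and $\epsilon\mapsto \Ft(\epsilon t)$ are $C^1$ solutions of the scalar ODE with right-hand side $\sum_{j,|\alpha|\leq L} t_j a_{\alpha,j,l}(\epsilon t) G(\epsilon)^\alpha$ and initial datum $F_0$; the right-hand side is locally Lipschitz in $G$, so classical uniqueness forces $F(t)=\Ft(t)$. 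The main obstacle is the precise bookkeeping that matches the two constraints on $r'$ to the two estimates above; the scaling identity $\ANorm{f(\epsilon\cdot)}{n}{r'}=\ANorm{f}{n}{\epsilon r'}$ is what keeps everything finite and explains both the appearance of $\max\{1,D\}^{L+1}$ in the first bound and of $(L+1)^{N+1}$ in the second.
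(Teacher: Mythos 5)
Your proof is correct and takes a genuinely cleaner route than the paper's. The paper introduces an auxiliary two-variable space $\ASpace{n}{r'}[\CSpace{[0,1]}[\R^N]]$, proves a separate Proposition~\ref{Prop::PfIdentRAEuclid::AuxProp} about the $\epsilon$-$t$-decoupled integral equation with $\int_0^\epsilon$ on the right, and then specializes by setting $\Ft(\epsilon,t)=F(\epsilon t)$, invoking the uniqueness statement of the auxiliary result. You instead integrate once from $\epsilon=0$ to $\epsilon=1$, obtain the fixed-point equation $F(t)=F_0+\int_0^1 t_j a_{\alpha,j,l}(\epsilon t)F(\epsilon t)^\alpha\,d\epsilon$ directly in the one-variable space $\ASpace{n}{r'}[\R^N]$, and run the contraction there. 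The two approaches produce essentially the same estimates (Banach-algebra submultiplicativity, the rescaling identity $\ANorm{h(\epsilon\cdot)}{n}{r'}=\ANorm{h}{n}{\epsilon r'}$, and the telescoping identity for $G^\alpha-H^\alpha$); your bounds $n(L+1)^N r' D(2\max\{1,D\})^L\leq D$ and $n(L+1)^N r' DL(2\max\{1,D\})^{L-1}\leq \tfrac{L}{2(L+1)}$ track the paper's constraints on $r'$ exactly, and your $\sM$-invariance observation $\sT(G)_l(0)=F_{l,0}$ (since the $t_j$ factor vanishes at $t=0$) is the right replacement for the paper's implicit initial condition. Your closing identification $F|_{B^n(r')}=\Ft$ is also sound: the substitution $u=\epsilon\sigma$ in the fixed-point identity for $\Ft$ shows $\sigma\mapsto\Ft(\sigma t)$ solves the same scalar Picard--Lindel\"of ODE as $\sigma\mapsto F(\sigma t)$ with the same initial datum, so classical uniqueness finishes. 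What your route buys is the elimination of the auxiliary proposition and the two-variable bookkeeping; what it costs is a small, unaddressed technical point that $\epsilon\mapsto t_j a_{\alpha,j,l}(\epsilon\cdot)G(\epsilon\cdot)^\alpha$ is continuous (hence Bochner-integrable) as an $\ASpace{n}{r'}$-valued map, which is what makes $\sT(G)\in\ASpace{n}{r'}$; this follows by dominated convergence on the Taylor coefficients and is precisely the content the paper gets for free from its \cref{Lemma::PfIdentRAEuclid::Integragte}-style manipulation. You should state that continuity/integrability step explicitly, but it is not a gap in substance.
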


To prove \cref{Prop::PfIdentRAEuclid::MainProp} we prove a more general auxiliary result where we separate $\epsilon t$ into two variables $(\epsilon,t)$.  To this end,
set 
\begin{equation}\label{Eqn::PfIdentREEuclid::Definerp}
r':=\min\{ r, D(n 2^L (L+1)^N (\max\{1,D\})^{L+1})^{-1}, ( n(L+1)^{N+1} (\max\{1,D\})^{L} 2^L)^{-1} \}.
\end{equation}
We will consider functions $\Ft(\epsilon, t):[0,1]\times B^n(r')\rightarrow \R^N$,
and we will think of these as functions $\Ft(\epsilon, t)\in \ASpace{n}{r'}[\CSpace{[0,1]}[\R^N]]$.  I.e.,
\begin{equation*}
	\Ft(\epsilon,t) = \sum_{\beta\in \N^n} \frac{t^{\beta}}{\beta!} c_\beta(\epsilon),
\end{equation*}
where $c_\beta\in \CSpace{[0,1]}[\R^N]$ and
\begin{equation*}
	\ANorm{\Ft}{n}{r'}[\CSpace{[0,1]}[\R^N]] = \sum_{\beta\in \N^n} \frac{ (r')^{|\beta|} }{\beta!} \CNorm{c_\beta}{[0,1]}[\R^N].
\end{equation*}
For $1\leq j\leq n$, $1\leq l\leq N$, and $\alpha\in \N^N$ with $|\alpha|\leq L$, let $\at_{\alpha,j,l}(\epsilon,t)\in \ASpace{n}{r}[\CSpace{[0,1]}]$.
For $1\leq l\leq N$, fix a constant $\Ft_{l,0}\in \R$.  We consider the following system of differential equations for $1\leq l\leq N$:
\begin{equation}\label{Eqn::PfIdentRAEuclid::AuxEquation}
\diff{\epsilon} \Ft_l(\epsilon,t) = \sum_{j=1}^n \sum_{\substack{\alpha\in \N^N \\ |\alpha|\leq L}} t_j \at_{\alpha,j,l}(\epsilon, t) \Ft(\epsilon, t)^{\alpha}, \quad \Ft_l(0,t)\equiv \Ft_{l,0}.
\end{equation}
We suppose:
\begin{equation*}
	|\Ft_{l,0}|, \ANorm{\at_{\alpha,j,l}}{n}{r}[\CSpace{[0,1]}]\leq D, \quad 1\leq j\leq n, 1\leq l\leq N, |\alpha|\leq L.
\end{equation*}

\begin{prop}\label{Prop::PfIdentRAEuclid::AuxProp}
Let $r'$ be given by \cref{Eqn::PfIdentREEuclid::Definerp}.
There exists $\Ft(\epsilon, t)\in \ASpace{n}{r'}[\CSpace{[0,1]}[\R^N]]$ satisfying \cref{Eqn::PfIdentRAEuclid::AuxEquation}.  This solution satisfies
\begin{equation}\label{Eqn::PfIndentRAEuclid::AuxBound}
	\max_{1\leq l\leq N} \ANorm{\Ft_l}{n}{r'}[\CSpace{[0,1]}] \leq 2D.
\end{equation}
Finally, this solution is unique in the sense that if for some $\delta>0$,
$\Fh(\epsilon, t)\in \CSpace{[0,1]\times B^n(\delta)}[\R^N]$ is another solution to \cref{Eqn::PfIdentRAEuclid::AuxEquation}, then $\Ft(\epsilon, t)=\Fh(\epsilon, t)$
for $|t|<\min\{ \delta, r'\}$ and $\epsilon\in [0,1]$.
\end{prop}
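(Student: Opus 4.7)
The plan is to prove \cref{Prop::PfIdentRAEuclid::AuxProp} by the contraction mapping principle, following the template of the proof of \cref{Prop::PartODE::MainProp}. Reformulate \cref{Eqn::PfIdentRAEuclid::AuxEquation} as a fixed-point equation for the integral operator
\begin{equation*}
	\sT(\Ft)_l(\epsilon,t) := \Ft_{l,0} + \int_0^\epsilon \sum_{j=1}^n \sum_{\substack{\alpha\in \N^N \\ |\alpha|\leq L}} t_j\, \at_{\alpha,j,l}(s,t)\, \Ft(s,t)^\alpha\, ds,
\end{equation*}
acting on $N$-tuples $\Ft=(\Ft_1,\ldots,\Ft_N)$. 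A function $\Ft \in \ASpace{n}{r'}[\CSpace{[0,1]}[\R^N]]$ solves \cref{Eqn::PfIdentRAEuclid::AuxEquation} if and only if $\sT(\Ft)=\Ft$.

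Let $\sM\subseteq \ASpace{n}{r'}[\CSpace{[0,1]}[\R^N]]$ denote the closed subset on which $\max_{l}\ANorm{\Ft_l}{n}{r'}[\CSpace{[0,1]}]\leq 2D$, endowed with the max-over-$l$ norm; this is a complete metric space. The first step is to show $\sT:\sM\to\sM$. For $\Ft\in\sM$, the Banach-algebra property of $\ASpace{n}{r'}[\CSpace{[0,1]}]$ (\cref{Lemma::FuncSpaceRev::Algebra}, applied componentwise) gives $\ANorm{\Ft^\alpha}{n}{r'}[\CSpace{[0,1]}]\leq (2D)^{|\alpha|}\leq (2\max\{1,D\})^L$; the factor $t_j$ contributes $\ANorm{t_j}{n}{r'}=r'$; each $\at_{\alpha,j,l}$ is bounded by $D$ in the $\ASpace{n}{r'}$-norm (since $r'\leq r$); the $\epsilon$-integration loses nothing; and there are at most $(L+1)^N$ admissible $\alpha$. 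Summing, $\ANorm{\sT(\Ft)_l-\Ft_{l,0}}{n}{r'}[\CSpace{[0,1]}]\leq n r' D (L+1)^N (2\max\{1,D\})^L$, and the first bound in the definition \cref{Eqn::PfIdentREEuclid::Definerp} of $r'$ makes this at most $D$; combined with $|\Ft_{l,0}|\leq D$, this yields $\sT(\Ft)\in\sM$.

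The second step is the contraction estimate. Using the telescoping identities
\begin{equation*}
    \Ft^\alpha-\Gt^\alpha = \sum_{k=1}^N \Big(\prod_{l<k}\Ft_l^{\alpha_l}\Big)(\Ft_k^{\alpha_k}-\Gt_k^{\alpha_k})\Big(\prod_{l>k}\Gt_l^{\alpha_l}\Big), \quad \Ft_k^{\alpha_k}-\Gt_k^{\alpha_k}=(\Ft_k-\Gt_k)\sum_{m=0}^{\alpha_k-1}\Ft_k^m \Gt_k^{\alpha_k-1-m},
\end{equation*}
together with the Banach-algebra bound, one obtains $\ANorm{\Ft^\alpha-\Gt^\alpha}{n}{r'}[\CSpace{[0,1]}]\leq |\alpha|(2\max\{1,D\})^{|\alpha|-1}\max_l\ANorm{\Ft_l-\Gt_l}{n}{r'}[\CSpace{[0,1]}]$. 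Summing over $\alpha$ and $j$ as before produces the Lipschitz constant $n r' D L (L+1)^N (2\max\{1,D\})^{L-1}$, which the second bound in \cref{Eqn::PfIdentREEuclid::Definerp} forces to be at most $\tfrac12$.

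Now Banach's fixed-point theorem produces a unique fixed point $\Ft\in\sM$, which is the desired solution satisfying \cref{Eqn::PfIndentRAEuclid::AuxBound}. For the uniqueness assertion against an arbitrary continuous solution $\Fh$ on $[0,1]\times B^n(\delta)$, fix $t$ with $|t|<\min\{\delta,r'\}$: both $\Ft(\cdot,t)$ and $\Fh(\cdot,t)$ are $C^1$ solutions of the same classical ODE in $\epsilon$ with right-hand side polynomial (hence locally Lipschitz) in the unknown and identical initial data $\Ft_{l,0}$, so classical ODE uniqueness forces them to coincide on $[0,1]$. The main obstacle is just the bookkeeping of the constants so that both the stability estimate and the Lipschitz estimate fit under the single choice of $r'$ in \cref{Eqn::PfIdentREEuclid::Definerp}; everything else is a direct transcription of the contraction argument already used for \cref{Prop::PartODE::MainProp}, with the extra dependence on the parameter $\epsilon\in[0,1]$ absorbed harmlessly into the coefficient Banach space $\CSpace{[0,1]}$.
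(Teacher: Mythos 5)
Your proposal is correct and follows essentially the same route as the paper: the same integral operator $\sT$, the same complete metric space $\sM$ with the $2D$ ball and max-over-$l$ metric, the same two Banach-algebra estimates (one for $\sT:\sM\to\sM$, one for the contraction) landing under the constants baked into $r'$, and then the contraction mapping principle plus classical ODE uniqueness (Gr\"onwall/Picard--Lindel\"of) for the final uniqueness assertion. Your bookkeeping of constants differs mildly from the paper's (e.g., keeping a factor $D$ where the paper absorbs it into $(\max\{1,D\})^{L+1}$, and replacing $(2D)^{|\alpha|-1}$ by the coarser $(2\max\{1,D\})^{|\alpha|-1}$), but one readily checks that both your stability bound and your Lipschitz bound still fall under the two constraints defining $r'$ in \cref{Eqn::PfIdentREEuclid::Definerp}, so nothing is lost.
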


The rest of this section is devoted to the proofs of \cref{Prop::PfIdentRAEuclid::MainProp,Prop::PfIdentRAEuclid::AuxProp}.  We begin with \cref{Prop::PfIdentRAEuclid::AuxProp}, the existence portion of which we will prove
using the contraction mapping principle.

\begin{lemma}\label{Lemma::PfIdentRAEuclid::Integragte}
For $f(\epsilon,t)\in \ASpace{n}{r'}[\CSpace{[0,1]}]$, set $g(\epsilon, t):=\int_0^\epsilon f(\epsilon',t)\: d\epsilon'$.  Then, $g\in \ASpace{n}{r'}[\CSpace{[0,1]}]$ and
\begin{equation*}
	\ANorm{g}{n}{r'}[\CSpace{[0,1]}]\leq \ANorm{f}{n}{r'}[\CSpace{[0,1]}].
\end{equation*}
\end{lemma}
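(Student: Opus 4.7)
The plan is to reduce everything to an explicit term-by-term computation on the power series in $t$, with coefficients that are continuous functions of $\epsilon$. The fact that integration is a contraction of $\CSpace{[0,1]}$-norm into itself will do essentially all the work.

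First I would expand $f$ according to the definition of $\ASpace{n}{r'}[\CSpace{[0,1]}]$: write
\[
f(\epsilon,t) = \sum_{\beta\in\N^n} \frac{t^{\beta}}{\beta!}\, c_{\beta}(\epsilon),
\qquad c_{\beta}\in\CSpace{[0,1]},
\]
with $\ANorm{f}{n}{r'}[\CSpace{[0,1]}] = \sum_{\beta} \frac{(r')^{|\beta|}}{\beta!}\CNorm{c_{\beta}}{[0,1]}$. Define the candidate coefficients of $g$ by
\[
d_{\beta}(\epsilon) := \int_0^{\epsilon} c_{\beta}(\epsilon')\, d\epsilon'.
\]
Each $d_\beta$ is continuous on $[0,1]$ (it is even $C^1$), and $|d_\beta(\epsilon)| \leq \int_0^{\epsilon}|c_\beta(\epsilon')|\,d\epsilon' \leq \CNorm{c_\beta}{[0,1]}$, so $\CNorm{d_\beta}{[0,1]} \leq \CNorm{c_\beta}{[0,1]}$.

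Second, I would verify the identity
\[
g(\epsilon,t) = \sum_{\beta\in\N^n} \frac{t^{\beta}}{\beta!}\, d_{\beta}(\epsilon).
\]
The series defining $f(\epsilon',t)$ converges absolutely and uniformly in $(\epsilon',t)\in [0,1]\times B^n(r')$, since the partial sums are dominated by $\sum_\beta \frac{(r')^{|\beta|}}{\beta!}\CNorm{c_\beta}{[0,1]} = \ANorm{f}{n}{r'}[\CSpace{[0,1]}]<\infty$. Hence Fubini (or uniform convergence) allows interchange of summation and integration in $\epsilon'$, giving the claimed expansion.

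Finally, putting the pieces together,
\[
\ANorm{g}{n}{r'}[\CSpace{[0,1]}]
= \sum_{\beta\in\N^n}\frac{(r')^{|\beta|}}{\beta!}\CNorm{d_\beta}{[0,1]}
\leq \sum_{\beta\in\N^n}\frac{(r')^{|\beta|}}{\beta!}\CNorm{c_\beta}{[0,1]}
=\ANorm{f}{n}{r'}[\CSpace{[0,1]}],
\]
which yields the conclusion. There is no real obstacle here; the only point that needs a sentence of care is the interchange of sum and integral, which is immediate from the absolute convergence furnished by finiteness of $\ANorm{f}{n}{r'}[\CSpace{[0,1]}]$.
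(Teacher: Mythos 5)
Your proof is correct and is simply a careful write-out of what the paper dismisses as ``immediate from the definitions'': expand $f$ in its power series with $\CSpace{[0,1]}$ coefficients, integrate term by term (justified by absolute convergence coming from finiteness of the $\ASpace{n}{r'}[\CSpace{[0,1]}]$ norm), and note that integration over $[0,\epsilon]$ is a contraction on $\CSpace{[0,1]}$. Nothing further to add.
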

\begin{proof}This is immediate from the definitions.\end{proof}

For $\Ft(\epsilon,t)=(\Ft_1(\epsilon,t),\ldots, \Ft_N(\epsilon,t))\in \ASpace{n}{r'}[\CSpace{[0,1]}[\R^N]]$ and $1\leq l\leq N$, set
\begin{equation*}
\sT_l(\Ft)(\epsilon,t) = \Ft_{l,0} + \int_0^{\epsilon} \mleft( \sum_{j=1}^n \sum_{\substack{\alpha\in \N^N \\ |\alpha|\leq L}} t_j \at_{\alpha,j,l}(\epsilon',t) \Ft(\epsilon',t)^{\alpha} \mright)\: d\epsilon',
\end{equation*}
and set $\sT(\Ft):= (\sT_1(\Ft),\ldots, \sT_N(\Ft))$.  Using that $\ASpace{n}{r'}[\CSpace{[0,1]}]$ is an algebra (\cref{Lemma::FuncSpaceRev::Algebra}), $r'\leq r$, and \cref{Lemma::PfIdentRAEuclid::Integragte},
we have $\sT:\ASpace{n}{r'}[\CSpace{[0,1]}[\R^N]]\rightarrow \ASpace{n}{r'}[\CSpace{[0,1]}[\R^N]]$.  Furthermore, $\Ft$ solves \cref{Eqn::PfIdentRAEuclid::AuxEquation} if and only if
$\sT(\Ft)=\Ft$.

Set
\begin{equation*}
	\sM := \mleft\{ \Ft=(\Ft_1,\ldots, \Ft_N)\in \ASpace{n}{r'}[\CSpace{[0,1]}[\R^N]] : \ANorm{\Ft_l}{n}{r'}[\CSpace{[0,1]}]\leq 2D, 1\leq l\leq N  \mright\}.
\end{equation*}
We give $\sM$ the metric $\rho(\Ft, \Fh):= \max_{1\leq l\leq N} \ANorm{\Ft_l-\Fh_l}{n}{r'}[\CSpace{[0,1]}]$.  With this metric, $\sM$ is a complete metric space.

\begin{lemma}\label{Lemma::PfIdentRAEuclid::Contraction}
$\sT:\sM\rightarrow \sM$ and is a strict contraction.
\end{lemma}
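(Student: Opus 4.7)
The plan is to verify both claims by direct norm estimation, relying on the Banach algebra structure of $\ASpace{n}{r'}[\CSpace{[0,1]}]$ from \cref{Lemma::FuncSpaceRev::Algebra} and the integration bound from \cref{Lemma::PfIdentRAEuclid::Integragte}. The two nontrivial thresholds in the definition of $r'$ in \cref{Eqn::PfIdentREEuclid::Definerp} are each tailored to enforce exactly one of the two required inequalities, and I would handle them in that order.

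First, for $\Ft\in\sM$, iterating the Banach algebra inequality gives $\ANorm{\Ft^\alpha}{n}{r'}[\CSpace{[0,1]}]\leq(2D)^{|\alpha|}\leq 2^L\max\{1,D\}^L$ for $|\alpha|\leq L$, while $\ANorm{t_j}{n}{r'}[\CSpace{[0,1]}]=r'$, $\ANorm{\at_{\alpha,j,l}}{n}{r}[\CSpace{[0,1]}]\leq D$, and integration in $\epsilon$ does not grow the $\CSpace{[0,1]}$-norm. Since the number of $\alpha\in\N^N$ with $|\alpha|\leq L$ is at most $(L+1)^N$, combining everything yields $\ANorm{\sT_l(\Ft)}{n}{r'}[\CSpace{[0,1]}]\leq D + n r' D (L+1)^N 2^L \max\{1,D\}^L$. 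The first threshold $r'\leq D/(n 2^L (L+1)^N \max\{1,D\}^{L+1})$ forces the second term to be at most $D^2/\max\{1,D\}\leq D$, so the total is $\leq 2D$ and $\sT(\sM)\subseteq\sM$.

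For the contraction, the missing ingredient is the estimate $\ANorm{\Ft^\alpha-\Fh^\alpha}{n}{r'}[\CSpace{[0,1]}]\leq |\alpha|(2D)^{|\alpha|-1}\rho(\Ft,\Fh)$, which I would prove by the standard telescoping decomposition of $\Ft^\alpha-\Fh^\alpha$ into $|\alpha|$ summands, each a product of at most $|\alpha|-1$ factors from $\{\Ft_l,\Fh_l\}$ (each of norm $\leq 2D$) times a single factor $\Ft_l-\Fh_l$. Combining this with the same bookkeeping as above produces $\ANorm{\sT_l(\Ft)-\sT_l(\Fh)}{n}{r'}[\CSpace{[0,1]}]\leq n r' D L (L+1)^N 2^{L-1}\max\{1,D\}^{L-1}\,\rho(\Ft,\Fh)$, and the second threshold $r'\leq 1/(n(L+1)^{N+1}\max\{1,D\}^L 2^L)$ collapses the prefactor to at most $L/(2(L+1))<1/2$, giving a strict contraction.

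The only real bookkeeping obstacle is handling the cases $D\leq 1$ and $D>1$ uniformly: the quantity $(2D)^{|\alpha|}$ is not dominated by a simple monomial in $D$ alone, which is precisely why $\max\{1,D\}$ appears in both thresholds of \cref{Eqn::PfIdentREEuclid::Definerp}. Once this case split is absorbed into $\max\{1,D\}$, both inequalities reduce to the explicit constants displayed above.
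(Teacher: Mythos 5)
Your proof is correct and follows essentially the same route as the paper's: bound each summand $t_j\at_{\alpha,j,l}\Ft^{\alpha}$ (respectively the telescoped difference $t_j\at_{\alpha,j,l}(\Ft^{\alpha}-\Fh^{\alpha})$) using the Banach algebra property and the integration estimate, then use the two explicit thresholds built into $r'$ to close. Your intermediate constants are marginally sharper (you keep $D\max\{1,D\}^{L}$ and $L(L+1)^{N}$ where the paper rounds up to $\max\{1,D\}^{L+1}$ and $(L+1)^{N+1}$), but the argument and the final inequalities are the same.
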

\begin{proof}
For $\Ft\in \sM$, $|\alpha|\leq L$, $1\leq j\leq n$, and $1\leq l\leq N$, we have by \cref{Lemma::FuncSpaceRev::Algebra},
\begin{equation*}
	\ANorm{t_j \at_{\alpha,j,l} \Ft^{\alpha}}{n}{r'}[\CSpace{[0,1]}] \leq \ANorm{t_j}{n}{r'} \ANorm{\at_{\alpha,j,k}}{n}{r'}[\CSpace{[0,1]}] (2D)^{|\alpha|}
	\leq r' 2^{|\alpha|} D^{|\alpha|+1} \leq r' 2^{L} (\max\{1,D\})^{L+1}.
\end{equation*}
Thus, by \cref{Lemma::PfIdentRAEuclid::Integragte},
\begin{equation*}
	\ANorm{\sT_l(\Ft)}{n}{r'}[\CSpace{[0,1]}] \leq |\Ft_{l,0}| + \sum_{j=1}^n \sum_{|\alpha|\leq L} r' 2^L (\max\{1,D\})^{L+1}
	\leq D + r' n 2^L (L+1)^N (\max\{1,D\})^{L+1}\leq 2D,
\end{equation*}
where the last inequality follows from the choice of $r'$.  It follows that $\sT:\sM\rightarrow \sM$.

Again using \cref{Lemma::FuncSpaceRev::Algebra}, we have for $\Ft,\Fh\in \sM$, $|\alpha|\leq L$, $1\leq j\leq n$, and $1\leq l\leq N$,
\begin{equation*}
\begin{split}
	&\ANorm{t_j \at_{\alpha,j,l} (\Ft^{\alpha}-\Fh^{\alpha})}{n}{r'}[\CSpace{[0,1]}]
	\leq \ANorm{t_j}{n}{r'} \ANorm{\at_{\alpha,j,l}}{n}{r'}[\CSpace{[0,1]}] |\alpha| (2D)^{|\alpha|-1} \max_{1\leq k\leq N} \ANorm{\Ft_k-\Fh_k}{n}{r'}[\CSpace{[0,1]}]
	\\& \leq r' 2^{|\alpha|-1} D^{|\alpha|} |\alpha| \rho(\Ft, \Fh)
	\leq r' 2^{L-1} (\max\{1,D\})^{L} L \rho(\Ft,\Fh).
\end{split}
\end{equation*}
Thus, by \cref{Lemma::PfIdentRAEuclid::Integragte},
\begin{equation*}
\begin{split}
&\ANorm{\sT_l(\Ft)-\sT_l(\Fh)}{n}{r'}[\CSpace{[0,1]}]
\leq \sum_{j=1}^n \sum_{|\alpha|\leq L}  r' 2^{L-1} (\max\{1,D\})^{L} L \rho(\Ft,\Fh)
\\&\leq n(L+1)^{N+1} r' 2^{L-1} (\max\{1,D\})^{L} \rho(\Ft,\Fh)
\leq \frac{1}{2} \rho(\Ft,\Fh),
\end{split}
\end{equation*}
where the last inequality follows from the choice of $r'$.  It follows that $\rho(\sT(\Ft), \sT(\Fh))\leq \frac{1}{2} \rho(\Ft,\Fh)$, and therefore $\sT:\sM\rightarrow \sM$ is a strict contraction.
\end{proof}

\begin{proof}[Proof of \cref{Prop::PfIdentRAEuclid::AuxProp}]
\Cref{Lemma::PfIdentRAEuclid::Contraction} shows that the contraction mapping principle applies to $\sT:\sM\rightarrow \sM$ to yield a unique fixed point $\Ft\in \sM$ of $\sT$.
This fixed point is the desired solution to \cref{Eqn::PfIdentRAEuclid::AuxEquation}.  Since $\Ft\in \sM$, \cref{Eqn::PfIndentRAEuclid::AuxBound} follows.
Finally, since \cref{Eqn::PfIdentRAEuclid::AuxEquation} is a standard ODE, standard uniqueness theorems (using, for example, Gr\"onwall's inequality) give the claimed uniqueness.
\end{proof}

\begin{proof}[Proof of \cref{Prop::PfIdentRAEuclid::MainProp}]
Suppose $F\in \CSpace{B^n(r)}[\R^N]$ satisfies \cref{Eqn::PfIdentRAEuclid::MainEquation}.  Set $\Ft(\epsilon,t)=F(\epsilon t)$, $\at_{\alpha,j,l}(\epsilon, t) = a_{\alpha,j,l}(\epsilon t)$,
and $\Ft_{l,0}:= F_{l,0}$.  Then, $\Ft$ satisfies \cref{Eqn::PfIdentRAEuclid::AuxEquation}.  The uniqueness from \cref{Prop::PfIdentRAEuclid::AuxProp} shows that $\Ft$ is the solution
described in that result, and therefore $\Ft(\epsilon, t)\in \ASpace{n}{r'}[\CSpace{[0,1]}[\R^N]]$ and \cref{Eqn::PfIndentRAEuclid::AuxBound} holds.  Since $F(t) = \Ft(1,t)$, the result follows.
\end{proof}

		\subsubsection{Identifying Real Analytic Functions II: Manifolds}
Let $X_1,\ldots, X_q$ be $C^1$ vector fields on a $C^2$ manifold $\fM$.  Fix $x_0\in \fM$ and suppose $X_1,\ldots, X_q$ satisfy $\sC(x_0,\eta,\fM)$ for some $\eta>0$.
Fix $N,L\in \N$, $\xi>0$.  For $1\leq l\leq N$, $1\leq j\leq q$, let $P_{l,j}$ be a polynomial of degree $L$, in $N$ indeterminates, with coefficients in $\AXSpace{X}{x_0}{\eta}\cap \CSpace{B_X(x_0,\xi)}$:
\begin{equation*}
P_{l,j}(x,y) = \sum_{\substack{\alpha\in \N^N \\ |\alpha|\leq L}} b_{\alpha,l,j}(x) y^{\alpha},
\end{equation*}
where $b_{\alpha,l,j}\in \AXSpace{X}{x_0}{\eta}\cap \CSpace{B_X(x_0,\xi)}$.  Fix $D>0$ with $\AXNorm{b_{\alpha,l,j}}{X}{x_0}{\eta}\leq D$, $\forall \alpha,j,l$.

\begin{prop}\label{Prop::IdentRAManifold::MainProp}
Suppose $G=(G_1,\ldots, G_N)\in \CSpace{B_X(x_0,\xi)}[\R^N]$ satisfies, for $1\leq j\leq q$, $1\leq l\leq N$,
\begin{equation*}
	X_j G_l(x) = P_{l,j}(x,G(x)).
\end{equation*}
Then, $\exists r'\in (0,\eta]$ with $G\in \AXSpace{X}{x_0}{r'}[\R^N]$.  Furthermore, $\AXNorm{G}{X}{x_0}{r'}\leq C$ where $C$ and $r'$
can be chosen to depend only on upper bounds for $q$, $\eta^{-1}$, $\xi^{-1}$, $D$, $L$, $N$, and $|G(x_0)|$.
\end{prop}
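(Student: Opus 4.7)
The strategy is to reduce to the Euclidean version \cref{Prop::PfIdentRAEuclid::MainProp} by pulling back through the map $\Phi(t) := e^{t_1 X_1 + \cdots + t_q X_q} x_0$, which is defined on $B^q(\eta)$ thanks to the assumption $\sC(x_0,\eta,\fM)$. For $|t|<\xi$, the curve $s\mapsto e^{s(t_1X_1+\cdots+t_qX_q)}x_0$ is admissible in \cref{Eqn::FuncManifold::DefnBX} (take $\delta$ slightly larger than $|t|$ and $a_j=t_j/\delta$), so $\Phi(B^q(\eta_0))\subseteq B_X(x_0,\xi)$ with $\eta_0:=\min(\eta,\xi)$, and $F:=G\circ\Phi\in\CSpace{B^q(\eta_0)}[\R^N]$ is well-defined and continuous.

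Next, I would derive an ODE for $F$. Differentiating $F_l(\epsilon t)=G_l(\Phi(\epsilon t))$ in $\epsilon$ and using $X_j G_l(x)=P_{l,j}(x,G(x))$ yields
\[
\frac{d}{d\epsilon}F_l(\epsilon t)\;=\;\sum_{j=1}^q t_j\,(X_jG_l)(\Phi(\epsilon t))\;=\;\sum_{j=1}^q\sum_{|\alpha|\le L}t_j\,a_{\alpha,j,l}(\epsilon t)\,F(\epsilon t)^{\alpha},
\]
where $a_{\alpha,j,l}(t):=b_{\alpha,l,j}(\Phi(t))$. By the very definition of the spaces involved, the hypothesis $b_{\alpha,l,j}\in\AXSpace{X}{x_0}{\eta}$ with $\AXNorm{b_{\alpha,l,j}}{X}{x_0}{\eta}\le D$ says precisely that $a_{\alpha,j,l}\in\ASpace{q}{\eta}$ with $\ANorm{a_{\alpha,j,l}}{q}{\eta}\le D$; since $\eta_0\le \eta$, the same bound dominates the $\ASpace{q}{\eta_0}$-norm. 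Finally, $|F_l(0)|=|G_l(x_0)|\le |G(x_0)|$.

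The proof is then completed by invoking \cref{Prop::PfIdentRAEuclid::MainProp} with its $n$, $r$, $D$ replaced by $q$, $\eta_0$, and $D':=\max(D,|G(x_0)|)$ respectively; the resulting $r'>0$ and bound $\ANorm{F_l}{q}{r'}\le 2D'$ depend only on upper bounds for $q$, $L$, $N$, $D'$, and $\eta_0^{-1}$, i.e.\ only on admissible quantities. Since $\AXNorm{G_l}{X}{x_0}{r'}=\ANorm{F_l}{q}{r'}$ by definition of the $\AXSpace{X}{x_0}{r'}$-norm, this is exactly the conclusion. The only delicate step is the ODE derivation, but that is a standard chain-rule computation whose rigor is underwritten by the hypothesis that $X_jG_l$ exists classically and equals $P_{l,j}(x,G(x))$; no genuine obstacle arises.
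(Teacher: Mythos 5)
Your proof is correct and follows essentially the same route as the paper: pull back through $\Phi(t) = e^{t_1X_1+\cdots+t_qX_q}x_0$, observe that $F = G\circ\Phi$ solves the radial ODE with $\ASpace{q}{\eta_0}$-coefficients $b_{\alpha,l,j}\circ\Phi$, and invoke \cref{Prop::PfIdentRAEuclid::MainProp} with $n=q$, $r=\min\{\eta,\xi\}$. Your small additions (verifying $\Phi(B^q(\eta_0))\subseteq B_X(x_0,\xi)$ so the composition is defined, and tracking $D'=\max(D,|G(x_0)|)$ so that the $D$ of \cref{Prop::PfIdentRAEuclid::MainProp} bounds both $|F_{l,0}|$ and the coefficient norms) are accurate clarifications of details the paper leaves implicit.
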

\begin{proof}
Let $\Psi(t_1,\ldots, t_q):= \exp(t_1 X_1+\cdots + t_q X_q)x_0$, and set $F(t):=G(\Psi(t))$.
The goal is to show $F\in \ASpace{q}{r'}[\R^N]$ with $\ANorm{F}{q}{r'}[\R^N]\leq C$, where $r'$ and $C$
are as in the statement of the result.
Note that $F$ satisfies
\begin{equation*}
	\diff{\epsilon} F_l(\epsilon t) = \sum_{j=1}^q t_j (X_j F_l)(\Psi(\epsilon t))
	=\sum_{j=1}^q \sum_{\substack{\alpha\in \N^N \\ |\alpha|\leq L}} t_j b_{\alpha,l,j}\circ \Phi(\epsilon t) F(\epsilon t)^{\alpha}.
\end{equation*}
By hypothesis, $b_{\alpha,l,j}\circ \Psi\in \ASpace{q}{\eta}$ with $\ANorm{b_{\alpha,l,j}\circ \Psi}{q}{\eta}\leq D$.
From here, the result follows immediately from \cref{Prop::PfIdentRAEuclid::MainProp} (with $n=q$ and $r=\min\{\eta,\xi\}$).
\end{proof}
	
	\subsection{The Quantitative Theorem}\label{Section::Proofs::QuantiativeThm}
In this section, we prove \cref{Thm::QuantRes::MainThm} which is the main theorem of this paper.
We separate the proof into two parts:  when the vector fields are linearly independent at $x_0$ (i.e., when $n=q$)
and more generally when the vector fields may be linearly dependent at $x_0$ (i.e., when $q\geq n$).
	
		\subsubsection{Linearly Independent}\label{Section::PfQaunt::LI}
In this section we prove \cref{Thm::QuantRes::MainThm} in the special case when $n=q$; so that we have $X=X_{J_0}$.
By \cref{Prop::QualRes::InjectiveImmresion}, $X_1,\ldots, X_n$ span the tangent space at every point of $B_X(x_0,\xi)$.
Since we know $B_X(x_0,\xi)$ is an $n$-dimensional manifold, we have that $X_1,\ldots, X_n$ form a basis for the tangent
space at every point of $B_X(x_0,\xi)$.
Taking $\chi=\xi$, \cref{Thm::QuantRes::MainThm} \cref{Item::QuantRes::LI}, \cref{Item::QuantRes::BestBasis}, and \cref{Item::QuantRes::chiSubMfld} follow immediately.

We apply \cref{Prop::PartI::MainProp} to obtain $\etai\in (0,\eta_0]$ as in that proposition; so that $\Phi:B^n(\etai)\rightarrow B_X(x_0,\xi)$ and is a $C^2$ diffeomorphism
onto its image.  We let $Y_j=\Phi^{*} X_j$, and define $A$ as in \cref{Thm::QuantRes::MainThm} by $Y=(I+A) \grad$.
Our main goal is to show that $A$ is real analytic; this will imply that $Y_1,\ldots, Y_n$ are real analytic as well.

We have assumed $c_{j,k}^l\in \AXSpace{X}{x_0}{\eta}$ with $\AXNorm{c_{j,l}^l}{X}{x_0}{\eta}\lesssim 1$.  Thus, by the definition of
$\AXSpace{X}{x_0}{\eta}$, we have $c_{j,k}^l\circ \Phi\in \ASpace{n}{\eta}$ with $\ANorm{c_{j,k}^l\circ \Phi}{n}{\eta}\lesssim 1$.
We conclude $c_{j,k}^l\circ \Phi \in  \ASpace{n}{\etai}$ with $\ANorm{c_{j,k}^l\circ \Phi}{n}{\etai}\lesssim 1$.

For $1\leq l\leq n$, define an $n\times n$ matrix $C_l(t)$ by letting the $j,k$ component of $C_l(t)$ equal $c_{j,l}^k\circ \Phi$.
Thus, $C_l\in \ASpace{n}{\etai}[\M^{n\times n}]$ with $\ANorm{C_l}{n}{\etai}[\M^{n\times n}]\lesssim 1$.
Set $C(t):=\sum_{l=1}^n t_l C_l(t)$.
By \cref{Prop::PartI::MainProp} \cref{Item::Part1::ASize}, $A(0)=0$, and by \cref{Prop::PartI::MainProp} \cref{Item::Part1::ADiffEq},
$A$ satisfies the differential equation \cref{Eqn::Part1::ADiffEq}.

\Cref{Prop::PartODE::MainProp} shows that there is an admissible constant $\eta_1\in (0,\etai]$ such that \cref{Eqn::Part1::ADiffEq} has a solution
$\Ah\in \ASpace{n}{\eta_1}[\M^{n\times n}]$ with $\ANorm{\Ah}{n}{\eta_1}[\M^{n\times n}]\leq \frac{1}{2}$.
By the uniqueness of this solution described in \cref{Prop::PartODE::MainProp}, $A\big|_{B^n(\eta_1)} = \Ah$.
This establishes \cref{Thm::QuantRes::MainThm} \cref{Item::QuantRes::BoundA}.   \Cref{Thm::QuantRes::MainThm} \cref{Item::QuantRes::BoundY}
is an immediate consequence of \cref{Item::QuantRes::BoundA} (since $Y=Y_{J_0}$ when $n=q$).

\Cref{Prop::PartI::MainProp} \cref{Item::Part1::PhiOpen} shows $\Phi(B^n(\etai))$ is an open subset of $B_{X}(x_0,\xi)$
and \cref{Prop::PartI::MainProp} \cref{Item::Part1::PhiDiffeo} shows $\Phi:B^n(\etai)\rightarrow \Phi(B^n(\etai))$ is a $C^2$ diffeomorphism.
Since $\eta_1\leq \etai$, we see $\Phi(B^n(\eta_1))$ is an open subset of $B_{X}(x_0,\xi)$
and $\Phi:B^n(\eta_1)\rightarrow \Phi(B^n(\eta_1))$ is a $C^2$ diffeomorphism.
This establishes \cref{Thm::QuantRes::MainThm} \cref{Item::QuantRes::PhiOpen,Item::QuantRes::PhiDiffeo}.

Finally, we prove \cref{Thm::QuantRes::MainThm} \cref{Item::QuantRes::xi1xi2}.  We have already taken $\chi=\xi$, and we have
$\Phi(B^n(\eta_1))\subseteq B_X(x_0,\xi)$.  Thus it suffices to prove the existence of $\xi_1$ and $\xi_2$.  Since $X_{J_0}=X$,
we may take $\xi_1=\xi_2$, and therefore we only need to prove the existence of $\xi_1$.
This follows just as in \cite[\SSExistXiOne]{StovallStreet}.
		
		\subsubsection{Linearly Dependent}
In this section we prove \cref{Thm::QuantRes::MainThm} in the general case, $q\geq n$.  As in \cite{StovallStreet} the goal is to reduce the problem to the case $q=n$.
Set
\begin{equation*}
	\sI_0(n,q):=\{(i_1,\ldots, i_n)\in \sI(n,q) : 1\leq i_1<i_2<\cdots<i_n\leq q\}.
\end{equation*}

\begin{lemma}\label{Lemma::PfQuant::Liewedge}
For $J\in \sI(n,q)$, $1\leq j\leq n$,
\begin{equation*}
	\Lie{X_j} \bigwedge X_J = \sum_{K\in\sI_0(n,q)} g_{j,J}^K \bigwedge X_K, \text{ on }B_{X_{J_0}}(x_0,\xi),
\end{equation*}
where $\CNorm{g_{j,J}^K}{B_{X_{J_0}}(x_0,\xi)}\lesssim_0 1$ 
and $\AXNorm{g_{j,J}^K}{X_{J_0}}{x_0}{\eta}\lesssim 1$.
Here, $\Lie{X_j}$ denotes the Lie derivative with respect to $X_j$.
\end{lemma}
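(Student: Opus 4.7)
The plan is to prove this directly by applying the Leibniz rule for the Lie derivative to a decomposable multivector and then expanding using the structural equation $[X_j, X_k] = \sum_l c_{j,k}^l X_l$.

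First, I will recall (or derive from the Leibniz property $\Lie{V}(\omega \wedge \eta) = (\Lie{V}\omega)\wedge \eta + \omega \wedge (\Lie{V}\eta)$ applied to decomposable multivector fields, and the fact that $\Lie{V} Y = [V,Y]$ for a single vector field $Y$) the identity: for $J = (i_1, \ldots, i_n) \in \sI(n,q)$,
\begin{equation*}
\Lie{X_j} \bigwedge X_J = \sum_{k=1}^n X_{i_1}\wedge \cdots \wedge [X_j, X_{i_k}] \wedge \cdots \wedge X_{i_n},
\end{equation*}
where the commutator sits in the $k$-th slot. Note that $B_{X_{J_0}}(x_0,\xi) \subseteq B_X(x_0,\xi)$ (every $X_{J_0}$-horizontal curve is an $X$-horizontal curve with the $a_{n+1},\ldots,a_q$ coefficients set to zero), so the commutator relation $[X_j, X_{i_k}] = \sum_{l=1}^q c_{j,i_k}^l X_l$ is valid on all of $B_{X_{J_0}}(x_0,\xi)$.

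Next, substituting the structure relation yields
\begin{equation*}
\Lie{X_j}\bigwedge X_J = \sum_{k=1}^n \sum_{l=1}^q c_{j,i_k}^l \, \bigl(X_{i_1}\wedge \cdots \wedge X_l \wedge \cdots \wedge X_{i_n}\bigr).
\end{equation*}
Each inner wedge vanishes if $l \in \{i_m : m \neq k\}$; otherwise $(i_1,\ldots,l,\ldots,i_n)$ has distinct entries and may be reordered, at the cost of a sign $\sigma_{J,k,l} \in \{-1,+1\}$, to equal $\sigma_{J,k,l} \bigwedge X_{\kappa(J,k,l)}$ for the unique $\kappa(J,k,l) \in \sI_0(n,q)$ obtained by sorting. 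Collecting terms indexed by $K \in \sI_0(n,q)$ gives the claimed expansion with
\begin{equation*}
g_{j,J}^K \;=\; \sum_{\substack{1\leq k\leq n,\, 1\leq l\leq q \\ \kappa(J,k,l) = K}} \sigma_{J,k,l} \, c_{j,i_k}^l,
\end{equation*}
a finite sum (of at most $n q$ terms) of the $c_{j,i_k}^l$ with $\pm 1$ coefficients.

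The norm bounds are then immediate from the key assumptions. The sup-norm bound $\CNorm{g_{j,J}^K}{B_{X_{J_0}}(x_0,\xi)} \lesssim_0 1$ follows because each summand $|c_{j,i_k}^l|$ is bounded by a $0$-admissible constant on $B_{X_{J_0}}(x_0,\xi) \subseteq B_X(x_0,\xi)$ (this is exactly what the definition of $0$-admissible allows), and the number of summands depends only on $n$ and $q$. The analytic-type bound $\AXNorm{g_{j,J}^K}{X_{J_0}}{x_0}{\eta} \lesssim 1$ follows in exactly the same way from the triangle inequality for the semi-norm $\AXNorm{\cdot}{X_{J_0}}{x_0}{\eta}$, using the Key Assumption that $\AXNorm{c_{j,k}^l}{X_{J_0}}{x_0}{\eta}$ is controlled by an admissible constant. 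There is no real obstacle here; the lemma is a bookkeeping step packaging the Leibniz rule and the structural equation into the form needed for later use.
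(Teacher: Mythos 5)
Your proposal is correct and follows essentially the same route as the paper: apply the Leibniz rule for the Lie derivative of a decomposable multivector, substitute the structure relations $[X_j,X_{j_l}]=\sum_k c_{j,j_l}^k X_k$, and use the anti-commutativity of $\wedge$ to collect terms over $K\in\sI_0(n,q)$. Your write-up is a bit more explicit than the paper's (you name the sign $\sigma_{J,k,l}$, the sorting map $\kappa$, and the vanishing of repeated-index terms, whereas the paper simply says ``the result now follows from the anti-commutativity of $\wedge$ and the assumptions on $c_{i,j}^k$''), but the argument and the source of the norm bounds are identical.
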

\begin{proof}
Let $J=(j_1,\ldots, j_n)$.  By the definition of $\Lie{X_j}$ (see \cite[\SSDivideWedge]{StovallStreet} for more details), we have
\begin{equation*}
\Lie{X_j} \bigwedge X_J = \Lie{X_j}\left(X_{j_1}\wedge X_{j_2}\wedge \cdots \wedge X_{j_n}\right) = \sum_{l=1}^n X_{j_1}\wedge X_{j_2}\wedge \cdots \wedge X_{j_{l-1}}\wedge [X_j, X_{j_l}]\wedge X_{j_{l+1}}\wedge \cdots \wedge X_{j_n}.
\end{equation*}
But, $[X_j,X_{j_l}]=\sum_{k=1}^q c_{j,j_l}^k X_k$, by assumption.  Thus,
\begin{equation*}
\Lie{X_j} \bigwedge X_J = \sum_{l=1}^n \sum_{k=1}^q c_{j,j_l}^k X_{j_1}\wedge X_{j_2}\wedge \cdots \wedge X_{j_{l-1}}\wedge X_k \wedge X_{j_{l+1}}\wedge \cdots \wedge X_{j_n}.
\end{equation*}
The result now follows from the anti-commutativity of $\wedge$ and the assumptions on $c_{i,j}^k$.
\end{proof}

Take $\chi\in (0,\xi]$ to be the $0$-admissible constant given by \cref{Prop::Part1::chi}.  With this choice of $\chi$,
\cref{Thm::QuantRes::MainThm} \cref{Item::QuantRes::LI}, \cref{Item::QuantRes::BestBasis}, and \cref{Item::QuantRes::chiSubMfld} follow immediately.
In particular, $\bigwedge X_{J_0}(y)\ne 0$ for $y\in B_X(x_0,\xi)$.  It therefore makes sense to consider
$\frac{\bigwedge X_J(y)}{\bigwedge X_{J_0}(y)}$ for any $J\in \sI(n,q)$ and $y\in B_{X_{J_0}}(x_0,\chi)$.

\begin{lemma}\label{Lemma::PfQuant::DerivOfQuotient}
For $J\in \sI(n,q)$, $1\leq j\leq n$,
\begin{equation*}
	X_j \frac{\bigwedge X_J}{\bigwedge X_{J_0}} = \sum_{K\in \sI_0(n,q)} g_{j,J}^K \frac{\bigwedge X_K}{\bigwedge X_{J_0}} - \sum_{K\in \sI_0(n,q)} g_{j,J_0}^K \frac{\bigwedge X_J}{\bigwedge X_{J_0}}\frac{\bigwedge X_K}{\bigwedge X_{J_0}},
\end{equation*}
where $g_{j,J}^K$ are the functions from \cref{Lemma::PfQuant::Liewedge}.
\end{lemma}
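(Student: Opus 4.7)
The plan is to apply the quotient rule to the ratio $\frac{\bigwedge X_J}{\bigwedge X_{J_0}}$, treating $\bigwedge X_{J_0}$ as a nowhere-zero ``trivialization'' of the line bundle $\bigwedge^n T B_{X_{J_0}}(x_0,\chi)$, and then invoke the previous lemma to rewrite the resulting Lie derivatives in the desired form. The hypothesis from \cref{Prop::Part1::chi} that $\bigwedge X_{J_0}(y) \ne 0$ on $B_{X_{J_0}}(x_0,\chi)$ is what allows us to perform this division.

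First, I would set $h_J := \frac{\bigwedge X_J}{\bigwedge X_{J_0}}$, so that pointwise $\bigwedge X_J = h_J\, \bigwedge X_{J_0}$ as a section of the one-dimensional bundle $\bigwedge^n T B_{X_{J_0}}(x_0,\chi)$. Applying $\Lie{X_j}$ and using the Leibniz rule for the Lie derivative on multivectors, one gets
\begin{equation*}
\Lie{X_j}\bigwedge X_J = (X_j h_J)\, \bigwedge X_{J_0} + h_J \, \Lie{X_j}\bigwedge X_{J_0}.
\end{equation*}
Next, I would invoke \cref{Lemma::PfQuant::Liewedge} twice, once for $J$ and once for $J_0$:
\begin{equation*}
\Lie{X_j}\bigwedge X_J = \sum_{K\in \sI_0(n,q)} g_{j,J}^K \bigwedge X_K, \qquad
\Lie{X_j}\bigwedge X_{J_0} = \sum_{K\in \sI_0(n,q)} g_{j,J_0}^K \bigwedge X_K.
\end{equation*}
Substituting these into the Leibniz identity and factoring out $\bigwedge X_{J_0}$ (using that, for each $K$, $\bigwedge X_K = \frac{\bigwedge X_K}{\bigwedge X_{J_0}} \bigwedge X_{J_0}$), both sides become scalar multiples of the nowhere-zero $n$-vector $\bigwedge X_{J_0}$. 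Dividing by this common factor gives
\begin{equation*}
X_j h_J + h_J \sum_{K} g_{j,J_0}^K \frac{\bigwedge X_K}{\bigwedge X_{J_0}} = \sum_{K} g_{j,J}^K \frac{\bigwedge X_K}{\bigwedge X_{J_0}},
\end{equation*}
which rearranges to the claimed formula.

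The only genuine ``content'' in the proof is the Leibniz rule for $\Lie{X_j}$ acting on the product of a scalar function and an $n$-vector field, combined with the fact that in a 1-dimensional fiber two nonzero elements are proportional by a scalar which can be cancelled. Neither of these is a serious obstacle; the bookkeeping is routine once \cref{Lemma::PfQuant::Liewedge} is in place. The one minor point to be careful about is that $h_J$ is genuinely $C^1$ on $B_{X_{J_0}}(x_0,\chi)$, so that $X_j h_J$ makes sense classically; this follows from $\bigwedge X_{J_0}$ being $C^1$ and nowhere vanishing on this set, together with $\bigwedge X_J$ being $C^1$ (both being wedges of $C^1$ vector fields). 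A pointer to \cite[\SSDivideWedge]{StovallStreet} for the precise definition of the quotient of two $n$-vectors in a one-dimensional space completes the justification.
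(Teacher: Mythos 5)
Your proof is correct and follows essentially the same route as the paper: the paper invokes the quotient rule $X_j\tfrac{\bigwedge X_J}{\bigwedge X_{J_0}} = \tfrac{\Lie{X_j}\bigwedge X_J}{\bigwedge X_{J_0}} - \tfrac{\bigwedge X_J}{\bigwedge X_{J_0}}\tfrac{\Lie{X_j}\bigwedge X_{J_0}}{\bigwedge X_{J_0}}$ by citing \cite[\SSDerivWedge]{StovallStreet} and then applies \cref{Lemma::PfQuant::Liewedge}, while you derive the equivalent Leibniz form $\Lie{X_j}\bigwedge X_J = (X_j h_J)\bigwedge X_{J_0} + h_J\Lie{X_j}\bigwedge X_{J_0}$ from $\bigwedge X_J = h_J\bigwedge X_{J_0}$ and perform the same substitution and cancellation. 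The only difference is that you re-derive the small identity rather than cite it; the argument is otherwise identical.
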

\begin{proof}
We use the identity
\begin{equation*}
	X_j  \frac{\bigwedge X_J}{\bigwedge X_{J_0}} = \frac{\Lie{X_j} \bigwedge X_J}{\bigwedge X_{J_0}} - \frac{\bigwedge X_J}{\bigwedge X_{J_0}} \frac{\Lie{X_j} \bigwedge X_{J_0}}{\bigwedge X_{J_0}},
\end{equation*}
which is proved in \cite[\SSDerivWedge]{StovallStreet}.
From here, the result follows immediately from \cref{Lemma::PfQuant::Liewedge}.
\end{proof}


\begin{lemma}\label{Lemma::PfQuantLd::C1Estimates}
For $J\in \sI(n,q)$ we have $\frac{\bigwedge X_J}{\bigwedge X_{J_0}}\in \CSpace{B_{X_{J_0}}(x_0,\chi)}$ and
\begin{equation*}
	\BCNorm{\frac{\bigwedge X_J}{\bigwedge X_{J_0}}}{B_{X_{J_0}}(x_0,\chi)}\lesssim_0 1.
\end{equation*}
\end{lemma}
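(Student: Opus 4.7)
The plan is to observe that this lemma is essentially a repackaging of \cref{Prop::Part1::chi}, with almost no additional work needed. First I would invoke \cref{Prop::Part1::chi}\,(c) to note that $B_{X_{J_0}}(x_0,\chi)$ is an open submanifold of $B_X(x_0,\xi)$, of dimension $n$, and \cref{Prop::Part1::chi}\,(a) to note that the section $\bigwedge X_{J_0}$ of the line bundle $\bigwedge^n T B_{X_{J_0}}(x_0,\chi)$ is nowhere vanishing on this open submanifold.

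Next I would address continuity. Since $X_1,\ldots, X_q$ are $C^1$ vector fields on the $C^2$ manifold $B_{X_{J_0}}(x_0,\chi)$, the $n$-vector fields $\bigwedge X_J$ and $\bigwedge X_{J_0}$ are continuous sections of the line bundle $\bigwedge^n T B_{X_{J_0}}(x_0,\chi)$. In any local trivialization given by a $C^2$ chart (for instance near each point, one can use the chart supplied by a suitable ordered product of flows, or simply any coordinate chart on the $C^2$ manifold), the ratio $\bigwedge X_J/\bigwedge X_{J_0}$, defined by \cref{Eqn::QuantRes::QuotientWedge}, equals a quotient of continuous scalar functions whose denominator is nowhere zero. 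Hence $\bigwedge X_J/\bigwedge X_{J_0}$ is a continuous scalar function on $B_{X_{J_0}}(x_0,\chi)$.

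Finally I would read off the uniform bound directly from \cref{Prop::Part1::chi}\,(b): for every $y\in B_{X_{J_0}}(x_0,\chi)$,
\begin{equation*}
\left|\frac{\bigwedge X_J(y)}{\bigwedge X_{J_0}(y)}\right|\leq \sup_{K\in \sI(n,q)}\left|\frac{\bigwedge X_K(y)}{\bigwedge X_{J_0}(y)}\right|\lesssim_0 1,
\end{equation*}
where only the upper half of the $\approx_0 1$ estimate is used. Combining continuity with this pointwise bound gives $\BCNorm{\bigwedge X_J/\bigwedge X_{J_0}}{B_{X_{J_0}}(x_0,\chi)}\lesssim_0 1$, as required. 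There is no real obstacle here; the only thing worth a line of explanation is the well-definedness of the quotient as a scalar, which follows from one-dimensionality of $\bigwedge^n T_y B_{X_{J_0}}(x_0,\chi)$ and is already discussed in \cite[\SSDivideWedge]{StovallStreet}.
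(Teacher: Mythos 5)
Your proposal is correct and takes essentially the same route as the paper: the paper's proof is simply the observation that this lemma is a restatement of \cref{Thm::QuantRes::MainThm}~\cref{Item::QuantRes::BestBasis}, which was itself established in the linearly dependent case directly from \cref{Prop::Part1::chi}(b) — exactly the source you cite. Your additional remarks about continuity of the ratio (continuous numerator and nonvanishing continuous denominator in a local trivialization) are sound and make explicit a point the paper leaves implicit.
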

\begin{proof}
This is just a restatement of \cref{Thm::QuantRes::MainThm} \cref{Item::QuantRes::BestBasis}, which we have already shown.
\end{proof}

\begin{lemma}\label{Lemma::PfQuantLD::RAEstimates}
There exists an admissible constant $\eta'\in (0,\eta]$ such that $\forall J\in \sI(n,q)$, $\frac{\bigwedge X_J}{\bigwedge X_{J_0}}\in \AXSpace{X_{J_0}}{x_0}{\eta'}$
and
\begin{equation*}
	\BAXNorm{\frac{\bigwedge X_J}{\bigwedge X_{J_0}}}{X_{J_0}}{x_0}{\eta'}\lesssim 1.
\end{equation*}
\end{lemma}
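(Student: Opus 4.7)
The plan is to apply \cref{Prop::IdentRAManifold::MainProp} to the vector fields $X_{J_0}=X_1,\ldots,X_n$ (playing the role of the vector fields $X_1,\ldots,X_q$ in that proposition) together with the system $G=(F_K)_{K\in \sI_0(n,q)}$ of functions defined by $F_K := \frac{\bigwedge X_K}{\bigwedge X_{J_0}}$ on $B_{X_{J_0}}(x_0,\chi)$, where $\chi$ is the $0$-admissible constant supplied by \cref{Prop::Part1::chi}. The number of unknowns is $N=|\sI_0(n,q)| = \binom{q}{n}$, which is $0$-admissibly bounded, and $X_{J_0}$ satisfies $\sC(x_0,\eta,\fM)$ by assumption.

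First I would verify the polynomial ODE system. By \cref{Lemma::PfQuant::DerivOfQuotient}, for each $1\leq j\leq n$ and each $K\in\sI_0(n,q)$,
\begin{equation*}
X_j F_K = \sum_{L\in\sI_0(n,q)} g_{j,K}^L F_L - \sum_{L\in\sI_0(n,q)} g_{j,J_0}^L F_K F_L,
\end{equation*}
which is a polynomial $P_{K,j}(x,G(x))$ of degree $L=2$ in the components of $G$, with coefficients drawn from the family $\{g_{j,J}^L\}$. \Cref{Lemma::PfQuant::Liewedge} gives $\AXNorm{g_{j,J}^L}{X_{J_0}}{x_0}{\eta}\lesssim 1$ and $\CNorm{g_{j,J}^L}{B_{X_{J_0}}(x_0,\xi)}\lesssim_0 1$, so the hypotheses of \cref{Prop::IdentRAManifold::MainProp} on the coefficients (with $\xi$ there replaced by our $\chi$, whose inverse is $0$-admissible) are satisfied with admissible constant $D$.

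Next, I would check the remaining quantitative inputs. By \cref{Lemma::PfQuantLd::C1Estimates} each $F_K$ lies in $\CSpace{B_{X_{J_0}}(x_0,\chi)}$ with admissibly bounded norm, so $G$ is a bounded continuous $\R^N$-valued function. By \cref{Eqn::QuantRes::DefnJ0} we have $|G(x_0)| \leq \sqrt{N}\,\zeta^{-1}$, which is $0$-admissible. Thus all quantities on which $r'$ and $C$ in \cref{Prop::IdentRAManifold::MainProp} are allowed to depend -- namely $n$, $\eta^{-1}$, $\chi^{-1}$, $D$, $L=2$, $N$, and $|G(x_0)|$ -- are admissible. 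Applying the proposition yields an admissible constant $\eta'\in (0,\eta]$ with $G\in \AXSpace{X_{J_0}}{x_0}{\eta'}[\R^N]$ and $\AXNorm{G}{X_{J_0}}{x_0}{\eta'}\lesssim 1$; in particular, $\BAXNorm{F_K}{X_{J_0}}{x_0}{\eta'}\lesssim 1$ for every $K\in \sI_0(n,q)$.

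Finally, I would extend this to arbitrary $J\in \sI(n,q)$. If the entries of $J$ are not distinct then $\bigwedge X_J\equiv 0$, so $F_J\equiv 0$ and the conclusion is trivial. Otherwise $J$ is a permutation of some $K\in\sI_0(n,q)$, and by the anti-commutativity of $\wedge$ we have $F_J = \pm F_K$, so $\BAXNorm{F_J}{X_{J_0}}{x_0}{\eta'}\lesssim 1$. I do not expect any serious obstacle: the nonlinear structure of the ODE is handled cleanly by the quadratic polynomial bound, and the main content -- the real-analytic ODE machinery -- is already packaged in \cref{Prop::IdentRAManifold::MainProp}. The only place one has to be slightly careful is in replacing $\xi$ by $\chi$ in the application, to ensure $G$ is defined and bounded on the relevant ball and that $\bigwedge X_{J_0}$ does not vanish there.
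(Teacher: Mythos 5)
Your proof is correct and takes essentially the same route as the paper: both apply \cref{Prop::IdentRAManifold::MainProp} to the vector $G=(F_K)_{K\in\sI_0(n,q)}$, using \cref{Lemma::PfQuant::DerivOfQuotient} to produce the degree-two polynomial ODE system and \cref{Lemma::PfQuant::Liewedge,Lemma::PfQuantLd::C1Estimates,Eqn::QuantRes::DefnJ0} to verify the quantitative hypotheses (the paper phrases it with $|G(x_0)|\le\zeta^{-1}$, presumably the sup norm, but this is immaterial). You spell out the final reduction from $\sI(n,q)$ to $\sI_0(n,q)$ via antisymmetry, which the paper leaves implicit in "The lemma follows."
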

\begin{proof}
Set $N:=|\sI_0(n,q)|$ and
let $G(x) \in \CSpace{B_{X_{J_0}}(x_0,\chi)}[\R^N]$ be given by $G_J(x) = \frac{\bigwedge X_J(x)}{\bigwedge X_{J_0}(x)}$, for $J\in \sI_0(n,q)$.
\Cref{Lemma::PfQuant::DerivOfQuotient} shows that \cref{Prop::IdentRAManifold::MainProp} applies to $G$ (with $\xi=\chi$, $X=X_{J_0}$, $q=n$, $L=2$, $D$ an admissible constant, and $|G(x_0)|\leq \zeta^{-1}$).
Here we taken $\eta'$ to be $r'$ from that result.  The lemma follows.
\end{proof}

\begin{lemma}\label{Lemma::PfQuantLD::DefnBts}
For $1\leq k\leq q$, $1\leq l\leq n$, there exist $\bt_{k}^l \in \CSpace{B_{X_{J_0}}(x_0,\chi)}\cap \AXSpace{X_{J_0}}{x_0}{\eta'}$ such that
\begin{equation}\label{Eqn::PfQuantLD::GiveBts}
	X_k=\sum_{l=1}^n \bt_k^l X_l.
\end{equation}
Furthermore, $\CNorm{\bt_k^l}{B_{X_{J_0}}(x_0,\chi)} \lesssim_0 1$ and $\AXNorm{\bt_k^l}{X_{J_0}}{x_0}{\eta'}\lesssim 1$.
\end{lemma}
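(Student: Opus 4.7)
The plan is to derive an explicit Cramer's-rule formula expressing the coefficients $\bt_k^l$ as ratios of wedge products, and then quote the two preceding lemmas to obtain the norm estimates. Since Proposition \ref{Prop::Part1::chi} guarantees that $\bigwedge X_{J_0}(y)\neq 0$ for every $y\in B_{X_{J_0}}(x_0,\chi)$, and since $B_{X_{J_0}}(x_0,\chi)$ is $n$-dimensional, the vectors $X_1(y),\ldots,X_n(y)$ form a basis of $T_yB_{X_{J_0}}(x_0,\chi)$ at every such point. Hence for each $k$ there exist unique scalars $\bt_k^1(y),\ldots,\bt_k^n(y)$ with $X_k(y)=\sum_{l=1}^n \bt_k^l(y) X_l(y)$, which establishes \cref{Eqn::PfQuantLD::GiveBts} pointwise.

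To get a usable formula, I would wedge both sides against the basis with the $l$-th slot omitted. Concretely, for fixed $k$ and $l$, let $J_l(k):=(1,\ldots,l-1,k,l+1,\ldots,n)\in \sI(n,q)$. By antisymmetry of $\wedge$, substituting $X_k=\sum_m \bt_k^m X_m$ into $\bigwedge X_{J_l(k)}$ kills every term except $m=l$, so
\begin{equation*}
\bigwedge X_{J_l(k)} = \bt_k^l \bigwedge X_{J_0}\quad \text{on } B_{X_{J_0}}(x_0,\chi),
\end{equation*}
and therefore
\begin{equation*}
\bt_k^l = \frac{\bigwedge X_{J_l(k)}}{\bigwedge X_{J_0}}.
\end{equation*}
This ratio is well-defined by the nonvanishing of $\bigwedge X_{J_0}$ on $B_{X_{J_0}}(x_0,\chi)$ and is independent of the choice of linear functional used in \cref{Eqn::QuantRes::QuotientWedge}.

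With the formula in hand, the bounds are immediate: \cref{Lemma::PfQuantLd::C1Estimates} applied to $J=J_l(k)$ gives $\bt_k^l\in \CSpace{B_{X_{J_0}}(x_0,\chi)}$ with $\CNorm{\bt_k^l}{B_{X_{J_0}}(x_0,\chi)}\lesssim_0 1$, and \cref{Lemma::PfQuantLD::RAEstimates} applied to the same $J$ gives $\bt_k^l\in \AXSpace{X_{J_0}}{x_0}{\eta'}$ with $\AXNorm{\bt_k^l}{X_{J_0}}{x_0}{\eta'}\lesssim 1$. No obstacle of substance is anticipated; the only point requiring any care is verifying that the wedge-product manipulations take place inside the $n$-dimensional submanifold $B_{X_{J_0}}(x_0,\chi)$, which is exactly what Proposition \ref{Prop::Part1::chi} furnishes.
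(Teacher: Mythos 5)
Your proof is correct and follows essentially the same route as the paper: you define $\bt_k^l$ as the wedge quotient $\bigwedge X_{J(l,k)}/\bigwedge X_{J_0}$, verify the Cramer's-rule identity by the antisymmetry of $\wedge$, and then invoke \cref{Lemma::PfQuantLd::C1Estimates,Lemma::PfQuantLD::RAEstimates} for the norm bounds. The only difference is that you spell out the wedge-product derivation explicitly, whereas the paper compresses it to a citation of Cramer's rule in the companion paper.
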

\begin{proof}
For $1\leq k\leq q$, $1\leq l\leq n$, set $J(l,k)=(1,2,\ldots, l-1,k,l+1,\ldots, n)\in \sI(n,q)$ and define 
$$\bt_k^l:=\frac{\bigwedge X_{J(l,k)}}{\bigwedge X_{J_0}}.$$
Cramer's rule shows \cref{Eqn::PfQuantLD::GiveBts} (see \cite[\SSDivideWedge]{StovallStreet} for more comments on this application of Cramer's rule).
The desired estimates follow from \cref{Lemma::PfQuantLd::C1Estimates,Lemma::PfQuantLD::RAEstimates}, completing the proof.
\end{proof}

\begin{lemma}\label{Lemma::PfQuantLD::HyposHold}
For $1\leq i,j,k\leq n$, there exist $\ch_{i,j}^k \in \CSpace{B_{X_{J_0}}(x_0,\chi)}\cap \AXSpace{X_{J_0}}{x_0}{\eta'}$ such that
\begin{equation}\label{Eqn::PfQaunt::Defnch}
[X_i,X_j]=\sum_{k=1}^n \ch_{i,j}^k X_k.
\end{equation}
Furthermore, $\CNorm{\ch_{i,j}^k}{B_{X_{J_0}}(x_0,\chi)}\lesssim_0 1$ and $\AXNorm{\ch_{i,j}^k}{X_{J_0}}{x_0}{\eta'}\lesssim 1$.
\end{lemma}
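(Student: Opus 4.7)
The plan is to derive the claim by a direct substitution that combines the hypothesis $[X_i,X_j]=\sum_{l=1}^q c_{i,j}^l X_l$ (available by the Key Assumption, and valid on $B_{X_{J_0}}(x_0,\chi)\subseteq B_X(x_0,\xi)$) with the expressions for $X_l$ in the basis $X_1,\ldots,X_n$ supplied by \cref{Lemma::PfQuantLD::DefnBts}. Concretely, for $1\le i,j\le n$,
\[
[X_i,X_j]=\sum_{l=1}^q c_{i,j}^l X_l=\sum_{l=1}^q c_{i,j}^l \sum_{k=1}^n \bt_l^k X_k = \sum_{k=1}^n \Bigl(\sum_{l=1}^q c_{i,j}^l \bt_l^k \Bigr) X_k,
\]
so the natural definition is $\ch_{i,j}^k:=\sum_{l=1}^q c_{i,j}^l\,\bt_l^k$, which is the only serious ingredient of the proof.

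The norm bounds are then essentially bookkeeping. For the $C$-norm, I would use that $\CSpace{B_{X_{J_0}}(x_0,\chi)}$ is a Banach algebra under pointwise multiplication, together with the assumed $0$-admissible control $\CNorm{c_{i,j}^l}{B_X(x_0,\xi)}\lesssim_0 1$ (restricted to the smaller ball) and the bound $\CNorm{\bt_l^k}{B_{X_{J_0}}(x_0,\chi)}\lesssim_0 1$ from \cref{Lemma::PfQuantLD::DefnBts}; a finite sum over $l$ then gives $\CNorm{\ch_{i,j}^k}{B_{X_{J_0}}(x_0,\chi)}\lesssim_0 1$. For the real-analytic norm, I would invoke \cref{Lemma::FuncSpaceRev::Algebra} to see that $\AXSpace{X_{J_0}}{x_0}{\eta'}$ is a Banach algebra. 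Since the seminorm $\AXNorm{\cdot}{X_{J_0}}{x_0}{r}$ is monotone in $r$, the Key Assumption gives $\AXNorm{c_{i,j}^l}{X_{J_0}}{x_0}{\eta'}\le\AXNorm{c_{i,j}^l}{X_{J_0}}{x_0}{\eta}\lesssim 1$; combined with $\AXNorm{\bt_l^k}{X_{J_0}}{x_0}{\eta'}\lesssim 1$ from \cref{Lemma::PfQuantLD::DefnBts} and the Banach-algebra inequality, a finite sum produces $\AXNorm{\ch_{i,j}^k}{X_{J_0}}{x_0}{\eta'}\lesssim 1$.

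The only ``subtlety'' to verify is that the quantities multiplied really live in the same function spaces on the same set. The $c_{i,j}^l$ are a priori continuous on $B_X(x_0,\xi)$, but since $\chi\le\xi$ and the Carnot-Carath\'eodory balls satisfy $B_{X_{J_0}}(x_0,\chi)\subseteq B_X(x_0,\xi)$ (the list $X_{J_0}$ is a sublist of $X$), the restrictions are well defined; likewise the seminorms $\AXNorm{\cdot}{X_{J_0}}{x_0}{\eta'}$ are well defined because $\eta'\le\eta$ and $X_{J_0}$ satisfies $\sC(x_0,\eta,\fM)$. No obstacle is anticipated here: the lemma is a genuinely routine packaging step that will be invoked later to put the vector fields in the hypothesis form \emph{required} by \cref{Prop::PartI::MainProp} (the $n=q$ case), which in turn feeds the argument of \cref{Section::PfQaunt::LI} applied after pulling back to the submanifold coordinates.
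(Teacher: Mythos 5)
Your proof is correct and follows essentially the same route as the paper's: expand $[X_i,X_j]$ via the Key Assumption, substitute the expressions for $X_l$ from \cref{Lemma::PfQuantLD::DefnBts}, define $\ch_{i,j}^k$ as the resulting coefficient, and then control the $\CSpace{\cdot}$ and $\AXSpace{X_{J_0}}{x_0}{\eta'}$ norms by the Banach-algebra properties (\cref{Lemma::FuncSpaceRev::Algebra}) and monotonicity of the seminorm in $r$. The bookkeeping remarks about domains and sublists are sound and correspond to what the paper leaves implicit.
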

\begin{proof}
For $1\leq i,j\leq n$ we have using \cref{Lemma::PfQuantLD::DefnBts}
\begin{equation*}
[X_i,X_j]=\sum_{k=1}^q c_{i,j}^k X_k = \sum_{l=1}^n \left(\sum_{k=1}^q c_{i,j}^k \bt_k^l\right) X_l.
\end{equation*}
Setting $\ch_{i,j}^l:= \sum_{k=1}^q c_{i,j}^k \bt_k^l$, \cref{Eqn::PfQaunt::Defnch} follows.  
We have
$\CNorm{c_{i,j}^k}{B_{X_{J_0}}(x_0,\xi)}\lesssim_0 1$ and $\AXNorm{c_{i,k}^k}{X_{J_0}}{x_0}{\eta}\lesssim 1$, by the definition of admissible constants.
Combining this with \cref{Lemma::PfQuantLD::DefnBts}, the fact that $\CNorm{fg}{B_{X_{J_0}}(x_0,\xi)}\leq \CNorm{f}{B_{X_{J_0}}(x_0,\xi)}\CNorm{g}{B_{X_{J_0}}(x_0,\xi)} $ (which is immediate from the definition), and $\AXNorm{fg}{X_{J_0}}{x_0}{\eta'}\leq \AXNorm{f}{X_{J_0}}{x_0}{\eta'}\AXNorm{g}{X_{J_0}}{x_0}{\eta'}$ (see \cref{Lemma::FuncSpaceRev::Algebra}), the desired estimates follow.
\end{proof}

In light of \cref{Lemma::PfQuantLD::HyposHold}, the case $n=q$ of \cref{Thm::QuantRes::MainThm} (which we proved in \cref{Section::PfQaunt::LI}) applies to $X_1,\ldots, X_n$ with $\eta$ replaced by $\eta'$
and $\xi$ replaced by $\chi$,\footnote{When $n=q$, we proved \cref{Thm::QuantRes::MainThm} with $\chi=\xi$.} yielding admissible constants $\eta_1\in (0,\eta']$ and $\xi_1>0$ as in that theorem.
This establishes \cref{Item::QuantRes::PhiOpen}, \cref{Item::QuantRes::PhiDiffeo}, \cref{Item::QuantRes::BoundA}, \cref{Item::QuantRes::xi1xi2} (except the existence of $\xi_2$),
and \cref{Item::QuantRes::BoundY} for $1\leq j\leq n$.  Since we have already shown \cref{Item::QuantRes::LI}, \cref{Item::QuantRes::BestBasis}, and \cref{Item::QuantRes::chiSubMfld},
all that remains to show is the existence of $\xi_2$ as in \cref{Item::QuantRes::xi1xi2} and \cref{Item::QuantRes::BoundY}  for $n+1\leq j\leq q$.
The existence of $\xi_2$ follows directly from \cite[\SSExistXiTwo]{StovallStreet}.

All that remains is \cref{Item::QuantRes::BoundY}  for $n+1\leq j\leq q$.  \Cref{Lemma::PfQuantLD::DefnBts} shows $\AXNorm{\bt_k^l}{X_{J_0}}{x_0}{\eta'}\lesssim 1$.
Thus, by the definition of $\AXSpace{X_{J_0}}{x_0}{\eta'}$, we have $\ANorm{\bt_{k}^l\circ \Phi}{n}{\eta_1}\lesssim 1$.  Pulling back \cref{Eqn::PfQuantLD::GiveBts} via $\Phi$, we have
for $n+1\leq k\leq q$,
\begin{equation*}
	Y_k = \sum_{l=1}^n \left(\bt_{k}^l\circ \Phi\right) Y_l.
\end{equation*}
Since we already know $\ANorm{Y_l}{n}{\eta_1}[\R^n]\lesssim 1$ ($1\leq l\leq n$) it follows that $\ANorm{Y_k}{n}{\eta_1}[\R^n]\lesssim 1$.  Here, we have used
that $\ASpace{n}{\eta_1}$ is a Banach algebra (\cref{Lemma::FuncSpaceRev::Algebra}).  This completes the proof.

	\subsection{Densities}\label{Section::Proofs::Densities}
In this section, we prove \cref{Thm::Density::MainThm} and \cref{Cor::Density::MainCor}.  We take the same setting and notation as in that \cref{Section::Results::Desnities}.
In particular, we have all of the conclusions of \cref{Thm::QuantRes::MainThm}, and take $\eta_1$, $\chi$, and $\Phi$ as in that theorem.
In addition, we have a density $\nu$ on $B_{X_{J_0}}(x_0,\chi)$ (where $\chi$ is as in \cref{Thm::QuantRes::MainThm}).
Let $f_j$ be as in \cref{Section::Results::Desnities}.  As in \cref{Thm::QuantRes::MainThm}, we (without loss of generality) take $J_0=(1,\ldots, n)$.

Following \cite{StovallStreet}, we introduce an auxiliary density on $B_{X_{J_0}}(x_0,\chi)$ given by
\begin{equation}\label{Eqn::PfDensities::Defnnu0}
\nu_0(Z_1,\ldots, Z_n):=\left|\frac{Z_1\wedge Z_2\wedge \cdots \wedge Z_n}{X_1\wedge X_2\wedge \cdots \wedge X_n}\right|.
\end{equation}
Since $X_1\wedge X_2\wedge \cdots \wedge X_n$ is never zero on $B_{X_{J_0}}(x_0,\chi)$ (\cref{Thm::QuantRes::MainThm} \cref{Item::QuantRes::LI}),
$\nu_0$ is defined on $B_{X_{J_0}}(x_0,\chi)$ and is clearly a density.
Note that $\nu_0(X_1,\ldots, X_n)\equiv 1$, so that $\nu_0$ is nonzero everywhere on $B_{X_{J_0}}(x_0,\chi)$.


\begin{lemma}\label{Lemma::PfDensities::fj0RA}
For $1\leq j\leq n$, $\Lie{X_j} \nu_0 =f_j^{0} \nu_0$, where $f_j^0\in \AXSpace{X_{J_0}}{x_0}{\eta_1}\cap \CSpace{B_{X_{J_0}}(x_0,\chi)}$ and $\AXNorm{f_j^0}{X_{J_0}}{x_0}{\eta_1}\lesssim 1$,
$\CNorm{f_j^0}{B_{X_{J_0}}(x_0,\chi)}\lesssim_0 1$.
\end{lemma}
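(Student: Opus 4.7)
The plan is to compute $f_j^0$ explicitly in terms of the structure coefficients $\ch_{j,k}^l$ produced by \cref{Lemma::PfQuantLD::HyposHold}; the required estimates on $f_j^0$ will then follow by linearity from those on $\ch_{j,k}^l$.

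First, I would observe that by the definition in \cref{Eqn::PfDensities::Defnnu0}, $\nu_0(X_1,\ldots,X_n)\equiv 1$ on $B_{X_{J_0}}(x_0,\chi)$. Since $X_1,\ldots,X_n$ is a frame there by \cref{Thm::QuantRes::MainThm}\cref{Item::QuantRes::LI}, every density $\mu$ on that ball is uniquely determined by the scalar $\mu(X_1,\ldots,X_n)$, with $\mu = \mu(X_1,\ldots,X_n)\cdot\nu_0$. Hence $f_j^0$ is forced to equal $(\Lie{X_j}\nu_0)(X_1,\ldots,X_n)$, and the identity $\Lie{X_j}\nu_0 = f_j^0\nu_0$ reduces to computing this scalar.

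Next I would carry out the computation via the associated $n$-form. Let $\omega$ denote the $n$-form on $B_{X_{J_0}}(x_0,\chi)$ characterized by $\omega(X_1,\ldots,X_n)\equiv 1$; it is nowhere zero, hence of locally constant sign, and $\nu_0 = |\omega|$. A direct computation in any local chart shows that $\Lie{X_j}|\omega| = g_j|\omega|$ whenever $\Lie{X_j}\omega = g_j\omega$. Evaluating the standard $n$-form Lie derivative formula on the frame gives
\begin{equation*}
g_j = (\Lie{X_j}\omega)(X_1,\ldots,X_n) = X_j\bigl[\omega(X_1,\ldots,X_n)\bigr] - \sum_{k=1}^n \omega(X_1,\ldots,[X_j,X_k],\ldots,X_n),
\end{equation*}
whose first term vanishes. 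Applying \cref{Lemma::PfQuantLD::HyposHold} to expand $[X_j,X_k] = \sum_{l=1}^n \ch_{j,k}^l X_l$, by multilinearity only the $l=k$ term survives, so $\omega(X_1,\ldots,[X_j,X_k],\ldots,X_n) = \ch_{j,k}^k$. This yields $f_j^0 = -\sum_{k=1}^n \ch_{j,k}^k$.

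The claimed estimates now follow: \cref{Lemma::PfQuantLD::HyposHold} bounds each $\ch_{j,k}^k$ in $\CSpace{B_{X_{J_0}}(x_0,\chi)}$ ($0$-admissibly) and in $\AXSpace{X_{J_0}}{x_0}{\eta'}$ (admissibly), and since $\eta_1\leq \eta'$ and the semi-norm $\AXNorm{\cdot}{X_{J_0}}{x_0}{r}$ is monotone non-decreasing in $r$ (inspect the defining series), the bound at $\eta'$ implies the bound at $\eta_1$; summing $n\leq q$ terms preserves both bounds. The only point requiring any care is the absolute value in the definition of $\nu_0$, which is handled uniformly by the identity $\nu_0 = |\omega|$ with $\omega$ nowhere vanishing; there is no real obstacle, and the proof is essentially just organizing the Lie derivative calculation and invoking \cref{Lemma::PfQuantLD::HyposHold}.
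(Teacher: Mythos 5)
Your proposal is correct and gives the right formula; it is also a genuinely different derivation from the one in the paper. The paper cites a computation from \cite{StovallStreet}, which yields
\begin{equation*}
f_j^0 = -\sum_{K\in \sI_0(n,q)} g_{j,J_0}^K \frac{\bigwedge X_K}{\bigwedge X_{J_0}},
\end{equation*}
with $g_{j,J_0}^K$ coming from \cref{Lemma::PfQuant::Liewedge}; the required norm bounds then combine the estimates on $g_{j,J_0}^K$ (from \cref{Lemma::PfQuant::Liewedge}) with the estimates on the quotients $\frac{\bigwedge X_K}{\bigwedge X_{J_0}}$ (from \cref{Lemma::PfQuantLd::C1Estimates,Lemma::PfQuantLD::RAEstimates}), and, crucially, need the Banach-algebra property of the $\AXSpace{X_{J_0}}{x_0}{\eta_1}$ and $\CSpace{\cdot}$ norms (\cref{Lemma::FuncSpaceRev::Algebra}) to handle the products. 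You instead compute $f_j^0 = -\sum_{k=1}^n \ch_{j,k}^k$ directly from the Lie-derivative formula for the $n$-form dual to the frame $X_1,\ldots,X_n$, using \cref{Lemma::PfQuantLD::HyposHold} to expand commutators. The two formulas do coincide: expanding $[X_j,X_l]=\sum_{k=1}^n\ch_{j,l}^k X_k$ inside $\Lie{X_j}\bigwedge X_{J_0}$ and dividing by $\bigwedge X_{J_0}$, only the diagonal $k=l$ terms survive, recovering $-\sum_l\ch_{j,l}^l$. Your route is slightly more economical because it expresses $f_j^0$ as a \emph{linear} combination of the already-estimated $\ch_{j,k}^k$, so no algebra property is needed, and it is self-contained rather than referring to a proof in the prequel. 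The passage from the $n$-form $\omega$ to the density $\nu_0=|\omega|$ is fine since $\omega$ is nowhere vanishing, and the monotonicity of $r\mapsto\AXNorm{\cdot}{X_{J_0}}{x_0}{r}$ (which also appears implicitly in the paper's proof) is used correctly to pass from $\eta'$ down to $\eta_1$.
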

\begin{proof}
The function $f_j^0$ was computed in the proof of \cite[\SSComputefjzero]{StovallStreet}.  There it is shown
$$f_j^0=-\sum_{K\in \sI_0(n,q)} g_{j,J_0}^K \frac{\bigwedge X_K}{\bigwedge X_{J_0}},$$
where $g_{j,J_0}^K$ is the function from \cref{Lemma::PfQuant::Liewedge}.
We have by \cref{Lemma::PfQuant::Liewedge},
\begin{equation*}
\AXNorm{g_{j,J_0}^K}{X_{J_0}}{x_0}{\eta_1}\leq \AXNorm{g_{j,J_0}^K}{X_{J_0}}{x_0}{\eta}\lesssim 1, \quad \CNorm{g_{j,J_0}^K}{B_{X_{J_0}}(x_0,\chi)}\leq \CNorm{g_{j,J_0}^K}{B_{X_{J_0}}(x_0,\xi)}\lesssim_0 1.
\end{equation*}
Also, by \cref{Lemma::PfQuantLd::C1Estimates} and \cref{Lemma::PfQuantLD::RAEstimates},
\begin{equation*}
\BAXNorm{\frac{\bigwedge X_K}{\bigwedge X_{J_0}}}{X_{J_0}}{x_0}{\eta_1} \leq \BAXNorm{\frac{\bigwedge X_K}{\bigwedge X_{J_0}}}{X_{J_0}}{x_0}{\eta'} \lesssim 1, \quad \BCNorm{\frac{\bigwedge X_J}{\bigwedge X_{J_0}}}{B_{X_{J_0}}(x_0,\chi)}\lesssim 1.
\end{equation*}
where we have used that $\eta'$ from the proof of \cref{Thm::Density::MainThm} satisfies $\eta'\geq \eta_1$.
Using the above,
the result follows from \cref{Lemma::FuncSpaceRev::Algebra}.
\end{proof}

Define $h_0\in \CSpace{B^n(\eta_1)}$ by $\Phi^{*}\nu_0 = h_0\LebDensity$, where $\LebDensity$ denotes the Lebesgue density on $\R^n$.

\begin{lemma}\label{Lemma::PfDesnities::hRA}
$h_0(t)=\det(I+A(t))^{-1}$ where $A$ is the matrix from \cref{Thm::Density::MainThm}.  Furthermore, 
$h_0(t)\approx_0 1$, $\forall t\in B^n(\eta_1)$, and $h_0\in \ASpace{n}{\eta_1}$ with $\ANorm{h_0}{n}{\eta_1}\lesssim_0 1$.
\end{lemma}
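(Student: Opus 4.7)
The plan is to compute $h_0$ explicitly by unwinding the definitions of pullback and of $\nu_0$, and then to read off both the pointwise estimate and the analyticity bound from the fact that $\|A\|_{\ASpace{n}{\eta_1}[\M^{n\times n}]}\leq \tfrac12$ together with the Banach algebra property of $\ASpace{n}{\eta_1}$ (\cref{Lemma::FuncSpaceRev::Algebra}).

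First I would compute $h_0(t)$. Directly from $\Phi^*\nu_0=h_0\LebDensity$,
\begin{equation*}
h_0(t)=(\Phi^*\nu_0)(\partial_{t_1},\ldots,\partial_{t_n})=\nu_0\!\left(\Phi_*\partial_{t_1},\ldots,\Phi_*\partial_{t_n}\right).
\end{equation*}
The relation $Y=(I+A)\grad$ from \cref{Thm::QuantRes::MainThm} and the bound $\ANorm{A}{n}{\eta_1}[\M^{n\times n}]\leq \tfrac12$ make $I+A(t)$ pointwise invertible, so $\grad=(I+A)^{-1}Y$, and since $\Phi_*Y_j=X_j$ we get $\Phi_*\partial_{t_j}=\sum_k((I+A)^{-1})_{j,k}X_k$. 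By the multilinearity of $\wedge$ and the definition \cref{Eqn::PfDensities::Defnnu0} of $\nu_0$,
\begin{equation*}
h_0(t)=\left|\det((I+A(t))^{-1})\right|\cdot\nu_0(X_1,\ldots,X_n)=\left|\det(I+A(t))^{-1}\right|.
\end{equation*}
Since $t\mapsto \det(I+A(t))$ is continuous on $B^n(\eta_1)$, equals $1$ at $t=0$, and cannot vanish (as the eigenvalues of $A(t)$ have modulus at most $\|A(t)\|\leq \tfrac12$), it is everywhere strictly positive, so the absolute value may be dropped, giving the claimed identity $h_0(t)=\det(I+A(t))^{-1}$.

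Next I would prove the pointwise bound $h_0(t)\approx_0 1$. From $\|A(t)\|\leq \tfrac12$ at every $t\in B^n(\eta_1)$ (which follows by evaluating the power series defining $\ANorm{A}{n}{\eta_1}$ at $t$), the eigenvalues $\lambda_i$ of $A(t)$ satisfy $|\lambda_i|\leq \tfrac12$, so
\begin{equation*}
(1/2)^n\leq \prod_i |1+\lambda_i|=|\det(I+A(t))|\leq (3/2)^n,
\end{equation*}
which gives $h_0(t)\in[(2/3)^n,2^n]$ and hence $h_0(t)\approx_0 1$.

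Finally, for real analyticity with the admissible $\ASpace{n}{\eta_1}$–bound, I would use the Banach algebra structure. Because $\ASpace{n}{\eta_1}[\M^{n\times n}]$ is a Banach algebra (\cref{Lemma::FuncSpaceRev::Algebra}) and $\ANorm{A}{n}{\eta_1}\leq \tfrac12<1$, the Neumann series $(I+A)^{-1}=\sum_{k\geq 0}(-A)^k$ converges there to a matrix $B\in\ASpace{n}{\eta_1}[\M^{n\times n}]$ with $\ANorm{B}{n}{\eta_1}\leq 2$. The determinant is a polynomial of degree $n$ in the entries, which by the algebra property sends $\ASpace{n}{\eta_1}[\M^{n\times n}]$ into $\ASpace{n}{\eta_1}$ with a bound depending only on $n$ and $\ANorm{B}{n}{\eta_1}$. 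Hence $h_0=\det(B)\in\ASpace{n}{\eta_1}$ with $\ANorm{h_0}{n}{\eta_1}\lesssim_0 1$, which is $0$-admissible since the only quantitative input is $\|A\|\leq \tfrac12$ and $n\leq q$. There is no serious obstacle here; the only thing to be careful about is the sign convention in \cref{Eqn::PfDensities::Defnnu0}, which is why the first step above singles out the positivity of $\det(I+A(t))$ as the justification for removing the absolute value in the statement of the lemma.
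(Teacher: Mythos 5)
Your proof is correct and follows essentially the same path as the paper: compute $h_0$ by expressing $\partial_{t_j}$ via $(I+A)^{-1}Y$ and multilinearity of the density, drop the absolute value using $\sup_t\Norm{A(t)}[\M^{n\times n}]\leq \frac12$, and then invert $I+A$ by Neumann series in the Banach algebra $\ASpace{n}{\eta_1}[\M^{n\times n}]$ to control $\ANorm{h_0}{n}{\eta_1}$. The only difference is that you spell out the positivity of $\det(I+A(t))$ and the pointwise $\approx_0 1$ bound via the eigenvalue estimate, which the paper states more tersely; the substance is identical.
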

\begin{proof}
Because $\sup_{t\in B^n(\eta_1)} \Norm{A(t)}[\M^{n\times n}]\leq \ANorm{A}{n}{\eta_1}[\M^{n\times n}]\leq \frac{1}{2}$ 
(by \cref{Thm::QuantRes::MainThm} \cref{Item::QuantRes::BoundA}),
we have $|\det(I+A(t))^{-1}|=\det(I+A(t))^{-1}$, for all $t\in B^n(\eta_1)$.  Thus,
\begin{equation*}
\begin{split}
	&h_0(t) = (\Phi^{*}\nu_0)(t)\left(\diff{t_1},\diff{t_2},\ldots, \diff{t_n}\right) = (\Phi^{*}\nu_0)(t) \left((I+A(t))^{-1} Y_1(t),\ldots, (I+A(t))^{-1} Y_n(t)\right)
	\\&=|\det (I+A(t))^{-1}| (\Phi^{*}\nu_0)(t)(Y_1(t),\ldots, Y_n(t)) = \det(I+A(t))^{-1} \nu_0(\Phi(t))(X_1(\Phi(t)),\ldots, X_n(\Phi(t)))
	\\&=\det(I+A(t))^{-1}.
\end{split}
\end{equation*}
That $h_0(t)\approx_0 1$, $\forall t\in B^n(\eta_1)$, now follows from the above mentioned fact that $\sup_{t\in B^n(\eta_1)}\Norm{A(t)}[\M^{n\times n}]\leq \frac{1}{2}$.

Since $\ASpace{n}{\eta_1}[\M^{n\times n}]$ is a Banach algebra (\cref{Lemma::FuncSpaceRev::Algebra}) and since
$\ANorm{A}{n}{\eta_1}[\M^{n\times n}]\leq \frac{1}{2}$, it follows that $I+A$ is invertible in $\ASpace{n}{\eta_1}[\M^{n\times n}]$
with $\ANorm{(I+A)^{-1}}{n}{\eta_1}[\M^{n\times n}]\leq 2$.  We conclude $\ANorm{h_0}{n}{\eta_1}=\ANorm{\det (I+A)^{-1}}{n}{\eta_1}\lesssim_0 1$.
\end{proof}

Since $\nu_0$ is an everywhere nonzero density, there is a unique $g\in \CSpace{B_{X_{J_0}}(x_0,\chi)}$ such that
$\nu_0=g\nu$.  

\begin{lemma}\label{Lemma::PfDensity::Derivg}
For $1\leq j\leq n$, $X_j g= (f_j- f_j^{0})g$, and $g(x)\approx_{0;\nu}g(x_0)=\nu(x_0)(X_1(x_0),\ldots, X_n(x_0))$ for all $x\in B_{X_{J_0}}(x_0,\chi)$.
\end{lemma}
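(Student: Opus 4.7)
The plan is to derive the ODE for $g$ from the Leibniz rule for the Lie derivative of densities and then integrate along a subunit curve. Comparing $\Lie{X_j}\nu$ computed in two ways---directly as $f_j \nu$, and via the Leibniz rule applied to the relation between $\nu$ and $\nu_0$---and using that $\nu_0$ is nowhere zero on $B_{X_{J_0}}(x_0,\chi)$ to divide it out, one obtains $X_j g = (f_j-f_j^0)g$. Evaluating the relation $\nu=g\nu_0$ at $(X_1(x_0),\ldots,X_n(x_0))$ and using that $\nu_0(X_1,\ldots,X_n)(x_0)=1$ (immediate from \cref{Eqn::PfDensities::Defnnu0}) yields $g(x_0) = \nu(X_1,\ldots,X_n)(x_0)$.

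For the estimate, fix $x \in B_{X_{J_0}}(x_0,\chi)$. By the definition of the Carnot--Carath\'eodory ball \cref{Eqn::FuncManifold::DefnBX}, there is a path $\gamma\colon[0,1]\to B_{X_{J_0}}(x_0,\chi)$ with $\gamma(0)=x_0$, $\gamma(1)=x$, $\gamma'(s) = \chi\sum_{j=1}^n a_j(s) X_j(\gamma(s))$, and $\sum_j |a_j(s)|^2 < 1$. Composing the ODE for $g$ with $\gamma$ yields a linear scalar ODE for $u(s):=g(\gamma(s))$:
\[
u'(s) = \chi\sum_{j=1}^n a_j(s)(f_j-f_j^0)(\gamma(s))\,u(s),
\]
whose explicit solution is
\[
g(\gamma(s)) = g(x_0)\exp\Bigl(\chi\int_0^s \sum_{j=1}^n a_j(\tau)(f_j-f_j^0)(\gamma(\tau))\,d\tau\Bigr).
\]

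Applying Cauchy--Schwarz in $j$ together with $\sum_j |a_j|^2 < 1$ gives $\sum_j |a_j| \leq \sqrt{n}$, so the exponent is bounded in absolute value by $\chi\sqrt{n}\,\max_j \CNorm{f_j - f_j^0}{B_{X_{J_0}}(x_0,\chi)}$. By the hypothesis $\CNorm{f_j}{B_{X_{J_0}}(x_0,\chi)}\lesssim_{0;\nu} 1$ and the bound $\CNorm{f_j^0}{B_{X_{J_0}}(x_0,\chi)} \lesssim_0 1$ from \cref{Lemma::PfDensities::fj0RA}, this is a $0;\nu$-admissible constant (noting $n\leq q$ and $\chi\leq \xi$ are $0$-admissible). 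Hence the exponential factor lies in $[e^{-C},e^{C}]$ for a $0;\nu$-admissible constant $C$, giving $g(x) \approx_{0;\nu} g(x_0)$.

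The main obstacle is purely bookkeeping rather than depth: one must track the sign convention in the Leibniz computation carefully (so as to recover the stated ODE $X_j g=(f_j-f_j^0)g$ and value $g(x_0)=\nu(X_1,\ldots,X_n)(x_0)$ simultaneously) and verify that every constant entering the exponent depends only on quantities permitted for a $0;\nu$-admissible constant. No new analytic idea is needed beyond the Gr\"onwall-style integration of the linear ODE along a subunit path.
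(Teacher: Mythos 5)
Your argument is correct and complete. Worth noting up front: the paper only gives ``See \cite[\SSSectionDensities]{StovallStreet}'' as its proof of this lemma, so you have in fact supplied the self-contained argument that the paper defers, and the strategy you use (Leibniz rule for the Lie derivative of a density, divide out $\nu_0$, integrate the resulting linear ODE for $g$ along a subunit curve) is exactly the natural one; it is almost certainly the argument in the cited companion paper.

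Two small things to flag, neither of which is a gap. First, you silently corrected an apparent typo in the paper's definition of $g$: the text says ``$\nu_0 = g\nu$,'' but as you implicitly recognized, that is inconsistent with $g(x_0)=\nu(X_1,\ldots,X_n)(x_0)$ and with the identity $\Phi^*\nu = (g\circ\Phi)\Phi^*\nu_0$ used in the proof of \cref{Thm::Density::MainThm} (and, more fundamentally, does not even make sense when $\nu\equiv 0$, since $\nu_0$ is nowhere vanishing); the correct relation is $\nu = g\nu_0$, and your Leibniz computation then yields $X_jg = (f_j - f_j^0)g$ with the stated sign and initial value. Second, for completeness of the integration step one should observe that $g$ is $C^1$ (since $\nu$ is $C^1$ by hypothesis and $\nu_0$ is $C^1$ and nonvanishing), so $u(s)=g(\gamma(s))$ is Lipschitz and the chain rule applies a.e.\ even though $\gamma'\in L^\infty$ need not be continuous; the exponential formula then follows from the usual linear-ODE integrating-factor argument, valid in the absolutely continuous category. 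Your verification that the exponent is bounded by a $0;\nu$-admissible constant (using $\chi$ $0$-admissible, $n\leq q$, $\CNorm{f_j}{B_{X_{J_0}}(x_0,\chi)}\lesssim_{0;\nu}1$ by definition, and $\CNorm{f_j^0}{B_{X_{J_0}}(x_0,\chi)}\lesssim_0 1$ from \cref{Lemma::PfDensities::fj0RA}) is correct.
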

\begin{proof} See \cite[\SSSectionDensities]{StovallStreet}.\end{proof}

\begin{lemma}\label{Lemma::PfDensities::gRA}
Set $s:=\min \{\eta_1,r\}$.
Then, 
$g\in \AXSpace{X_{J_0}}{x_0}{s}$ and
$\AXNorm{g}{X_{J_0}}{x_0}{s}\lesssim_{\nu} |\nu(X_1,\ldots, X_n)(x_0)|$.
\end{lemma}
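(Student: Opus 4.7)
The plan is to exploit the linear structure of the ODE $X_j g = (f_j-f_j^0)g$ (established in Lemma PfDensity::Derivg) to write $g$ explicitly as an exponential of an integral; the Banach algebra property of $\ASpace{n}{s}$ then lets us read off analyticity at the full radius $s$ without any shrinkage.

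Concretely, set $\tilde g(t):=g(\Phi(t))=g(e^{t\cdot X_{J_0}}x_0)$, which is defined on $B^n(\eta_1)$ since $\Phi(B^n(\eta_1))\subseteq B_{X_{J_0}}(x_0,\chi)$ by \cref{Thm::QuantRes::MainThm}. For fixed $t\in B^n(s)$, consider $G(\sigma):=\tilde g(\sigma t)$ on $\sigma\in[0,1]$. Differentiating,
\[
G'(\sigma)=\sum_{j=1}^n t_j(X_jg)(e^{\sigma t\cdot X_{J_0}}x_0)=\sum_{j=1}^n t_j(f_j-f_j^0)(e^{\sigma t\cdot X_{J_0}}x_0)\,G(\sigma),
\]
a scalar linear ODE with $G(0)=g(x_0)$. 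Its unique solution, evaluated at $\sigma=1$, yields
\[
\tilde g(t)=g(x_0)\exp(H(t)),\qquad H(t):=\int_0^1\sum_{j=1}^n t_j(f_j-f_j^0)(e^{\sigma t\cdot X_{J_0}}x_0)\,d\sigma.
\]

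Next I would estimate $H$ in $\ASpace{n}{s}$. By hypothesis $f_j\in\AXSpace{X_{J_0}}{x_0}{r}$ with $\nu$-admissible norm, and \cref{Lemma::PfDensities::fj0RA} gives $f_j^0\in\AXSpace{X_{J_0}}{x_0}{\eta_1}$ with admissible norm; since $s=\min\{\eta_1,r\}$, the pullbacks $p_j(u):=(f_j-f_j^0)\circ\Phi(u)$ lie in $\ASpace{n}{s}$ with $\ANorm{p_j}{n}{s}\lesssim_\nu 1$. For each $\sigma\in[0,1]$ the substitution $u=\sigma t$ does not increase the $\ASpace{n}{s}$-norm (since powers of $\sigma$ never exceed $1$), and multiplication by $t_j$ contributes an additional factor of $s$ as in the calculation of \cref{Lemma::PartODE::NormC}. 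Integrating over $\sigma$ is a contraction on $\ASpace{n}{s}[\CSpace{[0,1]}]$, so $\ANorm{H}{n}{s}\lesssim_\nu s\lesssim_\nu 1$. Since $\ASpace{n}{s}$ is a Banach algebra (\cref{Lemma::FuncSpaceRev::Algebra}), the exponential series converges there and
\[
\ANorm{\exp(H)}{n}{s}\le\exp(\ANorm{H}{n}{s})\lesssim_\nu 1.
\]
Therefore $\ANorm{\tilde g}{n}{s}\le|g(x_0)|\cdot\ANorm{\exp(H)}{n}{s}\lesssim_\nu |g(x_0)|=|\nu(X_1,\ldots,X_n)(x_0)|$ by \cref{Lemma::PfDensity::Derivg}, which is exactly $\AXNorm{g}{X_{J_0}}{x_0}{s}\lesssim_\nu|\nu(X_1,\ldots,X_n)(x_0)|$.

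The main conceptual point — and the reason one cannot simply invoke \cref{Prop::IdentRAManifold::MainProp} as a black box — is that the generic proposition for nonlinear real-analytic ODEs would yield some radius $r'\in(0,s]$ strictly smaller than $s$ (determined by the size of the nonlinearity). Here however the ODE is linear in $g$, and the explicit integrating-factor representation $\tilde g=g(x_0)\exp(H)$ combined with the Banach algebra structure of $\ASpace{n}{s}$ transports the radius $s$ through without any loss. The only mild technical care needed is in justifying that the pullback ODE for $\tilde g$ is indeed solved by the exponential formula — this follows from standard one-dimensional uniqueness (e.g.\ Grönwall) applied along each radial line $\sigma\mapsto\sigma t$.
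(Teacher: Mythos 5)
Your proposal is correct and takes essentially the same route as the paper: the paper likewise defines $B(t):=g\circ\Phi(t)$, differentiates $\epsilon\mapsto B(\epsilon t)$ using \cref{Lemma::PfDensity::Derivg} to obtain the linear ODE, solves it explicitly as $B(t)=B(0)\exp\bigl(\int_0^1\sum_jt_j\,(f_j-f_j^0)\circ\Phi(\epsilon t)\,d\epsilon\bigr)$, bounds the exponent in $\ASpace{n}{s}$ via \cref{Lemma::PfIdentRAEuclid::Integragte}, and then uses the Banach-algebra property of $\ASpace{n}{s}$ to control $\exp$. Your added remark that linearity is precisely what lets one keep the full radius $s$ (in contrast to \cref{Prop::IdentRAManifold::MainProp}, which would shrink it) is implicit in the paper's choice to write the exponential formula, and your only notational quibble — calling integration in $\sigma$ a ``contraction'' when it is merely norm-nonexpansive on $\ASpace{n}{s}[\CSpace{[0,1]}]$ — does not affect correctness.
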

\begin{proof}
Set 
$B(t):=g\circ\Phi(t)$.
The result can be rephrased as saying $B\in \ASpace{n}{s}$ with $\ANorm{B}{n}{s}\lesssim_{\nu} |B(0)|=|g(x_0)|=|\nu(X_1,\ldots, X_n)(x_0)|$.
We have, using \cref{Lemma::PfDensity::Derivg},
\begin{equation}\label{Eqn::PfDesnities::LinearODE}
\begin{split}
	&\diff{\epsilon} B(\epsilon t) = ((t_1X_1+\cdots+t_nX_n)g)(\Phi(\epsilon t)) = \sum_{j=1}^n t_j (f_j\circ \Phi(\epsilon t)-f_j^0\circ\Phi(\epsilon t)) g\circ\Phi(\epsilon t) 
	\\&= 
	\sum_{j=1}^n t_j (f_j\circ \Phi(\epsilon t)-f_j^0\circ\Phi(\epsilon t)) B(\epsilon t).
\end{split}
\end{equation}
Solving the linear ODE \cref{Eqn::PfDesnities::LinearODE} we have
\begin{equation*}
B(t) = e^{\int_0^1 \sum_{j=1}^n t_j(f_j\circ\Phi(\epsilon t)-f_j^0\circ\Phi(\epsilon t))\: d\epsilon} B(0).
\end{equation*}
Since $\AXNorm{f_j}{X_{J_0}}{x_0}{r}\lesssim_{\nu} 1$, by assumption, and $\AXNorm{f_j^0}{X_{J_0}}{x_0}{\eta_1}\lesssim_\nu 1$ by \cref{Lemma::PfDensities::fj0RA},
we have $\ANorm{f_j\circ \Phi - f_j^0\circ\Phi}{n}{s}\lesssim_{\nu} 1$. 
By \cref{Lemma::PfIdentRAEuclid::Integragte},
if $F(t) = \int_0^1 \sum_{j=1}^n t_j (f_j\circ\Phi(\epsilon t)-f_j^0\circ\Phi(\epsilon t))\: d\epsilon$, then $\ANorm{F}{n}{s}\lesssim_{\nu} 1$.
Finally, since $\ASpace{n}{s}$ is a Banach algebra (by \cref{Lemma::FuncSpaceRev::Algebra}), $\ANorm{e^{F}}{n}{s}\leq e^{\ANorm{F}{n}{s}}\lesssim_{\nu} 1$.
We conclude $\ANorm{B}{n}{s}\lesssim_{\nu} |B(0)|$, completing the proof.
\end{proof}

\begin{proof}[Proof of \cref{Thm::Density::MainThm}]
We have
\begin{equation*}
h\LebDensity = \Phi^{*} \nu = \Phi^{*} g\nu_0 = (g\circ \Phi) \Phi^{*} \nu_0= (g\circ\Phi) h_0 \LebDensity,
\end{equation*}
and therefore $h=(g\circ \Phi) h_0$.  \Cref{Item::Density::HConst} follows by combining the fact that $g\approx_{0;\nu} \nu(X_1,\ldots, X_n)(x_0)$ (\cref{Lemma::PfDensity::Derivg}) and
$h_0\approx 1$ (\cref{Lemma::PfDesnities::hRA}).

Since $\ANorm{g\circ\Phi}{n}{s}=\AXNorm{g}{X_{J_0}}{x_0}{s}\lesssim_{\nu} |\nu(X_1,\ldots, X_n)(x_0)|$ (\cref{Lemma::PfDensities::gRA}) and
$\ANorm{h_0}{n}{s}\leq \ANorm{h_0}{n}{\eta_1}\lesssim_0 1$ (\cref{Lemma::PfDesnities::hRA}), \cref{Item::Density::HRA} follows
by the formula $h=(g\circ\Phi) h_0$ and \cref{Lemma::FuncSpaceRev::Algebra}.
\end{proof}

Having proved \cref{Thm::Density::MainThm} we turn to \cref{Cor::Density::MainCor}.  To facilitate this, we introduce a corollary of \cref{Thm::QuantRes::MainThm}.
\begin{cor}\label{Cor::PfDesnity::MakeTheBalls}
Let $\eta_1, \xi_1, \xi_2>0$ be as in \cref{Thm::QuantRes::MainThm}.  Then, there exist admissible constants $\eta_2\in (0,\eta_1]$, $0<\xi_4\leq \xi_3\leq \xi_2$ such that
\begin{equation*}
\begin{split}
&B(x_0,\xi_4)\subseteq B_{X_{J_0}}(x_0,\xi_3)\subseteq \Phi(B^n(\eta_2))\subseteq B_{X_{J_0}}(x_0,\xi_2)\subseteq B_X(x_0,\xi_2)\\
&\subseteq B_{X_{J_0}}(x_0,\xi_1)\subseteq \Phi(B^n(\eta_2))\subseteq B_{X_{J_0}}(x_0,\chi)\subseteq B_X(x_0,\xi).
\end{split}
\end{equation*}
\end{cor}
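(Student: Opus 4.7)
The plan is to extend the containment chain supplied by \cref{Thm::QuantRes::MainThm} inward on both sides of $\Phi(B^n(\cdot))$. The outer inclusions $B_X(x_0,\xi_2)\subseteq B_{X_{J_0}}(x_0,\xi_1)\subseteq \Phi(B^n(\eta_1))\subseteq B_{X_{J_0}}(x_0,\chi)\subseteq B_X(x_0,\xi)$ are already available, and $B_{X_{J_0}}(x_0,\xi_2)\subseteq B_X(x_0,\xi_2)$ is immediate because any $X_{J_0}$-admissible path is an $X$-admissible path with zero coefficients appended. So only the three innermost inclusions $\Phi(B^n(\eta_2))\subseteq B_{X_{J_0}}(x_0,\xi_2)$, $B_{X_{J_0}}(x_0,\xi_3)\subseteq \Phi(B^n(\eta_2))$, and $B_X(x_0,\xi_4)\subseteq B_{X_{J_0}}(x_0,\xi_3)$ remain; they will be proved in order, first fixing an admissible $\eta_2\in(0,\eta_1]$ and then choosing $\xi_3,\xi_4$ admissibly in succession.

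For the first, set $\eta_2:=\min\{\eta_1,\xi_2/2\}$. For $t\in B^n(\eta_2)$, the curve $\gamma(s):=e^{s(t_1X_1+\cdots+t_nX_n)}x_0$, $s\in[0,1]$, joins $x_0$ to $\Phi(t)$ and satisfies $\gamma'(s)=\sum_{j=1}^n(t_j/\xi_2)\cdot\xi_2 X_j(\gamma(s))$ with $\sum_j(t_j/\xi_2)^2=|t|^2/\xi_2^2<1$, exhibiting $\Phi(t)\in B_{X_{J_0}}(x_0,\xi_2)$.

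For the second, I follow the strategy of \cite[\SSExistXiOne]{StovallStreet} at the finer scale $\eta_2$. A point $y\in B_{X_{J_0}}(x_0,\xi_3)$ is joined to $x_0$ by a curve $\gamma$ with $\gamma'(s)=\xi_3\sum_j a_j(s)X_j(\gamma(s))$ and $\|(\sum|a_j|^2)^{1/2}\|_{L^\infty}<1$; a time-rescaling argument ($\tilde\gamma(r):=\gamma(s_0r)$ rescales the coefficients to $s_0a_j(s_0r)$) shows $\gamma(s)\in B_{X_{J_0}}(x_0,\xi_3)$ for every $s\in[0,1]$, which, provided $\xi_3\leq\xi_1$, lies in $\Phi(B^n(\eta_1))$, so $\tilde\gamma:=\Phi^{-1}\circ\gamma$ is well defined, starts at $\tilde\gamma(0)=0$, and obeys $\tilde\gamma'(s)=\xi_3\sum_j a_j(s)Y_j(\tilde\gamma(s))$ almost everywhere. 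Using $\sup_j\ANorm{Y_j}{n}{\eta_1}[\R^n]\lesssim 1$ from \cref{Thm::QuantRes::MainThm} \cref{Item::QuantRes::BoundY} yields $|\tilde\gamma(s)|\leq C\xi_3$ for an admissible $C$, so the admissible choice $\xi_3:=\min\{\xi_1,\xi_2,\eta_2/(2C)\}$ places $\tilde\gamma(1)$ in $B^n(\eta_2)$ and hence $y\in\Phi(B^n(\eta_2))$. For the third inclusion, I follow \cite[\SSExistXiTwo]{StovallStreet}: given $y\in B_X(x_0,\xi_4)$ with an $X$-path $\gamma'(s)=\xi_4\sum_{k=1}^q a_k(s)X_k(\gamma(s))$, expand $X_k=\sum_{l=1}^n\bt_k^l X_l$ for $k>n$ via \cref{Lemma::PfQuantLD::DefnBts} to rewrite the path as $\gamma'(s)=\xi_4\sum_{l=1}^n\ah_l(s)X_l(\gamma(s))$; the uniform bound $\CNorm{\bt_k^l}{B_{X_{J_0}}(x_0,\chi)}\lesssim_0 1$ yields $\|(\sum|\ah_l|^2)^{1/2}\|_{L^\infty}\leq C'$ admissibly, and $\xi_4:=\xi_3/(2C')$ converts this into an $X_{J_0}$-path certifying $y\in B_{X_{J_0}}(x_0,\xi_3)$.

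The main obstacle is keeping every constant admissible in the sense of \cref{Defn::QuantRes::AdmissibleConst}, in particular the threshold $C$ used in Step~2. This is secured by the quantitative real-analyticity bound $\ANorm{A}{n}{\eta_1}[\M^{n\times n}]\leq 1/2$ of \cref{Thm::QuantRes::MainThm} \cref{Item::QuantRes::BoundA}, which forces $\sup_j\ANorm{Y_j}{n}{\eta_1}[\R^n]$ to be admissibly bounded (combining $Y_{J_0}=(I+A)\grad$ with the expansion $Y_k=\sum_l(\bt_k^l\circ\Phi)Y_l$ for $k>n$), so $C$ is admissible. Step~3 presents no additional obstruction, since the functions $\bt_k^l$ already carry $0$-admissible $C^0$ bounds by \cref{Lemma::PfQuantLD::DefnBts}.
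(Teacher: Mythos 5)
Your proof is correct, but it takes a genuinely different and much longer route than the paper. The paper's proof is a one-liner: after the first application of \cref{Thm::QuantRes::MainThm} produces $\eta_1,\xi_1,\xi_2$, one applies \cref{Thm::QuantRes::MainThm} \emph{again} with $\xi$ replaced by $\xi_2$. Since the hypotheses are monotone in $\xi$ (shrinking $\xi$ only shrinks the ball on which the $c_{j,k}^l$ must be bounded, and $\xi_2^{-1}$ is itself admissible), the second application is valid and produces admissible constants $\chi'\in(0,\xi_2]$, $\eta_2,\xi_3,\xi_4$ for which $B_X(x_0,\xi_4)\subseteq B_{X_{J_0}}(x_0,\xi_3)\subseteq\Phi(B^n(\eta_2))\subseteq B_{X_{J_0}}(x_0,\chi')\subseteq B_{X_{J_0}}(x_0,\xi_2)\subseteq B_X(x_0,\xi_2)$, giving the inner half of the chain in one stroke. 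Your argument instead re-derives those three inclusions by hand: Step~1 is a direct computation ($\eta_2<\xi_2$ suffices so $\Phi(B^n(\eta_2))\subseteq B_{X_{J_0}}(x_0,\xi_2)$), and Steps~2 and~3 re-run the arguments of \cite[\SSExistXiOne]{StovallStreet} and \cite[\SSExistXiTwo]{StovallStreet} at the smaller scale, which is exactly the content of $\xi_1$ and $\xi_2$ inside the proof of \cref{Thm::QuantRes::MainThm}. Both approaches work and the admissibility bookkeeping in yours is careful (the key quantities $\ANorm{Y_j}{n}{\eta_1}[\R^n]$ and $\CNorm{\bt_k^l}{B_{X_{J_0}}(x_0,\chi)}$ are indeed admissibly bounded by \cref{Item::QuantRes::BoundA}, \cref{Item::QuantRes::BoundY}, and \cref{Lemma::PfQuantLD::DefnBts}). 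The trade-off is that the paper's route uses the theorem as a black box and is essentially free once one verifies the hypotheses scale, whereas yours is more self-contained but duplicates the substance of the internal $\xi_1,\xi_2$ arguments from Part~I.
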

\begin{proof}
After obtaining $\eta_1,\xi_1,\xi_2$ from \cref{Thm::QuantRes::MainThm}, apply \cref{Thm::QuantRes::MainThm} again with $\xi$ replaced by $\xi_2$ to obtain $\eta_2$, $\xi_3$,
and $\xi_4$ as in the statement of the corollary.
\end{proof}

\begin{proof}[Proof of \cref{Cor::Density::MainCor}]
We have
\begin{equation}\label{Eqn::PfDensities::Integral}
\begin{split}
&\nu(B_{X_{J_0}}(x_0,\xi_2)) = \int_{B_{X_{J_0}}(x_0,\xi_2)} \nu = \int_{\Phi^{-1}(B_{X_{J_0}}(x_0,\xi_2))} \Phi^{*} \nu 
\\&=\int_{\Phi^{-1}(B_{X_{J_0}}(x_0,\xi_2))} h(t)\: dt \approx_{0;\nu} \LebDensity(\Phi^{-1}(B_{X_{J_0}}(x_0,\xi_2))) \nu(X_1,\ldots,X_n)(x_0),
\end{split}
\end{equation}
where $\LebDensity$ denotes Lebesgue measure, and we have used \cref{Thm::Density::MainThm} \cref{Item::Density::HConst}.
By \cref{Cor::PfDesnity::MakeTheBalls} and the fact that $\eta_1,\eta_2>1$ are admissible constants, we have
\begin{equation}\label{Eqn::PfDesnity::EstimateLebBalls}
1\approx \LebDensity(B^n(\eta_2)) \leq \LebDensity(\Phi^{-1}(B_{X_{J_0}}(x_0,\xi_2))\leq \LebDensity(B^n(\eta_1))\approx 1.
\end{equation}
Combining \cref{Eqn::PfDensities::Integral} and \cref{Eqn::PfDesnity::EstimateLebBalls} proves $\nu(B_{X_{J_0}}(x_0,\xi_2))\approx_{\nu} \nu(X_1,\ldots, X_n)(x_0)$.
The same proof works with $B_{X_{J_0}}(x_0,\xi_2)$ replaced by $B_X(x_0,\xi_2)$, which completes the proof of \cref{Eqn::Density::MainCor::NoAbs}.

To prove \cref{Eqn::Density::MainCor::Abs}, all that remains is to show
\begin{equation}\label{Eqn::PfDensity::FinalToShow}
 |\nu(X_1,\ldots, X_n)(x_0)|\approx_0 \max_{j_1,\ldots, j_n\in \{1,\ldots, q\}} |\nu(X_{j_1},\ldots, X_{j_n})(x_0)|
\end{equation}
Note that
\begin{equation*}
1=|\nu_0(X_1,\ldots, X_n)(x_0)|\leq \max_{k_1,\ldots, k_n\in \{1,\ldots, q\}} |\nu_0(X_{k_1},\ldots, X_{k_n})(x_0)|\leq \zeta^{-1}\lesssim _01,
\end{equation*}
by the definition of $\zeta$.  
We conclude
\begin{equation*}
\max_{k_1,\ldots, k_n\in \{1,\ldots, q\}} |\nu_0(X_{k_1},\ldots, X_{k_n})(x_0)|\approx_0 1.
\end{equation*}
Thus,
\begin{equation*}
\begin{split}
&|\nu(X_1,\ldots, X_n)(x_0)| = |g(x_0) \nu_0(X_1,\ldots, X_n)(x_0)| = |g(x_0)| 
\\&\approx_0 |g(x_0)| \max_{k_1,\ldots, k_n\in \{1,\ldots, q\}} |\nu_0(X_{k_1},\ldots, X_{k_n})(x_0)|
= \max_{k_1,\ldots, k_n\in \{1,\ldots, q\}} |\nu(X_{k_1},\ldots, X_{k_n})(x_0)|
\end{split}
\end{equation*}
This establishes \cref{Eqn::PfDensity::FinalToShow} and completes the proof.
\end{proof}

	\subsection{Qualitative Results}\label{Section::Proofs::Qual}
In this section, we prove the qualitative results; i.e, \cref{Thm::QualRes::LocalThm,Thm::QualRes::GlobalThm}.
These are simple consequences of \cref{Thm::QuantRes::MainThm}.
We begin with \cref{Thm::QualRes::LocalThm}.  For this we recall \cite[\SSLemmaMoreOnAssump]{StovallStreet}.

\begin{lemma}[\SSLemmaMoreOnAssump{} of \cite{StovallStreet}]\label{Lemma::PfQual::Existsetadelta}
Let $X_1,\ldots, X_q$ be $C^1$ vector fields on a $C^2$ manifold $\fM$.
\begin{enumerate}[(i)]
\item\label{Item::PfQual::existeta} Let $\Compact\Subset \fM$ be a compact set.  Then $\exists \eta>0$ such that $\forall x_0\in \Compact$, $X_1,\ldots, X_n$ satisfy $\sC(x_0,\eta,\fM)$.
%
\item\label{Item::PfQual::existdelta0} Let $\Compact\Subset \fM$ be a compact set.  Then, there exists $\delta_0>0$ such that $\forall \theta\in S^{q-1}$ if $x\in \Compact$
is such that $\theta_1 X_1(x)+\cdots+\theta_qX_q(x)\ne 0$, then $\forall r\in (0,\delta_0]$,
\begin{equation*}
	e^{r\theta_1 X_1+\cdots+ r\theta_qX_q}x\ne x.
\end{equation*}
\end{enumerate}
\end{lemma}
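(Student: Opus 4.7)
The plan is to reduce both parts to local ODE arguments, exploiting compactness to obtain uniform constants.

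For part \cref{Item::PfQual::existeta}, the approach is a standard Picard-Lindel\"of estimate. Choose $\Compact'$ with $\Compact \Subset \Compact' \Subset \fM$ and cover $\Compact'$ by finitely many coordinate charts on which each $X_j$ has bounded $C^1$ norm. For $a \in B^q(\eta)$, the vector field $V_a := a_1 X_1 + \cdots + a_q X_q$ has $C^0$ norm $\lesssim \eta$, so for $\eta$ sufficiently small (depending on the $C^1$ bounds and the distance from $\Compact$ to $\partial \Compact'$) the solution $E(r) = e^{rV_a}(x_0)$ exists and remains in $\Compact'$ for $r \in [0,1]$, uniformly in $x_0 \in \Compact$ and $|a| < \eta$. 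This gives the required $\sC(x_0,\eta,\fM)$ uniformly.

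For part \cref{Item::PfQual::existdelta0}, the plan is a Gronwall-type contradiction. Suppose no $\delta_0 > 0$ works; then there are sequences $\theta_k \in S^{q-1}$, $x_k \in \Compact$, and $r_k \to 0$ with $V_k := \theta_{k,1} X_1 + \cdots + \theta_{k,q}X_q$ satisfying $V_k(x_k) \ne 0$ and $E_k(r_k) = x_k$, where $E_k(s) := e^{sV_k}(x_k)$. By part \cref{Item::PfQual::existeta} applied to an enlarged compact set, all trajectories $E_k$ lie in a common compact set on which every $V_\theta$, $\theta \in S^{q-1}$, is Lipschitz with a uniform constant $L$. Let $M_k := \sup_{s \in [0,r_k]} |V_k(E_k(s))|$. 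From $|E_k(s) - x_k| \leq M_k s$ and the Lipschitz bound we get $M_k \leq |V_k(x_k)| + L M_k r_k$, hence $M_k \leq 2|V_k(x_k)|$ once $Lr_k \leq 1/2$. Since $E_k(r_k) = x_k$,
\begin{equation*}
0 \;=\; \int_0^{r_k} V_k(E_k(s))\, ds \;=\; r_k V_k(x_k) + R_k, \qquad |R_k| \;\leq\; \int_0^{r_k} L M_k s\, ds \;\leq\; L r_k^2 |V_k(x_k)|.
\end{equation*}
Dividing through by $r_k |V_k(x_k)| \ne 0$ yields $1 \leq Lr_k$, contradicting $r_k \to 0$. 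One can then take $\delta_0 := \min\{1/(2L), \eta\}$.

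The main obstacle is producing a \emph{uniform} Lipschitz constant $L$ for the entire family $\{V_\theta\}_{\theta \in S^{q-1}}$ along all the relevant trajectories; this is why the compactness of both $\Compact$ and $S^{q-1}$ is essential, and why part \cref{Item::PfQual::existeta} must be invoked to confine all flows to a fixed compact set before the Lipschitz estimate is applied. The underlying principle---that a short closed orbit at a point where the velocity is nonzero forces a large Lipschitz constant---is robust and requires only the $C^1$ regularity of $X_1,\ldots,X_q$; no spanning assumption on the tangent space is needed.
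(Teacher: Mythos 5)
Your proof is correct. Note that the paper itself does not reprove this lemma; it records only that part (i) is an immediate consequence of the Picard--Lindel\"of theorem (matching your argument exactly) and cites the first paper in the series for part (ii), so there is no proof of (ii) in this paper to compare against. Your Gronwall-type argument for (ii) is the natural elementary proof that a $C^1$ flow cannot have a closed orbit of small period through a point where the velocity is nonzero, and the computation is right: from $M_k \leq 2|V_k(x_k)|$ (valid once $Lr_k\leq 1/2$) and $r_k V_k(x_k) = -R_k$ with $|R_k|\leq \tfrac{1}{2}LM_kr_k^2\leq Lr_k^2|V_k(x_k)|$, dividing by $r_k|V_k(x_k)|\ne 0$ gives $1\leq Lr_k$, contradicting $r_k\to 0$.

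One small point deserves to be made explicit. You speak of a single Lipschitz constant $L$ ``on a common compact set,'' but on a $C^2$ manifold this only has meaning relative to a choice of coordinates. The fix is what you implicitly intend: cover the enlarged compact set by finitely many charts on which each $X_j$ has bounded $C^1$ norm, and let $L$ be the worst Lipschitz constant over this finite atlas; since $\|V_k\|_{C^0}$ is uniformly bounded and $r_k\to 0$, the trajectory $E_k$ stays within a single chart for $k$ large, and the Euclidean estimates you wrote then apply verbatim. With that made explicit the argument is complete. Your concluding observation --- that no spanning hypothesis is needed, only compactness of $\Compact$ and $S^{q-1}$ together with $C^1$ regularity --- is also correct and worth keeping.
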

\begin{proof}[Comments on the proof]
In  \cite[\SSLemmaMoreOnAssump]{StovallStreet}, \cref{Item::PfQual::existeta} was only stated for a fixed $x_0\in \fM$ and not ``uniformly on compact sets.''  However, the same proof yields \cref{Item::PfQual::existeta}; indeed,
it is an immediate consequence of the Picard-Lindel\"of theorem.  \Cref{Item::PfQual::existdelta0} is stated directly in \cite[\SSLemmaMoreOnAssump]{StovallStreet}.
\end{proof}

\begin{rmk}\label{Rmk::PfQual::Existsetadelta}
\Cref{Lemma::PfQual::Existsetadelta} shows that we always have $\eta$ and $\delta_0$ as in the assumptions of \cref{Thm::QuantRes::MainThm}.
Thus, if we wish to apply \cref{Thm::QuantRes::MainThm} to obtain a qualitative result, we do not need to verify the existence of $\eta$ and $\delta_0$.
\end{rmk}

\begin{lemma}\label{Lemma::PfQual::ShowCommRa}
Let $X_1,\ldots, X_q$ be $C^1$ vector fields on an $n$-dimensional $C^2$ manifold $M$. 
Let $V\subseteq M$ be open, let $U\subseteq \R^n$ be an open neighborhood of $0\in \R^n$, and let $\Phi:U\rightarrow V$ be a $C^2$-diffeomorphism.
Fix $r>0$ so that $B^n(r)\subseteq U$ and set $x_0=\Phi(0)$.  Set $Y_j=\Phi^{*}X_j$ and suppose $Y_j\in \ASpace{n}{r}[\R^n]$, $1\leq j\leq q$,
and for some $j_0,k_0$ $[Y_{j_0},Y_{k_0}]=\sum_{l=1}^q \ct_{j_0,k_0}^l Y_l$, where $\ct_{j_0,k_0}^l\in \ASpace{n}{r}$.
Then, there exists a neighborhood $V'$ of $x_0$ and $s>0$ such that $[X_{j_0},X_{k_0}]=\sum_{l=1}^q c_{j_0,k_0}^l X_l$ where
$c_{j_0,k_0}^l\in \CXomegaSpace{X}{s}[V']$ and $c_{j_0,k_0}^l=\ct_{j_0,k_0}^l\circ \Phi^{-1}$.
\end{lemma}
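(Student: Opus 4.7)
The plan is to chain together three ingredients already developed in the paper: the Euclidean comparison between the $\ASpace{n}{r}$ and $\ComegaSpace{r}$ scales, Nelson's theorem (\cref{Prop::NelsonTheorem2}), and the diffeomorphism invariance of the function spaces (\cref{Prop::FuncMfld::DiffeoInv}). The strategy is to work first on the Euclidean side $B^n(r)$, convert real analyticity from the classical sense into real analyticity with respect to $Y = Y_1,\ldots,Y_q$, and then push forward via $\Phi$.

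First I would push forward the given commutator identity. Since $\Phi$ is a $C^2$ diffeomorphism, pushforward respects Lie brackets and scalar multiplication, so $[X_{j_0},X_{k_0}] = \Phi_*[Y_{j_0},Y_{k_0}] = \sum_l (\ct_{j_0,k_0}^l\circ \Phi^{-1}) X_l$ on $V$. Define $c_{j_0,k_0}^l := \ct_{j_0,k_0}^l\circ\Phi^{-1}$; the desired identity then holds on $V$ and hence on any open neighborhood $V' \subseteq V$ of $x_0$.

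Next I would upgrade the coefficients' regularity. By \cref{Lemma::FiuncSpaceRev::Euclid::Compare} \cref{Item::FuncSpaceRev::Euclid::CBigger}, we have $\ASpace{n}{r}\subseteq \ComegaSpace{r/2}[B^n(r/2)]$, so $Y_1,\ldots,Y_q \in \ComegaSpace{r/2}[B^n(r/2)][\R^n]$ and $\ct_{j_0,k_0}^l \in \ComegaSpace{r/2}[B^n(r/2)]$. Then \cref{Prop::NelsonTheorem2} \cref{Item::NelsonThm::Used}, applied with these real analytic $Y_j$'s on $\Omega = B^n(r/2)$, produces some $s>0$ with $\ComegaSpace{r/2}[B^n(r/2)] \subseteq \CXomegaSpace{Y}{s}[B^n(r/2)]$; in particular $\ct_{j_0,k_0}^l \in \CXomegaSpace{Y}{s}[B^n(r/2)]$.

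Finally I would transfer this regularity back to $M$. Setting $V' := \Phi(B^n(r/2))$, an open neighborhood of $x_0$, and using that $\Phi_{*}Y = X$, the isometric isomorphism $f\mapsto f\circ\Phi$ from \cref{Prop::FuncMfld::DiffeoInv} yields $c_{j_0,k_0}^l = \ct_{j_0,k_0}^l\circ\Phi^{-1}\in \CXomegaSpace{X}{s}[V']$, which is the desired conclusion. There is no real obstacle beyond matching up radii: the only care needed is that Nelson's theorem costs us a factor of $2$ in the radius (moving from $\ASpace{n}{r}$ to $\ComegaSpace{r/2}[B^n(r/2)]$) and then produces a new scale $s$ on which the $Y$-iterated derivatives are controlled; this $s$, together with $V' = \Phi(B^n(r/2))$, are the parameters appearing in the conclusion.
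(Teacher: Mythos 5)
Your argument matches the paper's proof essentially step for step: convert from the $\ASpace{n}{r}$ scale to $\ComegaSpace{r/2}[B^n(r/2)]$ via \cref{Lemma::FiuncSpaceRev::Euclid::Compare}, apply \cref{Prop::NelsonTheorem2} to land in $\CXomegaSpace{Y}{s}[B^n(r/2)]$, and then transfer via $\Phi$ using \cref{Prop::FuncMfld::DiffeoInv}. The only difference is the order in which you verify the pushed-forward commutator identity, which is cosmetic.
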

\begin{proof}
Since $\ct_{j_0,k_0}^l\in \ASpace{n}{r}$ and $Y_j\in \ASpace{n}{r}[\R^n]$ ($1\leq j\leq q$), \cref{Lemma::FiuncSpaceRev::Euclid::Compare} \cref{Item::FuncSpaceRev::Euclid::CBigger} shows
$\ct_{j_0,k_0}^l\in \ComegaSpace{r/2}[B^n(r/2)]$ and $Y_j\in \ComegaSpace{r/2}[B^n(r/2)][\R^n]$.  
\Cref{Prop::NelsonTheorem2}
shows that there exists $s>0$
with $\ct_{j_0,k_0}^l\in \CXomegaSpace{Y}{s}[B^n(r/2)]$, where $Y$ denotes the list of vector fields $Y_1,\ldots, Y_q$.
Set $c_{j_0,k_0}^l:=\ct_{j_0,k_0}^l\circ \Phi^{-1}$ and $V':=\Phi(B^n(r/2))$.  \Cref{Prop::FuncMfld::DiffeoInv} shows
$c_{j_0,k_0}^l\in \CXomegaSpace{X}{s}[V']$ and pushing the formula $[Y_{j_0},Y_{k_0}]=\sum_{l=1}^q \ct_{j_0,k_0}^l Y_l$ forward via $\Phi$
shows $[X_{j_0},X_{k_0}]=\sum_{l=1}^q c_{j_0,k_0}^l X_l$.  This completes the proof.
\end{proof}

\begin{proof}[Proof of \cref{Thm::QualRes::LocalThm}]
\Cref{Item::QualRes::Local::Coord}$\Rightarrow$\cref{Item::QualRes::Local::Basis}:  Let $U$, $V$, $\Phi$, and $x_0$ be as in \cref{Item::QualRes::Local::Coord}.  
By the definition of $\CjSpace{\omega}[U][\R^n]$, there exists an $r>0$ such that $\Phi^{*}X_1,\ldots, \Phi^{*}X_q\in \ComegaSpace{r}[U][\R^n]$.
Without loss of
generality, assume $0\in U$ and $\Phi(0)=x_0$.  Reorder $X_1,\ldots, X_q$ so that $X_1(x_0),\ldots, X_n(x_0)$ are linearly independent and let $Y_j:=\Phi^{*} X_j$,
so that $Y_j\in \ComegaSpace{r}[U][\R^n]$. 
  Note that $Y_1(0),\ldots, Y_n(0)$ span the tangent space $T_0U$.
   Take $r_1\in (0, r]$ so small $B^n(r_1)\subseteq U$. 
   By \cref{Lemma::FiuncSpaceRev::Euclid::Compare} \cref{Item::FuncSpaceRev::Euclid::ABigger}, $Y_j\in \ASpace{n}{r_1}(\R^n)$, $1\leq j\leq q$.
   Since $Y_1,\ldots, Y_n$ are real analytic and form a basis for the tangent space 
   near $0$,
   there exists $r_2>0$ such that
   \begin{equation*}
   	[Y_i,Y_j]=\sum_{k=1}^n \ct_{i,j}^k Y_k,\quad \ct_{i,j}^k\in \ASpace{n}{r_2}.
   \end{equation*}
   Pushing this statement forward via $\Phi$ shows, for $1\leq i,j\leq n$, $[X_i,X_j]=\sum_{k=1}^n \ch_{i,j}^k X_k$, where $\ch_{i,j}^k:=\ct_{i,j}^k\circ \Phi^{-1}$.
   \Cref{Lemma::PfQual::ShowCommRa} shows there exists $s_1>0$ with $\ch_{i,j}^k\in \CXomegaSpace{X}{s_1}[V']\subseteq \CXjSpace{X}{\omega}[V']$ for some neighborhood $V'$ of $x_0$.
   
   Furthermore, since each $Y_j$ ($1\leq j\leq q$) is real analytic, and $Y_1,\ldots, Y_n$ form a basis of the tangent space near $0$, there is $s_2>0$
   such that, for $n+1\leq j\leq q$,
   \begin{equation}\label{Eqn::QualRes::1to2::Commutator}
   	Y_j=\sum_{k=1}^n \bt_j^k Y_k, \quad \bt_{j}^k\in \ASpace{n}{s_2}.
   \end{equation}
   By \cref{Lemma::FiuncSpaceRev::Euclid::Compare} \cref{Item::FuncSpaceRev::Euclid::CBigger},
   $\bt_{j}^k\in \ComegaSpace{s_2/2}[B^n(s_2/2)]$.
   \Cref{Prop::NelsonTheorem2}
   shows that there exists $s_3>0$ such that $\bt_{j}^k\in \CXomegaSpace{Y}{s_3}[B^n(s_2/2)]$.
   \Cref{Prop::FuncMfld::DiffeoInv} shows $b_{j}^k:=\bt_j^k\circ \Phi^{-1}\in \CXomegaSpace{X}{s_3}[\Phi(B^n(s_2/2))]\subseteq \CXjSpace{X}{\omega}[\Phi(B^n(s_2/2)]$.
   Pushing \cref{Eqn::QualRes::1to2::Commutator} forward via $\Phi$, we have $X_j=\sum_{k=1}^n b_j^k X_k$.
   Combining the above proves \cref{Item::QualRes::Local::Basis}.
   
   \Cref{Item::QualRes::Local::Basis}$\Rightarrow$\cref{Item::QualRes::Local::Commute}:  Suppose \cref{Item::QualRes::Local::Basis} holds.
   Let $V$ be as in \cref{Item::QualRes::Local::Basis} and take $r>0$ so that $\ch_{i,j}^k, b_j^k\in \CXomegaSpace{X}{r}[V]$, $\forall i,j,k$.
     We wish to show for $1\leq i,j\leq q$,
   \begin{equation}\label{Eqn::QualRes::ToShow::Commute}
   	[X_i,X_j]=\sum_{k=1}^q c_{i,j}^k X_k, \quad c_{i,j}^k \in \CXomegaSpace{X}{r/2}[V].
   \end{equation}
   For $1\leq i,j\leq n$, \cref{Eqn::QualRes::ToShow::Commute} follows directly from the hypothesis \cref{Item::QualRes::Local::Basis}.  We prove \cref{Eqn::QualRes::ToShow::Commute} for $n+1\leq i,j\leq q$.
   The remaining cases ($1\leq i\leq n$ and $n+1\leq j\leq q$, or $n+1\leq i\leq q$ and $1\leq j\leq n$) are similar and easier.
   We have
   \begin{equation*}
   	[X_i,X_j]=\left[\sum_{k_1=1}^n b_i^{k_1} X_{k_1}, \sum_{k_2=1}^n b_j^{k_2} X_{k_2}\right] =\sum_{k_1,k_2=1}^n \left(b_i^{k_1}(X_{k_1} b_j^{k_2}) X_{k_2} - b_{j}^{k_2} (X_{k_2} b_{i}^{k_1}) X_{k_1} + \sum_{l=1}^n b_{i}^{k_1}b_j^{k_2} \ch_{k_1,k_2}^l X_l \right).
   \end{equation*}
   We are given $b_j^k,\ch_{k_1,j_2}\in \CXomegaSpace{X}{r}[V]\subseteq \CXomegaSpace{X}{r/2}[V]$ and \cref{Lemma::FuncSpaceRev::DerivComega} shows $X_l b_j^k \in \CXomegaSpace{X}{r/2}[V]$.
   Now the result follows from the fact that $\CXomegaSpace{X}{r/2}[V]$ is a Banach algebra (\cref{Lemma::FuncSpaceRev::Algebra}).
   
   \Cref{Item::QualRes::Local::Commute}$\Rightarrow$\cref{Item::QualRes::Local::Coord}:  This is a consequence of \cref{Thm::QuantRes::MainThm}.  We make a few comments to this end.
   First of all, as discussed in \cref{Lemma::PfQual::Existsetadelta,Rmk::PfQual::Existsetadelta}, there exist $\eta$ and $\delta_0$ as in the hypotheses of  \cref{Thm::QuantRes::MainThm}.
   In \cref{Item::QualRes::Local::Commute} we are given $c_{i,j}^k\in \CXomegaSpace{X}{r}$ near $x_0$ for some $r>0$, while in \cref{Thm::QuantRes::MainThm} it is assumed
   $c_{j,k}^l$ is continuous near $x_0$ and $c_{j,k}^k \in \AXSpace{X_{J_0}}{x_0}{\eta}$ (where $J_0$ is chosen as in that theorem with $\zeta=1$).  However,  
   $c_{i,j}^k\in \CXomegaSpace{X}{r}$ near $x_0$ clearly implies $c_{j,k}^l$ is continuous near $x_0$,
   and \cref{Lemma::FuncSpaceRev::Mfld::ABigger} shows
   $\CXomegaSpace{X}{\eta}\subseteq \AXSpace{X}{x_0}{\eta}\subseteq  \AXSpace{X_{J_0}}{x_0}{\eta}$, so by shrinking $\eta$ to be $\leq r$, those hypotheses follow.
   With these remarks, \cref{Thm::QuantRes::MainThm} applies to yield the coordinate chart $\Phi$ as in that theorem.  This coordinate chart
   has the properties given in \cref{Item::QualRes::Local::Coord}, completing the proof.
\end{proof}

We now turn to \cref{Thm::QualRes::GlobalThm}.  The uniqueness of the real analytic structure described in that theorem follows from the next lemma.
\begin{lemma}\label{Lemma::PfQual::ShowDiffeoRA}
Let $M,N$ be two $n$-dimensional real analytic manifolds and suppose $X_1,\ldots, X_q$ are real analytic vector fields on $M$ which span the tangent space
at every point, and $Z_1,\ldots, Z_q$ are real analytic vector fields on $N$.  Let $\Psi:M\rightarrow N$ be a $C^2$ diffeomorphism such that
$\Psi_{*} X_j=Z_j$.  Then $\Psi$ is real analytic.
\end{lemma}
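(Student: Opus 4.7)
The plan is to show $\Psi$ is real analytic near each point $x_0 \in M$ by writing it locally as a composition of real analytic maps constructed from the exponentials of the given vector fields. Fix $x_0 \in M$ and, after reordering, assume $X_1(x_0),\ldots,X_n(x_0)$ are linearly independent, where $n = \dim M$. Since $\Psi$ is a diffeomorphism and $\Psi_* X_j = Z_j$, the vectors $Z_1(\Psi(x_0)),\ldots,Z_n(\Psi(x_0))$ are also linearly independent in $T_{\Psi(x_0)} N$.

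For $\delta > 0$ small, define
\[
\Phi_M(t) := e^{t_1 X_1 + \cdots + t_n X_n} x_0, \qquad \Phi_N(t) := e^{t_1 Z_1 + \cdots + t_n Z_n} \Psi(x_0), \qquad t \in B^n(\delta).
\]
Because $X_1,\ldots,X_n$ are real analytic, the vector field $\sum_{j=1}^n t_j X_j$ depends real analytically on $(t, x)$, so the classical theorem on real analytic dependence of ODE solutions on parameters (applied to $\gamma'(s) = \sum_j t_j X_j(\gamma(s))$, $\gamma(0) = x_0$, up to $s = 1$) shows that $\Phi_M$ is real analytic as a map into $M$. The same reasoning gives real analyticity of $\Phi_N$ into $N$. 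The differentials at $0$ are the $n \times n$ matrices with columns $X_j(x_0)$ and $Z_j(\Psi(x_0))$ respectively, both invertible, so shrinking $\delta$ if needed, the real analytic inverse function theorem makes both $\Phi_M$ and $\Phi_N$ real analytic diffeomorphisms onto their images.

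The key identity is $\Psi \circ \Phi_M = \Phi_N$ on $B^n(\delta)$. Indeed, fix $t \in B^n(\delta)$ and let $\gamma_t(s)$ solve $\gamma_t'(s) = \sum_j t_j X_j(\gamma_t(s))$ with $\gamma_t(0) = x_0$, so that $\Phi_M(t) = \gamma_t(1)$. Using $d\Psi(X_j) = Z_j \circ \Psi$, we compute
\[
\frac{d}{ds} \Psi(\gamma_t(s)) = d\Psi_{\gamma_t(s)} \mleft( \sum_j t_j X_j(\gamma_t(s)) \mright) = \sum_j t_j Z_j(\Psi(\gamma_t(s))),
\]
so by uniqueness of solutions to ODEs, $s \mapsto \Psi(\gamma_t(s))$ is the integral curve of $\sum_j t_j Z_j$ starting at $\Psi(x_0)$; setting $s = 1$ gives $\Psi(\Phi_M(t)) = \Phi_N(t)$.

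Combining these steps, on the open neighborhood $\Phi_M(B^n(\delta))$ of $x_0$ we have $\Psi = \Phi_N \circ \Phi_M^{-1}$, a composition of two real analytic maps, hence real analytic. Since $x_0 \in M$ was arbitrary, $\Psi$ is real analytic throughout $M$. No serious obstacle arises here: the argument relies only on the spanning hypothesis at $x_0$, the real analyticity of the vector fields, and the classical real analytic dependence theorem for ODEs, and none of the quantitative machinery developed earlier in the paper is required.
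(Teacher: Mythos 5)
Your proof is correct, but the route is genuinely different from the paper's. The paper fixes a real analytic coordinate chart $x_1,\ldots,x_n$ near $x_0$, writes $\partial/\partial x_j = \sum_l a_{j,l}\, X_l$ with real analytic $a_{j,l}$, uses $d\Psi(X_l) = Z_l(\Psi)$ to get a system $\partial_{x_j}\Psi(x) = \sum_l a_{j,l}(x)\,F_{j,l}(\Psi(x))$ with real analytic right-hand side, and then invokes \cref{Prop::PfRealAl::IdentifyRA::Basic} (the $C^1$-solution-of-a-real-analytic-ODE-is-real-analytic principle). You instead use the two exponential maps $\Phi_M(t) = e^{\sum t_j X_j}x_0$ and $\Phi_N(t) = e^{\sum t_j Z_j}\Psi(x_0)$ as canonical coordinates adapted to the two frames, show $\Psi \circ \Phi_M = \Phi_N$ by naturality of flows and ODE uniqueness, and conclude $\Psi = \Phi_N \circ \Phi_M^{-1}$ is real analytic as a composition of real analytic diffeomorphisms. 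Both reductions ultimately rest on the classical real analytic dependence of ODE solutions on parameters; the paper funnels it through its own abstract lemma, which keeps the argument aligned with the ODE-based machinery used throughout the paper, whereas your version sidesteps that lemma entirely and is arguably more geometric and self-contained. One cosmetic gain of the paper's route is that it never needs to argue the exponential maps $\Phi_M$, $\Phi_N$ are themselves real analytic diffeomorphisms or invoke the real analytic inverse function theorem; yours trades that for not having to set up and solve for the coefficient functions $a_{j,l}$. Both are valid.
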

\begin{proof}
Fix a point $x_0\in M$.  We will show $\Psi$ is real analytic near $x_0$.  Reorder the vector fields $X_1,\ldots, X_q$ so that
$X_1(x_0),\ldots, X_n(x_0)$ are linearly independent; and therefore form a basis for the tangent space near $x_0$.
Reorder $Z_1,\ldots, Z_q$ in the corresponding way, so that we have $\Psi_{*} X_j=Z_j$.  I.e., we have
\begin{equation*}
	d\Psi(x)X_{j}(x)=Z_j(\Psi(x)).
\end{equation*}
Pick a real analytic coordinate system, $x_1,\ldots, x_n$, on $M$ near $x_0$.  Since $X_1,\ldots, X_n$ span the tangent space near $x_0$ and are real analytic, and $Z_1,\ldots, Z_n$ are real analytic, we have, for $x$ near $x_0$,
\begin{equation*}
	\frac{\partial \Psi}{\partial x_j} (x) = \sum_{l=1}^n a_{j,l}(x) F_{j,l}(\Psi(x)),\quad 1\leq j\leq n,
\end{equation*}
where $a_{j,l}$ and $F_{j,l}$ are real analytic near $x_0$ and $\Psi(x_0$), respectively.
\Cref{Prop::PfRealAl::IdentifyRA::Basic} applies to show $\Psi$ is real analytic near $x_0$, completing the proof.
\end{proof}

\begin{proof}[Proof of \cref{Thm::QualRes::GlobalThm}]
\Cref{Item::QualRes::Global::Atlas}$\Rightarrow$\cref{Item::QualRes::Global::Conds} is obvious.  For the converse, suppose \cref{Item::QualRes::Global::Conds} holds.
Thus, for each $x\in M$, there exist open sets $U_x\subseteq \R^n$, $V_x\subseteq M$, and a $C^2$ diffeomorphism $\Phi_x:U_x\rightarrow V_x$
such that if $Y_j^x=\Phi_x^{*} X_j$, then $Y_j^x$ is real analytic on $U_x$.  We wish to show that the colletion
$\{(\Phi_x^{-1}, V_x): x\in M\}$ forms a real analytic atlas on $M$; once this is shown, \cref{Item::QualRes::Global::Atlas} will follow
since then $X_j$ will be real analytic with respect to this atlas by definition, and this atlas is clearly 
compatible with the
$C^2$ structure on $M$.  Hence, we need only verify that the transition functions are real analytic.
Take $x_1,x_2\in M$ such that $V_{x_1}\cap V_{x_2}\ne \emptyset$.  Set $\Psi=\Phi_{x_2}^{-1}\circ \Phi_{x_1}:U_{x_1}\cap \Phi_{x_1}^{-1}(V_{x_2})\rightarrow U_{x_2}\cap \Phi_{x_2}^{-1}(V_{x_1})$.
We wish to show $\Psi$ is a real analytic diffeomorphism.  We already know $\Psi$ is a $C^2$ diffeomorphism and $\Psi_{*}Y_j^{x_1}=Y_j^{x_2}$.
That $\Psi$ is real analytic now follows from \cref{Lemma::PfQual::ShowDiffeoRA}, completing the proof of \cref{Item::QualRes::Global::Atlas}.
As mentioned before, the uniqueness of the real analytic structure, as described in the theorem, follows from \cref{Lemma::PfQual::ShowDiffeoRA}, completing the proof.
\end{proof}

\section{Densities in Euclidean Space}\label{Section::DensitiesInEuclidean}
While the hypotheses in \cref{Section::Results::Desnities} concern densities on abstract manifolds, the most important special case which arises in applications is that of the induced Lebesgue density on real analytic
submanifolds of Euclidean space.  In this section, we describe how to apply \cref{Thm::Density::MainThm,Cor::Density::MainCor} in such a setting.

Let $\Omega\subseteq \R^N$ be open and fix $x_0\in \Omega$.  Let $X_1,\ldots, X_q\in \CjSpaceloc{\omega}[\Omega][\R^N]$ be real analytic vector fields on $\Omega$.
We suppose $X_1,\ldots, X_q$ satisfy the conditions of \cref{Thm::QuantRes::MainThm} with $\fM=\Omega$, and so we have admissible constants as in that theorem,
and $\xi$, $\delta_0$, $\eta$, $J_0$, and $\zeta$ as in the hypotheses of  \cref{Thm::QuantRes::MainThm}, and we take $\chi$ as in the conclusion of  \cref{Thm::QuantRes::MainThm}.\footnote{There is a sense in which \cref{Thm::QuantRes::MainThm} can always be applied to real analytic vector fields.
This is the subject of \cref{Section::Scaling::BeyondHormander} and \cref{Section::ScalingRevis}.  However, this section has a different thrust and so we assume the hypotheses of  \cref{Thm::QuantRes::MainThm}.}
As in \cref{Thm::QuantRes::MainThm}, we take $n:=\dim \Span\{X_1(x_0),\ldots, X_q(x_0)\}$.  We also assume that $\xi$ is chosen so that $B_{X}(x_0,\xi)\Subset \Omega$.
Fix an open set $\Omega'$ with $B_X(x_0,\xi)\Subset \Omega'\Subset \Omega$.

Under these hypotheses, \cref{Prop::QualRes::InjectiveImmresion} applies to show that $B_X(x_0,\xi)$ is an $n$-dimensional, injectively immersed
submanifold of $\Omega$.  Classical theorems show that $B_X(x_0,\xi)$  can be given the structure of a real analytic,\footnote{It is not important for the results in this section that $B_X(x_0,\xi)$ can be given a real analytic structure.} injectively immersed submanifold of $\Omega$ and $X_1,\ldots, X_q$ are real analytic vector fields on $B_X(x_0,\xi)$.
Let $\nu$ denote the induced Lebesgue density on $B_X(x_0,\xi)$.  The goal of this section is to describe how the hypotheses of \cref{Thm::Density::MainThm,Cor::Density::MainCor} hold, for this choice of $\nu$, in a quantitative way.\footnote{The quantitative estimates in this section do not follow from classical proofs.  The main difficulty is that classical proofs break down
near a singular point of the associated foliation. See \cref{Rmk::Intro::BadAtSingular,Rmk::BeyondHor::UsefulForSingular}.}

As in \cref{Thm::QuantRes::MainThm} we, without loss of generality, reorder the vector fields so that $J_0=(1,\ldots, n)$.   

Fix $\delta_1>0$ and $s_1>0$ so that
\begin{equation*}
	X_1,\ldots, X_n\in \ComegaSpace{s_1}[B^N(x_0,\delta_1)][\R^N],
\end{equation*}
where $B^N(x_0,\delta_1)=\{ y\in \R^N : |x_0-y|<\delta_1\}$.

\begin{defn}\label{Defn::EuclidDense::Eadmis}
We say $C$ is an $\Eadmis$-admissible constant\footnote{Here, $\Eadmis$ stands for ``Euclidean''.}
 if $C$ can be chosen to depend only on
anything an admissible constant may depend on (see \cref{Defn::QuantRes::AdmissibleConst}),
and upper bounds for $\delta_1^{-1}$, $s_1^{-1}$, $N$, $\CjNorm{X_j}{1}[\Omega']$, $1\leq j\leq n$,
and $\ComegaNorm{X_j}{s_1}[B^N(x_0,\delta_1)][\R^N]$, $1\leq j\leq n$.
We write $A\lesssim_{\Eadmis} B$ if $A\leq C B$, where $C$ is a positive $\Eadmis$-admissible constant.
We write $A\approx_{\Eadmis} B$ for $A\lesssim_{\Eadmis} B$ and $B\lesssim_{\Eadmis} A$.
\end{defn}

 The main result of this section is the following:

\begin{prop}\label{Prop::EuclidDense::MainProp}
There exists an $\Eadmis$-admissible constant $r>0$ and functions $f_1,\ldots, f_n\in \AXSpace{X_{J_0}}{x_0}{r}\cap \CSpace{B_{X_{J_0}}(x_0,\chi)}$ such that for $1\leq j\leq n$, $\Lie{X_j} \nu = f_j \nu$
and $\AXNorm{f_j}{X_{J_0}}{x_0}{r}\lesssim_{\Eadmis} 1$, $\CNorm{f_j}{B_{X_{J_0}}(x_0,\chi)}\lesssim_{\Eadmis} 1$.
\end{prop}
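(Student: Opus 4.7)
The plan is to compute $f_j$ explicitly from the definition, isolate the single contribution that is not directly controlled by the structure constants from \cref{Lemma::PfQuantLD::HyposHold}, and then bound that contribution by transferring from classical real analyticity on the parameter domain into the vector-field analytic space via Nelson's theorem. Using $\nu(X_1,\dots,X_n)(y)=\sqrt{G(y)}$, where $G(y):=\det(X_a(y)\cdot X_b(y))_{a,b=1}^n$ is the Gram determinant in the Euclidean inner product on $\R^N$, and combining with the standard Lie derivative formula for densities and the decomposition $[X_j,X_k]=\sum_l \ch_{j,k}^l X_l$ from the proof of \cref{Lemma::PfQuantLD::HyposHold}, one obtains
\[
f_j \;=\; \frac{X_j G}{2G}\;-\;\sum_{k=1}^n \ch_{j,k}^k \qquad\text{on }B_{X_{J_0}}(x_0,\chi).
\]
The sum is already admissibly bounded in $\AXSpace{X_{J_0}}{x_0}{\eta'}\cap\CSpace{B_{X_{J_0}}(x_0,\chi)}$ by \cref{Lemma::PfQuantLD::HyposHold}, so the focus is on controlling the first term.

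Next, I would show that $\Phi$ is real analytic as a map to $\R^N$ with $\Eadmis$-admissible $\ASpace{n}{r}[\R^N]$-bounds. Since $\Phi-x_0$ satisfies the integral equation $(\Phi-x_0)(t)=\int_0^1\sum_j t_j\, X_j(\Phi(st))\,ds$, and since the composition $X_j(x_0+u(\cdot))$ satisfies $\|X_j(x_0+u(\cdot))\|_{\ASpace{n}{r}[\R^N]}\le \ComegaNorm{X_j}{s_1}[B^N(x_0,\delta_1)][\R^N]$ for $u\in\ASpace{n}{r}[\R^N]$ with $u(0)=0$ and $\|u\|_{\ASpace{n}{r}[\R^N]}<s_1$ (by the Banach algebra property, \cref{Lemma::FuncSpaceRev::Algebra}), a contraction mapping argument in $\ASpace{n}{r}[\R^N]$ analogous to the proof of \cref{Prop::PartODE::MainProp} yields $\Phi-x_0\in\ASpace{n}{r}[\R^N]$ for some $\Eadmis$-admissible $r$ with $\Eadmis$-admissible norm. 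Consequently $g(t):=\partial\Phi(t)^{\transpose}\partial\Phi(t)$, the pullback $\det g=G\circ\Phi$, and the composition $(\nabla X_j)\circ\Phi$ all inherit $\ASpace{n}{r}$-membership with $\Eadmis$-admissible norms.

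I will then derive the alternative representation $f_j=\operatorname{tr}(P\nabla X_j)$, where $P=X(X^{\transpose}X)^{-1}X^{\transpose}$ is the orthogonal projection onto the tangent space of the leaf. This follows by combining the Koszul-type identity for $X_jM$ with the commutator decomposition $\nabla_{X_j}X_a=[X_j,X_a]+\nabla_{X_a}X_j$; the structure-constant contributions produced by the first step are cancelled by the $\sum_k\ch_{j,k}^k$ term. Since $\|P\|\le1$ and $\|\nabla X_j(y)\|\le\CjNorm{X_j}{1}[\Omega']$ for $y\in B_{X_{J_0}}(x_0,\chi)\subset\Omega'$, the pointwise estimate $|f_j(y)|\le n\|\nabla X_j(y)\|$ immediately yields $\CNorm{f_j}{B_{X_{J_0}}(x_0,\chi)}\lesssim_{\Eadmis}1$.

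The main obstacle is the analytic estimate: producing $\|f_j\circ\Phi\|_{\ASpace{n}{r'}}\lesssim_{\Eadmis}1$ with radius $r'$ independent of $G(x_0)^{-1}$. The identity
\[
(\det g)\cdot(f_j\circ\Phi)=\operatorname{tr}\!\bigl(\partial\Phi\cdot\operatorname{adj}(g)\cdot\partial\Phi^{\transpose}\cdot(\nabla X_j)\circ\Phi\bigr)
\]
presents $f_j\circ\Phi$ as a ratio of two $\ASpace{n}{r}$-functions with admissible norms, whose denominator may be arbitrarily small. The resolution is the cancellation visible through the singular value decomposition $\partial\Phi=U\Sigma V^{\transpose}$: the left-hand trace equals $\operatorname{tr}(U^{\transpose}((\nabla X_j)\circ\Phi)U)$, in which $\Sigma^{-1}$ is eliminated against the two surrounding factors of $\Sigma$. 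Algebraically this forces the ostensible poles of the quotient along $\{\det g=0\}$ to be removable, so $f_j\circ\Phi$ extends holomorphically to a complex neighborhood of $B^n(r')$ of $\Eadmis$-admissible radius, with modulus uniformly controlled by the real pointwise bound from step three. Cauchy estimates then furnish admissible control $\ComegaNorm{f_j\circ\Phi}{s}[B^n(s)]\lesssim_{\Eadmis}1$. Finally, applying \cref{Prop::NelsonTheorem2} to the real analytic frame $Y_j=\Phi^{*}X_j\in\ASpace{n}{\eta_1}[\R^n]$ supplied by \cref{Thm::QuantRes::MainThm} transfers this to $f_j\circ\Phi\in\CXomegaSpace{Y}{s'}[B^n(s)]$ with admissible norm, hence $f_j\circ\Phi\in\AXSpace{Y}{0}{s'}$ via \cref{Lemma::FuncSpaceRev::Mfld::ABigger}; diffeomorphism invariance (\cref{Prop::FuncMfld::DiffeoInv}) and the inclusion $\AXSpace{X}{x_0}{s'}\subseteq\AXSpace{X_{J_0}}{x_0}{s'}$ then conclude.
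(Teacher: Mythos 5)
The first three steps are sound and re-derive the paper's setup from a different angle (your Gram determinant $\sqrt{G}$ agrees with the paper's $|\det_{n\times n}X_{J_0}|$ by Cauchy--Binet, and your trace formula $f_j=\operatorname{tr}(P\,\nabla X_j)$ gives a clean route to the sup-norm bound). But the fourth step, which is where the real content of the proposition lies, contains a genuine gap.

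You correctly flag the obstacle: $f_j\circ\Phi$ appears as a ratio whose denominator $\det g = G\circ\Phi$ can be arbitrarily small near a singular point of the foliation, so no lower bound on it is $\Eadmis$-admissible. The proposed resolution via removable singularities and Cauchy estimates fails in two ways. First, the identity $\partial\Phi\cdot\operatorname{adj}(g)\cdot\partial\Phi^{\transpose} = (\det g)\, UU^{\transpose}$ with $\|UU^{\transpose}\|\le 1$ is a strictly real phenomenon: the factors $U,V$ in $\partial\Phi = U\Sigma V^{\transpose}$ are orthogonal only for real $\partial\Phi$, and the cancellation of $\Sigma^{-1}$ relies on that orthogonality. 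The holomorphic extension of $\partial\Phi$ to a complex polydisc has no SVD with orthogonal factors compatible with the ordinary (non-Hermitian) transpose --- e.g.\ $A=(1,i)^{\transpose}$ gives $A^{\transpose}A=0$ --- so the pointwise operator-norm bound on the ``projection'' does not propagate off the real slice, and you cannot conclude that the holomorphic extension of the quotient has controlled modulus in any complex neighborhood. Second, even if removability were granted, a bound on the real slice does not yield Cauchy estimates on a polydisc of admissible radius. Take $p(t)=t^2$ and $q(t)=t^2+\epsilon^2$ in one variable: $p/q$ is real analytic and bounded by $1$ on all of $\R$, yet its Taylor series at the origin has radius of convergence exactly $\epsilon$. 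Uniform real bounds on $p$, $q$, and $p/q$ therefore give no uniform lower bound on the analytic radius of the quotient, and this is exactly the failure mode the proposition has to circumvent.

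The paper never forms the quotient at a place where it must be divided. Instead it shows the normalized ratios $G_K := (\det X_{K,J_0})/|\det_{n\times n}X_{J_0}|$ satisfy a closed \emph{polynomial} first-order ODE system along $X_{J_0}$ (\cref{Lemma::EudlidDense::DerivOfFraction}), with coefficients controlled by the structure constants and the $a_j^k$ through \cref{Lemma::EuclidDense::DerivDet}, and with initial data $|G_K(x_0)|\le 1$. Applying the contraction-mapping result \cref{Prop::IdentRAManifold::MainProp} then delivers the $\AXSpace{X_{J_0}}{x_0}{\eta'''}$-bound with admissible radius directly, with no division and no complex-analytic continuation. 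Your pointwise bound via the projection is a nice supplement, but the vector-field-analytic bound requires the ODE route or some other device that sidesteps the quotient.
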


\begin{rmk}
It is an immediate consequence of \cref{Prop::EuclidDense::MainProp} that  \cref{Thm::Density::MainThm,Cor::Density::MainCor} hold, for this choice of $\nu$, where any constant which is
$\nu$-admissible in the sense of those results, is $\Eadmis$-admissible in the sense of \cref{Defn::EuclidDense::Eadmis}.
\end{rmk}

The rest of this section is devoted to the proof of \cref{Prop::EuclidDense::MainProp}.  By \cref{Lemma::PfQuantLD::HyposHold}, for $1\leq i,j,k\leq n$, there exists an admissible constant $\eta'>0$ and 
$\ch_{i,j}^k\in \CSpace{B_{X_{J_0}}(x_0,\chi)}\cap \AXSpace{X_{J_0}}{x_0}{\eta'}$ such that 
\begin{equation}\label{Eqn::EuclideDense::Commutator}
[X_i,X_j]=\sum_{k=1}^n \ch_{i,j}^k X_k,\quad 1\leq i,j\leq n,
\end{equation}
where $\CNorm{\ch_{i,j}^k}{B_{X_{J_0}}(x_0,\chi)}\lesssim_0 1$ and $\AXNorm{\ch_{i,j}^k}{X_{J_0}}{x_0}{\eta'}\lesssim 1$.

We abuse notation and write $X_{J_0}$ to both denote the list of vector fields $X_1,\ldots, X_n$ and the $N\times n$ matrix whose columns are given by $X_1,\ldots, X_n$.
For $K\in \sI_0(n,N)$ we write $X_{K,J_0}$ to denote the $n\times n$ submatirx of $X_{J_0}$ given by taking the rows listed in $K$.
We set $\det_{n\times n} X_{J_0} = (\det X_{K,J_0})_{K\in\sI_0(n,N)}$, so that $\det_{n\times n} X_{J_0} $ is a vector (it is not important in which order the coordinates are arranged).
Since $X_1\wedge X_2\wedge \cdots \wedge X_n$ is never zero on $B_{X_{J_0}}(x_0,\chi)$ (by \cref{Thm::QuantRes::MainThm} \cref{Item::QuantRes::LI}),
we have $|\det_{n\times n} X_{J_0}|>0$ on $B_{X_{J_0}}(x_0,\chi)$.

\begin{lemma}\label{Lemma::EuclidDense::DerivDet}
There exists an $\Eadmis$-admissible constant $\eta''>0$ such that for $1\leq j\leq n$ and $K\in \sI_0(n,N)$,
\begin{equation*}
	X_j \det X_{K,J_0} = \sum_{L\in \sI_0(n,N)} \gt_{j,K}^L \det X_{L,J_0},
\end{equation*}
where $\gt_{j,K}^L \in \AXSpace{X_{J_0}}{x_0}{\eta''} \cap \CSpace{B_{X_{J_0}}(x_0,\chi)}$ and $\AXNorm{\gt_{j,K}^L}{X_{J_0}}{x_0}{\eta''}\lesssim_{\Eadmis} 1$, $\CNorm{\gt_{j,K}^L}{B_{X_{J_0}}(x_0,\chi)}\lesssim_{\Eadmis} 1$.
\end{lemma}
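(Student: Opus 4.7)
The plan is to compute the Lie derivative $\Lie{X_j}(X_1 \wedge \cdots \wedge X_n)$ as an $n$-vector field in the ambient $\R^N$ in two different ways and equate coefficients in the Euclidean basis $\{\partial_{x_L} : L \in \sI_0(n,N)\}$ of $\Lambda^n\R^N$.  On the one hand, the commutator identity \cref{Eqn::EuclideDense::Commutator} together with the observation that only the diagonal $l=b$ term survives in each wedge gives
\[
\Lie{X_j}(X_1 \wedge \cdots \wedge X_n) = \sum_{b=1}^n X_1 \wedge \cdots \wedge [X_j,X_b] \wedge \cdots \wedge X_n = \Bigl(\sum_{b=1}^n \ch_{j,b}^b\Bigr) X_1 \wedge \cdots \wedge X_n
\]
on $B_{X_{J_0}}(x_0,\chi)$.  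On the other hand, expanding $X_1 \wedge \cdots \wedge X_n = \sum_{K} (\det X_{K,J_0})\,\partial_{x_K}$ and applying the Leibniz rule for $\Lie{X_j}$ yields
\[
\Lie{X_j}(X_1 \wedge \cdots \wedge X_n) = \sum_{K} (X_j \det X_{K,J_0})\,\partial_{x_K} + \sum_{K} (\det X_{K,J_0})\,\Lie{X_j}\partial_{x_K}.
\]

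A direct computation from $[X_j,\partial_{x_i}] = -\sum_m (\partial_{x_i}X_j^m)\,\partial_{x_m}$ expands $\Lie{X_j}\partial_{x_K} = \sum_{L\in\sI_0(n,N)} h_{j,K}^L\,\partial_{x_L}$, where each coefficient $h_{j,K}^L$ is a $\Z$-linear combination of at most $n$ first-order partials $\partial_{x_i}X_j^m$ (indeed, a single $\pm\partial_{x_i}X_j^m$ when $K\ne L$).  Equating the $\partial_{x_K}$-coefficients of the two expressions above produces
\[
X_j \det X_{K,J_0} = \Bigl(\sum_{b=1}^n \ch_{j,b}^b\Bigr)\det X_{K,J_0} - \sum_{L\in\sI_0(n,N)} h_{j,L}^K\,\det X_{L,J_0},
\]
so I take $\gt_{j,K}^L := \delta_{K,L}\sum_b \ch_{j,b}^b - h_{j,L}^K$.

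The $\Eadmis$-admissible bounds on $\gt_{j,K}^L$ are then assembled piece by piece.  The $\sum_b \ch_{j,b}^b$ contribution is controlled directly by \cref{Lemma::PfQuantLD::HyposHold}.  For $h_{j,L}^K$: starting from $X_j \in \ComegaSpace{s_1}[B^N(x_0,\delta_1)][\R^N]$, a standard Cauchy-type estimate for the $\CjSpace{\omega,\cdot}$ norm places $\partial_{x_i}X_j^m$ in $\ComegaSpace{s_1/2}[B^N(x_0,\delta_1)]$ with norm $\lesssim_\Eadmis 1$.  Applying \cref{Prop::NelsonTheorem2} (i) with the real-analytic vector fields $X_1,\ldots,X_n$ (whose $\ComegaSpace{s_1}[B^N(x_0,\delta_1)][\R^N]$ norms are $\Eadmis$-admissibly bounded) upgrades this to membership in $\CXomegaSpace{X_{J_0}}{s}[B^N(x_0,\delta_1)]$ for some admissible $s>0$, and \cref{Lemma::FuncSpaceRev::Mfld::ABigger} then yields $\partial_{x_i}X_j^m \in \AXSpace{X_{J_0}}{x_0}{\eta''}$ for any admissible $\eta''\in(0,s]$ small enough that $X_{J_0}$ satisfies $\sC(x_0,\eta'',B^N(x_0,\delta_1))$ (which is achievable admissibly by Picard--Lindel\"of, since the $C^1$ norms of the $X_j$ on $\Omega'$ are $\Eadmis$-admissible).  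The $\CSpace{B_{X_{J_0}}(x_0,\chi)}$ bound is immediate from continuity of $\partial_{x_i}X_j^m$ on the compact set $\overline{B_X(x_0,\xi)}\subseteq\Omega'$, and the algebra property \cref{Lemma::FuncSpaceRev::Algebra} handles the finite sums.

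The main technical hurdle is precisely this final transfer: converting Euclidean real-analytic control on $\partial_{x_i}X_j^m$ (from Cauchy estimates on $X_j$) into the leafwise norm $\AXSpace{X_{J_0}}{x_0}{\eta''}$ via Nelson's theorem and \cref{Lemma::FuncSpaceRev::Mfld::ABigger}, with every parameter calibrated to the $\Eadmis$-admissible data.  Once this chain is established, the structural identity for $\gt_{j,K}^L$ derived above is essentially formal.
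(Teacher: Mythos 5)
Your proposal is correct and follows essentially the same route as the paper's proof: both apply the Leibniz rule for the Lie derivative to obtain the identity $X_j\det X_{K,J_0} = \bigl(\sum_b\ch_{j,b}^b\bigr)\det X_{K,J_0} + (\text{a first-derivative term})\cdot\det X_{L,J_0}$, with the paper writing $\det X_{K,J_0}=\nu_K(\bigwedge X_{J_0})$ and Lie-differentiating the $n$-form $\nu_K$, while you expand $\bigwedge X_{J_0}$ in the $\partial_{x_K}$ basis and Lie-differentiate the $\partial_{x_K}$'s--two dual descriptions of the same computation (indeed $b_{j,K}^L=-h_{j,L}^K$). The remaining estimate chain (Cauchy-type bound on $\partial_{x_i}X_j^m$ via \cref{Lemma::FuncSpaceRev::DerivComega}, then \cref{Prop::NelsonTheorem2}, Picard--Lindel\"of for $\eta''$, and \cref{Lemma::FuncSpaceRev::Mfld::ABigger}) matches the paper exactly; just be explicit that $\eta''$ should also be taken $\le\eta'$ so the $\ch_{j,b}^b$ contribution from \cref{Lemma::PfQuantLD::HyposHold} lands in $\AXSpace{X_{J_0}}{x_0}{\eta''}$ as well.
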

\begin{proof}
For $K=(k_1,\ldots, k_n)\in \sI_0(n,N)$, set $\nu_K = dx_{k_1}\wedge dx_{k_2}\wedge \cdots \wedge dx_{k_n}$, so that $\nu_K$ is an $n$-form on $\R^N$.
Note that $\det X_{K,J_0}= \nu_K (\bigwedge X_{J_0})$.  Using \cite[Proposition 18.9]{LeeIntroToSmoothManifolds} we have for $1\leq j\leq n$,
\begin{equation}\label{Eqn::EuclideDense::LieDerivOfForm}
	X_j \det X_{K,J_0} = \Lie{X_j} \nu_K \mleft(\bigwedge X_{J_0}\mright) = (\Lie{X_j} \nu_K) \mleft(\bigwedge X_{J_0}\mright) + \nu_K\mleft( \Lie{X_j} \bigwedge X_{J_0}\mright).
\end{equation}
We address the two terms on the right hand side of \cref{Eqn::EuclideDense::LieDerivOfForm} separately.

For the second term on the right hand side of \cref{Eqn::EuclideDense::LieDerivOfForm}, we have
\begin{equation}\label{Eqn::EuclideDense::LieDerivOfForm::1}
\begin{split}
 &\nu_K\mleft( \Lie{X_j} \bigwedge X_{J_0}\mright) = \nu_K(\Lie{X_j} (X_1\wedge X_2\wedge \cdots \wedge X_n))
 \\&= \nu_K( [X_j, X_1]\wedge X_2\wedge \cdots \wedge X_n) + \nu_K ( X_1 \wedge [X_j, X_2]\wedge X_3\wedge \cdots \wedge X_n)
 \\&\quad\quad +\cdots + \nu_K (X_1\wedge X_2\wedge \cdots\wedge X_{n-1} \wedge [X_j, X_n])
\end{split}
\end{equation}
The terms on the right hand side of \cref{Eqn::EuclideDense::LieDerivOfForm::1} are all similar, so we address only the first.  We have, using \cref{Eqn::EuclideDense::Commutator},
\begin{equation*}
\nu_K([X_j, X_1]\wedge X_2\wedge \cdots \wedge X_n) = \sum_{l=1}^n \ch_{j,1}^l \nu_K(X_l\wedge X_2\wedge \cdots \wedge X_n) = \ch_{j,1}^1 \nu_K(X_1\wedge X_2\wedge \cdots \wedge X_n)
=\ch_{j,1}^1 \det X_{K,J_0}.
\end{equation*}
Since $\ch_{j,1}^1\in \AXSpace{X_{J_0}}{x_0}{\eta'}\cap \CSpace{B_{X_{J_0}}(x_0,\chi)}$ with $\AXNorm{\ch_{j,1}^1}{X_{J_0}}{x_0}{\eta'}\lesssim 1$, $\CNorm{\ch_{j,1}^1}{B_{X_{J_0}}(x_0,\chi)}\lesssim_0 1$, this is of the desired form for any $\eta''\leq \eta'$.

We now turn to the first term on the right hand side of \cref{Eqn::EuclideDense::LieDerivOfForm}.  
We have, for $K=(k_1,\ldots, k_n)$,
\begin{equation}\label{Eqn::EudlicdDense::LieOfForm::2}
\begin{split}
	&\Lie{X_j} \nu_K = \Lie{X_j} (dx_{k_1}\wedge dx_{k_2}\wedge \cdots \wedge dx_{k_n})
	\\&=(\Lie{X_j} d x_{k_1})\wedge d x_{k_2}\wedge \cdots d x_{k_n} + dx_{k_1} \wedge (\Lie{X_j} d x_{k_2}) \wedge \cdots \wedge dx_{k_n}
	+ \cdots + d x_{k_1}\wedge dx_{k_2}\wedge \cdots \wedge (\Lie{X_j}dx_{k_n}).
\end{split}
\end{equation}
Each term on the right hand side of \cref{Eqn::EudlicdDense::LieOfForm::2} is similar, so we describe the first.
For this, we write $X_j=\sum_{k=1}^N a_j^k \diff{x_k}$ ($1\leq j\leq n$), where
$a_j^k\in \ComegaSpace{s_1}[B^N(x_0,\delta_1)]\cap \CjSpace{1}[\Omega']$ with $\ComegaNorm{a_j^k}{s_1}[B^N(x_0,\delta_1)]\lesssim_{\Eadmis} 1$ and $\CjNorm{a_j^k}{1}[\Omega']\lesssim_{\Eadmis} 1$.
Then, $\Lie{X_j} d x_{k_1} = da_j^{k_1} = \sum_{l=1}^N \frac{\partial a_j^{k_1}}{\partial x_l} dx_l$.
Thus,
\begin{equation*}
	(\Lie{X_j} d x_{k_1})\wedge d x_{k_2} \wedge \cdots \wedge dx_{k_n}
	= \sum_{l=1}^N \frac{\partial a_j^{k_1}}{\partial x_l} d x_l \wedge dx_{k_2}\wedge \cdots \wedge dx_{k_n}
	= \sum_{l=1}^N  \frac{\partial a_j^{k_1}}{\partial x_l} \epsilon_{K,l} \nu_{K_l},
\end{equation*}
where $\epsilon_{K,l}\in \{-1,0,1\}$ and $K_l\in \sI_0(n,N)$ is obtained by reordering $(l,k_2,k_3,\ldots, k_n)$ to be non-decreasing.
Applying the same ideas to the other terms on the right hand side of \cref{Eqn::EudlicdDense::LieOfForm::2}, we see
\begin{equation}\label{Eqn::EudlicdDense::LieOfForm::3}
	(\Lie{X_j} \nu_K) \mleft(\bigwedge X_{J_0}\mright) = \sum_{L\in \sI_0(n,N)} b_{j,K}^L \nu_L\mleft(\bigwedge X_{J_0}\mright) = \sum_{L\in \sI_0(n,N)} b_{j,K}^L \det X_{L,J_0},
\end{equation}
where each $b_{j,K}^L$ is a sum of terms of the form $\pm \frac{\partial a_j^k}{\partial x_l}$ where $1\leq j\leq n$, $1\leq k,l\leq N$ (and the number of terms in this sum is $\lesssim_{\Eadmis} 1$).

Since $a_j^k\in \ComegaSpace{s_1}[B^N(x_0,\delta_1)]$ with $\ComegaNorm{a_j^k}{s_1}[B^N(x_0,\delta_1)]\lesssim_{\Eadmis} 1$, applying \cref{Lemma::FuncSpaceRev::DerivComega} (with $X=\grad$) shows
\begin{equation*}
	\frac{\partial a_j^k}{\partial x_l}\in \ComegaSpace{s_1/2}[B^N(x_0,\delta_1)], \quad \BComegaNorm{\frac{\partial a_j^k}{\partial x_l}}{s_1/2}[B^N(x_0,\delta_1)]\lesssim_{\Eadmis} 1, \quad 1\leq j\leq n, 1\leq k,l\leq N.
\end{equation*}
We conclude $b_{j,K}^L\in \ComegaSpace{s_1/2}[B^N(x_0,\delta_1)]$ with $\ComegaNorm{b_{j,K}^L}{s_1/2}[B^N(x_0,\delta_1)]\lesssim_{\Eadmis} 1$.

By \cref{Prop::NelsonTheorem2} \cref{Item::NelsonThm::Used}, there exists an $\Eadmis$-admissible constant $s_2>0$ such that
$b_{j,K}^L\in \CXomegaSpace{X_{J_0}}{s_2}[B^N(x_0,\delta_1)]$ with $\CXomegaNorm{b_{j,K}^L}{X_{J_0}}{s_2}[B^N(x_0,\delta_1)]\lesssim_{\Eadmis} 1$.
Also, it immediately follows from the properties of the $a_j^k$ that $b_{j,K}^L\in \CSpace{B_{X_{J_0}}(x_0,\chi)}$ with $\CNorm{b_{j,K}^L}{B_{X_{J_0}}(x_0,\chi)}\lesssim_{\Eadmis} 1$ (here we have used $\chi\leq \xi$
and therefore $B_{X_{J_0}}(x_0,\chi)\subseteq \Omega'$).

Because, for $1\leq j\leq n$, $\CjNorm{X_j}{1}[B^N(x_0,\delta_1)][\R^N] \lesssim_{\Eadmis} \ComegaNorm{X_j}{s_1}[B^N(x_0,\delta_1)][\R^N]\lesssim_{\Eadmis} 1$,
the Picard-Lindel\"of theorem shows that we may take an $\Eadmis$-admissible constant $\eta''\in (0,\min\{s_2,\eta'\}]$ so small that
$X_{J_0}$ satisfies $\sC(x_0,\eta'', B^N(x_0,\delta_1))$.  Then, by \cref{Lemma::FuncSpaceRev::Mfld::ABigger} we have, for $1\leq j\leq n$, $K,L\in \sI_0(n,N)$,
\begin{equation*}
	\AXNorm{b_{j,K}^L}{X_{J_0}}{x_0}{\eta''}\leq \CXomegaNorm{b_{j,K}^L}{X_{J_0}}{s_2}[B^N(x_0,\delta_1)]\lesssim_{\Eadmis} 1.
\end{equation*}
Combining this with \cref{Eqn::EudlicdDense::LieOfForm::3} and the above mentioned fact that $\CNorm{b_{j,K}^L}{B_{X_{J_0}}(x_0,\chi)}\lesssim_{\Eadmis} 1$, shows the first term on the right hand side of \cref{Eqn::EuclideDense::LieDerivOfForm} is of the desired form, completing the proof.
\end{proof}

\begin{lemma}\label{Lemma::EudlidDense::DerivOfFraction}
Let $K\in \sI_0(n,N)$, $1\leq j\leq n$.  Then,
\begin{equation*}
	X_j \frac{\det X_{K,J_0}}{ |\det_{n\times n} X_{J_0}|} = \sum_{L\in \sI_0(n,N)} \gt_{j,K}^L \frac{\det X_{L,J_0}}{|\det_{n\times n} X_{J_0}|} - \sum_{L_1,L_2\in \sI_0(n,N)} \gt_{j,L_1}^{L_2} \frac{ (\det X_{K,J_0})(\det X_{{L_1},J_0}) (\det  X_{L_2, J_0}) }{ |\det_{n\times n} X_{J_0}|^3},
\end{equation*}
where $\gt_{j,L_1}^{L_2}$ are the functions from \cref{Lemma::EuclidDense::DerivDet}.
\end{lemma}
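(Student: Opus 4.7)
The plan is a direct application of the quotient rule together with Lemma~\ref{Lemma::EuclidDense::DerivDet}. The key observation is that
\begin{equation*}
|\det_{n\times n} X_{J_0}|^2 = \sum_{L\in \sI_0(n,N)} (\det X_{L,J_0})^2,
\end{equation*}
so $|\det_{n\times n} X_{J_0}|$ is the Euclidean norm of a vector-valued function. On $B_{X_{J_0}}(x_0,\chi)$ we know $\bigwedge X_{J_0}$ is nonvanishing (Theorem~\ref{Thm::QuantRes::MainThm}~\cref{Item::QuantRes::LI}), hence $|\det_{n\times n} X_{J_0}| > 0$ throughout this set and division and square-root-differentiation are legitimate.

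First, I would differentiate the denominator by the chain rule: writing $|\det_{n\times n} X_{J_0}|$ as the square root of the sum of squares and using Lemma~\ref{Lemma::EuclidDense::DerivDet},
\begin{equation*}
X_j |\det_{n\times n} X_{J_0}| = \frac{1}{|\det_{n\times n} X_{J_0}|} \sum_{L_1\in \sI_0(n,N)} (\det X_{L_1,J_0}) X_j (\det X_{L_1,J_0}) = \frac{1}{|\det_{n\times n} X_{J_0}|} \sum_{L_1,L_2} \gt_{j,L_1}^{L_2} (\det X_{L_1,J_0})(\det X_{L_2,J_0}).
\end{equation*}
Then I would apply the Leibniz/quotient rule:
\begin{equation*}
X_j \frac{\det X_{K,J_0}}{|\det_{n\times n} X_{J_0}|} = \frac{X_j \det X_{K,J_0}}{|\det_{n\times n} X_{J_0}|} - \frac{\det X_{K,J_0}}{|\det_{n\times n} X_{J_0}|^2} X_j |\det_{n\times n} X_{J_0}|.
\end{equation*}
The first term, again using Lemma~\ref{Lemma::EuclidDense::DerivDet}, becomes $\sum_{L} \gt_{j,K}^L \frac{\det X_{L,J_0}}{|\det_{n\times n} X_{J_0}|}$, while substituting the previous display for $X_j |\det_{n\times n} X_{J_0}|$ into the second term yields the claimed cubic-denominator expression. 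Matching the two pieces with the stated formula completes the proof.

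There is essentially no obstacle here: the identity is an algebraic manipulation, and the only point that needs care is verifying that $|\det_{n\times n} X_{J_0}|$ is bounded away from zero and $C^1$ on the region of interest so that the chain rule on $\sqrt{\cdot}$ is valid. Both follow from the standing hypotheses and Theorem~\ref{Thm::QuantRes::MainThm}~\cref{Item::QuantRes::LI}.
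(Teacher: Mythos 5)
Your proof is correct and takes essentially the same approach as the paper: both apply the quotient rule and then use Lemma~\ref{Lemma::EuclidDense::DerivDet} to expand the derivatives of the $\det X_{L,J_0}$. The only cosmetic difference is that you differentiate the square root $|\det_{n\times n} X_{J_0}| = \bigl(\sum_L (\det X_{L,J_0})^2\bigr)^{1/2}$ directly, while the paper writes the same step as $\tfrac{1}{2}\,|\det_{n\times n} X_{J_0}|^{-3}\, X_j |\det_{n\times n} X_{J_0}|^{2}$; the two are algebraically identical.
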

\begin{proof}
We have, using \cref{Lemma::EuclidDense::DerivDet}, for $1\leq j\leq n$, $K\in \sI_0(n,N)$,
\begin{equation*}
\begin{split}
&X_j \frac{ \det X_{K, J_0}}{|\det_{n\times n} X_{J_0}|} = \frac{ X_j \det X_{K,J_0}}{|\det_{n\times n} X_{J_0}|} -  \frac{1}{2} \frac{\det X_{K,J_0}}{|\det_{n\times n} X_{J_0}|^3} X_j |\det_{n\times n} X_{J_0}|^2
\\&=\sum_{L\in \sI_0(n,N)} \gt_{j,K}^L \frac{ \det X_{L,J_0}}{|\det_{n\times n} X_{J_0}|} - \sum_{L_1\in \sI_0(n,N)} \frac{(\det X_{K, J_0}) (\det X_{L_1,J_0}) (X_j \det X_{L_1,J_0})}{ |\det_{n\times n} X_{J_0} |^3 }
\\&= \sum_{L\in \sI_0(n,N)} \gt_{j,K}^L \frac{\det X_{L,J_0}}{|\det_{n\times n} X_{J_0}|} - \sum_{L_1,L_2\in \sI_0(n,N)} \gt_{j,L_1}^{L_2} \frac{ (\det X_{K,J_0})(\det X_{{L_1},J_0}) (\det  X_{L_2, J_0}) }{ |\det_{n\times n} X_{J_0}|^3},
\end{split}
\end{equation*}
completing the proof.
\end{proof}

\begin{lemma}\label{Lemma::EuclidDense::QuotientRA}
There exists an $\Eadmis$-admissible constant $\eta'''>0$ such that $\forall K\in \sI_0(n,N)$,
\begin{equation*}
\frac{\det X_{K, J_0}}{|\det_{n\times n} X_{J_0}|} \in \AXSpace{X_{J_0}}{x_0}{\eta'''}, \quad \BAXNorm{ \frac{\det X_{K, J_0}}{|\det_{n\times n} X_{J_0}|}}{X_{J_0}}{x_0}{\eta'''}\lesssim_{\Eadmis} 1.
\end{equation*}
\end{lemma}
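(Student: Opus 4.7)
The plan is to deduce this by applying \cref{Prop::IdentRAManifold::MainProp} to the vector-valued function $G \in \CSpace{B_{X_{J_0}}(x_0,\chi)}[\R^{|\sI_0(n,N)|}]$ whose $K$-component is $G_K := \frac{\det X_{K,J_0}}{|\det_{n\times n} X_{J_0}|}$.  The key input is \cref{Lemma::EudlidDense::DerivOfFraction}, which shows that for $1\leq j\leq n$,
\begin{equation*}
X_j G_K = \sum_{L\in \sI_0(n,N)} \gt_{j,K}^L G_L - \sum_{L_1,L_2\in \sI_0(n,N)} \gt_{j,L_1}^{L_2} G_K G_{L_1} G_{L_2},
\end{equation*}
i.e.\ $X_j G_K = P_{K,j}(x, G(x))$ where $P_{K,j}$ is a polynomial of degree $3$ in $|\sI_0(n,N)|$ indeterminates whose coefficients are the functions $\gt_{j,K}^L$ and $\gt_{j,L_1}^{L_2}$ from \cref{Lemma::EuclidDense::DerivDet}.

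I would then verify the hypotheses of \cref{Prop::IdentRAManifold::MainProp} applied with $X$ replaced by $X_{J_0}$ (so $q$ there is $n$), $\eta$ replaced by $\eta''$ from \cref{Lemma::EuclidDense::DerivDet}, $\xi$ replaced by $\chi$, $N$ replaced by $|\sI_0(n,N)|$, and $L=3$.  The coefficients $\gt_{j,K}^L, \gt_{j,L_1}^{L_2}$ lie in $\AXSpace{X_{J_0}}{x_0}{\eta''}\cap \CSpace{B_{X_{J_0}}(x_0,\chi)}$ with norms $\lesssim_{\Eadmis} 1$, giving the bound $D$ for the proposition; this uses that $\eta''$ is $\Eadmis$-admissible.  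The function $G$ is continuous on $B_{X_{J_0}}(x_0,\chi)$ (since $\bigwedge X_{J_0}$ is nowhere vanishing there by \cref{Thm::QuantRes::MainThm} \cref{Item::QuantRes::LI}) and satisfies $|G_K(x)|\leq 1$ everywhere on $B_{X_{J_0}}(x_0,\chi)$, because $|\det_{n\times n} X_{J_0}|$ is (at least) the absolute value of any of its components $\det X_{K,J_0}$; in particular $|G(x_0)|\lesssim_0 1$.  Finally, we need $X_{J_0}$ to satisfy $\sC(x_0,\eta'',\fM)$, which holds (after possibly shrinking $\eta''$ by an $\Eadmis$-admissible factor) because $\eta$ in the setup of \cref{Thm::QuantRes::MainThm} already gives $\sC(x_0,\eta,\fM)$, and every quantity involved is $\Eadmis$-admissible.

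With all hypotheses verified, \cref{Prop::IdentRAManifold::MainProp} produces an $\Eadmis$-admissible constant $\eta'''\in (0,\eta'']$ and an $\Eadmis$-admissible constant $C$ such that $G\in \AXSpace{X_{J_0}}{x_0}{\eta'''}[\R^{|\sI_0(n,N)|}]$ with $\AXNorm{G}{X_{J_0}}{x_0}{\eta'''}\leq C$, which is precisely the conclusion of the lemma applied component-wise.  There is no real obstacle here: the work has already been done in assembling \cref{Lemma::EuclidDense::DerivDet,Lemma::EudlidDense::DerivOfFraction} and in proving \cref{Prop::IdentRAManifold::MainProp}; the only care needed is the bookkeeping that every constant appearing (the degree $3$, the dimension $|\sI_0(n,N)|$, $D$, $\eta''^{-1}$, $\chi^{-1}$, and $|G(x_0)|$) is genuinely $\Eadmis$-admissible, so that the resulting $\eta'''$ and $C$ are $\Eadmis$-admissible as well.
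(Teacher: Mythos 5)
Your proposal is correct and is essentially the paper's proof: the paper likewise sets $G_K=\frac{\det X_{K,J_0}}{|\det_{n\times n}X_{J_0}|}$, invokes \cref{Lemma::EudlidDense::DerivOfFraction} for the polynomial ODE, and applies \cref{Prop::IdentRAManifold::MainProp} with $q=n$, $\eta=\eta''$, $\xi=\chi$, $L=3$, $N=|\sI_0(n,N)|$ (the paper notes $|G(x_0)|=1$ exactly, since $|\det_{n\times n}X_{J_0}|$ is the Euclidean norm of the vector $(\det X_{K,J_0})_K$, but your $\lesssim_0 1$ bound serves just as well). Your extra remarks about $\sC(x_0,\eta'',\cdot)$ are already built into the choice of $\eta''$ in \cref{Lemma::EuclidDense::DerivDet}, so no further shrinking is needed.
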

\begin{proof}
For $K\in \sI_0(n,N)$, set $G_K:= \frac{\det X_{K, J_0}}{|\det_{n\times n} X_{J_0}|}$.  \Cref{Lemma::EudlidDense::DerivOfFraction} shows, for $1\leq j\leq n$,
\begin{equation*}
	X_j G_K= 
	 \sum_{L\in \sI_0(n,N)} \gt_{j,K}^L G_L - \sum_{L_1,L_2\in \sI_0(n,N)} \gt_{j,L_1}^{L_2}  G_K G_{L_1}G_{L_2}.
\end{equation*}
Using the estimates on the functions $\gt_{j,L_1}^{L_2}$ described in \cref{Lemma::EuclidDense::DerivDet}, the result follows from \cref{Prop::IdentRAManifold::MainProp},
with $q=n$, $\eta=\eta''$, $\xi=\chi$, $D$ an $\Eadmis$-admissible constant, $L=3$, $N=|\sI_0(n,N)|$, and $|G(x_0)|= 1$.  Here, we take $\eta'''$ to be the $r'$ guaranteed by that result.
\end{proof}

\begin{lemma}\label{Lemma::EudlidDense::BoundDerivCont}
\begin{equation*}
	\frac{\det X_{K, J_0}}{|\det_{n\times n} X_{J_0}|} \in \CSpace{B_{X_{J_0}}(x_0,\chi)}, \quad \BCNorm{ \frac{\det X_{K, J_0}}{|\det_{n\times n} X_{J_0}|}}{B_{X_{J_0}}(x_0,\chi)}\leq 1.
\end{equation*}
\end{lemma}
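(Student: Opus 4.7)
The plan is to verify the two claims separately: continuity of the quotient on $B_{X_{J_0}}(x_0,\chi)$, and the pointwise bound by $1$. Neither requires any real analyticity machinery; this lemma is a bookkeeping step whose sole purpose is to record the trivial $C^0$ estimate that complements the real analytic bound from \cref{Lemma::EuclidDense::QuotientRA}.

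For the pointwise bound, the key observation is that $\det X_{K,J_0}$ is literally one of the components of the vector $\det_{n\times n} X_{J_0} = (\det X_{L,J_0})_{L\in\sI_0(n,N)}$. For any norm $|\cdot|$ on this coordinate vector satisfying $|v_L|\leq |v|$ for each coordinate $L$ (which certainly holds for the Euclidean norm, the $\ell^\infty$ norm, or any $\ell^p$ norm with $p\geq 1$), we have $|\det X_{K,J_0}(y)|\leq |\det_{n\times n} X_{J_0}(y)|$ for every $y\in B_{X_{J_0}}(x_0,\chi)$. Dividing gives the claimed bound of $1$ whenever the right hand side is strictly positive.

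For well-definedness and continuity, I would invoke \cref{Thm::QuantRes::MainThm} \cref{Item::QuantRes::LI}, which gives $\bigwedge X_{J_0}(y)\ne 0$ for every $y\in B_{X_{J_0}}(x_0,\chi)$. Since the components of $\bigwedge X_{J_0}$ in the standard basis of $\bigwedge^n\R^N$ are (up to sign) exactly the minors $\det X_{L,J_0}$ as $L$ ranges over $\sI_0(n,N)$, the non-vanishing of $\bigwedge X_{J_0}(y)$ is equivalent to $|\det_{n\times n} X_{J_0}(y)|>0$. Since $X_1,\ldots,X_n\in C^1$, each $\det X_{L,J_0}$ is continuous, and so is $|\det_{n\times n} X_{J_0}|$; the ratio is therefore a well-defined continuous function on $B_{X_{J_0}}(x_0,\chi)$, completing the proof.

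There is no main obstacle: every ingredient (continuity of the vector fields, non-vanishing of $\bigwedge X_{J_0}$ on $B_{X_{J_0}}(x_0,\chi)$, and the trivial inequality $|v_K|\leq |v|$) is already in place. The only thing to double-check is the convention for the norm on the coordinate vector $\det_{n\times n} X_{J_0}$, but any of the standard choices yields the constant $1$ with no admissible factor needed.
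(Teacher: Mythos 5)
Your proof is correct and takes essentially the same route as the paper's, which simply asserts that continuity ``follows immediately'' from the non-vanishing of $\bigwedge X_{J_0}$ on $B_{X_{J_0}}(x_0,\chi)$ (via \cref{Thm::QuantRes::MainThm} \cref{Item::QuantRes::LI}) and that the bound by $1$ ``follows directly from the definition.'' You have merely spelled out the two steps the paper leaves to the reader: that $\det X_{K,J_0}$ is a coordinate of the vector $\det_{n\times n} X_{J_0}$ so the ratio is at most $1$ in absolute value, and that the $C^1$ hypothesis on the $X_j$ plus the strictly positive denominator give continuity.
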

\begin{proof}
	Since $\bigwedge X_{J_0}$ is never zero on $B_{X_{J_0}}(x_0,\chi)$ (by \cref{Thm::QuantRes::MainThm} \cref{Item::QuantRes::LI}), the continuity of
	 $\frac{\det X_{K, J_0}}{|\det_{n\times n} X_{J_0}|} $ follows immediately.  That it is bounded by $1$ follows directly from the definition.
\end{proof}

\begin{lemma}\label{Lemma::EuclidDense::Definehj}
There exists an $\Eadmis$-admissible constant $\eta_2>0$ and functions $h_j\in \AXSpace{X_{J_0}}{x_0}{\eta_2}\cap \CSpace{B_{X_{J_0}}(x_0,\chi)}$, $1\leq j\leq n$ so that
\begin{equation*}
	X_j \mleft|\det_{n\times n} X_{J_0}\mright| =h_j \mleft|\det_{n\times n} X_{J_0}\mright|,\quad 1\leq j\leq n,
\end{equation*}
and $\AXNorm{h_j}{X_{J_0}}{x_0}{\eta_2}\lesssim_{\Eadmis} 1$, $\CNorm{h_j}{B_{X_{J_0}}(x_0,\chi)}\lesssim_{\Eadmis} 1$.
\end{lemma}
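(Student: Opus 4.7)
The plan is to compute $X_j |\det_{n\times n} X_{J_0}|$ directly using the chain rule identity
\[
X_j |v| \;=\; \frac{X_j |v|^2}{2|v|},
\]
which is legitimate because $|\det_{n\times n} X_{J_0}|$ is the Euclidean norm of a vector whose components cannot all vanish on $B_{X_{J_0}}(x_0,\chi)$ (this is a consequence of \cref{Thm::QuantRes::MainThm} \cref{Item::QuantRes::LI}, as already used in the previous lemmas of this section). Thus the plan is to reduce to $X_j |\det_{n\times n} X_{J_0}|^2$, which is polynomial in the subdeterminants, and then repackage the answer so that a single factor of $|\det_{n\times n} X_{J_0}|$ can be pulled out on the right.

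Expanding $|\det_{n\times n} X_{J_0}|^2 = \sum_{K \in \sI_0(n,N)} (\det X_{K,J_0})^2$, applying $X_j$ via Leibniz, and invoking \cref{Lemma::EuclidDense::DerivDet} for $X_j \det X_{K,J_0}$, I obtain
\[
X_j |\det_{n\times n} X_{J_0}|^2 \;=\; 2 \sum_{K,L \in \sI_0(n,N)} \gt_{j,K}^L \,(\det X_{K,J_0})(\det X_{L,J_0}).
\]
Dividing by $2|\det_{n\times n} X_{J_0}|$ and factoring out one copy of $|\det_{n\times n} X_{J_0}|$ from the numerator gives
\[
X_j |\det_{n\times n} X_{J_0}| \;=\; |\det_{n\times n} X_{J_0}| \sum_{K,L \in \sI_0(n,N)} \gt_{j,K}^L \, G_K\, G_L,
\]
where $G_K := \det X_{K,J_0} / |\det_{n\times n} X_{J_0}|$ is exactly the function controlled in \cref{Lemma::EuclidDense::QuotientRA,Lemma::EudlidDense::BoundDerivCont}. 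I would then define
\[
h_j \;:=\; \sum_{K,L \in \sI_0(n,N)} \gt_{j,K}^L \, G_K\, G_L, \qquad \eta_2 := \min\{\eta'', \eta'''\}.
\]

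To conclude, by \cref{Lemma::EuclidDense::DerivDet} each $\gt_{j,K}^L$ lies in $\AXSpace{X_{J_0}}{x_0}{\eta_2} \cap \CSpace{B_{X_{J_0}}(x_0,\chi)}$ with both norms $\lesssim_{\Eadmis} 1$, and by \cref{Lemma::EuclidDense::QuotientRA,Lemma::EudlidDense::BoundDerivCont} the same holds for each $G_K$. Since $\AXSpace{X_{J_0}}{x_0}{\eta_2}$ is a Banach algebra by \cref{Lemma::FuncSpaceRev::Algebra} and $\CSpace{B_{X_{J_0}}(x_0,\chi)}$ is a Banach algebra under pointwise product, the triple products $\gt_{j,K}^L G_K G_L$ obey the same bounds, and summing over the $\lesssim_{\Eadmis} 1$ many pairs $(K,L)$ produces the claimed estimates on $h_j$.

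There is no genuinely hard step here: once the chain rule identity reduces matters to $X_j|v|^2$, the bulk of the work has already been carried out in \cref{Lemma::EuclidDense::DerivDet,Lemma::EuclidDense::QuotientRA,Lemma::EudlidDense::BoundDerivCont}. The only observation specific to this lemma is the algebraic fact that after dividing by $|\det_{n\times n} X_{J_0}|$, the resulting expression factors as $|\det_{n\times n} X_{J_0}|$ times a bilinear form in the quotients $G_K$, which is precisely what allows $h_j$ to inherit the required regularity and bounds.
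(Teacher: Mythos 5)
Your proof is correct and is essentially identical to the paper's: you reduce $X_j\mleft|\det_{n\times n}X_{J_0}\mright|$ to $X_j\mleft|\det_{n\times n}X_{J_0}\mright|^2$ via the same chain rule step, expand using \cref{Lemma::EuclidDense::DerivDet}, factor out $\mleft|\det_{n\times n}X_{J_0}\mright|$ to arrive at the same formula $h_j=\sum_{K,L}\gt_{j,K}^L G_K G_L$ with $\eta_2=\min\{\eta'',\eta'''\}$, and close with the same appeal to \cref{Lemma::EuclidDense::QuotientRA,Lemma::EudlidDense::BoundDerivCont,Lemma::FuncSpaceRev::Algebra}.
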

\begin{proof}
Using \cref{Lemma::EuclidDense::DerivDet} we have
\begin{equation*}
\begin{split}
	&X_j \mleft|\det_{n\times n} X_{J_0}\mright| = \frac{1}{2} \mleft|\det_{n\times n} X_{J_0}\mright|^{-1} X_j \mleft|\det_{n\times n} X_{J_0}\mright|^2
	=\frac{1}{2} \mleft|\det_{n\times n} X_{J_0}\mright|^{-1} \sum_{K\in \sI_0(n,N)} X_j (\det X_{K,J_0})^2
	\\&= \mleft|\det_{n\times n} X_{J_0}\mright|^{-1} \sum_{K,L\in \sI_0(n,N)} \gt_{j,K}^L (\det X_{K,J_0}) (\det X_{L,J_0})
	\\&=\mleft|\det_{n\times n} X_{J_0}\mright| \sum_{K,L\in \sI_0(n,N)} \gt_{j,K}^L \frac{\det X_{K,J_0}}{|\det_{n\times n} X_{J_0}|} \frac{\det X_{L,J_0}}{|\det_{n\times n} X_{J_0}|}.
\end{split}
\end{equation*}
We set $h_j: = \sum_{K,L\in \sI_0(n,N)} \gt_{j,K}^L \frac{\det X_{K,J_0}}{|\det_{n\times n} X_{J_0}|} \frac{\det X_{L,J_0}}{|\det_{n\times n} X_{J_0}|}$, and let $\eta_2:=\eta''\wedge \eta'''$.
Then, using the bounds on $\gt_{j,K}^L$ from \cref{Lemma::EuclidDense::DerivDet}, the bounds on $\frac{\det X_{L,J_0}}{|\det_{n\times n} X_{J_0}|}$ from \cref{Lemma::EuclidDense::QuotientRA,Lemma::EudlidDense::BoundDerivCont},
and \cref{Lemma::FuncSpaceRev::Algebra}, we have $h_j\in \AXSpace{X_{J_0}}{x_0}{\eta_2}\cap \CSpace{B_{X_{J_0}}(x_0,\chi)}$ and $\AXNorm{h_j}{X_{J_0}}{x_0}{\eta_2}\lesssim_{\Eadmis} 1$, $\CNorm{h_j}{B_{X_{J_0}}(x_0,\chi)}\lesssim_{\Eadmis} 1$ completing the proof.
\end{proof}

\begin{lemma}\label{Lemma::EuclidDense::FormulaFornu}
$\nu=|\det_{n\times n} X_{J_0}| \nu_0$ on $B_{X_{J_0}}(x_0,\chi)$, where $\nu_0$ is given by \cref{Eqn::PfDensities::Defnnu0}.
\end{lemma}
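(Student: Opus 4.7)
The plan is to exploit the fact that the space of densities on an $n$-dimensional real vector space is one-dimensional, so that two continuous densities on an $n$-dimensional manifold which agree when evaluated on a single pointwise basis are equal as densities. First I would verify that both sides are well-defined continuous densities on $B_{X_{J_0}}(x_0,\chi)$: the left side $\nu$ because $B_{X_{J_0}}(x_0,\chi)$ is an $n$-dimensional injectively immersed submanifold of $\R^N$ (so it inherits the Lebesgue density from the ambient Euclidean metric), and the right side because $\bigwedge X_{J_0}$ is nowhere zero on $B_{X_{J_0}}(x_0,\chi)$ by \cref{Thm::QuantRes::MainThm}\cref{Item::QuantRes::LI}, so $\nu_0$ from \cref{Eqn::PfDensities::Defnnu0} is continuous, as is $|\det_{n\times n}X_{J_0}|$.

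The natural basis on which to test equality is $X_1(y),\ldots,X_n(y)$, which spans $T_yB_{X_{J_0}}(x_0,\chi)$ at every $y\in B_{X_{J_0}}(x_0,\chi)$, again by \cref{Thm::QuantRes::MainThm}\cref{Item::QuantRes::LI}. On this basis, $\nu_0(X_1,\ldots,X_n)\equiv 1$ directly from \cref{Eqn::PfDensities::Defnnu0}, so the right side evaluates to $|\det_{n\times n}X_{J_0}(y)|$.

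The substantive ingredient is the classical formula for the induced Lebesgue density on an $n$-dimensional submanifold of $\R^N$: for tangent vectors $Z_1,\ldots,Z_n$ assembled as columns of an $N\times n$ matrix $Z$, one has $\nu(Z_1,\ldots,Z_n)=\sqrt{\det(Z^{\transpose}Z)}$. Applied with $Z=X_{J_0}(y)$ and combined with the Cauchy--Binet identity
\begin{equation*}
\det(Z^{\transpose}Z)=\sum_{K\in\sI_0(n,N)}(\det Z_K)^2,
\end{equation*}
this gives $\nu(X_1,\ldots,X_n)(y)=\sqrt{\sum_{K\in\sI_0(n,N)}(\det X_{K,J_0}(y))^2}=|\det_{n\times n}X_{J_0}(y)|$, matching the right side.

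There is no serious obstacle: the whole computation is a one-line application of Cauchy--Binet after reducing to a single basis. The only small point of care is to confirm that the notation $|\det_{n\times n}X_{J_0}|$ used throughout this section refers to the Euclidean ($\ell^2$) norm of the vector of $n\times n$ minor determinants, as this is what makes Cauchy--Binet reproduce the Gram determinant $\sqrt{\det(X_{J_0}^{\transpose}X_{J_0})}$; this interpretation is consistent with the use of this quantity in \cref{Lemma::EudlidDense::BoundDerivCont}, where the claim that $|\det X_{K,J_0}|/|\det_{n\times n}X_{J_0}|\le 1$ is immediate from the definition.
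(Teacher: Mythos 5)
Your proof is correct and takes essentially the same approach as the paper's: both arguments reduce to showing that $\nu$ and $|\det_{n\times n}X_{J_0}|\,\nu_0$ agree when evaluated on the pointwise basis $X_1,\ldots,X_n$ of $T_yB_{X_{J_0}}(x_0,\chi)$, using $\nu_0(X_1,\ldots,X_n)\equiv 1$ and the identity $\nu(X_1,\ldots,X_n)=|\det_{n\times n}X_{J_0}|$. The paper simply asserts that last identity from the fact that $\nu$ is the induced Lebesgue density on an injectively immersed submanifold of $\R^N$; you supply the justification via the Gram determinant formula $\nu(Z_1,\ldots,Z_n)=\sqrt{\det(Z^{\transpose}Z)}$ and Cauchy--Binet, which is a correct and useful unpacking. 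Your reading of $|\det_{n\times n}X_{J_0}|$ as the $\ell^2$ norm of the vector of $n\times n$ minors is also right and is exactly what the paper uses in the computation of \cref{Lemma::EuclidDense::Definehj}, where $|\det_{n\times n}X_{J_0}|^2=\sum_{K\in\sI_0(n,N)}(\det X_{K,J_0})^2$ appears explicitly.
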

\begin{proof}
Since $\nu_0(\bigwedge X_{J_0})\equiv 1$, $\nu_0$ is a strictly positive density on $B_{X_{J_0}}(x_0,\chi)$.  Thus, $\nu=f(x) \nu_0$ for some $f:B_{X_{J_0}}(x_0,\chi)\rightarrow \R$.
To solve for $f$ we evaluate this equation at $\bigwedge X_{J_0}$ and since $\nu_0(\bigwedge X_{J_0})\equiv 1$, we have
$f = \nu(\bigwedge X_{J_0})$.  Since $\nu$ is the induced Lebesgue density on an $n$-dimensional, injectively immersed, submanifold of $\R^N$ (to which $X_1,\ldots, X_q$ are tangent), we have
$\nu(\bigwedge X_{J_0})= |\det_{n\times n} X_{J_0}|$, completing the proof.
\end{proof}

\begin{proof}[Proof of \cref{Prop::EuclidDense::MainProp}]
For $1\leq j\leq n$, let $f_j:=f_j^0+h_j$, where $f_j^0$ is described in \cref{Lemma::PfDensities::fj0RA} and $h_j$ is described in \cref{Lemma::EuclidDense::Definehj}.  Then, if $r:=\min\{\eta_1,\eta_2\}>0$
we have that $r$ is an $\Eadmis$-admissible constant, $f_j\in \AXSpace{X_{J_0}}{x_0}{r}\cap \CSpace{B_{X_{J_0}}(x_0,\chi)}$, and $\AXNorm{f_j}{X_{J_0}}{x_0}{r}\lesssim_{\Eadmis} 1$,
$\CNorm{f_j}{B_{X_{J_0}}(x_0,\chi)}\lesssim_{\Eadmis} 1$.

Using \cref{Lemma::PfDensities::fj0RA,Lemma::EuclidDense::Definehj,Lemma::EuclidDense::FormulaFornu} we have, for $1\leq j\leq n$,
\begin{equation*}
	\Lie{X_j} \nu = \Lie{X_j} \mleft|\det_{n\times n} X_{J_0}\mright| \nu_0
	= \mleft(X_j \mleft|\det_{n\times n} X_{J_0}\mright|\mright) \nu_0 + \mleft|\det_{n\times n} X_{J_0}\mright| \Lie{X_j} \nu_0
	=h_j  \mleft|\det_{n\times n} X_{J_0}\mright| \nu_0 + f_j^0  \mleft|\det_{n\times n} X_{J_0}\mright|\nu_0
	= f_j \nu,
\end{equation*}
completing the proof.
\end{proof}

\section{\texorpdfstring{Scaling and real analyticity:  the proof of \cref{Thm::Scaling::BeyondHor}}{Scaling and real analyticity}}\label{Section::ScalingRevis}
In this section, we prove \cref{Thm::Scaling::BeyondHor}.
Fix a compact set $\Compact\Subset \Omega$.
The idea is that, if $m$ is chosen sufficiently large (depending on $\Compact$ and $V_1,\ldots, V_r$), then a proof similar to that of \cref{Thm::GenSubR::MainThm} will work
 uniformly for  $x\in \Compact$ with the manifold $M$ from \cref{Thm::GenSubR::MainThm} replaced by $L_x$.
As in \cref{Thm::Scaling::BeyondHor}, throughout this section we use $A\lesssim B$ to denote $A\leq C B$ where
$C$ can be chosen independent of $x\in \Compact$ and $\delta\in (0,1]$.

We first show how the appropriate conditions hold uniformly, which uses the Weierstrass preparation theorem.  This is based on an idea of Lobry \cite{LobryControlabiliteDesSystems}\footnote{\Cref{Lemma::ScalinvRevis::NoetherianA,Lemma::ScalingRevis::Noetherian} are classical, and it was Lobry \cite{LobryControlabiliteDesSystems} who first used them
to prove a result similar to \cref{Prop::ScalingRevis::NSW}.  Unfortunately, there is a slight error in \cite{LobryControlabiliteDesSystems}; see \cite{StefanIntegrabilityOfSystemsOfVectorFields}.
Below, we begin with a \textit{finite} collection of vector fields $V_1,\ldots, V_r$, which allows us to avoid this issue.}
 (see also
\cite[p. 188]{SussmanOrbitsOfFamiliesOfVectorFieldsAndIntegrabilityOfDistributions}), and was
used in a similar context in \cite{SteinStreetIII} and \cite[Section 2.15.5]{StreetMultiParamSingInt}.

We take the same setting and notation as in \cref{Thm::Scaling::BeyondHor}.  Thus, we have real analytic vector fields $V_1,\ldots, V_r$ on an open set $\Omega\subseteq \R^n$.
We assign to each $V_1,\ldots, V_r$ the formal degree $1$.  If $Z$ has formal degree $e$, we assign to $[V_j, Z]$ the formal degree $e+1$.
We let $\sS$ denote the (infinite) set of all such vector fields with formal degrees; thus each $(X,e)\in \sS$ is a pair of a real analytic vector field $X$ and $e\in \N$, where
$X$ is an iterated commutator of $V_1,\ldots, V_r$.

An important ingredient is the next proposition:
\begin{prop}\label{Prop::ScalingRevis::NSW}
Fix $x\in \Omega$.  Then, there exists an open neighborhood $U_x\subseteq \Omega$ of $x$ and $m_x\in \N$
such that the following holds.
Let $m\geq m_x$, and let
 $(X_1,d_1),\ldots, (X_{q},d_{q})$ be an enumeration of those elements $(Z,e)\in \sS$ with $e\leq m$.  Then there exist
real analytic functions $\ch_{j,k}^{l,x}\in \CjSpace{\omega}[U_{x}]$ with
\begin{equation*}
	[X_j,X_k]=\sum_{d_l\leq d_j+d_k} \ch_{j,k}^{l,x} X_l \text{ on } U_{x}.
\end{equation*}
\end{prop}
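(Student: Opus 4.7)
My plan is to prove this by the classical Lobry argument, which turns the Weierstrass preparation theorem (equivalently, the Noetherianity of the ring of convergent power series) into a finiteness statement for the module of commutators at the germ level.

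Let $\sO_x$ denote the local ring of germs of real analytic functions at $x$. By Weierstrass preparation, $\sO_x$ is a Noetherian local ring, and the $\sO_x$-module $\sV_x$ of germs at $x$ of real analytic vector fields is free of rank $n$. For each $k \in \N$, let $\sM_k \subseteq \sV_x$ be the submodule generated (over $\sO_x$) by the germs at $x$ of those $(Z,e) \in \sS$ with $e \leq k$. Note that this is a finite generating set since only finitely many iterated commutators of $V_1,\ldots, V_r$ have depth at most $k$. The modules $\sM_k$ form an increasing chain $\sM_1 \subseteq \sM_2 \subseteq \cdots$ inside the finitely generated $\sO_x$-module $\sV_x$.

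Since $\sO_x$ is Noetherian and $\sV_x$ is finitely generated, every submodule of $\sV_x$ is finitely generated, and every ascending chain of submodules stabilizes. Therefore there exists $m_x \in \N$ such that $\sM_{m_x} = \sM_{m_x + k}$ for all $k \geq 0$. This $m_x$ will be the constant in the proposition.

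Now fix $m \geq m_x$ and enumerate $(X_1,d_1),\ldots,(X_q,d_q)$ as in the statement. For each pair $1 \le j,k \le q$: if $d_j + d_k \leq m$, then $[X_j, X_k]$ has formal degree $d_j+d_k$ and is by definition one of the $X_l$ in the list, so the required expansion is trivial. If instead $d_j + d_k > m$, then $[X_j, X_k]$ is an element of $\sM_{d_j+d_k}$, and by the stabilization property this module equals $\sM_{m_x} \subseteq \sM_m$; hence there exist germs $\ch_{j,k}^{l,x} \in \sO_x$ (for those $l$ with $d_l \leq m < d_j + d_k$) such that $[X_j, X_k] = \sum \ch_{j,k}^{l,x} X_l$ as a germ at $x$. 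In either case the resulting expansion satisfies $d_l \leq d_j + d_k$.

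Finally, each germ identity extends to some open neighborhood of $x$ on which both sides are defined and equal. Since there are only finitely many pairs $(j,k) \in \{1,\ldots,q\}^2$, we may intersect the corresponding neighborhoods (and intersect with a common domain of definition for the finitely many $\ch_{j,k}^{l,x}$) to obtain a single open neighborhood $U_x \subseteq \Omega$ of $x$ on which all the identities $[X_j, X_k] = \sum_{d_l \leq d_j+d_k} \ch_{j,k}^{l,x} X_l$ hold simultaneously, with $\ch_{j,k}^{l,x} \in \CjSpace{\omega}[U_x]$. The main (and really only) conceptual obstacle is the invocation of Noetherianity of $\sO_x$; once this is granted, the rest is bookkeeping. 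Note that $m_x$ and $U_x$ depend only on $x$ and the original vector fields $V_1,\ldots, V_r$, not on the choice of $m \geq m_x$, as desired.
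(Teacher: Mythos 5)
Your overall strategy---invoking Noetherianity of the ring of germs $\sO_x$ (Weierstrass preparation) and turning the formal degree into a filtration of submodules---is the same one the paper uses. But there are two problems, one small and one real.

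The small one: when $d_j+d_k\le m$ you assert ``$[X_j,X_k]$\ldots{} is by definition one of the $X_l$ in the list.'' This is false unless one of $j,k$ indexes a $V_i$. The set $\sS$ consists only of left-nested iterated commutators $[V_{i_1},[V_{i_2},[\cdots,V_{i_e}]]]$; a bracket like $[[V_1,V_2],[V_3,V_4]]$ is not in $\sS$. The fix is standard (and is stated in the paper just after $\sSh$ is introduced): by the Jacobi identity, $[X_j,X_k]$ is a \emph{constant} linear combination of elements of $\sS$ of degree $d_j+d_k$, so the required expansion with $d_l=d_j+d_k$ still holds.

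The real gap is uniformity of $U_x$ in $m$. Your neighborhood is obtained by intersecting finitely many domains of definition, one for each pair $(j,k)\in\{1,\ldots,q\}^2$ (and each germ coefficient). But $q$ grows with $m$, so the $U_x$ you construct depends on $m$; the proposition requires a single $U_x$ valid for every $m\ge m_x$. Your closing sentence asserts this independence but nothing in the argument delivers it: when $d_j+d_k>m$, the germ relation $[X_j,X_k]=\sum\ch^{l,x}_{j,k}X_l$ comes from writing a germ in $\sM_{m_x}$ in terms of generators, and a priori each such relation lives only on its own shrinking neighborhood. The paper closes this gap in its Lemma~\ref{Lemma::ScalingRevis::MakeUAndF} by an induction on formal degree: one first fixes a single neighborhood $U_1$ on which the \emph{finitely many} base relations (brackets of elements of a fixed finite generating set $\sF$) hold with analytic coefficients, and then shows by induction that \emph{every} $(X,d)\in\sSh$ admits the required expansion with coefficients analytic on that same $U_1$, using that $X=[V_i,X']$ with $X'$ of lower degree and propagating via the Leibniz rule. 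That inductive step is the missing ingredient; without it the proof only establishes the statement for each fixed $m$ with an $m$-dependent neighborhood. (As a side remark: your use of the ascending chain condition directly on the filtration $\sM_k$ is a legitimate alternative to the paper's auxiliary-variable device in Lemma~\ref{Lemma::ScalingRevis::NoetherianWithFormal} for obtaining the \emph{germ}-level relations with the degree constraint, since here each degree level of $\sS$ is finite; but it does not by itself replace the neighborhood-propagation induction.)
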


We turn to the proof of \cref{Prop::ScalingRevis::NSW}.  In the next few results, we (without loss of generality) relabel the fixed point $x\in \Omega$ from \cref{Prop::ScalingRevis::NSW} to be $0$.
Thus, we work near the point $0\in \R^n$.
We write $f:\R^n_0\rightarrow \R^n$ to denote that $f$ is a germ of a function defined near $0\in \R^n$.  Let
\begin{equation*}
\sA_n:=\mleft\{ f:\R^n_0\rightarrow \R\: \big|\: f\text{ is real analytic}\mright\},
\end{equation*}
\begin{equation*}
\sA_n^m:=\mleft\{ f:\R^n_0\rightarrow \R^m\: \big|\: f\text{ is real analytic}\mright\}.
\end{equation*}
Notice that $\sA_n^m$ can be identified with the $m$-fold Cartesian product of $\sA_n$; justifying our notation.  $\sA_n^n$ can also be identified with the space of germs of real analytic vector fields
near $0\in \R^n$.

\begin{lemma}\label{Lemma::ScalinvRevis::NoetherianA}
The ring $\sA_n$ is Noetherian.
\end{lemma}
\begin{proof}[Comments on the proof]
This is a simple consequence of the Weierstrass preparation theorem.  See page 148 of \cite{ZariskiSamuel}.  The proof in \cite{ZariskiSamuel} is for the formal power series ring, however,
as mentioned on page 130 of \cite{ZariskiSamuel}, the proof also works for the ring of convergent power series.  This is exactly the ring $\sA_n$.
\end{proof}

\begin{lemma}\label{Lemma::ScalingRevis::Noetherian}
The module $\sA_n^m$ is a Noetherian $\sA_n$ module.
\end{lemma}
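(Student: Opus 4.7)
The plan is to deduce this from the previous lemma by the standard fact that finitely generated modules over a Noetherian ring are Noetherian. The module $\sA_n^m$ is free of rank $m$ over $\sA_n$, with basis the standard unit vectors $e_1,\ldots,e_m$, so it is certainly finitely generated.

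I would proceed by induction on $m$. The base case $m=1$ is immediate, since $\sA_n^1\cong \sA_n$ is Noetherian as a module over itself (its submodules are exactly its ideals, and these satisfy the ascending chain condition by the previous lemma). For the inductive step, consider the short exact sequence
\begin{equation*}
0\longrightarrow \sA_n\stackrel{\iota}{\longrightarrow} \sA_n^{m+1}\stackrel{\pi}{\longrightarrow}\sA_n^{m}\longrightarrow 0,
\end{equation*}
where $\iota$ includes into the first coordinate and $\pi$ projects onto the last $m$ coordinates. Then I would invoke the standard homological fact that in a short exact sequence of modules $0\to A\to B\to C\to 0$, the middle term $B$ is Noetherian if and only if both $A$ and $C$ are. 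Given the base case and the inductive hypothesis, this gives that $\sA_n^{m+1}$ is Noetherian.

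The main (and only) obstacle is really just citing or verifying this homological fact about short exact sequences, which is entirely routine: given an ascending chain $N_1\subseteq N_2\subseteq\cdots$ of submodules of $B$, one observes that both $\iota^{-1}(N_k)$ in $A$ and $\pi(N_k)$ in $C$ must stabilize, and an easy diagram chase shows the $N_k$ themselves then stabilize. Since this is entirely standard commutative algebra, I would simply state it and cite a reference (e.g., Atiyah--Macdonald), rather than reproducing the argument in detail. No real-analytic content beyond the previous lemma is needed.
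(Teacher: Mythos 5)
Your proposal is correct and matches the paper's approach: the paper simply cites the standard fact that $R^m$ is Noetherian as a module over any Noetherian ring $R$, and your induction via the short exact sequence $0\to \sA_n\to \sA_n^{m+1}\to \sA_n^m\to 0$ is precisely the standard proof of that fact.
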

\begin{proof}[Comments on the proof]
It is a standard fact that for any Noetherian ring $R$, the $R$-module $R^m$ is Noetherian.
\end{proof}

\begin{lemma}\label{Lemma::ScalingRevis::NoetherianWithFormal}
Let $\sS\subset \sA_n^n\times \N$.  Then, there exists a finite subset $\sF\subseteq \sS$ such that for every $(g,e)\in \sS$,
\begin{equation*}
	g(x) = \sum_{\substack{(f,d)\in \sF \\ d\leq e}} c_{(f,d)}(x) f(x),
\end{equation*}
with $c_{f,d}\in \sA_n$.
\end{lemma}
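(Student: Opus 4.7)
The plan is to filter $\sS$ by the formal degree and apply the Noetherian property of $\sA_n^n$ (\cref{Lemma::ScalingRevis::Noetherian}). For each $e\in \N$, let
\begin{equation*}
	M_e := \sum_{\substack{(f,d)\in \sS \\ d\leq e}} \sA_n \cdot f \;\subseteq\; \sA_n^n
\end{equation*}
be the $\sA_n$-submodule generated by those germs in $\sS$ whose degree is at most $e$. This produces an ascending chain $M_0\subseteq M_1\subseteq M_2\subseteq\cdots$ of submodules of the Noetherian $\sA_n$-module $\sA_n^n$, which must stabilize: there exists $N\in \N$ with $M_e=M_N$ for all $e\geq N$.

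Next, for each $e\in \{0,1,\ldots,N\}$, I would extract a finite generating subfamily from the given (possibly infinite) generating set. Since $M_e$ is a submodule of a Noetherian module, it is finitely generated, say by $m_1,\dots,m_k\in M_e$. By definition of $M_e$, each $m_i$ is a finite $\sA_n$-linear combination of germs $f$ arising from pairs $(f,d)\in \sS$ with $d\leq e$; collecting the (finitely many) such pairs used across $i=1,\dots,k$ yields a finite set $\sF_e\subseteq \{(f,d)\in \sS : d\leq e\}$ whose first coordinates generate $M_e$ over $\sA_n$.

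Set $\sF := \sF_0\cup \sF_1\cup \cdots \cup \sF_N$, which is a finite subset of $\sS$. Given $(g,e)\in \sS$, note that $g\in M_e$ by construction. If $e\leq N$, then $g$ is an $\sA_n$-linear combination of the first coordinates of elements of $\sF_e\subseteq \sF$, all of which have degree $\leq e$. If $e>N$, then $g\in M_e = M_N$, which is generated by the first coordinates of $\sF_N\subseteq \sF$; these have degree $\leq N<e$. In either case we obtain the required representation $g(x)=\sum_{(f,d)\in \sF,\, d\leq e} c_{(f,d)}(x) f(x)$ with $c_{(f,d)}\in \sA_n$. There is no real obstacle here beyond correctly bookkeeping the degree bound, since Noetherianness does all the work; the only mildly subtle point is to choose the generators $\sF_e$ as actual elements of $\sS$ (rather than arbitrary module generators) so that the degree inequality $d\leq e$ is preserved.
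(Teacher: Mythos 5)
Your proof is correct, but it takes a genuinely different route from the paper's. You filter $\sS$ by degree and run the ascending chain condition directly on the chain $M_0\subseteq M_1\subseteq\cdots$ of submodules of $\sA_n^n$, then assemble $\sF$ from degree-compatible finite generating sets $\sF_0,\dots,\sF_N$ up to the stabilization index $N$. The paper instead introduces an auxiliary variable $t$: it defines $\iota(f,d):=t^d f(x)\in\sA_{n+1}^n$, lets $\sM$ be the $\sA_{n+1}$-submodule generated by $\iota\sS$, picks a finite $\sF\subseteq\sS$ with $\iota\sF$ generating $\sM$, and then, given $(g,e)\in\sS$, writes $t^e g(x)=\sum_{(f,d)\in\sF}\ch_{(f,d)}(t,x)\,t^d f(x)$ and applies $\tfrac{1}{e!}\partial_t^e\big|_{t=0}$; this automatically annihilates every term with $d>e$, producing the degree-bounded representation. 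Your argument is shorter and more elementary for the one-parameter lemma as stated, since the only ingredient is ACC on a totally ordered chain. The paper's device buys robustness: the closing remark of \cref{Section::MultiParam} claims that ``the same proof works'' for $\sS\subset\sA_n^n\times\N^\nu$ by taking $t\in\R^\nu$ and treating $d$ as a multi-index, and indeed the $\iota$-plus-Taylor-extraction argument transfers verbatim. Your stabilization argument, by contrast, uses the well-ordering of $\N$ in an essential way; over the merely partially ordered $\N^\nu$ the family $(M_e)_e$ is only directed, infinitely many pairwise incomparable $e$'s each demand their own degree-compatible generating set, and establishing finiteness of $\sF$ would require an additional ingredient such as Dickson's lemma.
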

\begin{proof}
Define a map $\iota:\sA_n^n\times \N\rightarrow \sA_{n+1}^n$ by $\iota(f,d)=t^d f(x)$, where $t\in \R$, $x\in \R^n$.
Let $\sM$ be the submodule of $\sA_{n+1}^n$ generated by $\iota \sS$.  $\sM$ is finitely generated by \cref{Lemma::ScalingRevis::Noetherian}.
Let $\sF\subseteq \sS$ be a finite subset so that $\iota \sF$ generates $\sM$.  We will show that $\sF$ satisfies the conclusions of the lemma.

Let $(g,e)\in \sS$.  Since $t^e g\in \sM$, we have
\begin{equation*}
	t^e g(x) = \sum_{(f,d)\in \sF} \ch_{(f,d)}(t,x) t^d f(x),\quad \ch_{(f,d)}\in \sA_{n+1},
\end{equation*}
on a neighborhood of $(0,0)\in \R\times \R^n$.  Applying $\frac{1}{e!} \partial_t^e\big|_{t=0}$ to both sides and using that
$\frac{1}{e!} \partial_t^e\big|_{t=0} t^d c_{(f,d)}(t,x) =0$ if $d>e$, we have
\begin{equation*}
	g(x) = \sum_{\substack{(f,d)\in \sF \\ d\leq e}} \mleft[ \frac{1}{e!} \partial_t^e \big|_{t=0} t^d  \ch_{(f,d)}(t,x)\mright] f(x).
\end{equation*}
The result follows.
\end{proof}

We return to the setting at the start of this section.  We let $\sSh$ denote the smallest collection of vector fields paired with formal degrees such that:
\begin{itemize}
\item $(V_1,1),\ldots, (V_r,1)\in \sSh$.
\item If $(Y,d),(Z,e)\in \sSh$, then $([Y,Z],d+e)\in \sSh$.
\end{itemize}
Note that $\sSh$ and $\sS$ (where $\sS$ is defined at the start of this section) are closely related.  Indeed, $\sS\subseteq \sSh$, and if $(Z,e)\in \sSh$ then
$Z$ is a finite linear combination (with constant coefficients) of vector fields $Z'$ such that $(Z',e)\in \sS$ (this follows from the Jacobi identity).

\begin{lemma}\label{Lemma::ScalingRevis::MakeUAndF}
Fix $x\in \Omega$.
There exists a finite set $\sF_x\subseteq \sS$ and an open neighborhood $U_x\Subset \Omega$ of $x$ such that for every $(X,d)\in \sSh$,
\begin{equation*}
	X = \sum_{\substack{(Z,e)\in \sF_x\\ e\leq d}} c_{(X,d)}^{(Z,e)} Z, \quad c_{(X,d)}^{(Z,e)}\in \CjSpace{\omega}[U_x].
\end{equation*}
\end{lemma}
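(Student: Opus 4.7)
The plan is to apply \cref{Lemma::ScalingRevis::NoetherianWithFormal} to $\sS$ at the germ level at $x$, and then bootstrap the germ-level decomposition to one valid on a single neighborhood $U_x$ by induction along the recursive definition of $\sS$. Without loss of generality take $x=0$, and identify germs at $0$ of real analytic vector fields with elements of $\sA_n^n$.

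First I would reduce the problem from $\sSh$ to $\sS$: every $(X,d)\in\sSh$ is a finite $\R$-linear combination of elements of $\sS$ of the same degree $d$, by induction on nesting depth using the Jacobi identity (which preserves total formal degree, as noted just before the lemma). Hence any decomposition of each such element of $\sS$ in $\sF_x$ satisfying the degree bound transfers to $X$ with the same bound. Applying \cref{Lemma::ScalingRevis::NoetherianWithFormal} to $\sS\subseteq \sA_n^n\times \N$ yields a finite $\sF_x\subseteq \sS$ such that every $(g,e)\in\sS$ has a germ-level decomposition $g=\sum_{(f,d)\in\sF_x,\, d\leq e} c_{(f,d)}\,f$ with $c_{(f,d)}\in\sA_n$. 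I would augment $\sF_x$ so that each $(V_j,1)\in\sF_x$; it remains a finite subset of $\sS$. Finally, choose the neighborhood $U_x\Subset\Omega$ small enough that (a) every $f$ with $(f,d)\in\sF_x$ is real analytic on $U_x$, and (b) for every generator $V_j$ and every $(f_i,e_i)\in\sF_x$ the fixed germ-level decomposition $[V_j,f_i]=\sum_k b^{(k)}_{j,i} f_k$ (with $(f_k,e_k)\in\sF_x$ and $e_k\leq e_i+1$) has every coefficient $b^{(k)}_{j,i}$ real analytic on $U_x$. Only finitely many germs appear in (a) and (b), so a single $U_x$ works.

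The core step is then an induction on $d$ showing that every $(X,d)\in\sS$ admits a decomposition $X=\sum_i c_i f_i$ on $U_x$ with $(f_i,e_i)\in\sF_x$, $e_i\leq d$, and $c_i\in\CjSpace{\omega}[U_x]$. The base case $d=1$ is immediate because each $V_j\in\sF_x$. For the inductive step, write $X=[V_{j_1},Y]$ with $(Y,d-1)\in\sS$; apply the induction to $Y$ and expand using the Leibniz-type identity
\begin{equation*}
[V_{j_1},\textstyle\sum_i c_i f_i]=\sum_i (V_{j_1}c_i) f_i+\sum_i c_i\,[V_{j_1},f_i],
\end{equation*}
substituting the fixed decomposition of each $[V_{j_1},f_i]$ from (b). The resulting coefficients are built from products and directional derivatives of real analytic functions on $U_x$, hence lie in $\CjSpace{\omega}[U_x]$; the degree constraint $e_k\leq e_i+1\leq d$ is preserved term by term. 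Combining this induction with the reduction from $\sSh$ to $\sS$ completes the proof. The main obstacle I anticipate is precisely the upgrade from germ-level to $U_x$-level decomposition: \cref{Lemma::ScalingRevis::NoetherianWithFormal} a priori produces only germs, and the infinite family $\sS$ has no uniform lower bound on the neighborhoods of validity of the individual decompositions. The argument circumvents this by demanding uniform validity of only finitely many ``base'' decompositions in (b), after which the induction manufactures all remaining decompositions purely via real analytic operations on $U_x$.
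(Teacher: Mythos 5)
Your proof is correct, and it rests on the same key idea as the paper's: apply \cref{Lemma::ScalingRevis::NoetherianWithFormal} to get a finite generating family, use finiteness to fix a single neighborhood on which a finite stock of ``base'' decompositions is simultaneously valid, and then propagate to all elements by an induction on formal degree that only ever uses analytic operations (products, directional derivatives, Leibniz) on that fixed neighborhood, together with the reduction from $\sSh$ to $\sS$ via the Jacobi identity. The tactical setup differs in a way worth noting. The paper applies the Noetherian lemma to $\sSh$ (rather than $\sS$), converts the output to a subset $\sF_1\subseteq \sS$, and then \emph{enlarges} it to $\sF=\{(X,d)\in\sS : d\leq m\}$, the full degree-$\leq m$ slice; this enlargement is forced by the paper's choice to induct over $\sSh$ with \emph{binary} splits $X=[X_1,X_2]$, which requires germ-level decompositions of $[Z_1,Z_2]$ for arbitrary pairs $Z_1,Z_2\in\sF$ on the common neighborhood, and a degree-truncated slice of $\sS$ is the natural way to guarantee that. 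You instead induct over $\sS$ with \emph{unary} splits $X=[V_{j_1},Y]$ (licensed because $\sS$ consists of the left-nested commutators) and lift to $\sSh$ only at the end; consequently you need uniform decompositions only for the pairs $[V_j,f_i]$, so you can keep the Noetherian lemma's output plus $\{(V_j,1)\}$ without closing it into a degree slice. This is a genuine, if minor, streamlining: the family of ``base'' decompositions whose coefficients must be pinned down on $U_x$ is smaller. Both routes then conclude identically, shrinking to $U_x\Subset U_1$ (implicitly or explicitly) so that locally analytic coefficients become elements of $\CjSpace{\omega}[U_x]$.
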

\begin{proof}
In this proof we relabel $x$ to be $0$.
We apply \cref{Lemma::ScalingRevis::NoetherianWithFormal} to $\sSh$ to obtain a finite set $\sFh\subseteq \sSh$ as in that lemma.  Thus, every element $(X,d)\in \sSh$
can be written in the form:
\begin{equation*}
X = \sum_{\substack{(Z,e)\in \sFh\\ e\leq d}} c_{(X,d)}^{(Z,e)} Z, \quad c_{(X,d)}^{(Z,e)}\in \sA_n.
\end{equation*}
Since every vector field appearing in $\sSh$ is a finite linear combination (with constant coefficients) of vector fields with the same degree in $\sS$, there is a finite set $\sF_1\subseteq \sS$
such that every element $(X,d)\in \sSh$
can be written in the form:
\begin{equation}\label{Eqn::ScalinvRevis::SumF0}
X = \sum_{\substack{(Z,e)\in \sF_1\\ e\leq d}} c_{(X,d)}^{(Z,e)} Z, \quad c_{(X,d)}^{(Z,e)}\in \sA_n.
\end{equation}
The problem is that, a priori, the neighborhood of $0$ on which $c_{(X,d)}^{(Z,e)}$ are defined might depend on $(X,d)$.  Our goal is to find a common neighborhood of $0$
which works for all $(X,d)\in \sSh$.

Let $m:=\max \{ d : \exists (X,d)\in \sF_1\}$ and set $\sF:=\{ (X,d)\in \sS : d\leq m\}$.  Note that $\sF\subset \sS$ is a finite set and $\sF_1\subseteq \sF$.
Furthermore, \cref{Eqn::ScalinvRevis::SumF0} holds with $\sF_1$ replaced by $\sF$ (since $\sF_1\subseteq \sF$).
For each $(X_1,d_1),(X_2,d_2)\in \sF$, $([X_1,X_2],d_1+d_2)\in \sSh$ and therefore \cref{Eqn::ScalinvRevis::SumF0} holds with $(X,d)$ replaced by $([X_1,X_2],d_1+d_2)$
and $\sF_1$ replaced by $\sF$.  Since there are only finitely many such vector fields, there is a common neighborhood $U_1$ of $0$ such that for each
$(X_1,d_1),(X_2,d_2)\in \sF$ we may write
\begin{equation}\label{Eqn::ScalingRevis::FirstCommutator}
	[X_1,X_2]=\sum_{\substack{(Z,e)\in \sF\\ e\leq d_1+d_2}}b_{(X_1,d_1), (X_2,d_2)}^{(Z,e)} Z, \quad b_{(X_1,d_1),(X_2,d_2)}^{(Z,e)}\in \CjSpaceloc{\omega}[U_1].
\end{equation}
We claim, $\forall (X,d)\in \sSh$,
\begin{equation}\label{Eqn::ScalingRevis::ToProveSum}
	X = \sum_{\substack{(Z,e)\in \sF\\ e\leq d}} c_{(X,d)}^{(Z,e)} Z, \quad c_{(X,d)}^{(Z,e)}\in \CjSpaceloc{\omega}[U_1].
\end{equation}
We prove \cref{Eqn::ScalingRevis::ToProveSum} by induction on $d$.  Since $(V_1,1),\ldots, (V_r,1)\in \sF$, the base case, $d=1$, is clear.  Let $d_0\geq 2$; we assume the result
for all $d<d_0$ and prove the result for $d_0$.  If $(X,d_0)\in \sSh$ with $d_0\geq 2$, then $X=[X_1,X_2]$ where $(X_1,d_1),(X_2,d_2)\in \sSh$ with $d_1+d_2=d_0$.
In particular, $d_1,d_2<d_0$ and so by the inductive hypothesis
we may write for $j=1,2$,
\begin{equation*}
	X_j = \sum_{\substack{(Z,e)\in \sF\\ e\leq d_j}} c_{(X_j,d_j)}^{(Z,e)} Z, \quad c_{(X_j,d_j)}^{(Z,e)}\in \CjSpaceloc{\omega}[U_1].
\end{equation*}
Thus, we have, using \cref{Eqn::ScalingRevis::FirstCommutator},
\begin{equation*}
\begin{split}
	&X = [X_1,X_2]
	=\mleft[\sum_{\substack{(Z_1,e_1)\in \sF\\ e_1\leq d_1}} c_{(X_1,d_1)}^{(Z_1,e_1)} Z_1, \sum_{\substack{(Z_2,e_2)\in \sF\\ e_2\leq d_2}} c_{(X_2,d_2)}^{(Z_2,e_2)} Z_2\mright]
	\\&=\sum_{\substack{(Z_1,e_1)\in \sF\\ e_1\leq d_1}} \sum_{\substack{(Z_2,e_2)\in \sF\\ e_2\leq d_2}} \mleft( \mleft( Z_1 c_{(X_2,d_2)}^{(Z_2,e_2)}\mright) Z_2 -  \mleft( Z_2 c_{(X_1,d_1)}^{(Z_1,e_1)}\mright) Z_1 + c_{(X_1,d_1)}^{(Z_1,e_1)}c_{(X_2,d_2)}^{(Z_2,e_2)} [Z_1,Z_2]\mright)
	\\&= \sum_{\substack{(Z_1,e_1)\in \sF\\ e_1\leq d_1}} \sum_{\substack{(Z_2,e_2)\in \sF\\ e_2\leq d_2}} \mleft( \mleft( Z_1 c_{(X_2,d_2)}^{(Z_2,e_2)}\mright) Z_2 -  \mleft( Z_2 c_{(X_1,d_1)}^{(Z_1,e_1)}\mright) Z_1 + c_{(X_1,d_1)}^{(Z_1,e_1)}c_{(X_2,d_2)}^{(Z_2,e_2)} \sum_{\substack{(Z_3,e_3)\in \sF \\ e_3\leq e_1+e_2 }}   b_{(Z_1,e_1),(Z_2,e_2)}^{(Z_3,e_3)} Z_3    \mright).
\end{split}
\end{equation*}
Since $c_{(X_1,d_1)}^{(Z_1,e_1)}, c_{(X_2,d_2)}^{(Z_2,e_2)}, b_{(Z_1,e_1),(Z_2,e_2)}^{(Z_3,e_3)} \in \CjSpaceloc{\omega}[U_1]$, $\forall (Z_1,e_1),(Z_2,e_2),(Z_3,e_3)\in \sF$,
\cref{Eqn::ScalingRevis::ToProveSum} follows for $(X,d)$, completing the proof of \cref{Eqn::ScalingRevis::ToProveSum}.
Taking $\sF_x:=\sF$ and $U\Subset U_1$ a relatively compact open set containing $0=x$, the result follows.
\end{proof}

\begin{proof}[Proof of \cref{Prop::ScalingRevis::NSW}]
Let $\sF_x\subset \sS$ and $U_x\subseteq \Omega$ be as in \cref{Lemma::ScalingRevis::MakeUAndF}.  Set $m_x:=\max\{ e : \exists (Z,e)\in \sF_x\}$.  For $m\geq m_x$
let $(X_1,d_1),\ldots, (X_q,d_q)$ be an enumeration of those elements $(Z,e)\in \sS$ with $e\leq m$.  Since $\sF_x\subseteq \{ (X_1,d_1),\ldots, (X_q,d_q)\}$ and
for each $i,j$, $([X_i,X_j],d_i+d_j)\in \sSh$, the result follows from \cref{Lemma::ScalingRevis::MakeUAndF}.
\end{proof}

\begin{lemma}\label{Lemma::ScalingRevis::Pickm}
There exists $m=m(\Compact)\in \N$ such that the following holds.  Let $(X_1,d_1),\ldots, (X_q,d_q)$ be the list of vector fields
with formal degrees $\leq m$ as defined at the start of this section.  Then, there exists $\xi\in (0,1]$, $s>0$ such that $\forall x\in \Compact$,
\begin{equation}\label{Eqn::ScalingRevis::CommuteAsInNSW}
	[X_j,X_k]=\sum_{d_l\leq d_j+d_k} c_{j,k}^{l,x} X_l, \quad c_{j,k}^{l,x}\in \CXomegaSpace{X}{s}[B_X(x,\xi)],
\end{equation}
where $X$ denotes the list of vector fields $X_1,\ldots, X_q$.  Furthermore,
\begin{equation*}
	\sup_{x\in \Compact} \CXomegaNorm{c_{j,k}^{l,x}}{X}{s}[B_X(x,\xi)]<\infty.
\end{equation*}
Finally, $\bigcup_{x\in \Compact} B_X(x,\xi)\Subset \Omega$.
\end{lemma}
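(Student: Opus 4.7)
The plan is to combine \cref{Prop::ScalingRevis::NSW} with a finite-subcover argument to select $m$, and then upgrade the classical real-analytic estimates provided by that proposition to bounds in the Nelson-type norm $\CXomegaSpace{X}{s}$ using \cref{Prop::NelsonTheorem2} \cref{Item::NelsonThm::Used}.

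First I would invoke \cref{Prop::ScalingRevis::NSW} at each $x\in \Compact$ to obtain a neighborhood $U_x\Subset \Omega$ and an integer $m_x$; compactness extracts a finite subcover $U_{x_1},\ldots,U_{x_R}$, and I set $m:=\max_r m_{x_r}$. I next shrink each $U_{x_r}$ to a relatively compact open set $U'_{x_r}\Subset U_{x_r}$ so that $\{U'_{x_r}\}$ still covers $\Compact$. With $(X_1,d_1),\ldots,(X_q,d_q)$ the vector fields of formal degree $\leq m$, \cref{Prop::ScalingRevis::NSW} supplies, for each $r$, real-analytic functions $\ch_{j,k}^{l,x_r}\in \CjSpace{\omega}[U_{x_r}]$ with
\begin{equation*}
[X_j,X_k]=\sum_{d_l\leq d_j+d_k}\ch_{j,k}^{l,x_r}X_l\quad\text{on }U_{x_r}.
\end{equation*}
Because each $U'_{x_r}$ is relatively compact in $U_{x_r}$ and everything in sight is real analytic, there exists a single $s_0>0$ with $X_j,\ch_{j,k}^{l,x_r}\in \ComegaSpace{s_0}[U'_{x_r}]$ and uniform norm bounds across $r,j,k,l$. \Cref{Prop::NelsonTheorem2} \cref{Item::NelsonThm::Used} then yields $s>0$ (independent of $r$) such that $\ch_{j,k}^{l,x_r}\in \CXomegaSpace{X}{s}[U'_{x_r}]$ with $\sup_{r,j,k,l}\CXomegaNorm{\ch_{j,k}^{l,x_r}}{X}{s}[U'_{x_r}]<\infty$.

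To pick $\xi$, set $C:=\max_j\sup_{y\in \overline{\bigcup_r U_{x_r}}}|X_j(y)|$, which is finite since $\overline{\bigcup_r U_{x_r}}\subset \Omega$ is compact and each $X_j$ is continuous; any admissible path defining $B_X(x,\xi)$ has Euclidean speed at most $C\xi$, so $B_X(x,\xi)$ is contained in the Euclidean ball of radius $C\xi$ about $x$, provided that ball lies in $\Omega$. Let $\xi_0>0$ be a Euclidean Lebesgue number for the cover $\{U'_{x_r}\}$ of $\Compact$, and take $\xi\in (0,1]$ small enough that $C\xi\leq \xi_0$ and $\bigcup_{x\in \Compact}\{y\in \R^n:|y-x|\leq C\xi\}\Subset \Omega$. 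For each $x\in \Compact$ I pick $r(x)$ so that $B_X(x,\xi)\subseteq U'_{x_{r(x)}}$ and define $c_{j,k}^{l,x}:=\ch_{j,k}^{l,x_{r(x)}}\big|_{B_X(x,\xi)}$; the inequality $\CXomegaNorm{c_{j,k}^{l,x}}{X}{s}[B_X(x,\xi)]\leq \CXomegaNorm{\ch_{j,k}^{l,x_{r(x)}}}{X}{s}[U'_{x_{r(x)}}]$ (immediate from the definition) together with the finite range of $r(x)$ delivers the required uniform bound, while $\bigcup_{x\in \Compact}B_X(x,\xi)\Subset \Omega$ holds by construction.

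The main substantive step is \cref{Prop::NelsonTheorem2}, which converts classical real-analytic control on the Euclidean neighborhoods $U'_{x_r}$ into control in the coordinate-free norm $\CXomegaSpace{X}{s}$; without this one could not verify \cref{Item::GenSub::Assump::Boudnscs} in the application of \cref{Thm::GenSubR::MainThm} used to prove \cref{Thm::Scaling::BeyondHor}. Everything else is a finite-subcover bookkeeping argument, with the only mild subtlety being the comparison of sub-Riemannian balls with Euclidean balls so that a standard Euclidean Lebesgue number can be used to choose $\xi$ uniformly.
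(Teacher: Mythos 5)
Your argument is essentially the paper's: you invoke \cref{Prop::ScalingRevis::NSW} at each point, extract a finite subcover to fix $m$, get uniform $\ComegaSpace{s_0}$ bounds on the relabelled $\ch_{j,k}^{l,x_r}$ on a shrunk cover, and then pass to the Nelson norm via \cref{Prop::NelsonTheorem2}~\cref{Item::NelsonThm::Used}. The only place you diverge is in the choice of $\xi$: the paper enlarges the list of vector fields to $W = X_1,\dots,X_q,\partial_{x_1},\dots,\partial_{x_n}$, invokes the Phragmen--Lindel\"of principle to get $B_W(x,\xi')\subseteq\Omega'$, and then takes a Lebesgue number for the cover with respect to the $B_W$-metric (whose balls induce the Euclidean topology). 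You instead bound the Euclidean speed of an admissible path and use a Euclidean Lebesgue number directly. Both work; yours is arguably more elementary. Two small points worth flagging. First, the speed bound is $\lesssim\sqrt{q}\,C\,\xi$, not $C\xi$, where $C$ is your sup of $|X_j|$; a harmless constant. Second, your assertion that ``any admissible path defining $B_X(x,\xi)$ has Euclidean speed at most $C\xi$'' is circular as stated, since the sup defining $C$ is taken over a set you have not yet shown the path stays inside. This is closed by the usual escape-time/Gr\"onwall argument (take the sup over a slightly larger compact $K'$, shrink $\xi$ so the Euclidean ball of radius $\sqrt{q}\,C\,\xi$ about any $x\in\Compact$ lies in $K'$, and observe the path cannot exit $K'$ before time $1$). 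The paper's appeal to ``the Phragmen--Lindel\"of principle'' encapsulates exactly this step, so you are at a comparable level of detail, but it would be cleaner to make the escape-time argument explicit rather than presenting the speed bound as unconditional.
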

\begin{proof}
For each $x\in \Omega$, let $m_x\in \N$ and $U_x\Subset \Omega$ be as in \cref{Prop::ScalingRevis::NSW}.  
Fix an open set $\Omega'$ with $\Compact\Subset \Omega'\Subset \Omega$, and set $\Compact_1:=\overline{\Omega'}\Subset \Omega$.
$\{ U_x: x\in \Compact_1\}$ is an open cover for $\Compact_1$ and we extract a finite sub-cover
$U_{x_1},\ldots, U_{x_R}$.  Set $m:=\max \{ m_{x_k} : 1\leq k\leq R\}$ and let $(X_1,d_1),\ldots, (X_q,d_q)\in \sS$ be an enumeration of all the vector fields in $\sS$ with degree $d_j\leq m$.

We claim that there exists $\xi\in (0,1]$ such that $\forall x\in \Compact$, $\exists k\in \{1,\ldots, R\}$ with $B_X(x,\xi)\subseteq U_{x_k}$.
Consider the list of vector fields $W=X_1,\ldots, X_q, \diff{x_1},\ldots, \diff{x_n}$.  
By the Phragmen-Lindel\"of principle, we may take $\xi'>0$ so small $B_W(x,\xi')\subseteq \Omega'\subset \Compact_1$, for all $x\in \Compact$.
The balls $B_W(x,\delta)$ are metric balls and the topology induced by these balls on $\Omega$ is the usual topology.
Let $\xi\in (0,\xi']$ be less than or equal to the Lebesgue number for the cover $U_{x_1},\ldots, U_{x_R}$ of $\Compact_1$, with respect to the metric associated to the balls $B_W(x,\delta)$.  Then, $\forall x\in \Compact$, $\exists k\in \{1,\ldots, R\}$ with
$B_X(x,\xi)\subseteq B_W(x,\xi) = B_W(x,\xi)\cap \Compact_1\subseteq U_{x_k}$, as desired.  Also,
$\bigcup_{x\in \Compact} B_X(x,\xi) \subseteq U_{x_1}\cup \cdots \cup U_{x_R}\Subset \Omega$.

Let $\ch_{j,k}^{l,x}$ be the functions from \cref{Prop::ScalingRevis::NSW} (with this choice of $m$).  Take $s_1>0$ so that $\forall r\in \{1,\ldots, R\}$,
$\ch_{j,k}^{l,x_r}\in \ComegaSpace{s_1}[U_{x_r}]$.  

Take $x\in \Compact$ and $1\leq r\leq R$ so that $B_X(x,\xi)\subseteq U_{x_r}$.  Set $c_{j,k}^{l,x}:=\ch_{j,k}^{l,x_r}\big|_{B_X(x,\xi)}$.  By \cref{Prop::NelsonTheorem2}, there exists $s\approx 1$ with
$c_{j,k}^{l,x}\in \CXomegaSpace{X}{s}[B_X(x,\xi)]\subseteq \CXomegaSpace{X}{s}[U_{x_r}]\subseteq \ComegaSpace{s_1}[U_{x_r}]$ and
\begin{equation*}
	\CXomegaNorm{c_{j,k}^{l,x}}{X}{s}[B_X(x,\xi)]\leq \CXomegaNorm{\ch_{j,k}^{l,x_r}}{X}{s}[U_{x_r}]\lesssim \ComegaNorm{\ch_{j,k}^{l,x_r}}{s_1}[U_{x_r}]\leq \max_{1\leq r\leq R} \ComegaNorm{\ch_{j,k}^{l,x_r}}{s_1}[U_{x_r}] \lesssim 1.
\end{equation*}
By the definition of $c_{j,k}^{l,x}$, we have $[X_j,X_k]=\sum_{l} c_{j,k}^{l,x} X_l$, for $x\in B_X(x,\xi)$, completing the proof.
\end{proof}

Let $m$, $c_{j,k}^{l,x}$, $\xi$, $s$, and $(X_1,d_1),\ldots, (X_q,d_q)$ be as in \cref{Lemma::ScalingRevis::Pickm}.  We will prove \cref{Thm::Scaling::BeyondHor} with this choice of $m$.
For $\delta\in (0,1]$, set $X_j^{\delta}:=\delta^{d_j} X_j$, let $X^{\delta}$ denote the list $X_1^{\delta},\ldots, X_q^{\delta}$, and set
\begin{equation*}
c_{j,k}^{l,x,\delta} :=
\begin{cases}
\delta^{d_j+d_k-d_l}c_{j,k}^{l,x},&\text{if }d_j+d_k\geq d_l,\\
0,&\text{otherwise}.
\end{cases}
\end{equation*}

\begin{lemma}\label{Lemma::ScalingRevis::Commutators}
$[X_j^\delta,X_k^{\delta}]=\sum_{l} c_{j,k}^{l,x,\delta} X_l^{\delta}$ on $B_{X^{\delta}}(x,\xi)$ and
\begin{equation*}
	\sup_{x\in \Compact,\delta\in (0,1]} \BCXomegaNorm{c_{j,k}^{l,x,\delta}}{X^{\delta}}{s}[B_{X^{\delta}}(x,\xi)]<\infty.
\end{equation*}
\end{lemma}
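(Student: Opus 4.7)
My plan is to prove both assertions by direct computation, using the scaling structure already laid out and the uniform estimate from \cref{Lemma::ScalingRevis::Pickm}.

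The commutator identity follows immediately by linearity of the Lie bracket in each argument together with the fact that multiplying a vector field by a scalar commutes past the Lie bracket up to a scalar: $[X_j^{\delta},X_k^{\delta}]=\delta^{d_j+d_k}[X_j,X_k]$. Applying \cref{Eqn::ScalingRevis::CommuteAsInNSW} from \cref{Lemma::ScalingRevis::Pickm} (on $B_X(x,\xi)$, which contains $B_{X^{\delta}}(x,\xi)$ as discussed below) gives
\begin{equation*}
[X_j^{\delta},X_k^{\delta}]=\delta^{d_j+d_k}\sum_{d_l\leq d_j+d_k} c_{j,k}^{l,x} X_l =\sum_{l:d_l\leq d_j+d_k}\delta^{d_j+d_k-d_l} c_{j,k}^{l,x} X_l^{\delta}=\sum_l c_{j,k}^{l,x,\delta} X_l^{\delta},
\end{equation*}
where the definition of $c_{j,k}^{l,x,\delta}$ (which includes the factor $c_{j,k}^{l,x}$ absent from the displayed typo in the excerpt) makes the last equality tautological.

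Next I would verify the containment $B_{X^{\delta}}(x,\xi)\subseteq B_X(x,\xi)$ for all $\delta\in(0,1]$. Given an admissible curve $\gamma$ with $\gamma'=\sum a_j\xi X_j^{\delta}(\gamma)=\sum(a_j\delta^{d_j})\xi X_j(\gamma)$ and $\|\sum|a_j|^2\|_{L^\infty}<1$, the reparametrized coefficients $b_j:=a_j\delta^{d_j}$ satisfy $\|\sum|b_j|^2\|_{L^\infty}^{1/2}\leq(\max_j\delta^{d_j})\|\sum|a_j|^2\|_{L^\infty}^{1/2}<1$ (using $d_j\geq 1$ and $\delta\leq 1$), so $\gamma$ witnesses membership in $B_X(x,\xi)$.

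For the norm bound, I would compute term-by-term. For each ordered multi-index $\alpha=(\alpha_1,\ldots,\alpha_m)$, induction on $m$ gives $(X^{\delta})^{\alpha}f=\delta^{d_{\alpha_1}+\cdots+d_{\alpha_m}}X^{\alpha}f$, since applying $X_{\alpha_i}^{\delta}=\delta^{d_{\alpha_i}} X_{\alpha_i}$ pulls out a factor of $\delta^{d_{\alpha_i}}$ at each step. Thus, on $B_{X^{\delta}}(x,\xi)\subseteq B_X(x,\xi)$, and when $d_l\leq d_j+d_k$,
\begin{equation*}
|(X^{\delta})^{\alpha} c_{j,k}^{l,x,\delta}|=\delta^{d_j+d_k-d_l+\sum_i d_{\alpha_i}}|X^{\alpha} c_{j,k}^{l,x}|\leq |X^{\alpha}c_{j,k}^{l,x}|,
\end{equation*}
since the exponent of $\delta$ is nonnegative and $\delta\leq 1$. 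Summing against $s^m/m!$ and over $|\alpha|=m$, then over $m$, yields
\begin{equation*}
\CXomegaNorm{c_{j,k}^{l,x,\delta}}{X^{\delta}}{s}[B_{X^{\delta}}(x,\xi)]\leq \CXomegaNorm{c_{j,k}^{l,x}}{X}{s}[B_X(x,\xi)],
\end{equation*}
and the right-hand side is uniformly bounded in $x\in\Compact$ by \cref{Lemma::ScalingRevis::Pickm}. The only mild obstacle is keeping track of the sign of the exponent of $\delta$, but this is guaranteed by the cutoff $d_l\leq d_j+d_k$ built into the definition of $c_{j,k}^{l,x,\delta}$, so the argument is essentially just bookkeeping.
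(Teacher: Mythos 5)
Your proof is correct and carries out exactly what the paper's terse one‑sentence proof (``multiply \cref{Eqn::ScalingRevis::CommuteAsInNSW} by $\delta^{d_j+d_k}$ and trace through the definitions'') intends: the scaling identity $[X_j^\delta,X_k^\delta]=\delta^{d_j+d_k}[X_j,X_k]$, the containment $B_{X^\delta}(x,\xi)\subseteq B_X(x,\xi)$ (from $\delta\leq 1$, $d_j\geq 1$), and the relation $(X^\delta)^\alpha=\delta^{\sum_i d_{\alpha_i}}X^\alpha$ combined with $d_j+d_k\geq d_l$ to absorb all powers of $\delta$ in the norm. You are also right that the displayed definition of $c_{j,k}^{l,x,\delta}$ in the paper has dropped the factor $c_{j,k}^{l,x}$; compare with the correct form of $\ch_{j,k}^{l,x,\delta}$ appearing in the proof of \cref{Thm::NSW::RA}.
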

\begin{proof}This follows immediately from \cref{Lemma::ScalingRevis::Pickm}, by multiplying \cref{Eqn::ScalingRevis::CommuteAsInNSW} by $\delta^{d_j+d_k}$ and tracing through the definitions.
\end{proof}

We pick $j_1=j_1(x,\delta),\ldots, j_{n_0(x)}=j_{n_0(x)}(x,\delta)$ as in \cref{Section::Scaling::BeyondHormander} and we set
$$J_0=J_0(x,\delta):=(j_1(x,\delta),\ldots, j_{n_0(x)}(x,\delta))\in \sI(n_0(x),q) )\in \sI(n_0(x),q).$$
Let $X^{\delta}_{J_0}$ denote the list $X_{j_1(x,\delta)}^{\delta},\ldots, X_{j_{n_0(x)}(x,\delta)}^{\delta}$.

\begin{lemma}\label{Lemma::ScalingRevis::TheoremApplies}
The conditions \cref{Thm::QuantRes::MainThm} hold uniformly when applied to $X_1^{\delta},\ldots, X_q^{\delta}$, at the base point $x$, for $x\in \Compact$ and $\delta\in (0,1]$.  Here we take $\xi$ and
$J_0=J_0(x,\delta)$ as defined above.  The corresponding map $\Phi$ from \cref{Thm::QuantRes::MainThm} is the map $\Phi_{x,\delta}$ defined in \cref{Eqn::Scaling::BeyondHor::DefinePhi}.
In particular, any constant which is admissible in the sense of \cref{Thm::QuantRes::MainThm} can be taken independent of $x\in \Compact$ and $\delta\in (0,1]$, when applied to $X_1^{\delta},\ldots, X_q^{\delta}$ at the base point $x$.
\end{lemma}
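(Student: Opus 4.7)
The plan is to verify, one hypothesis at a time, that \cref{Thm::QuantRes::MainThm} applies to the list $X^\delta=X_1^\delta,\ldots,X_q^\delta$ at base point $x$, with $\xi$ as produced by \cref{Lemma::ScalingRevis::Pickm} and $\zeta=1$, and that every quantity on which an admissible constant may depend is controlled uniformly in $x\in\Compact$ and $\delta\in(0,1]$. The integer $n=n_0(x)$ is independent of $\delta$, and $J_0=J_0(x,\delta)$ is chosen to maximize $|\det_{n_0(x)\times n_0(x)}\delta^d X(x)|_\infty$, which is exactly the recipe behind \cref{Eqn::QuantRes::DefnJ0} with $\zeta=1$; in particular the map $\Phi$ of \cref{Eqn::QuantRes::DefnPhi} coincides with the $\Phi_{x,\delta}$ of \cref{Eqn::Scaling::BeyondHor::DefinePhi}.

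For the commutator bounds, \cref{Lemma::ScalingRevis::Commutators} supplies $[X_j^\delta,X_k^\delta]=\sum_l c_{j,k}^{l,x,\delta}X_l^\delta$ on $B_{X^\delta}(x,\xi)$ with $\sup_{x,\delta}\BCXomegaNorm{c_{j,k}^{l,x,\delta}}{X^\delta}{s}[B_{X^\delta}(x,\xi)]<\infty$. Since $X^\delta_{J_0}$ is a sublist of $X^\delta$ and $B_{X^\delta_{J_0}}(x,\xi)\subseteq B_{X^\delta}(x,\xi)$, the series defining the $\CXomegaSpace{X^\delta_{J_0}}{s}$-norm in \cref{Eqn::FuncMfld::CXomegaNorm} has fewer terms than the one for the full list, giving
\[
\BCXomegaNorm{c_{j,k}^{l,x,\delta}}{X^\delta_{J_0}}{s}[B_{X^\delta_{J_0}}(x,\xi)]\le \BCXomegaNorm{c_{j,k}^{l,x,\delta}}{X^\delta}{s}[B_{X^\delta}(x,\xi)].
\]
Then \cref{Rmk::QuantRest::CanUseComegaNormsInstead} together with \cref{Lemma::FuncSpaceRev::Mfld::ABigger} yields a uniform bound on $\AXNorm{c_{j,k}^{l,x,\delta}}{X^\delta_{J_0}}{x}{\min\{s,\xi\}}$, and a fortiori on $\CNorm{c_{j,k}^{l,x,\delta}}{B_{X^\delta_{J_0}}(x,\xi)}$, for all $x\in\Compact$ and $\delta\in(0,1]$.

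Finally I would produce uniform $\eta>0$ and $\delta_0>0$. Fix a compact neighborhood $\Compact_1\Supset\bigcup_{x\in\Compact}B_X(x,\xi)$, which exists by the last line of \cref{Lemma::ScalingRevis::Pickm}; the vector fields $X_1,\ldots,X_q$ have uniform $C^1$-bounds on $\Compact_1$ and so, since $\delta^{d_j}\le 1$, do the $X_j^\delta$. The ODE for $\sC(x,\eta,\fM)$ applied to $X^\delta_{J_0}$ reduces to $\gamma'(t)=\sum_l b_l(t)X_{j_l(x,\delta)}(\gamma(t))$ with $\|b\|_\infty<\eta$, and Picard--Lindel\"of delivers $\eta>0$ independent of $x,\delta$. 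For $\delta_0$ I would, as in the proof of \cref{Thm::GenSubR::MainThm}, inspect the proof of \cref{Lemma::PfQual::Existsetadelta} from \cite[\SSLemmaMoreOnAssump]{StovallStreet} rather than use it as a black box: that argument depends quantitatively only on uniform $C^1$-bounds of the relevant vector fields, and these are preserved under $X_j\mapsto\delta^{d_j}X_j$ for $\delta\le 1$. Combining these uniform choices with the commutator bounds above, every quantity listed in \cref{Defn::QuantRes::AdmissibleConst} is controlled uniformly, so every admissible constant can be chosen independently of $x\in\Compact$ and $\delta\in(0,1]$.

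The main obstacle is the uniformity of $\delta_0$, which alone among the inputs requires opening the proof of \cref{Lemma::PfQual::Existsetadelta} to verify that its quantitative dependence scales correctly under the rescaling $X_j\mapsto\delta^{d_j}X_j$; the remaining ingredients are routine once the commutator structure from \cref{Lemma::ScalingRevis::Commutators} is in hand.
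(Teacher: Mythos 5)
Your argument is correct and follows the same route as the paper: verify that $\zeta=1$ for $J_0(x,\delta)$, bound the commutator coefficients via \cref{Lemma::ScalingRevis::Commutators}, compare the $\AXSpace{X^\delta_{J_0}}{x}{\cdot}$ norm to the $\CXomegaSpace{X^\delta}{s}$ norm, and obtain $\eta,\delta_0$ uniformly from \cref{Lemma::PfQual::Existsetadelta}. Two small remarks. First, the radius in your conclusion should be $\min\{s,\eta,\xi\}$ rather than $\min\{s,\xi\}$, since \cref{Lemma::FuncSpaceRev::Mfld::ABigger} also requires $X^\delta_{J_0}$ to satisfy $\sC(x,\cdot,\cdot)$ at that radius; you produce the uniform $\eta$ anyway, so this is only a bookkeeping slip. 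Second, the ``main obstacle'' you flag for $\delta_0$ is not actually there in this setting: because $X_j^\delta=\delta^{d_j}X_j$ with $\delta^{d_j}\le1$, any flow of $r\sum\theta_j X_j^\delta$ ($\theta\in S^{q-1}$, $r\le\delta_0$) is a flow of $r'\sum\phi_j X_j$ for some $\phi\in S^{q-1}$ and $0<r'\le r$, so \cref{Lemma::PfQual::Existsetadelta} applied once to $X_1,\ldots,X_q$ on a compact set containing $\bigcup_{x\in\Compact}B_X(x,\xi)$ already gives a single $\delta_0$ that works for every $\delta\in(0,1]$ --- this is why the paper can say the uniformity ``follows directly,'' in contrast with the more general \cref{Thm::GenSubR::MainThm} where the family $X^\delta$ has no such explicit scaling and opening the proof is genuinely needed.
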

\begin{proof}
We use $\xi>0$ and $c_{j,k}^{l,x,\delta}$ from \cref{Lemma::ScalingRevis::Commutators}.
The existence of $\delta_0$, independent of $x\in \Compact$ and $\delta\in (0,1]$, as in the hypotheses of \cref{Thm::QuantRes::MainThm} follows directly from \cref{Lemma::PfQual::Existsetadelta}.
The existence of $\eta>0$, independent of $x\in \Compact$ and $\delta\in (0,1]$, such that $X^{\delta}$ satisfies $\sC(x,\eta,\Omega)$, $\forall x\in \Compact$ also follows from \cref{Lemma::PfQual::Existsetadelta}.
With $J_0$ as above, we may take $\zeta\approx 1$ in \cref{Thm::QuantRes::MainThm}.  By \cref{Lemma::FuncSpaceRev::Mfld::ABigger} and \cref{Lemma::ScalingRevis::Commutators}, we have
\begin{equation*}
	\sup_{x\in \Compact, \delta\in (0,1]} \BAXNorm{c_{j,k}^{l,x,\delta}}{X_{J_0(x,\delta)}^{\delta}}{x}{\min\{ s,\eta,\xi\}} \leq \sup_{x\in \Compact,\delta\in (0,1]} \BCXomegaNorm{c_{j,k}^{l,x,\delta}}{X^{\delta}}{s}[B_{X^{\delta}}(x,\xi)]<\infty.
\end{equation*}
Thus, we use $\min\{s,\eta,\xi\}$ in place of $\eta$ in the hypotheses of \cref{Thm::QuantRes::MainThm}.  The result follows.
\end{proof}

\Cref{Lemma::ScalingRevis::TheoremApplies} shows that \cref{Thm::QuantRes::MainThm} applies to $X_1^{\delta},\ldots, X_q^{\delta}$ at the base point $x\in \Compact$, to yield constants
$\chi, \eta_1, \xi_1,\xi_2>0$ as in that theorem (with $0<\xi_2\leq \xi_1\leq \chi\leq \xi$), which are independent of $x\in \Compact$ and $\delta\in (0,1]$.

\begin{proof}[Proof of \cref{Thm::Scaling::BeyondHor} \cref{Item::BeyongHor::Pullbacks}]
This follows directly from \cref{Thm::QuantRes::MainThm} \cref{Item::QuantRes::BoundA} and \cref{Item::QuantRes::BoundY}.
\end{proof}

\begin{proof}[Proof of \cref{Thm::Scaling::BeyondHor} \cref{Item::BeyondHor::Containments}]
By \cref{Thm::QuantRes::MainThm} we have $B_{X^{\delta}}(x,\xi_2)\subseteq \Phi_{x,\delta}(B^n(\eta_1))\subseteq B_{X^{\delta}}(x,\xi)$.  Taking $\xi_0=\xi_2$, we have
$B_{(X,d)}(x,\xi_0 \delta) \subseteq B_{X^{\delta}}(x,\xi_2)$ and since $\xi\leq 1$, $B_{X^{\delta}}(x,\xi)\subseteq B_{X^\delta}(x,1)=B_{(X,d)}(x,\delta)$, completing the proof of \cref{Item::BeyondHor::Containments}.
\end{proof}

\begin{rmk}\label{Rmk::ScalingRevis::xi0xi2}
In the proof of \cref{Thm::Scaling::BeyondHor} \cref{Item::BeyondHor::Containments}, we chose $\xi_0$ so that $B_{(X,d)}(x,\xi_0 \delta) \subseteq B_{X^{\delta}}(x,\xi_2)$.
\end{rmk}

\begin{proof}[Proof of  \cref{Thm::Scaling::BeyondHor} \cref{Item::BeyondHor::RADiffeo}]
This is  contained in \cref{Thm::QuantRes::MainThm} \cref{Item::QuantRes::PhiOpen} and \cref{Item::QuantRes::PhiDiffeo}, except that \cref{Thm::QuantRes::MainThm}  \cref{Item::QuantRes::PhiDiffeo} only guarantees that $\Phi_{x,\delta}$ is a $C^2$ diffeomorphism, not a real analytic diffeomorphism.  However, using \cref{Item::BeyongHor::Pullbacks} (which we have already proved), $Y_1^{x,\delta},\ldots, Y_q^{x,\delta}$ form a real analytic
spanning set for $T_u B^{n_0(x)}(\eta_1)$ ($\forall u\in B^{n_0(x)}(\eta_1)$).  Since $(\Phi_{x,\delta})_{*}Y_j^{x,\delta}= \delta^{d_j} X_j$ where $X_1,\ldots, X_q$ are real analytic on $L_x$, \cref{Lemma::PfQual::ShowDiffeoRA}
shows $\Phi_{x,\delta}$ is real analytic.
\end{proof}

\begin{lemma}\label{Lemma::ScalingRevis::DensAssumpHold}
There exists $r>0$, independent of $x\in \Compact$, $\delta\in (0,1]$, and functions $f_l^{x,\delta}$ ($1\leq l\leq n_0(x)$) such that $\forall x\in \Compact$, $\delta\in(0,1]$,
\begin{equation*}
	\Lie{X^{\delta}_{j_l(x,\delta)}} \nu_x = f_l^{x,\delta} \nu_x, \quad 1\leq l\leq n_0(x),
\end{equation*}
where $f_l^{x,\delta}\in \AXSpace{X^{\delta}_{J_0(x,\delta)}}{x}{r}\cap \CSpace{B_{X^{\delta}_{J_0(x,\delta)}}(x,\chi)}$ with
\begin{equation*}
	\AXNorm{f_l^{x,\delta}}{X^\delta_{J_0(x,\delta)}}{x}{r}, \CNorm{f_l^{x,\delta}}{B_{X^{\delta}_{J_0(x,\delta)}}(x,\chi)} \lesssim 1, \quad 1\leq l\leq n_0(x).
\end{equation*}
\end{lemma}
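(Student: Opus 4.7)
The statement is the uniform-in-$(x,\delta)$ analogue of \cref{Prop::EuclidDense::MainProp} applied to $X^\delta_1,\ldots,X^\delta_q$ at base point $x$, so my plan is to rerun that proof and check that each constant produced can be chosen independent of $x\in\Compact$ and $\delta\in(0,1]$.  \Cref{Lemma::ScalingRevis::TheoremApplies} already ensures uniformity of the admissible constants coming from \cref{Thm::QuantRes::MainThm}; consequently only the additional Euclidean data appearing in \cref{Defn::EuclidDense::Eadmis} need to be verified uniformly, namely a radius $\delta_1$, an analytic radius $s_1$, and the attendant $C^1$ and $\ComegaSpace{s_1}$ norms of the vector fields.

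To secure these bounds uniformly, fix an open set $\Omega'$ with $\bigcup_{x\in\Compact}B_X(x,\xi)\Subset\Omega'\Subset\Omega$, which is possible by \cref{Lemma::ScalingRevis::Pickm}.  Since $X_1,\ldots,X_q$ are real analytic on $\Omega$ and $\overline{\Omega'}$ is compact, a standard compactness argument produces $\delta_1,s_1>0$ with
\begin{equation*}
\sup_{x\in\overline{\Omega'}}\ComegaNorm{X_j}{s_1}[B^n(x,\delta_1)][\R^n]<\infty,\qquad \CjNorm{X_j}{1}[\Omega'][\R^n]<\infty,\quad 1\leq j\leq q.
\end{equation*}
Because $d_j\ge 1$ and $\delta\in(0,1]$, identical bounds persist when each $X_j$ is replaced by $X_j^\delta=\delta^{d_j}X_j$.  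Hence every $\Eadmis$-admissible constant arising from applying \cref{Defn::EuclidDense::Eadmis} to $X^\delta$ at base point $x$ is bounded uniformly in $(x,\delta)\in\Compact\times(0,1]$.

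With uniformity in hand, I would retrace \cref{Lemma::EuclidDense::DerivDet}--\cref{Lemma::EuclidDense::Definehj} together with \cref{Lemma::EuclidDense::FormulaFornu}, decorating every object with superscript $\delta$ and with $J_0(x,\delta)$ in place of $J_0$, to produce $f_l^{x,\delta}:=f_{0,l}^{x,\delta}+h_l^{x,\delta}$, where $f_{0,l}^{x,\delta}$ is supplied by the $X^\delta$-analog of \cref{Lemma::PfDensities::fj0RA} and $h_l^{x,\delta}$ by the scaled analog of \cref{Lemma::EuclidDense::Definehj}.  The factorization $\nu_x=|\det_{n_0(x)\times n_0(x)}X^\delta_{J_0(x,\delta)}|\,\nu_0^{x,\delta}$, where $\nu_0^{x,\delta}$ is the auxiliary density defined via \cref{Eqn::PfDensities::Defnnu0} for the list $X^\delta_{J_0(x,\delta)}$, then yields $\Lie{X^\delta_{j_l(x,\delta)}}\nu_x=f_l^{x,\delta}\nu_x$ with the required uniform bounds on a common admissible radius $r>0$.

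I expect the principal obstacle to be purely bookkeeping: at every invocation of \cref{Lemma::FuncSpaceRev::Algebra}, \cref{Prop::NelsonTheorem2}, or \cref{Prop::IdentRAManifold::MainProp} embedded in the proof of \cref{Prop::EuclidDense::MainProp}, one must confirm that the radius produced and the norm bounds extracted depend only on quantities already shown uniform.  No new analytic ingredient is required beyond the two observations that norms of $X_j^\delta$ are controlled by those of $X_j$ (because $\delta^{d_j}\le 1$) and that the commutator coefficients $c_{j,k}^{l,x,\delta}$ are uniformly bounded by \cref{Lemma::ScalingRevis::Commutators}.
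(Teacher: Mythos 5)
Your approach matches the paper's: verify the hypotheses of \cref{Prop::EuclidDense::MainProp} hold uniformly for $X^\delta$ over $x\in\Compact$, $\delta\in(0,1]$ (using \cref{Lemma::ScalingRevis::TheoremApplies} for the admissible constants and compactness plus $\delta^{d_j}\le 1$ for the Euclidean data in \cref{Defn::EuclidDense::Eadmis}), and then invoke \cref{Prop::EuclidDense::MainProp}. The paper's proof is just a terse statement of this; your proposal fills in the verification correctly, and the proposed retracing of the internal lemmas is unnecessary once the hypotheses are seen to hold uniformly, since the proposition can then be applied as a black box.
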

\begin{proof}
It is immediate to verify that the hypotheses of \cref{Prop::EuclidDense::MainProp}, when applied to the vector fields $X^\delta$, hold uniformly for $x\in \Compact$, $\delta\in (0,1]$.
Thus, $\Eadmis$-admissible constants in that theorem can be chosen independent of $x\in \Compact$, $\delta\in (0,1]$.  Because of this, the lemma is an immediate consequence
of \cref{Prop::EuclidDense::MainProp}.
\end{proof}

In light of \cref{Lemma::ScalingRevis::DensAssumpHold}, the assumptions of \cref{Thm::Density::MainThm} and \cref{Cor::Density::MainCor} hold, when applied to the vector fields
$X^\delta$ and the density $\nu_x$, at the base point $x$, uniformly for $x\in \Compact$ and $\delta\in (0,1]$.  Thus, any constant which is $\nu$-admissible (or $0;\nu$-admissible) in the
sense of those results can be chosen independent of $x\in \Compact$, $\delta\in (0,1]$.

Since $\nu_x$ is the induced Lebesgue density on an $n_0(x)$-dimensional injectively immersed submanifold of $\R^N$, we have
\begin{equation}\label{Eqn::ScalingRevis::ComputeDensityDet}
	\nu_x(Z_1\wedge \cdots \wedge Z_{n_0(x)}) = \mleft|\det_{n_0(x)\times n_0(x)} \mleft( Z_1 | \cdots | Z_{n_0(x)}\mright)\mright|,
\end{equation}
where $Z_1,\ldots, Z_{n_0(x)}$ are vector fields tangent to $L_x$ and $(Z_1|\cdots | Z_{n_0(x)})$ denotes the $N\times n_0(x)$ matrix with columns given by $Z_1,\ldots, Z_{n_0(x)}$.

\begin{proof}[Proof of  \cref{Thm::Scaling::BeyondHor} \cref{Item::BeyondHor::Estimateh}]
This follows from \cref{Thm::Density::MainThm} and \cref{Cor::Density::MainCor}, using \cref{Eqn::ScalingRevis::ComputeDensityDet}.
\end{proof}

\begin{proof}[Proof of \cref{Thm::Scaling::BeyondHor} \cref{Item::BeyondHor::Estimatenux}]
By \cref{Rmk::ScalingRevis::xi0xi2}, \cref{Cor::Density::MainCor}, and \cref{Eqn::ScalingRevis::ComputeDensityDet}, we have for $\delta\in (0,1]$,
\begin{equation}\label{Eqn::ScalingRevis::Esimtatenux::1}
\begin{split}
	&\nu_x(B_{(X,d)}(x,\xi_0\delta)) \leq \nu_x(B_{X^\delta}(x,\xi_2)) \approx \max_{k_1,\ldots, k_{n_0(x)}\in \{1,\ldots, q\} } \nu_x (\delta^{d_{k_1}} X_{k_1},\ldots, \delta^{d_{k_{n_0(x)}}} X_{k_{n_0(x)}})
	\\&\approx \mleft| \det_{n_0(x)\times n_0(x)} \delta^{d} X(x) \mright|_{\infty}
	\approx \mleft| \det_{n_0(x)\times n_0(x)} (\xi_0 \delta)^{d} X(x) \mright|_{\infty},
\end{split}
\end{equation}
where the last $\approx$ follows immediately from the formula for $\mleft| \det_{n_0(x)\times n_0(x)} \delta^{d} X(x) \mright|_{\infty}$ and the fact that $\xi_0\approx 1$.

Similarly, since $\xi_2\leq 1$, we have by  \cref{Cor::Density::MainCor} and \cref{Eqn::ScalingRevis::ComputeDensityDet},
\begin{equation}\label{Eqn::ScalingRevis::Esimtatenux::2}
\begin{split}
	&\nu_x(B_{(X,d)}(x,\delta)) \geq \nu_x(B_{X^\delta}(x,\xi_2)) \approx \max_{k_1,\ldots, k_{n_0(x)}\in \{1,\ldots, q\} } \nu_x (\delta^{d_{k_1}} X_{k_1},\ldots, \delta^{d_{k_{n_0(x)}}} X_{k_{n_0(x)}})
	\\&\approx \mleft| \det_{n_0(x)\times n_0(x)} \delta^{d} X(x) \mright|_{\infty}
\end{split}
\end{equation}
Combining \cref{Eqn::ScalingRevis::Esimtatenux::1} and \cref{Eqn::ScalingRevis::Esimtatenux::2} completes the proof.
\end{proof}

\begin{proof}[Proof of \cref{Thm::Scaling::BeyondHor} \cref{Item::BeyondHor::Doubling}]
This follows immediately from the formula given in \cref{Thm::Scaling::BeyondHor} \cref{Item::BeyondHor::Estimatenux}.
\end{proof}

	\subsection{Multi-parameter geometries}\label{Section::MultiParam}
The results in \cref{Section::Scaling::BeyondHormander} concerned single-parameter sub-Riemannian geometries.  \Cref{Thm::Scaling::BeyondHor} can be generalized to the setting of multi-parameter
geometries with essentially the same proof.  We outline these ideas in this section.  Such multi-parameter geometries arise in applications:  see \cite{SteinStreetIII,StreetMultiParamSingInt}.

Let $V_1,\ldots, V_r$ be real analytic vector fields defined on an open set $\Omega\subseteq \R^n$.  Fix $\nu\in \N$ and to each $V_j$ assign a formal degree $0\ne e_j\in \N^\nu$.
If $Z$ has formal degree $e\in \N^\nu$, we assign to $[V_j,Z]$ the formal degree $e+e_j$.  Fix $m\in \N$ a large integer and let $(X_1,d_1),\ldots, (X_q,d_q)$ denote the finite list
of vector fields with $\nu$-parameter formal degree $d_j\in \N^\nu$ with $|d_j|_{\infty}\leq m$.  The results which follow are essentially independent of $m$, so long as $m$ is chosen sufficiently
large (depending on $(V_1,e_1),\ldots, (V_r,e_r)$ and $\Compact$).  For $\delta\in (0,1]^{\nu}$, we let $\delta^{d}X$ denote the list of vector fields $\delta^{d_1}X_1,\ldots, \delta^{d_q}X_q$.
We sometimes identify $\delta^d X$ with the $n\times q$ matrix $(\delta^{d_1} X_1|\cdots|\delta^{d_q} X_q)$.
As before, we set $B_{(X,d)}(x,\delta):=B_{\delta^{d} X}(x,1)$, though now $\delta\in (0,1]^{\nu}$.

As in \cref{Section::Scaling::BeyondHormander}, the involutive distribution generated by $V_1,\ldots, V_r$ foliates $\Omega$ into leaves, and we let $L_x$ denote the leaf passing through $x$,
and $\nu_x$ the induced Lebesgue density on $L_x$.  $B_{(X,d)}(x,\delta)$ is an open subset of $L_x$.

For each $x\in \Omega$ set $n_0(x):=\dim \Span\{X_1(x),\ldots, X_q(x)\}$.  For each $x\in \Omega$, $\delta\in (0,1]^\nu$, pick $j_1=j_1(x,\delta),\ldots, j_{n_0(x)}=j_{n_0(x)}(x,\delta)$ so that
\begin{equation*}
\mleft| \det_{n_0(x)\times n_0(x)} \mleft( \delta^{d_{j_1}} X_{j_1}(x) | \cdots | \delta^{d_{j_{n_0(x)}}} X_{j_{n_0(x)}}(x)\mright) \mright|_{\infty} = \mleft| \det_{n_0(x)\times n_0(x)} \delta^{d} X\mright|_{\infty}.
\end{equation*}
For this choice of $j_1=j_1(x,\delta),\ldots, j_{n_0(x)}=j_{n_0(x)}(x,\delta)$ set (writing $n_0$ for $n_0(x)$):
\begin{equation*}
\Phi_{x,\delta}(t_1,\ldots,t_{n_0}) := \exp\mleft( t_1 \delta^{d_{j_1}} X_{j_1}+\cdots+ t_{n_0} \delta^{d_{j_{n_0}}} X_{j_{n_0}} \mright)x.
\end{equation*}

\begin{thm}
Fix a compact set $\Compact\Subset \Omega$ and $x\in \Compact$, take $m$ sufficiently large (depending on $\Compact$ and $(V_1,e_1),\ldots, (V_r,e_r)$), and define $(X_1,d_1),\ldots, (X_q,d_q)$ as above.
Define $n_0(x)$, $\nu_x$, and $\Phi_{x,\delta}(t_1,\ldots, t_{n_0})$ as above.
We write
$A\lesssim B$ for $A\leq C B$ where $C$ is a positive constant which may depend on $\Compact$, but does not depend on the particular points $x\in \Compact$ and $u\in \R^{n_0(x)}$ under consideration,
or on the scale $\delta\in (0,1]^\nu$; we write $A\approx B$ for $A\lesssim B$ and $B\lesssim A$.  There exist $\eta_0,\xi_0\approx 1$ such that $\forall x\in \Compact$,
\begin{enumerate}[(i)]
\item
$\nu_x(B_{(X,d)}(x,\delta))\approx \mleft| \det_{n_0(x)\times n_0(x)} \delta^{d} X(x) \mright|_{\infty}$, $\forall \delta\in (0,1]^{\nu}$, $|\delta|\leq \xi_0$.
\item
$\nu_x(B_{(X,d)}(x,2\delta))\lesssim \nu_x(B_{(X,d)}(x,\delta))$, $\forall \delta\in (0,1]^{\nu}$, $|\delta|\leq \xi_0/2$.
\item
$\forall \delta\in (0,1]^\nu$, $\Phi_{x,\delta}(B^{n_0(x)}(\eta_1))\subseteq L_x$ is open and $\Phi_{x,\delta}:B^n(\eta_1)\rightarrow \Phi_{x,\delta}(B^n(\eta_1))$ is a real analytic diffeomorphism.
\item
For $\delta\in (0,1]^{\nu}$, define $h_{x,\delta}(t)$ on $B^{n_0(x)}(\eta_1)$ by $h_{x,\delta}\LebDensity = \Phi_{x,\delta}^{*} \nu_x$.  Then, $h_{x,\delta}(t)\approx \mleft| \det_{n_0(x)\times n_0(x)} \delta^{d} X(x) \mright|_{\infty}$,
$\forall t\in B^{n_0(x)}(\eta_1)$, and there exists $s\approx 1$ with $\ANorm{h_{x,\delta}}{n_0(x)}{s}\lesssim  \mleft| \det_{n_0(x)\times n_0(x)} \delta^{d} X(x) \mright|_{\infty}$.
\item
$B_{(X,d)}(x,\xi_0\delta)\subseteq \Phi_{x,\delta}(B^{n_0(x)}(\eta_1))\subseteq B_{(X,d)}(x,\delta)$, $\forall \delta\in (0,1]^{\nu}$.
\item
For $\delta\in (0,1]^{\nu}$, $x\in \Compact$, let $Y_j^{x,\delta}:=\Phi_{x,\delta}^{*} \delta^{d_j} X_j$, so that $Y_j^{x,\delta}$ is a real analytic vector field on $B^{n_0(x)}(\eta_1)$.  We have
\begin{equation*}
\ANorm{Y_j^{x,\delta}}{n_0(x)}{\eta_1}[\R^n]\lesssim 1, \quad 1\leq j\leq q.
\end{equation*}
Finally, $Y_1^{x,\delta}(u),\ldots, Y_q^{x,\delta}(u)$ span $T_u B^{n_0(x)}(\eta_1)$, uniformly in $x$, $\delta$, and $u$, in the sense that
\begin{equation*}
\max_{k_1,\ldots, k_{n_0(x)}\in \{1,\ldots, q\}} \inf_{u\in B^{n_0(x)}(\eta_1)} \mleft| \det\mleft( Y_{k_1}^{x,\delta}(u)| \cdots | Y_{k_{n_0(x)}}^{x,\delta}(u)  \mright) \mright|\approx 1.
\end{equation*}
\end{enumerate}
\end{thm}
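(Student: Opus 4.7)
The plan is to follow the proof of \cref{Thm::Scaling::BeyondHor} from \cref{Section::ScalingRevis} essentially verbatim, replacing $\N$ by $\N^\nu$ throughout and interpreting $\leq$ on degrees componentwise. The single algebraic observation that makes everything else go through is that for $\delta \in (0,1]^\nu$ and $d_l \leq d_j + d_k$ componentwise, the scalar $\delta^{d_j+d_k-d_l}$ lies in $(0,1]$, so the rescaled commutator coefficients remain uniformly bounded.

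The main step is to establish the multi-parameter analogue of \cref{Lemma::ScalingRevis::NoetherianWithFormal}: given $\sS \subseteq \sA_n^n \times \N^\nu$, there is a finite $\sF \subseteq \sS$ such that every $(g,e) \in \sS$ admits an expansion $g(x) = \sum_{(f,d) \in \sF,\ d \leq e} c_{(f,d)}(x) f(x)$ with $c_{(f,d)} \in \sA_n$. The argument is a direct adaptation of the one-parameter case: introduce $\nu$ auxiliary variables $t = (t_1,\ldots,t_\nu)$ and the embedding $\iota(f,d) := t^d f(x) = t_1^{d_1}\cdots t_\nu^{d_\nu} f(x) \in \sA_{n+\nu}^n$. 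Since $\sA_{n+\nu}^n$ is Noetherian (by iterating \cref{Lemma::ScalingRevis::Noetherian}), the submodule generated by $\iota(\sS)$ has a finite generating set $\iota(\sF)$. Writing $t^e g = \sum \tilde c_{(f,d)}(t,x)\, t^d f(x)$ and applying $\frac{1}{e!}\partial_t^e|_{t=0}$ (multi-index convention $e! = e_1!\cdots e_\nu!$) kills every term with $d \not\leq e$ componentwise and extracts analytic coefficients in $\sA_n$.

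With this in hand, the multi-parameter analogues of \cref{Prop::ScalingRevis::NSW,Lemma::ScalingRevis::MakeUAndF,Lemma::ScalingRevis::Pickm,Lemma::ScalingRevis::Commutators} go through without change. One obtains, for each compact $\Compact \Subset \Omega$, an integer $m = m(\Compact)$, real numbers $\xi \in (0,1]$, $s > 0$, and functions $c_{j,k}^{l,x} \in \CXomegaSpace{X}{s}[B_X(x,\xi)]$ satisfying $[X_j,X_k] = \sum_{d_l \leq d_j+d_k} c_{j,k}^{l,x} X_l$ with norm bounded uniformly in $x \in \Compact$. Setting
\begin{equation*}
c_{j,k}^{l,x,\delta} := \begin{cases} \delta^{d_j+d_k-d_l} c_{j,k}^{l,x}, & d_l \leq d_j+d_k, \\ 0, & \text{otherwise,} \end{cases}
\end{equation*}
one has $[X_j^\delta, X_k^\delta] = \sum_l c_{j,k}^{l,x,\delta} X_l^\delta$ and the norms $\CXomegaNorm{c_{j,k}^{l,x,\delta}}{X^\delta}{s}[B_{X^\delta}(x,\xi)]$ are uniformly bounded for $(x,\delta) \in \Compact \times (0,1]^\nu$, precisely because each $\delta_i^{(d_j+d_k-d_l)_i} \leq 1$ under the componentwise hypothesis.

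From this point the uniform hypotheses of \cref{Thm::QuantRes::MainThm,Thm::Density::MainThm,Cor::Density::MainCor,Prop::EuclidDense::MainProp} hold for $X_1^\delta,\ldots,X_q^\delta$ at base point $x \in \Compact$ (with $\Phi$ there equal to $\Phi_{x,\delta}$, and $J_0 = J_0(x,\delta)$ chosen with $\zeta \approx 1$), and every admissible or $\nu$-admissible constant is independent of $(x,\delta)$. The six conclusions of the theorem are then read off exactly as in the single-parameter case: the real analytic diffeomorphism claim comes from \cref{Thm::QuantRes::MainThm} \cref{Item::QuantRes::PhiOpen,Item::QuantRes::PhiDiffeo} combined with \cref{Lemma::PfQual::ShowDiffeoRA} applied to the real analytic spanning set $Y_j^{x,\delta} = \Phi_{x,\delta}^* \delta^{d_j} X_j$; the containments use $B_{(X,d)}(x,\xi_0\delta) \subseteq B_{X^\delta}(x,\xi_2)$ as in \cref{Rmk::ScalingRevis::xi0xi2}; the volume and doubling estimates follow from \cref{Cor::Density::MainCor} via the determinantal identity \cref{Eqn::ScalingRevis::ComputeDensityDet}; and the estimate on $h_{x,\delta}$ follows from \cref{Thm::Density::MainThm}. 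The main obstacle is the multi-parameter Noetherian step above; once that is in place no new analytic phenomenon appears, and the remaining work is a line-by-line check that no step of the single-parameter proof secretly used $\nu = 1$.
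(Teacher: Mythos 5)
Your proposal is correct and follows essentially the same route the paper indicates: the paper's own ``Comments on the proof'' says precisely to redo \cref{Thm::Scaling::BeyondHor} with $\N$ replaced by $\N^\nu$, the inequality on degrees read componentwise, and \cref{Lemma::ScalingRevis::NoetherianWithFormal} generalized by treating $t \in \R^\nu$ as a multi-variable. You go one step further and actually carry out the Noetherian step: embedding $\iota(f,d) = t^d f(x)$ into $\sA_{n+\nu}^n$, finitely generating the submodule, and then applying $\tfrac{1}{e!}\partial_t^e|_{t=0}$ to kill all terms with $d \not\leq e$ componentwise while producing analytic coefficients. Your observation that the uniformity of the rescaled commutator coefficients reduces to the algebraic fact that $\delta^{d_j+d_k-d_l} \in (0,1]$ for $\delta \in (0,1]^\nu$ whenever $d_l \leq d_j + d_k$ componentwise is exactly the point on which the adaptation of \cref{Lemma::ScalingRevis::Commutators} rests, and your inclusion of the factor $c_{j,k}^{l,x}$ in the definition of $c_{j,k}^{l,x,\delta}$ is the form needed to make the application of \cref{Thm::QuantRes::MainThm}, \cref{Thm::Density::MainThm}, \cref{Cor::Density::MainCor}, and \cref{Prop::EuclidDense::MainProp} go through uniformly.
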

\begin{proof}[Comments on the proof]
The proof is nearly identical to the proof of \cref{Thm::Scaling::BeyondHor}; anywhere in the proof where one writes $d\leq e$, where now $d,e\in \N^\nu$, the inequality means $d_\mu\leq e_\mu$, $\forall 1\leq \mu\leq \nu$.
A main change needed is that the set $\sS$ consists of vector fields paired with formal degrees in $\N^\nu$, instead of formal degrees in $\N$.
To deal with this one needs to generalize \cref{Lemma::ScalingRevis::NoetherianWithFormal} to deal with $\sS\subset \sA_n^n\times \N^\nu$; the same proof works by treating $t\in \R^\nu$ and using each degree as a multi-index.
With these modifications it is straightforward to adapt the proof to yield this more general result.  We leave the details to the interested reader.
\end{proof}



\bibliographystyle{amsalpha}

\bibliography{coords}

\center{\it{University of Wisconsin-Madison, Department of Mathematics, 480 Lincoln Dr., Madison, WI, 53706}}

\center{\it{street@math.wisc.edu}}

\center{MSC 2010:  58A30 (Primary), 32C05 and 53C17 (Secondary)}

\end{document}